\numberwithin{equation}{section}
\renewcommand{\Bar}{\overline}
\newcommand{\R}{\mathbb{R}}
\newcommand{\N}{\mathbb{N}}
\newcommand{\Z}{\mathbb{Z}}
\newcommand{\C}{\mathbb{C}}
\newcommand{\imp}{\;\Rightarrow\;}
\newcommand{\m}{\mathrm}
\newcommand{\lv}{\lVert}
\newcommand{\rv}{\rVert}
\newcommand{\Lm}{\Lambda}
\newcommand{\al}{\alpha}
\newcommand{\be}{\beta}
\newcommand{\es}{\varnothing}
\newcommand{\lra}{\;\Leftrightarrow\;}
\newcommand{\ep}{\varepsilon}
\newcommand{\f}{\frac}
\newcommand{\sig}{\sigma}
\newcommand{\del}{\delta}
\newcommand{\pd}{\partial}
\newcommand{\grad}{\nabla}
\newcommand{\bpm}{\begin{pmatrix}}
\newcommand{\epm}{\end{pmatrix}}
\newcommand{\loc}{\m{loc}}
\renewcommand{\bar}{\overline}
\newcommand{\Le}{\mathfrak{L}}
\newcommand{\emb}{\hookrightarrow}
\newcommand{\norm}[1]{\left\lv#1\right\rv}
\newcommand{\qnorm}[1]{\left \vert \mspace{-1.8mu} \left\vert
\mspace{-1.8mu} \left \lvert #1 \right \vert \mspace{-1.8mu} \right\vert
\mspace{-1.8mu} \right\vert}
\newcommand{\bnorm}[1]{\Big\lv#1\Big\rv}
\newcommand{\snorm}[1]{\big\lv#1\big\rv}
\newcommand{\bp}[1]{\Big(#1\Big)}
\newcommand{\abs}[1]{\left|#1\right|}
\newcommand{\p}[1]{\left(#1\right)}
\newcommand{\br}[1]{\left\langle{#1}\right\rangle}
\renewcommand{\sb}[1]{\left[{#1}\right]}
\newcommand{\bsb}[1]{\Big[{#1}\Big]}
\newcommand{\ssb}[1]{\big[{#1}\big]}
\newcommand{\cb}[1]{\left\{{#1}\right\}}
\renewcommand{\bf}[1]{\mathbf{#1}}
\newcommand{\ii}{i}
\DeclareMathOperator{\supp}{supp}
\DeclareMathOperator{\card}{card}
\providecommand{\br}[1]{\langle #1 \rangle}
\renewcommand{\le}{\leqslant}
\renewcommand{\ge}{\geqslant}
\newtheorem{prop}{Proposition}[section]
\newtheorem{thm}[prop]{Theorem}
\newtheorem{defn}[prop]{Definition}
\newtheorem{lem}[prop]{Lemma}
\newtheorem{coro}[prop]{Corollary}
\newenvironment{customthm}[1]
  {\innercustomthm}
  {\endinnercustomthm}
\def\XXint#1#2#3{{\setbox0=\hbox{$#1{#2#3}{\int}$ }
\vcenter{\hbox{$#2#3$ }}\kern-.6\wd0}}
\theoremstyle{definition}
\newenvironment{exa}
  {\pushQED{\qed}\exax}
  {\popQED\endexax}
\title[Truncated interpolation and screened Sobolev spaces]{
A truncated real interpolation method and characterizations of screened Sobolev spaces
}
\author{Noah Stevenson}
\address{
Department of Mathematical Sciences\\
Carnegie Mellon University\\
Pittsburgh, PA 15213, USA
}
\email[N. Stevenson]{nwsteven@andrew.cmu.edu}
\author{Ian Tice}
\address{
Department of Mathematical Sciences\\
Carnegie Mellon University\\
Pittsburgh, PA 15213, USA
}
\email[I. Tice]{iantice@andrew.cmu.edu}
\thanks{I. Tice was supported by an NSF CAREER Grant (DMS \#1653161). N. Stevenson was supported by the summer research support provided by this grant. }
\subjclass[2010]{Primary 46M35, 46E35; Secondary 46A04, 46F05, 46N20}
\keywords{Interpolation of seminormed spaces, screened Sobolev and Besov spaces, Littlewood-Paley}
\begin{document}

\begin{abstract} 

In this paper we prove structural and topological characterizations of the screened Sobolev spaces with screening functions bounded below and above by positive constants. We generalize a method of interpolation to the case of seminormed spaces.  This method, which we call the truncated method, generates the screened Sobolev subfamily and a more  general screened Besov scale. We then prove that the screened Besov spaces are equivalent to the sum of a Lebesgue space and a homogeneous Sobolev space and provide a Littlewood-Paley frequency space characterization.

\end{abstract}

\maketitle

\section{Introduction}

\subsection{Background}
The study of partial differential equations on unbounded domains is a catalyst for the development of new analytical tools and spaces of functions.  One reason for this is that the classical scale of inhomogeneous Sobolev spaces fails to provide a suitable functional setting for PDEs on these domains.   This is evident in the basic exterior Dirichlet problem in $\R^2$, where $u\p{x}=\log\abs{x}$ solves
\begin{equation}\label{this is the first PDE}
    \begin{cases}
    -\Delta u=0&\text{in }\R^2\setminus\Bar{B\p{0,e}}\\
    u=1&\text{on }\pd B\p{0,e}.
    \end{cases}
\end{equation}
While we have that $u\in\dot{W}^{1,p}(\R^2\setminus\Bar{B\p{0,e}})\cap\dot{W}^{2,q}(\R^2\setminus\Bar{B\p{0,e}})$ for all $2<p\le\infty$ and $1<q\le\infty$, $u$ does not belong to  $L^r(\R^2\setminus\Bar{B\p{0,e}})$ for any $1\le r\le\infty$. 

One approach for dealing with this problem is to switch to weighted inhomogeneous Sobolev spaces: see, for instance \cite{MR554783,MR312241,MR380094,MR802206,MR2753293,MR1923391}.  An alternative approach is to directly utilize homogeneous Sobolev spaces, but for this to be fruitful in the study of boundary value problems it is essential to know their associated trace spaces.  The trace spaces associated to homogeneous Sobolev spaces on infinite strip-like domains of the form $\{x \in \R^{n+1} : \eta^-(x') < x_{n+1} < \eta^+(x')\}$, for $\eta^\pm : \R^n \to \R$ Lipschitz with $\eta^- < \eta^+$, were recently characterized by Leoni and Tice \cite{leoni2019traces}.  They used this to characterize the solvability of certain quasilinear elliptic boundary value problems in these domains.  This trace theory has also been used in recent studies of the Muskat problem by Nguyen and Pausader \cite{nguyen_pausader}, Nguyen \cite{2019arXiv190711552N}, and Flynn and Nguyen \cite{2020arXiv200110473F}.  

A curious feature of this trace theory is that regularity of the trace function is measured with a fractional Sobolev seminorm involving a screening effect.  These screened Sobolev seminorms were first studied by  Strichartz~\cite{MR3481175}, who proved that the fractional regularity associated to traces of $\dot{H}^1(\R\times\p{0,1})$  is characterized by the seminorm
\begin{equation}\label{strichartz}
    \sb{f}_{\tilde{H}^{\f12}\p{\R}}=\Big(\int_{\R}\int_{\p{x-1,x+1}}\abs{f\p{x}-f\p{y}}^2\abs{x-y}^{-2}\;\m{d}y\;\m{d}x\Big)^{1/2} \text{ for }f\in L^1_\loc\p{\R}.
\end{equation}
Comparing the expression in~\eqref{strichartz} to the seminorm on a homogeneous Sobolev-Slobodeckij space $\dot{H}^{\f12}\p{\R}$, one sees that the moniker `screening' is justified since in the former only small difference quotients are allowed, and larger ones are screened away.

The generalization of this result in~\cite{leoni2019traces} required the introduction more general seminorms.  For an open set $\es\neq U\subseteq\R^n$, a lower semicontinuous function $\sig:U\to(0,\infty]$, $s\in\p{0,1}$ (called the screening function), and $1\le p<\infty$, \cite{leoni2019traces} defines the screened Sobolev space $\tilde{W}^{s,p}_{\p{\sig}}\p{U}$ as the collection of locally integrable functions $f$, defined on $U$, for which
\begin{equation}\label{screed space defn}
    \sb{f}_{\tilde{W}^{s,p}_{\p{\sig}}} = \Big(\int_{U}\int_{B\p{x,\sig\p{x}}\cap U}\abs{f\p{x}-f\p{y}}^p\abs{x-y}^{-n-sp}\;\m{d}y\;\m{d}x\Big)^{1/p} < \infty.
\end{equation}
Variants of these screened spaces have appeared in recent work on fractional Sobolev-type seminorms \cite{bourgain-brezis-mironescu2001,bourgain-brezis-mironescu2002,brezis-nguyen2018,ponce2004,ponce-spector2017} and in weak formulations of nonlocal elliptic equations \cite{MR3023366,MR3318251,MR2733097}.  However, the space $\tilde{W}^{s,p}_{\p{\sig}}$ above did not appear in previous literature, so \cite{leoni2019traces} established its basic properties: completeness, strict inclusion of $\dot{W}^{1,p}$, and a partial frequency space characterization in the case that $p=2$, $\sig=1$: for $f\in\mathscr{S}\p{\R^n;\R}$ we have the equivalence
\begin{equation}\label{tice's partial fourier characterization}
    \sb{f}_{\tilde{W}^{s,2}_{\p{1}}}\asymp\Big(\int_{\R^n}\min\{\abs{\xi}^2,\abs{\xi}^{2s}\}\abs{\hat{f}\p{\xi}}^2\;\m{d}\xi\Big)^{1/2}.
\end{equation}
Deeper questions related to density, embeddings, traces, a more robust frequency space characterization, and interpolation were left open in \cite{leoni2019traces}, and a central goal of this paper is to fill that gap.

The key to unlocking these deeper properties is the characterization of the screened spaces in terms of interpolation theory, specifically the real method of abstract interpolation (see~\cite{MR0482275,MR3753604,MR0243340}).
One expects such a characterization, as this is the case for the  Sobolev-Slobodeckij and Besov spaces.  We refer to the works
\cite{adams-fournier2003book, MR0482275, besov-ilin-nikolskii1979book,  besov-ilin-nikolskii1978book, burenkov1998book, dinezza-palatucci-valdinoci2012,  grisvard2011book, MR3726909, mazya2011book,necas,MR0461123,MR503903}  and their references for a thorough study of these spaces and their interpolation properties.  However, the standard methods of abstract interpolation only generate Banach interpolation spaces intermediate to an appropriately compatible pair of Banach spaces.  The screened Sobolev spaces are only seminormed spaces with non-Hausdorff topologies and thus, without appropriate modification, interpolation methods cannot generate these spaces.  Literature regarding the theory of interpolation of seminormed spaces appears to be sparse, aside from a technical report of Gustavsson \cite{gustavsson}, which is difficult to find in print.

\subsection{Primary results and discussion}

We survey the principle new results regarding the screened spaces obtained in this paper. For brevity's sake, we do not provide fully detailed statements and only record their abbreviated forms. The proper statements can be found later in the indicated theorems. 

In order to study the screened Sobolev spaces, we actually introduce a more general scale of screened Besov spaces, $\tilde{B}^{s,p}_{q}\p{\R^n}$, for $s\in\p{0,1}$ and $1\le p,q\le\infty$.   See Definition~\ref{screened besov spaces defn} for the precise definition.  Our first result finds sufficient conditions that identify the screened Sobolev spaces within the screened Besov scale.

\begin{customthm}{1}[Proved in Corollary~\ref{sum characterization of screened sobolev spaces}]\label{1}
If $\sig:\R^n\to\R^+$ is a lower semicontinuous screening function bounded above and below by positive constants,  $s\in\p{0,1}$, and $1\le p<\infty$, then the  screened Sobolev space $\tilde{W}^{s,p}_{\p{\sig}}\p{\R^n}$ is equivalent to the screened Besov space $\tilde{B}^{s,p}_p\p{\R^n}$.
\end{customthm}
With the established connection between the scales of screened Sobolev and screened Besov spaces, we move to structurally and topologically characterize the latter. We find that there is a method of interpolation of seminormed spaces that generates the screened Besov spaces.

\begin{customthm}{2}[Proved in Theorem~\ref{screened sobolev spaces are truncated interpolation spaces}]\label{2}
There is a method of interpolation of seminormed spaces, called the truncated real-method, that generates the screened Besov spaces as interpolation spaces with respect to a Lebesgue space and a homogeneous Sobolev space.
\end{customthm}

The interpolation characterization of the screened Besov spaces leads to the following characterization.

\begin{customthm}{3}[Proved in Theorem
~\ref{fundamental decomposition of screened sobolev spaces}]\label{3}
For $s\in\p{0,1}$ and $1\le p,q\le\infty$, the screened Besov space $\tilde{B}^{s,p}_q\p{\R^n}$ is  equivalent to the sum of the inhomogeneous Besov space $B^{s,p}_q\p{\R^n}$ and the homogeneous Sobolev space $\dot{W}^{1,p}\p{\R^n}$.
\end{customthm}

The sum characterization from the previous theorem allows us to characterize when the subspace of compactly supported smooth functions is dense.

\begin{customthm}
{4}[Proved in Corollaries ~\ref{density of com} and~\ref{lack of density}]\label{4}
For $1\le p,q<\infty$ and $s\in\p{0,1}$ the set $C^\infty_c\p{\R^n}$ is dense in the screened Besov space $\tilde{B}^{s,p}_q\p{\R^n}$ if and only if $1<p$ or $2\le n$.
\end{customthm}

We next generalize~\eqref{tice's partial fourier characterization} by giving a Littlewood-Paley characterization of the screened spaces.

\begin{customthm}{5}[Proved in Corollary~\ref{general fourier char of screened spaces}]\label{5}
Let $1<p<\infty$, $1\le q\le\infty$, $s\in\p{0,1}$. For all functions $f\in\tilde{B}^{s,p}_q\p{\R^n}$ we have that $f$ is a tempered distribution and that the following equivalence holds:
\begin{equation}
    \sb{f}_{\tilde{B}^{s,p}_q}\asymp\bnorm{\bp{\textstyle{\sum_{j\in\Z\setminus\N}}\p{2^j\abs{\uppi_jf}}^2}^{1/2}}_{L^p}+\norm{\cb{2^{sj}\norm{\uppi_j f}_{L^p}}_{j\in\N}}_{\ell^q\p{\N}},
\end{equation}
where $\cb{\uppi_j}_{j\in\Z}$ are a family of dyadic localization operators, as in Definition~\ref{dyadic localization}.
\end{customthm}

Theorem \ref{5} follows from a somewhat more general result that provides a Littlewood-Paley characterization of the interpolant between two Riesz potential spaces, $\dot{H}^{r_i,p}\p{\R^n}$ for $i\in\{1,2\}$ (see Definition~\ref{riesz potential spaces}).

\begin{customthm}{6}[Proved in Theorem \ref{truncated interpolation of Riesz potential spaces}]\label{6}
Let $1<p<\infty$, $1\le q\le\infty$, $\al\in\p{0,1}$, $\sigma \in \R^+$, and $r,s\in\R$ with $r<s$.  Set $t=\p{1-\al}r+\al s$.  Then the interpolation space $\big(\dot{H}^{r,p}\p{\R^n},\dot{H}^{s,p}\p{\R^n}\big)^{\p{\sigma}}_{\al,q}$ is characterized by the Littlewood-Paley seminorm
\begin{equation}
    f \mapsto \bnorm{\bp{\textstyle{\sum_{j\in\Z\setminus\N}}\p{2^{sj}\abs{\uppi_j f}}^2}^{1/2}}_{L^p}+\snorm{\cb{2^{tj}\norm{\uppi_jf}_{L^p}}_{j\in\N}}_{\ell^q\p{\N;\R}}.
\end{equation}
\end{customthm}

The interesting feature of Theorems \ref{5} and \ref{6} is that the Littlewood-Paley characterization changes form between low frequencies and high frequencies.  For low frequencies, a Triebel-Lizorkin type of seminorm arises, but for high frequencies it is of Besov type.  Note also that the power $2^{sj}$ in the low frequencies is inherited from the second factor in the interpolation.  This explains why the low frequency Fourier multiplier in \eqref{tice's partial fourier characterization}, $\abs{\xi}^2$, matches that associated to $\dot{H}^1(\R^n)$.

Our final result concerns embedding and restriction (trace) results for these spaces.

\begin{customthm}{7}[Proved in Section \ref{sec:embeddings} and Theorem \ref{restriction theorem}]\label{7}
The screened Besov spaces enjoy various embeddings, and, provided that $n\ge 2$, $1<p<\infty$, $1\le q\le\infty$, and  $p^{-1}<s<1$, they admit well-defined restriction operators with continuous right-inverses.
\end{customthm}

Broadly speaking, our strategy for proving the above results is to take the analytical high road: these results are consequences of our development of a more general abstract theory.  We begin the paper, in Section~\ref{interpolation of seminormed spaces}, with the development of interpolation methods of seminormed spaces in the abstract. Recall that the $K$-method for Banach spaces takes a pair of spaces $X_0$ and $X_1$ and constructs their intermediate $s,q$-interpolation space, $\p{X_0,X_1}_{s,q}$, as the collection of all elements $x$ belonging to the sum of $X_0$ and $X_1$ for which the map $\R^+\ni t\mapsto t^{-s}K\p{t,x}\in\R$ belongs to $L^q\p{\R^+;\mu}$,  were $\mu$ is the Haar measure associated to the multiplicative group on $\R^+$ (see Section \ref{notation stuff}). We find that this $K$-method is not quite right to produce the screened spaces as interpolation spaces. However, it is nearly correct.  We need only consider a slight generalization to seminormed spaces and allow for a larger family of domains of integration. 

For a parameter $\sig\in(0,\infty]$ we study the `truncated' interpolation space $\p{Y_0,Y_1}_{s,q}^{\p{\sig}}$ with respect to seminormed spaces $Y_0$ and $Y_1$. The truncated spaces are characterized as the set of $y$ belonging to the sum of $Y_0$ and $Y_1$ for which the map $\p{0,\sig}\ni t\mapsto t^{-s}K\p{t,y}\in\R$ belongs to $L^q\p{\p{0,\sig},\mu}$. We find that for $\sig=\infty$ the seminormed interpolation mirrors that of the interpolation theory of Banach spaces, with only a few more subtleties regarding notions of compatibility. On the other hand, when $\sig<\infty$ the truncated method does give interpolation spaces; however it is interestingly asymmetric in the roles of $Y_0$ and $Y_1$. The upshot of studying these methods of abstract seminormed interpolation is that we obtain a general relationship between the methods for $\sig<\infty$ and $\sig=\infty$.  More precisely, in Theorem \ref{label sum characterization of the truncated method} we find an abstract sum characterization: for $\sig <\infty$ the truncated interpolation space $\p{Y_0,Y_1}^{\p{\sig}}_{s,q}$ is equivalent to the sum of $\p{Y_0,Y_1}^{\p{\infty}}_{s,q}$ and the second factor, $Y_1$.

Section~\ref{Homogeneous Sobolev and homogeneous Besov spaces} is a three-fold development of vital analytical tools utilized in the later study of screened Sobolev and screened Besov spaces. The inspiration for this section is the Littlewood-Paley theory in Chapter 6 of~\cite{MR3243734}, the applications of harmonic analysis to study smoothness in Chapter 1 of~\cite{MR3243741}, and the interpolation of Sobolev and Besov spaces in Chapter 6 of~\cite{MR0482275}. First, we define the homogeneous Sobolev spaces and the Riesz potential spaces. The latter is a two parameter space, $\dot{H}^{s,p}\p{\R^n}$,  for $s\in\R$ and $1<p<\infty$ (see Definition~\ref{riesz potential spaces}) where, roughly speaking, a tempered distribution $f$ belongs to $\dot{H}^{s,p}\p{\R^n}$ if $[\abs{\cdot}^s\hat{f}]^{\vee}$ defines a function in $L^p\p{\R^n}$. Note that this scale is intimately related to the homogeneous Sobolev spaces; however, we work directly with seminorms rather than quotient by polynomials to obtain a normed space.  We prove a frequency space characterization of $\dot{W}^{1,p}\p{\R^n}$ that says that the former space is essentially  equivalent to the Riesz potential space $\dot{H}^{1,p}\p{\R^n}$.  We then pair this with a Littlewood-Paley decomposition of the Riesz potential spaces to deduce a Littlewood-Paley characterization of the homogeneous Sobolev space $\dot{W}^{1,p}\p{\R^n}$.

Next, we study the $L^p$-modulus of continuity and its relationship with the $K$-functional on the sum of $L^p\p{\R^n}$ and $\dot{W}^{1,p}\p{\R^n}$. Having established this, we use the interpolation of seminormed spaces developed in Section~\ref{interpolation of seminormed spaces} to show that the homogeneous Besov spaces (see Definition~\ref{defintion of besov spaces}), $\dot{B}^{s,p}_q\p{\R^n}$, are generated via $\big(L^p\p{\R^n},\dot{W}^{1,p}\p{\R^n}\big)_{s,q}^{\p{\infty}}$ for $s\in\p{0,1}$ and $1\le q\le\infty$.

As a final development in Section~\ref{Homogeneous Sobolev and homogeneous Besov spaces}, we explore the homogeneous Besov-Lipschitz scale of spaces, ${_\wedge}\dot{B}^{s,p}_q\p{\R^n}$ with parameters $s\in\R$, $1<p<\infty$, and $1\le q\le\infty$ (see Definition~\ref{homogeneous besov-lipschitz spaces}). With the Littlewood-Paley decomposition of the Riesz-Potential spaces, we see that the theory of seminorm interpolation realizes the equivalence: ${_\wedge}\dot{B}^{r,p}_q\p{\R^n}=\big(\dot{H}^{s,p}\p{\R^n},\dot{H}^{t,p}\p{\R^n}\big)^{\p{\infty}}_{\al,q}$ for $r=\p{1-\al}s+\al t$. This interpolation result, supplemented with the interpolation characterization of the homogeneous Besov spaces, reveals a Littlewood-Paley characterization of the latter scale.

Section~\ref{Screened Sobolev and screened Besov spaces} synthesizes the abstract seminormed space interpolation of Section~\ref{interpolation of seminormed spaces} and the analysis of Section~\ref{Homogeneous Sobolev and homogeneous Besov spaces} to obtain a deeper understanding of the screened Sobolev spaces. First we generalize the scale of screened Sobolev spaces by defining the screened Besov spaces in Definition~\ref{screened besov spaces defn}. Having already developed the homogeneous Besov spaces and the connection between the $L^p$ modulus of continuity and the $K$-functional associated to the sum of $L^p\p{\R^n}$ and $\dot{W}^{1,p}\p{\R^n}$, the claims in Theorem~\ref{2} above are now immediate. Then the abstract sum characterization of the truncated interpolation method gives, with a little more work, Theorems~\ref{3} and~\ref{4}. We next apply the truncated interpolation method to general pairs of Riesz potential spaces and from this analysis we obtain the claims of Theorem~\ref{5}. Finally, we use the sum characterization of the screened Besov spaces to quickly read off some results on embeddings and traces.


\subsection{Conventions of notation}\label{notation stuff}
We record our conventions of notation used throughout this paper. The number sets $\N$, $\Z$, $\R$, and $\C$ are the natural numbers, integers, reals, and complex numbers, respectively. We assume that $0\in\N$ and write $\N^+=\N\setminus\cb{0}$ and $\R^+=\p{0,\infty}$.  In writing $\R^n$ we always assume $n \in \N^+$.

Throughout the paper we denote the field $\mathbb{K}\in\cb{\R,\C}$.  The spaces of rapidly decreasing and analytic functions taking values in $\mathbb{K}$ are denoted by $\mathscr{S}\p{\R^n;\mathbb{K}}$ and $C^\omega\p{\R^n;\mathbb{K}}$, respectively. The space of tempered distributions valued in $\mathbb{K}$ is denoted $\mathscr{S}^{\ast}\p{\R^n;\mathbb{K}}$. The Fourier transform is denoted either as $\hat{\cdot}$ or $\mathscr{F}$. For $0<\al\le1$, the homogeneous H\"older space (homogeneous Lipschitz space when $\al=1$) $\dot{C}^{0,\al}\p{\R^n;\mathbb{K}}$ is the space of functions $f:\R^n\to\mathbb{K}$ such that $\sb{f}_{\dot{C}^{0,\al}}=\sup\cb{\abs{f\p{x}-f\p{y}}/\abs{x-y}^{\al}\;:\;x,y\in\R^n,\;x\neq y} < \infty$.

We let $\mu$ denote the standard Haar measure with respect to the multiplicative structure on $\R^+$, i.e. $\mu\p{E}=\int_{\R^+}\chi_E\p{t}t^{-1}\;\m{d}t$ for Lebesgue measurable sets $E\subseteq\R^+$. The $n$-dimensional Lebesgue measures and $s$-dimensional Hausdorff measures are denoted $\mathfrak{L}^n$ and $\mathcal{H}^s$, respectively. Moreover we choose the normalization of $\mathcal{H}^s$ so that when $s=n$ we have $\mathcal{H}^n = \mathfrak{L}^n$. 

Finally,  whenever the expression $a \lesssim b$ appears in a proof of a result, it means that there is a constant $C\in\R^+$, depending only on the parameters quantified in the statement of the result such that $a\le Cb$.  We may sharpen this by occasionally writing the explicit dependence of the constant $C$ as a subscript on $\lesssim$, i.e. $a\lesssim_{s,p,q}b$. We write $a \asymp b$ to mean $a \lesssim b$ and $b \lesssim a$.


\section{Interpolation of seminormed spaces}\label{interpolation of seminormed spaces}
In this section we present two distinct methods of generating interpolation spaces intermediate to a pair of seminormed spaces satisfying certain compatibility conditions. Both generalize known methods of interpolating between couples of Banach spaces.  The first method is a seminorm generalization of the well-known `real method of interpolation' (see for instance, the paper~\cite{MR0243340}, Chapter 3 in~\cite{MR0482275}, or Chapter 1 in~\cite{MR503903,MR3753604}), and as such we refer to this method as the real method of interpolation of seminormed spaces.  The real seminorm method has its origins in the work of Gustavsson \cite{gustavsson}, and here we essentially follow his approach, with a few embellishments.  

The second method, which we call the truncated real method, generates spaces in a seemingly similar way to the real method; however, it is bizarrely asymmetric and generates larger spaces than the non-truncated method.  The truncated method has its origins in the work of Gomez and Milman~\cite{gomez_milman_1986}, who employed it to study the extreme parameter regime of Peetre's interpolation theory for nested Banach spaces, with the aim of proving certain estimates for singular integral operators.  A more thorough study of the interpolation properties of the limiting spaces in the nested case commenced with the paper of Cobos, Fern\'{a}ndez-Cabrera,  K\"{u}hn, and Ullrich~\cite{cobos_etal_2009}.  Recent work of Astashkin, Lykov, and Milman~\cite{astashkin_etal_2019} removed the nested assumption, considered a more general parameter regime, and uncovered a deep connection between extrapolation theory (see the book of Jawerth and Milman~\cite{MR1046185}) and the limiting case of the real method.  We were led to consider a seminorm version of this theory in studying the trace theory of homogeneous Sobolev spaces on certain unbounded domains.

The spaces obtained from the non-truncated method appear crucially at a few points in the truncated theory, so it is important for us to have a careful enumeration of their properties.  The technical report \cite{gustavsson} is not available in journals or online, so we have recorded a number of its results below and indicated how to obtain the proofs from the arguments used in the second method.

A concise review of relevant topological notions in seminormed spaces is presented in Appendix~\ref{seminorm topology}.  Throughout the following section all generic seminormed spaces are over a fixed common field - either real or complex.
\subsection{Topology of compatible couples}\label{topology of compatible couples}
We begin by exploring notions of compatibility of seminormed spaces. In this first subsection we consider what happens when two seminormed spaces are simultaneously contained within some larger vector space. We can then consider their sum and intersection and give each of those a seminorm in a natural way.

\begin{defn}[Compatibility of seminormed spaces]\label{admissable semi normed spaces}
Suppose that $\p{X_0,\sb{\cdot}_0}$ and  $\p{X_1,\sb{\cdot}_1}$ are seminormed spaces.
\begin{enumerate}
    \item We say that they are a strongly compatible pair if there exists a topological vector space $\p{Y,\tau}$ such that $X_i\emb Y$ for $i\in\cb{0,1}$, and $\mathfrak{A}\p{Y}=\mathfrak{A}\p{ X_0} \cup \mathfrak{A}\p{X_1}$, where the annihilator $\mathfrak{A}$ is defined in Definition \ref{defn of topological vector space}. 
    Note that, due to Proposition \ref{seminormed spaces are TVS}, the second condition implies that either $\mathfrak{A}\p{X_0}\subseteq\mathfrak{A}\p{X_1}$ or $\mathfrak{A}\p{X_1}\subseteq\mathfrak{A}\p{X_0}$.
    \item We say that $\p{X_0,\sb{\cdot}_0}$ and $\p{X_1,\sb{\cdot}_1}$ are a weakly compatible pair if there is a vector space $Y$ with $X_0,X_1\subseteq Y$. Note that every strongly compatible pair is automatically a weakly compatible pair.
    \item In the case that $X_0$ and $X_1$ are either a strong or weak compatible pair of seminormed spaces we form their sum and their intersection in the usual way:
    \begin{equation}
        \Sigma\p{X_0,X_1}=\cb{x\in Y\;:\;\exists\p{x_0,x_1}\in X_0\times X_1,\;x=x_0+x_1}\text{ and }
        \Delta\p{X_0,X_1}=X_0\cap X_1.
    \end{equation}
    We endow these spaces with the seminorms $\sb{\cdot}_\Sigma:\Sigma\p{X_0,X_1}\to\R$ and $\sb{\cdot}_{\Delta}:\Delta\p{X_0,X_1}\to\R$ defined by 
    \begin{equation}
         \sb{x}_\Sigma =\inf\cb{\sb{x_0}_0+\sb{x_1}_1\;:\;\p{x_0,x_1}\in X_0\times X_1,\;x=x_0+x_1}\text{ and }
        \sb{x}_\Delta = \max\cb{\sb{x}_0,\sb{x}_1}.
    \end{equation}
    Observe that we have the continuous embeddings  $\Delta\p{X_0,X_1}\emb X_i\emb\Sigma\p{X_0,X_1}$ for each $i\in\cb{0,1}$.
\end{enumerate}
\end{defn}

\begin{defn}[Intermediate and interpolation spaces]\label{intermediate and interpolation spaces}
Suppose that $\p{X_0,\sb{\cdot}_0}$ and  $\p{X_1,\sb{\cdot}_1}$ are  a weakly compatible couple of seminormed spaces.
\begin{enumerate}
    \item We say that a seminormed space $\p{Y,\sb{\cdot}}$ is intermediate with respect to the couple $\p{X_0,\sb{\cdot}_0}$, $\p{X_1,\sb{\cdot}_1}$ if it holds that
        $\Delta\p{X_0,X_1}\emb Y\emb\Sigma\p{X_0,X_1}$.
    \item Suppose that $\p{Y_0,\sb{\cdot}_0}$ and $\p{Y_1,\sb{\cdot}_1}$  are another weakly compatible couple of seminormed spaces, and that $\p{X,\sb{\cdot}_X}$ and $\p{Y,\sb{\cdot}_Y}$ are another pair seminormed spaces, with $X\subseteq\Sigma\p{X_0,X_1}$ and $Y\subseteq\Sigma\p{Y_0,Y_1}$. We say that $X$ and $Y$ are a pair of interpolation spaces if for every linear map $T:\Sigma\p{X_0,X_1}\to\Sigma\p{Y_0,Y_1}$ that is continuous with values in $Y_i$ when restricted to $X_i$, $i\in \{0,1\}$, it holds that $TX \subseteq Y$ and $T: X \to Y$ is continuous.
\end{enumerate}
\end{defn}

The definition of strong compatibility that we give ensures that the intersection of a compatible couple behaves well with respect to completeness.

\begin{prop}\label{completeness of sum and intersection}
Suppose that $\p{X_0,\sb{\cdot}_0}$ and $\p{X_1,\sb{\cdot}_1}$ are a weakly compatible pair of semi-Banach spaces.  Then the space $\p{\Sigma\p{X_0,X_1},\sb{\cdot}_{\Sigma}}$ is semi-Banach.  If we assume that the pair is strongly compatible, then $\p{\Delta\p{X_0,X_1},\sb{\cdot}_{\Delta}}$ is semi-Banach.
\end{prop}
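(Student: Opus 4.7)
My strategy is the standard telescoping argument, carefully adapted to the seminorm setting. Given a $\sb{\cdot}_\Sigma$-Cauchy sequence $\cb{z_n}\subseteq\Sigma\p{X_0,X_1}$, I would first pass to a subsequence (relabelled $z_n$) satisfying $\sb{z_{n+1}-z_n}_\Sigma<2^{-n}$, and then invoke the definition of the infimum seminorm to choose, for each $n$, a decomposition $z_{n+1}-z_n=a_n+b_n$ with $a_n\in X_0$, $b_n\in X_1$, and $\sb{a_n}_0+\sb{b_n}_1<2^{-n}$. The partial sums $A_N=\sum_{n=1}^N a_n$ are then $\sb{\cdot}_0$-Cauchy in $X_0$ and $B_N=\sum_{n=1}^N b_n$ are $\sb{\cdot}_1$-Cauchy in $X_1$, so semi-Banach completeness of each factor produces limits $a\in X_0$ and $b\in X_1$. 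Since $\sb{\cdot}_\Sigma\le\sb{\cdot}_i$ on $X_i$, the telescoping identity $z_{N+1}=z_1+A_N+B_N$ will yield $z_{N+1}\to z_1+a+b$ in $\Sigma\p{X_0,X_1}$, and a routine Cauchy-criterion argument promotes this subsequential convergence to convergence of the original sequence. Only weak compatibility is used in this portion.

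\textbf{Plan for $\Delta$.} For this half I would take $\cb{x_n}$ to be $\sb{\cdot}_\Delta$-Cauchy; since $\sb{\cdot}_\Delta=\max\cb{\sb{\cdot}_0,\sb{\cdot}_1}$, the sequence is simultaneously Cauchy in $X_0$ and in $X_1$, so semi-Banach completeness produces $x\in X_0$ and $y\in X_1$ with $\sb{x_n-x}_0\to 0$ and $\sb{x_n-y}_1\to 0$. What remains is to identify these two limits as a single element of $\Delta\p{X_0,X_1}$, and this is where strong compatibility must be invoked. My plan is to embed into the ambient TVS $\p{Y,\tau}$, in which $x_n$ converges to both $x$ and $y$; the standard uniqueness-of-limits-up-to-the-annihilator fact for TVS then forces $x-y\in\mathfrak{A}\p{Y}$. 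Strong compatibility gives $\mathfrak{A}\p{Y}=\mathfrak{A}\p{X_0}\cup\mathfrak{A}\p{X_1}$; in the case $x-y\in\mathfrak{A}\p{X_0}$ we have $y=x-\p{x-y}\in X_0$, placing $y\in\Delta\p{X_0,X_1}$, and the estimate $\sb{x_n-y}_0\le\sb{x_n-x}_0+\sb{x-y}_0=\sb{x_n-x}_0\to 0$ combined with $\sb{x_n-y}_1\to 0$ yields $\sb{x_n-y}_\Delta\to 0$. The symmetric case $x-y\in\mathfrak{A}\p{X_1}$ is handled identically with the roles of $x$ and $y$ exchanged.

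\textbf{Expected obstacle.} The sum case is essentially bookkeeping; the conceptual weight of the proposition sits in the intersection. Without an ambient TVS that identifies elements of the two factors in a mutually compatible fashion, the independently-produced limits $x\in X_0$ and $y\in X_1$ need not agree even modulo annihilators, and $\Delta\p{X_0,X_1}$ can fail to be complete. The hypothesis $\mathfrak{A}\p{Y}=\mathfrak{A}\p{X_0}\cup\mathfrak{A}\p{X_1}$ is precisely calibrated to absorb the discrepancy $x-y$ into one factor or the other, and my argument turns entirely on this point.
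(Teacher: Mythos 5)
Your proposal is correct and follows essentially the same route as the paper: the intersection argument (limits in each factor, passage to the ambient TVS from strong compatibility, absorbing the discrepancy via $\mathfrak{A}\p{Y}=\mathfrak{A}\p{X_0}\cup\mathfrak{A}\p{X_1}$, then the triangle-inequality estimate) is identical, and your telescoping/Cauchy-subsequence treatment of $\Sigma\p{X_0,X_1}$ is just the sequential reformulation of the series criterion of Lemma~\ref{characterizations of completeness in seminormed spaces} that the paper uses directly. No gaps.
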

\begin{proof}
If $\cb{x_k}_{k=0}^\infty\subset\Delta\p{X_0,X_1}$ is Cauchy, then the continuous embedding $\Delta\p{X_0,X_1}\emb X_i$ for $i\in\cb{0,1}$ paired with completeness of $X_i$ implies that there are $\p{a,b}\in X_0\times X_1$ such that
\begin{equation}\label{convergence 1}
    \lim_{k\to\infty}\p{\sb{x_k-a}_0+\sb{x_k-b}_1}=0.
\end{equation}
By definition of strongly admissible pair, there is some topological vector space $Y$ such that $X_0,X_1\emb Y$ and $\mathfrak{A}\p{Y}=\mathfrak{A}\p{X_0}\cup\mathfrak{A}\p{X_1}$. Hence $x_k\to a,b$ as $k\to\infty$ in $Y$ as $k\to\infty$. Thus, by Proposition~\ref{kernel measures failure of a space to be hausdorff}, $a-b\in\mathfrak{A}\p{Y}=\mathfrak{A}\p{X_0}\cup\mathfrak{A}\p{X_1}$. Let's handle the case that $a-b\in\mathfrak{A}\p{X_0}$ (the other case is similar). Then, $b=a+\p{b-a}\in X_0+\mathfrak{A}\p{X_0}\subseteq X_0$, and hence $b\in\Delta\p{X_0,X_1}$. Moreover for any $k \in \N$ we have the bound
\begin{equation}\label{bound 1}
    \sb{x_k-b}_{\Delta}\le\sb{x_k-b}_0+\sb{x_k-b}_1\le\sb{b-a}_0+\sb{x_k-a}_0+\sb{x_k-b}_1=\sb{x_k-a}_0+\sb{x_k-b}_1.
\end{equation}
Thus~\eqref{bound 1} paired with~\eqref{convergence 1} shows that $x_k\to b$ in $\Delta\p{X_0,X_1}$, showing this space to be complete.

To prove completeness of $\Sigma\p{X_0,X_1}$, we use Lemma~\ref{characterizations of completeness in seminormed spaces}. Suppose that $\cb{x_k}_{k=0}^\infty\subset\Sigma\p{X_0,X_1}$ is a sequence such that $\sum_{k=0}^\infty\sb{x_k}_{\Sigma}<\infty$. For each $k \in \N$, we can find, by the definition of the seminorm on $\Sigma\p{X_0,X_1}$, a pair $\p{a_k,b_k}\in X_0\times X_1$ for which $a_k+b_k=x_k$ and $\sb{a_k}_0+\sb{b_k}_1\le 2^{-k}+\sb{x_k}_{\Sigma}$. Summing over $k$ in the inequality reveals that $
    -2+\sum_{k=0}^\infty\sb{a_k}_0+\sum_{k=0}^\infty\sb{b_k}_1\le\sum_{k=0}^\infty\sb{x_k}_{\Sigma}<\infty$.
Consequently, there are $\p{a,b}\in X_0\times X_1$ for which 
    $\lim_{K\to\infty}\ssb{-a+\sum_{k=0}^Ka_k}_0=0$ and $\lim_{K\to\infty}\ssb{-b+\sum_{k=0}^Kb_k}_1=0$.
Set $x=a+b\in\Sigma\p{X_0,X_1}$. For any $K \in \N$ we may then estimate $
    \ssb{-x+\sum_{k=0}^Kx_k}_{\Sigma}\le\ssb{-a+\sum_{k=0}^Ka_k}_0+\ssb{-b+\sum_{k=0}^Kb_k}_1$.
As $K\to\infty$, the previous right hand side vanishes, completing the proof.
\end{proof}

We can also say something about the annihilators of the sum and intersection.

\begin{prop}[Annihilators of sum and intersection]\label{kernel of the sum and intersection}
Suppose that $\p{X_0,\sb{\cdot}_0}$ and $\p{X_1,\sb{\cdot}_1}$ are a pair of weakly admissible seminormed spaces. Then $\mathfrak{A}\p{\Delta\p{X_0,X_1}} = \Delta\p{\mathfrak{A}\p{X_0},\mathfrak{A}\p{X_1}}$.  Also, we have the inclusion    $\Sigma\p{\mathfrak{A}\p{X_0},\mathfrak{A}\p{X_1}} \subseteq \mathfrak{A}\p{\Sigma\p{X_0,X_1}}$, and if if we additionally assume that $\p{X_0,\sb{\cdot}_0}$ and $\p{X_1,\sb{\cdot}_1}$ are strongly compatible or $\Delta\p{X_0,X_1}$ is semi-Banach, then equality holds.
\end{prop}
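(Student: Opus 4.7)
The plan is to prove the three claims in sequence, with the reverse inclusion in the third assertion being the crux.

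For the equality $\mathfrak{A}\p{\Delta\p{X_0,X_1}} = \Delta\p{\mathfrak{A}\p{X_0},\mathfrak{A}\p{X_1}}$, I would simply unwind the definition: since $\sb{x}_\Delta = \max\cb{\sb{x}_0, \sb{x}_1}$, an element $x \in X_0 \cap X_1$ annihilates under $\sb{\cdot}_\Delta$ if and only if it annihilates under both $\sb{\cdot}_0$ and $\sb{\cdot}_1$ simultaneously. For the unconditional inclusion $\Sigma\p{\mathfrak{A}\p{X_0},\mathfrak{A}\p{X_1}} \subseteq \mathfrak{A}\p{\Sigma\p{X_0,X_1}}$, I would take $x = a + b$ with $\sb{a}_0 = \sb{b}_1 = 0$ and use the infimum definition of $\sb{\cdot}_\Sigma$ to obtain $\sb{x}_\Sigma \le \sb{a}_0 + \sb{b}_1 = 0$.

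The interesting content is the reverse inclusion in the sum case, under either extra hypothesis. In both scenarios I would begin by fixing $x \in \mathfrak{A}\p{\Sigma\p{X_0,X_1}}$ and extracting, from the infimum definition of $\sb{\cdot}_\Sigma$, a sequence of decompositions $x = a_k + b_k$ with $a_k \in X_0$, $b_k \in X_1$, and $\sb{a_k}_0 + \sb{b_k}_1 \to 0$. In the strongly compatible case, continuity of the embeddings $X_i \emb Y$ into the ambient TVS forces $a_k, b_k \to 0$ in $Y$, so the constant sequence equal to $x = a_k + b_k$ is itself a null sequence in $Y$. Combining Proposition~\ref{kernel measures failure of a space to be hausdorff} (identifying $\mathfrak{A}\p{Y}$ with $\overline{\cb{0}}$) with the compatibility hypothesis $\mathfrak{A}\p{Y} = \mathfrak{A}\p{X_0} \cup \mathfrak{A}\p{X_1}$ places $x$ in $\mathfrak{A}\p{X_0} \cup \mathfrak{A}\p{X_1} \subseteq \Sigma\p{\mathfrak{A}\p{X_0}, \mathfrak{A}\p{X_1}}$, where in either alternative we simply pair the nonzero summand with $0$ from the complementary annihilator.

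Under the semi-Banach hypothesis on $\Delta\p{X_0, X_1}$, the strategy is more delicate. I would form the differences $c_k := a_k - a_0 = b_0 - b_k$, noting that the middle equality inside $\Sigma\p{X_0,X_1}$ is exactly what forces $c_k \in X_0 \cap X_1 = \Delta\p{X_0, X_1}$. The triangle inequalities $\sb{c_k - c_m}_0 \le \sb{a_k}_0 + \sb{a_m}_0$ and $\sb{c_k - c_m}_1 \le \sb{b_k}_1 + \sb{b_m}_1$ show that $\cb{c_k}$ is Cauchy in the intersection, so by hypothesis there is a limit $c$. Setting $a := a_0 + c \in X_0$ and $b := b_0 - c \in X_1$, I would verify directly that $\sb{a}_0 \le \sb{a - a_k}_0 + \sb{a_k}_0 \to 0$, so $a \in \mathfrak{A}\p{X_0}$, and symmetrically $b \in \mathfrak{A}\p{X_1}$; since $a + b = a_0 + b_0 = x$, this exhibits the decomposition needed. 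The main obstacle is precisely this last maneuver: the key insight is that the two formally distinct difference sequences $\cb{a_k - a_0}$ and $\cb{b_0 - b_k}$ agree as elements of $\Sigma\p{X_0, X_1}$ and therefore live in $\Delta\p{X_0, X_1}$, which is what allows us to extract a single correction $c$ that regularizes the $X_0$ and $X_1$ components simultaneously.
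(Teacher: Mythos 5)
Your proposal is correct and follows essentially the same route as the paper: the trivial unwinding for the intersection, the infimum bound for the easy inclusion, the ambient-TVS argument in the strongly compatible case, and in the semi-Banach case the same difference sequence ($c_k = a_k - a_0 = b_0 - b_k$, the paper's $w_n$) whose $\Delta$-limit corrects the initial decomposition. No gaps to report.
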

\begin{proof}
The first assertion is trivial, so we only prove the second. If $x\in\Sigma\p{\mathfrak{A}\p{X_0},\mathfrak{A}\p{X_1}}$, then there are $\p{x_0,x_1}\in\mathfrak{A}\p{X_0}\times\mathfrak{A}\p{X_1}$ such that $x=x_0+x_1$. Hence $0\le\sb{x}_{\Sigma}\le\sb{x_0}_0+\sb{x_1}_1=0$, and the inclusion is shown.

Suppose first that the pair of seminormed spaces are strongly admissible. Thus we may find $\p{Y,\tau}$ a topological vector space such that $\forall\;i\in\cb{0,1}$ we have $X_i\emb Y$, and $\mathfrak{A}\p{Y}=\mathfrak{A}\p{X_0}\cup\mathfrak{A}\p{X_1}$. Thus, if $x\in\mathfrak{A}\p{\Sigma\p{X_0,X_1}}$, then we may find sequences $\cb{y_m}_{m\in\N}\subset X_0$ and $\cb{z_m}_{m\in\N}\subset X_1$ such that $x=y_m+z_m$ for all $m\in\N$, and $\sb{y_m}_0+\sb{z_m}_1\le2^{-m}$ for $m\in\N$. The continuous embeddings $X_0,X_1\emb Y$ imply that $y_m,z_m\to0$ in $Y$ as $m\to\infty$, and hence $x\in\mathfrak{A}\p{Y}\subseteq\Sigma\p{\mathfrak{A}\p{X_0},\mathfrak{A}\p{X_1}}$.

Suppose next that $\Delta\p{X_0,X_1}$ is semi-Banach, in which case we employ an argument from \cite{gustavsson}.  Let $x\in\mathfrak{A}\p{\Sigma\p{X_0,X_1}}$.  Then for each $n\in\N$, we can find a decomposition of $x$ with the following property:
\begin{equation}
    x=y_n+z_n,\;\p{y_n,z_n}\in X_0\times X_1,\;\sb{y_n}_0+\sb{z_n}_1\le2^{-n}.
\end{equation}
Observe that for $n\in\N$ we have $w_n=y_n-y_0=z_0-z_n\in\Delta\p{X_0,X_1}$. Hence, for $m,n\in\N$ we may estimate:
\begin{equation}
    \sb{w_n-w_m}_{\Delta}=\sb{\p{y_n-y_0}-\p{y_m-y_0}}_{\Delta}=\max\cb{\sb{y_n-y_m}_0,\sb{z_n-z_m}_1}\le 2^{-m}+2^{-n}.
\end{equation}
Then $\cb{w_n}_{n\in\N}\subseteq\Delta\p{X_0,X_1}$ is Cauchy. Since $\Delta\p{X_0,X_1}$ is semi-Banach by hypothesis, there is $w\in\Delta\p{X_0,X_1}$ such that $w_n\to w$ as $n\to\infty$ in $\Delta\p{X_0,X_1}$. Now we observe that $y_0+w\in X_0$, $z_0-w\in X_1$, and $\p{y_0+w}+\p{z_0-w}=x$. For $n\in\N$ it holds that
\begin{equation}
    \begin{cases}
    \sb{y_0+w}_0\le\sb{y_0+w_n}_0+\sb{w-w_n}_{\Delta}\le 2^{-n}+\sb{w-w_n}_{\Delta}\\
    \sb{z_0-w}_1\le\sb{z_0-w_n}_1+\sb{w_n-w}_{\Delta}\le 2^{-n}+\sb{w-w_n}_{\Delta}
    \end{cases}\to0\quad\text{as}\quad n\to\infty.
\end{equation}
Hence $x\in\Sigma\p{\mathfrak{A}\p{X_0},\mathfrak{A}\p{X_1}}$, as desired.
\end{proof}

This result tells us that one of the downsides to having a weak, but not strong, compatible pair is that the annihilator of the sum may grow larger than one desires.

\subsection{The $K$-methods}
We define the $K$-functional in the same way as the normed space theory.
\begin{defn}[$K$-functional]\label{K functional for semi normed spaces}
Given $\p{X_0,\sb{\cdot}_0}$ and $\p{X_1,\sb{\cdot}_1}$, a weakly compatible pair of seminormed spaces (see Definition~\ref{admissable semi normed spaces}), we define the functional $\mathscr{K}:\R^+\times\Sigma\p{X_0,X_1}\to\R$ via
\begin{equation}
    \mathscr{K}\p{t,x}=\inf\cb{\sb{x_0}_0+t\sb{x_1}_1\;:\;\p{x_0,x_1}\in X_0\times X_1,\;x=x_0+x_1}.
\end{equation}
\end{defn}

The following proposition contains the most basic properties of the $K$-functional from Definition~\ref{K functional for semi normed spaces}.

\begin{prop}\label{basic properties of K for seminormed}
Given a weakly compatible pair of seminormed spaces $\p{X_0,\sb{\cdot}_0}$ and $\p{X_1,\sb{\cdot}_1}$, the following hold:
\begin{enumerate}
    \item $\forall\;t\in\R^+$, $\mathscr{K}\p{t,\cdot}$ is a seminorm on $\Sigma\p{X_0,X_1}$, and $\mathscr{K}\p{1,\cdot}=\sb{\cdot}_{\Sigma}$.
    \item For all $x\in\Sigma\p{X_0,X_1}$ and for all $t,s\in\R^+$ we have the estimates
    \begin{equation}\label{equivalence}
        \min\cb{1,t/s}\mathscr{K}\p{s,x}\le\mathscr{K}\p{t,x}\le\max\cb{1,t/s}\mathscr{K}\p{s,x}.
    \end{equation}
    \item $\forall\;x\in\Sigma\p{X_0,X_1}$, the mapping $\R^+\ni t\mapsto\mathscr{K}\p{t,x}\in\R$ is concave, increasing, and measurable.
\end{enumerate}
\end{prop}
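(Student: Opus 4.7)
All three assertions are essentially classical facts about the $K$-functional and transfer verbatim from the normed setting, since the definition of $\mathscr{K}(t,\cdot)$ only uses the seminorms $[\cdot]_0$, $[\cdot]_1$ and the infimum structure. I would organize the proof as three short verifications.

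For item (1), I would fix $t\in\R^+$ and check the seminorm axioms for $\mathscr{K}(t,\cdot)$ by manipulating decompositions. Absolute homogeneity: if $x=x_0+x_1$ with $(x_0,x_1)\in X_0\times X_1$ and $\alpha$ is a scalar, then $\alpha x=(\alpha x_0)+(\alpha x_1)$ is a decomposition of $\alpha x$, and conversely any decomposition of $\alpha x$ (for $\alpha\neq0$) arises from a decomposition of $x$ by scaling; taking the infimum gives $\mathscr{K}(t,\alpha x)=|\alpha|\mathscr{K}(t,x)$ (with the $\alpha=0$ case trivial). Subadditivity: given $x=x_0+x_1$ and $y=y_0+y_1$, sum to obtain a decomposition $x+y=(x_0+y_0)+(x_1+y_1)$ whose cost is $\le [x_0]_0+[y_0]_0+t([x_1]_1+[y_1]_1)$, then take the infimum separately over the decompositions of $x$ and $y$. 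The identity $\mathscr{K}(1,\cdot)=[\cdot]_\Sigma$ is literally Definition~\ref{admissable semi normed spaces}(3).

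For item (2), I would establish the upper bound first. For any decomposition $x=x_0+x_1$, I exploit the bounds $1\le\max\{1,t/s\}$ and $t=(t/s)s\le\max\{1,t/s\}\cdot s$ to estimate
\begin{equation}
[x_0]_0+t[x_1]_1\le\max\{1,t/s\}\bigl([x_0]_0+s[x_1]_1\bigr),
\end{equation}
and then take the infimum to get $\mathscr{K}(t,x)\le\max\{1,t/s\}\mathscr{K}(s,x)$. The lower bound in \eqref{equivalence} follows by swapping $t$ and $s$: the inequality just proved gives $\mathscr{K}(s,x)\le\max\{1,s/t\}\mathscr{K}(t,x)$, and rearranging (and noting $1/\max\{1,s/t\}=\min\{1,t/s\}$) yields the desired lower bound.

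For item (3), monotonicity is immediate since for each fixed decomposition the affine function $t\mapsto[x_0]_0+t[x_1]_1$ is nondecreasing, and the infimum of nondecreasing functions is nondecreasing. Concavity follows because $\mathscr{K}(\cdot,x)$ is the pointwise infimum of the family of affine functions $\{t\mapsto[x_0]_0+t[x_1]_1\}_{x=x_0+x_1}$, and any infimum of affine (hence concave) functions is concave. Measurability is then automatic: a concave function on the open interval $\R^+$ is continuous, so in particular Borel measurable.

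I do not anticipate any real obstacle here: the only subtlety is making sure that I never invoke definiteness of the seminorms (so that, e.g., elements of the annihilators cause no problem), and this is handled simply by working throughout with infima of sums of nonnegative quantities. The argument is completely parallel to the Banach-space case treated in, e.g., \cite{MR0482275}.
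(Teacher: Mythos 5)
Your proof is correct and is exactly the routine verification the paper has in mind — its proof of this proposition is simply the remark that all three items are immediate from the definition of $\mathscr{K}$, and your decomposition-by-decomposition arguments (scaling/summing decompositions for item (1), the elementary bound $[x_0]_0+t[x_1]_1\le\max\{1,t/s\}([x_0]_0+s[x_1]_1)$ for item (2), and the infimum-of-affine-functions observation for item (3)) are the standard details being elided. Nothing further is needed.
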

\begin{proof}
These three items are immediate from the definition of $\mathscr{K}$.
\end{proof}

Using the $K$-functional, we can define the following families of extended seminorms on the sum.

\begin{defn}[$K$-methods' interpolation spaces]\label{interpolation spaces}
Let $\p{X_0,\sb{\cdot}_0}$ and $\p{X_1,\sb{\cdot}_1}$ be a weakly compatible pair of seminormed spaces, $s\in\p{0,1}$, $1\le q\le\infty$, $\sig\in(0,\infty]$.   We define $\sb{\cdot}_{s,q}^{\p{\sig}}:\Sigma\p{X_0,X_1}\to\sb{0,\infty}$ via
\begin{equation}
    \sb{x}_{s,q}^{\p{\sig}}
    =
    \bp{\int_{\p{0,\sig}}\p{t^{-s}\mathscr{K}\p{t,x}}^qt^{-1}\;\m{d}t}^{1/q}\text{ if }q<\infty,
    \text{ and }\sup\cb{t^{-s}\mathscr{K}\p{t,x}\;:\;t\in\p{0,\sig}}\text{ when }q=\infty.
\end{equation}
We define the $K$-methods' interpolation spaces to be the sets $\p{X_0,X_1}_{s,q}^{\p{\sig}}=\{x\in\Sigma\p{X_0,X_1}\;:\;\sb{x}_{s,q}^{\p{\sig}}<\infty\}$, which are a vector spaces thanks to Proposition \ref{basic properties of K for seminormed} and Minkowski's inequality on $L^q(\p{0,\sig},\mu)$ ($\mu$ is as in Section~\ref{notation stuff}).  Moreover, we equip the space $\p{X_0,X_1}_{s,q}^{\p{\sig}}$ with the seminorm $\sb{\cdot}_{s,q}^{\p{\sig}}$.

In the case that $\sig=\infty$, we often write $\p{X_0,X_1}_{s,q}$ and $\sb{\cdot}_{s,q}$ in place of $\p{X_0,X_1}_{s,q}^{\p{\infty}}$ and $\sb{\cdot}_{s,q}^{\p{\infty}}$. This is in agreement with the existing notation for the usual $K$-method on normed vector spaces. In the case that $\sig<\infty$, we also refer to this method of generating spaces as the truncated method of interpolation.

\end{defn}
\subsection{Basic properties}
We now study basic properties of the $K$-methods of interpolation. In particular, we will prove that they are intermediate and interpolation spaces in the sense of Definition~\ref{intermediate and interpolation spaces}, then we study various inclusion, embedding, and completeness properties, and finally we exhibit equivalent discrete seminorms. Along the way, we will see that for fixed $s$ and $q$, the $K$-methods' interpolation spaces are only topologically distinct for $\sig$ finite and $\sig$ infinite. 

\begin{prop}[$K$-method spaces are intermediate]\label{truncated interpolation spaces are intermediate}
Suppose that $\p{X_0,\sb{\cdot}_0}$ and $\p{X_1,\sb{\cdot}_1}$ are a weakly compatible pair of seminormed spaces. Then for all $s\in\p{0,1}$, $\sig\in(0,\infty]$, and $1\le q\le\infty$, we have continuous embeddings: $
    \Delta\p{X_0,X_1}\emb\p{X_0,X_1}^{\p{\sig}}_{s,q}\emb\Sigma\p{X_0,X_1}$.
\end{prop}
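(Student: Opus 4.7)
The plan is to verify the two embeddings directly by bounding the $K$-functional from above and below. Both embeddings ultimately reduce to the integrability of the function $t \mapsto t^{-s}\min\{1,t\}$ with respect to the measure $t^{-1}\,\mathrm{d}t$ on suitable subintervals of $(0,\sigma)$, together with part (2) of Proposition~\ref{basic properties of K for seminormed}.

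\textbf{Step 1: the embedding $\Delta(X_0,X_1) \emb (X_0,X_1)^{(\sigma)}_{s,q}$.} Given $x \in \Delta(X_0,X_1)$, use the trivial decompositions $x = x + 0$ and $x = 0 + x$ in the definition of $\mathscr{K}$ to conclude $\mathscr{K}(t,x) \le \min\{[x]_0,\, t[x]_1\} \le \min\{1,t\}\,[x]_{\Delta}$ for every $t \in \R^+$. Substituting this pointwise bound into the formula for $[x]^{(\sigma)}_{s,q}$ yields
\begin{equation}
\bsb{x}^{(\sigma)}_{s,q} \le \bsb{x}_{\Delta}\,C(s,q,\sigma), \qquad C(s,q,\sigma) := \bp{\int_{(0,\sigma)} \p{t^{-s}\min\{1,t\}}^q \,\f{\mathrm{d}t}{t}}^{1/q}
\end{equation}
(with the obvious sup modification when $q=\infty$). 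To check that $C(s,q,\sigma)$ is finite, split the integral at $\min\{1,\sigma\}$: near the origin the integrand behaves like $t^{(1-s)q-1}$, which is integrable because $s<1$ and $q\ge 1$; on $(1,\sigma)$ (when $\sigma>1$, and only genuinely needed when $\sigma=\infty$) it behaves like $t^{-sq-1}$, which is integrable because $s>0$. The $q=\infty$ case is immediate since $\sup_{t\in (0,\sigma)} t^{-s}\min\{1,t\} < \infty$.

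\textbf{Step 2: the embedding $(X_0,X_1)^{(\sigma)}_{s,q} \emb \Sigma(X_0,X_1)$.} Apply item (2) of Proposition~\ref{basic properties of K for seminormed} with the auxiliary parameter equal to $1$ to obtain $\mathscr{K}(t,x) \ge \min\{1,t\}\,\mathscr{K}(1,x) = \min\{1,t\}\,[x]_{\Sigma}$ for all $t\in\R^+$. Fix any $t_0 \in (0,\sigma)$ with $t_0 \le 1$ (for instance $t_0 = \min\{1,\sigma/2\}$); then
\begin{equation}
\bsb{x}^{(\sigma)}_{s,q} \ge \bp{\int_{(t_0/2,\,t_0)} \p{t^{-s}\min\{1,t\}}^q\,\f{\mathrm{d}t}{t}}^{1/q}\,\bsb{x}_{\Sigma} = c(s,q,\sigma)\,\bsb{x}_{\Sigma},
\end{equation}
where $c(s,q,\sigma) > 0$ because the integrand is strictly positive on the chosen interval. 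Again the $q=\infty$ case follows by simply taking the supremum bound $\sup_{t\in(0,\sigma)}t^{-s}\min\{1,t\} > 0$.

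\textbf{Assessment.} There is no real obstacle here; the statement is a direct unpacking of the definitions plus the two-sided control of $\mathscr{K}(t,x)$ by $\min\{1,t\}[x]_\Delta$ (above) and $\min\{1,t\}[x]_\Sigma$ (below) from Proposition~\ref{basic properties of K for seminormed}. The only mild care required is the bookkeeping between the cases $\sigma$ finite vs.\ $\sigma = \infty$ and $q$ finite vs.\ $q=\infty$, so that the integrals used as constants are simultaneously finite (Step 1) and strictly positive (Step 2); this is handled uniformly by isolating the behaviour near $0$ and near $\infty$ against the exponent $s\in(0,1)$.
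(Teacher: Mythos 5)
Your proposal is correct and follows essentially the same route as the paper: bound $\mathscr{K}(t,x)$ above by $\min\{1,t\}\sb{x}_{\Delta}$ for the first embedding and below by $\min\{1,t\}\sb{x}_{\Sigma}$ (via Proposition~\ref{basic properties of K for seminormed}) for the second, then integrate against $t^{-sq-1}$ over $(0,\sigma)$. The only cosmetic differences are that you treat $\sigma=\infty$ explicitly (the paper defers that case to the classical normed theory) and that you restrict the lower-bound integral to a subinterval, whereas the paper uses the full integral constant $C_{s,q,\sigma}$; both yield the same conclusion.
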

\begin{proof}
We only prove the case that $\sig<\infty$, as the case $\sig=\infty$ is proved in the exact same way as the real method of interpolation of normed vector spaces.

First we consider the case when $q=\infty$. Then for any $x\in\Delta\p{X_0,X_1}$ and any $t\in\p{0,\sig}$, we have the estimate $\mathscr{K}\p{t,x}\le\min\cb{1,t}\sb{x}_{\Delta}$. Hence $\sb{x}^{\p{\sig}}_{s,\infty}\le\min\cb{1,\sig^{1-s}}\sb{x}_{\Delta}$. If now $x\in\p{X_0,X_1}^{\p{\sig}}_{s,\infty}$, we use the fact that for $t\in\p{0,\sig}$ $\mathscr{K}\p{t,x}\ge\min\cb{1,t}\sb{x}_{\Sigma}$. Hence $\sb{x}^{\p{\sig}}_{s,\infty}\ge\min\cb{1,\sig^{1-s}}\sb{x}_{\Sigma}$.

Next, we handle $1\le q<\infty$. If $x\in\Delta\p{X_0,X_1}$ we can use the same estimate as before:
\begin{equation}\label{helium}
    \sb{x}^{\p{\sig}}_{s,q}=\bp{\int_{\p{0,\sig}}\p{t^{-s}\mathscr{K}\p{t,x}}^qt^{-1}\;\m{d}t}^{1/q} \le\sb{x}_{\Delta}\bp{\int_{\p{0,\sig}}\min\cb{t^{-sq},t^{q\p{1-s}}}t^{-1}\;\m{d}t}^{1/q}=C_{s,q,\sig}\sb{x}_{\Delta}.
\end{equation}
And if $x\in\p{X_0,X_1}^{\p{\sig}}_{s,q}$ we obtain:
\begin{equation}
    \sb{x}^{\p{\sig}}_{s,q}=\bp{\int_{\p{0,\sig}}\p{t^{-s}\mathscr{K}\p{t,x}}^qt^{-1}\;\m{d}t}^{1/q} \ge\sb{x}_{\Sigma}\bp{\int_{\p{0,\sig}}\min\cb{t^{-sq},t^{q\p{1-s}}}t^{-1}\;\m{d}t}^{1/q}=C_{s,q,\sig}\sb{x}_{\Sigma}.
\end{equation}
This completes the proof.
\end{proof}


Next, we show that the $K$-methods' interpolation spaces preserve completeness.
\begin{prop}\label{completeness of truncated spaces}
Suppose that $\p{X_0,\sb{\cdot}_0}$ and $\p{X_1,\sb{\cdot}_1}$ are a weakly compatible pair of semi-Banach spaces. Then for all $\sig\in (0,\infty]$, $s\in\p{0,1}$, and $1\le q\le\infty$ the seminormed space $\big(\p{X_0,X_1}^{\p{\sig}}_{s,q},\sb{\cdot}^{\p{\sig}}_{s,q}\big)$ is semi-Banach.
\end{prop}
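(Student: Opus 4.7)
The plan is to verify completeness through the summability criterion supplied by Lemma~\ref{characterizations of completeness in seminormed spaces}: it suffices to show that every sequence $\{x_k\}_{k\in\N}\subseteq(X_0,X_1)^{(\sig)}_{s,q}$ with $C:=\sum_{k=0}^{\infty}[x_k]^{(\sig)}_{s,q}<\infty$ has convergent partial sums in the interpolation seminorm. First I would produce a candidate limit. By the continuous embedding $(X_0,X_1)^{(\sig)}_{s,q}\hookrightarrow \Sigma(X_0,X_1)$ from Proposition~\ref{truncated interpolation spaces are intermediate}, the series $\sum_k x_k$ is absolutely summable in $\Sigma(X_0,X_1)$, which is semi-Banach by Proposition~\ref{completeness of sum and intersection}; hence there is $x\in\Sigma(X_0,X_1)$ with $S_N:=\sum_{k=0}^{N}x_k\to x$ in $[\cdot]_{\Sigma}$.

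Next I would show the partial sums are Cauchy in $[\cdot]^{(\sig)}_{s,q}$ with explicit control. Since $\mathscr{K}(t,\cdot)$ is a seminorm on $\Sigma(X_0,X_1)$ for each fixed $t$ (Proposition~\ref{basic properties of K for seminormed}), subadditivity gives, for any $M>N$,
\begin{equation}
    \mathscr{K}\p{t,S_M-S_N}\le\sum_{k=N+1}^{M}\mathscr{K}\p{t,x_k},
\end{equation}
and applying Minkowski's inequality in $L^q((0,\sig),\mu)$ yields $[S_M-S_N]^{(\sig)}_{s,q}\le\sum_{k=N+1}^{M}[x_k]^{(\sig)}_{s,q}$. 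This provides a uniform-in-$M$ bound that vanishes as $N\to\infty$.

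Now I would upgrade this Cauchy estimate to convergence to the specific limit $x$. The key observation is that $\mathscr{K}(t,\cdot)$ is continuous on $\Sigma(X_0,X_1)$: by Proposition~\ref{basic properties of K for seminormed}, $|\mathscr{K}(t,y)-\mathscr{K}(t,z)|\le\mathscr{K}(t,y-z)\le\max\{1,t\}[y-z]_{\Sigma}$. Since $S_M-S_N\to x-S_N$ in $[\cdot]_{\Sigma}$ as $M\to\infty$, it follows that $\mathscr{K}(t,x-S_N)=\lim_{M\to\infty}\mathscr{K}(t,S_M-S_N)$ pointwise in $t\in(0,\sig)$. For $1\le q<\infty$ I apply Fatou's lemma against the measure $t^{-1}\,\m{d}t$ to obtain
\begin{equation}
    [x-S_N]^{(\sig)}_{s,q}\le\liminf_{M\to\infty}[S_M-S_N]^{(\sig)}_{s,q}\le\sum_{k=N+1}^{\infty}[x_k]^{(\sig)}_{s,q},
\end{equation}
and for $q=\infty$ I take the supremum in $t$ of the pointwise limit and bound term-by-term to reach the analogous estimate. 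In either case the right side tends to $0$ as $N\to\infty$, which simultaneously certifies that $x\in(X_0,X_1)^{(\sig)}_{s,q}$ (via the triangle inequality $[x]^{(\sig)}_{s,q}\le[S_N]^{(\sig)}_{s,q}+[x-S_N]^{(\sig)}_{s,q}$) and that $S_N\to x$ in the interpolation seminorm.

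The argument is essentially uniform in $\sig\in(0,\infty]$, so no case split is needed beyond the trivial $q=\infty$ versus $q<\infty$ distinction. The only subtle point -- and the only place where care is required -- is the use of continuity of $\mathscr{K}(t,\cdot)$ in passing from the Cauchy estimate to Fatou's lemma, since one cannot interchange $\liminf_M$ with the $L^q$ seminorm directly and must first secure pointwise convergence of the $K$-functional. This single input, combined with Propositions~\ref{completeness of sum and intersection}, \ref{basic properties of K for seminormed}, and~\ref{truncated interpolation spaces are intermediate}, yields the result.
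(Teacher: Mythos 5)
Your proof is correct and follows essentially the same route as the paper: the series criterion of Lemma~\ref{characterizations of completeness in seminormed spaces}, the limit produced in $\Sigma\p{X_0,X_1}$ via Propositions~\ref{truncated interpolation spaces are intermediate} and~\ref{completeness of sum and intersection}, and subadditivity of $\mathscr{K}\p{t,\cdot}$ to estimate the tails. The only difference is the device for the final limit interchange: you pass $M\to\infty$ pointwise in $t$ and invoke Fatou's lemma, whereas the paper truncates the integration domain to $\p{2^{-b},\sig}$, controls the remainder by $C_b\sb{-x+\sum_{k\le M}x_k}_{\Sigma}$, and then lets $b\to\infty$ by monotone convergence; both are sound, and your Fatou step is, if anything, slightly more economical.
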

\begin{proof}
We again only prove the case for $\sig<\infty$, as the case for $\sig=\infty$ follows with a similar argument. We verify completeness through the series characterization in Lemma~\ref{characterizations of completeness in seminormed spaces}. Let $\cb{x_k}_{k=0}^\infty\subset\p{X_0,X_1}_{s,q}^{\p{\sig}}$ be a sequence such that $\sum_{k=0}^\infty\sb{x_k}^{\p{\sig}}_{s,q}<\infty$. By Proposition~\ref{truncated interpolation spaces are intermediate} it follows that $\sum_{k=0}^\infty\sb{x_k}_{\Sigma}<\infty$. Then, Proposition~\ref{completeness of sum and intersection} implies that there exists $x\in\Sigma\p{X_0,X_1}$ such that $\lim_{K\to\infty}\ssb{-x+\sum_{k=0}^Kx_k}_{\Sigma}=0$.  For $K,M \in \N$ with $M > K$ we may use Proposition~\ref{basic properties of K for seminormed} to bound 
\begin{equation}\label{this is a math equation that will indeed be referenced later in this paper}
\mathscr{K}\p{t,-x+\textstyle{\sum_{k=0}^K} x_k}\le  \mathscr{K}\p{t,-x + \textstyle{\sum_{k=0}^M} x_k} +  \textstyle{\sum_{k=K+1}^M} \mathscr{K}\p{t,x_k},    
\end{equation}
and since $\mathscr{K}(t,\cdot)$ is an equivalent seminorm on $\Sigma\p{X_0,X_1}$ we may send $M \to \infty$ in ~\eqref{this is a math equation that will indeed be referenced later in this paper} and then multiply by $t^{-s}$ to deduce that $t^{-s}\mathscr{K}\big(t,-x+\textstyle{\sum_{k=0}^K} x_k\big)\le\sum_{k=K+1}^\infty t^{-s}\mathscr{K}\p{t,x_k}$,
for all $K \in \N$ and all $t\in\p{0,\sig}$.  In the case that $q=\infty$ we deduce immediately that $[-x+\sum_{k=0}^Kx_k]^{\p{\sig}}_{s,\infty}\le\sum_{k=K+1}^\infty\sb{x_k}_{s,\infty}^{\p{\sig}}$.
Hence $x\in\p{X_0,X_1}^{\p{\sig}}_{s,\infty}$. Since the right-hand-side tends to zero as $K\to\infty$, completeness is established in this case. 

We now consider the case $1\le q<\infty$.   For $b\in\N^+$ with $2^{-b}<\sig$ we integrate \eqref{this is a math equation that will indeed be referenced later in this paper}, apply Minkowski's inequality, and employ the bound
$\mathscr{K}\p{t,\cdot}\le\max\cb{1,t^{-1}}\sb{\cdot}_{\Sigma}$ (see Proposition~\ref{basic properties of K for seminormed}) to estimate:
\begin{multline}\label{a good estimate 2}
    \bp{\int_{\p{2^{-b},\sig}}\p{t^{-s}\mathscr{K}\p{t,-x+\textstyle{\sum_{k=0}^K} x_k}}^qt^{-1}\;\m{d}t}^{1/q}
    \le\bp{\int_{\p{2^{-b},\sig}}\p{t^{-s}\mathscr{K}\p{t,-x+\textstyle{\sum_{k=0}^{M}} x_k}}^qt^{-1}\;\m{d}t}^{1/q} 
    \\+\sum_{k=K+1}^{M} \bp{\int_{\p{2^{-b},\sig}}\p{t^{-s}\mathscr{K}\p{t, x_k}}^q\f{1}{t}\;\m{d}t}^{1/q}
    \le C_b\sb{-x+\textstyle{\sum_{k=0}^{M}x_k}}_{\Sigma}+\sum_{k=K+1}^\infty\sb{x_k}^{\p{\sig}}_{s,q}.
\end{multline}
The number $C_b=\big(\int_{\p{2^{-b},\sig}}\max\cb{t^{-qs},t^{q\p{1-s}}}t^{-1}\;\m{d}t\big)^{1/q}$ is finite, so we may send $M \to \infty$  in~\eqref{a good estimate 2} and use the convergence in $\Sigma\p{X_0,X_1}$ to see that
\begin{equation}\label{a good equation 3}
    \bp{\int_{\p{2^{-b},\sig}}\p{t^{-s}\mathscr{K}\p{t,-x+\textstyle{\sum_{k=0}^K} x_k}}^qt^{-1}\;\m{d}t}^{1/q}\le\sum_{k=K+1}^\infty\sb{x_k}^{\p{\sig}}_{s,q}.
\end{equation}
Letting $b\to\infty$ and using the monotone convergence theorem shows that~\eqref{a good equation 3} continues to hold with $0$ in place of $2^{-b}$. Hence $x\in\p{X_0,X_1}^{\p{\sig}}_{s,q}$. Finally, sending $K\to\infty$ shows that  $[-x+\sum_{k=0}^Kx_k]^{\p{\sig}}_{s,q} \to 0$, and we conclude that the space $\p{X_0,X_1}^{\p{\sig}}_{s,q}$ is complete.
\end{proof}
We now examine the inclusion relations among the interpolation and truncated interpolation spaces.
\begin{prop}[Inclusions and embeddings of $K$-methods' spaces]\label{inclusion relations of truncated interpolation spaces}
Suppose that $\p{X_0,\sb{\cdot}_0}$ and $\p{X_1,\sb{\cdot}_1}$ are a weakly compatible pair of seminormed spaces, $s\in\p{0,1}$, $1\le p,q\le\infty$, and $\sig\in(0,\infty],\;\rho\in\R^+$. The following hold:
\begin{enumerate}
    \item  We have the continuous embedding $\p{X_0,X_1}_{s,q} = \p{X_0,X_1}_{s,q}^{\p{\infty}} \hookrightarrow  \p{X_0,X_1}_{s,q}^{\p{\rho}}$.  Moreover, for all $x \in \p{X_0,X_1}_{s,q}$ we have that $
        \sb{x}_{s,q}^{\p{\rho}} \le         \sb{x}_{s,q}$.
    \item     If $\sig<\infty$, then we have the equality of spaces, $\p{X_0,X_1}^{\p{\sig}}_{s,q}=\p{X_0,X_1}_{s,q}^{\p{\rho}}$, with equivalence of seminorms. In fact, $\forall\;x\in\Sigma\p{X_0,X_1}$ it holds
            $\sb{x}^{\p{\sig}}_{s,q}\le\max\cb{\rho^s\sig^{-s},\sig^{1-s}\rho^{s-1}}\sb{x}^{\p{\rho}}_{s,q}$.
    \item If $\sig<\infty$ and $s<t$, then we have the continuous embedding $\p{X_0,X_1}^{\p{\sig}}_{t,q}\emb\p{X_0,X_1}^{\p{\sig}}_{s,q}$, with the following estimate  for all $x\in\p{X_0,X_1}^{\p{\sig}}_{t,q}$: $\sb{x}^{\p{\sig}}_{s,q}\le\sig^{t-s}\sb{x}_{t,q}^{\p{\sig}}$.
    \item If $\sig<\infty$, then we have the continuous embedding $X_1\emb\p{X_0,X_1}^{\p{\sig}}_{s,q}$, with the following estimate  for all $x\in X_1$: $\sb{x}^{\p{\sig}}_{s,q}\le D_{s,q,\sig}\sb{x}_1$ where $D_{s,q,\sig}=
        \sig^{1-s}q^{-1/q}\p{1-s}^{-1/q
        }$ when $q<\infty$ and $D_{s,q,\infty}=\sig^{1-s}$.
    \item If $p<q$, then we have the continuous embedding $\p{X_0,X_1}^{\p{\sig}}_{s,p}\emb\p{X_0,X_1}^{\p{\sig}}_{s,q}$.
\end{enumerate}
\end{prop}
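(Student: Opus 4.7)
The five claims are independent, and I would treat them in order of increasing difficulty: (1), (3), (4), (5), and finally (2). Items (1), (3), and (4) are essentially mechanical. For (1), since $(0,\rho) \subseteq (0,\infty)$, monotonicity of the integral (or supremum) defining the seminorms yields the inequality at once. For (3), the pointwise estimate $u^{-s} = u^{t-s} u^{-t} \le \sigma^{t-s} u^{-t}$ on $(0,\sigma)$, inserted into the definition, gives the bound. For (4), the trivial decomposition $x = 0 + x$ for $x \in X_1$ yields $\mathscr{K}(t, x) \le t[x]_1$, and the constant $D_{s,q,\sigma}$ emerges from computing the $L^q((0,\sigma),\mu)$-norm of $t \mapsto t^{1-s}$.

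For item (5), the plan is to exploit the multiplicative monotonicity encoded in Proposition~\ref{basic properties of K for seminormed}(2): both $\mathscr{K}(\cdot, x)$ and $t \mapsto \mathscr{K}(t, x)/t$ are essentially monotone, so $g(t) := t^{-s}\mathscr{K}(t, x)$ is comparable to $g(t_0)$ for $t$ in a fixed dyadic neighborhood of $t_0$. Integrating $g(u)^p u^{-1}\,\mathrm{d}u$ over this neighborhood produces the estimate $\|g\|_{L^\infty((0,\sigma),\mu)} \lesssim_{s,p} \|g\|_{L^p((0,\sigma),\mu)}$. For $p < q < \infty$, interpolation via $\|g\|_{L^q}^q \le \|g\|_{L^\infty}^{q-p}\|g\|_{L^p}^p$ delivers the embedding, while the $q = \infty$ case follows directly from the $L^\infty$ bound.

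Item (2) is the main technical step. Without loss of generality one may assume $\rho \le \sigma$, since the reverse case is trivial by restriction of the integration domain. The integral over $(0,\rho)$ is bounded by $[x]_{s,q}^{(\rho)}$ directly, so the essential work lies in the tail $(\rho, \sigma)$. On this tail, Proposition~\ref{basic properties of K for seminormed}(2) supplies $\mathscr{K}(t, x) \le (t/\rho)\mathscr{K}(\rho, x)$; to complete the bound I would apply the opposite inequality $\mathscr{K}(u, x) \ge (u/\rho)\mathscr{K}(\rho, x)$ for $u \in (0, \rho)$, integrate its $q$-th power, and solve for $\mathscr{K}(\rho, x) \lesssim_{s,q} \rho^s [x]_{s,q}^{(\rho)}$. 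Plugging back and evaluating $\int_\rho^\sigma t^{(1-s)q - 1}\,\mathrm{d}t$ then yields the tail estimate, and summing with the $(0,\rho)$ contribution gives the asserted form of the constant.

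The main obstacle is item (5): the inclusion $L^p \hookrightarrow L^q$ for $p < q$ generally fails on $((0,\sigma), \mu)$, so this is not a measure-theoretic fact but a consequence of the particular multiplicative monotonicity of the $K$-functional. A secondary subtlety appears in item (2), where Proposition~\ref{basic properties of K for seminormed}(2) must be applied twice in opposite directions, first to bound $\mathscr{K}(t, x)$ by $\mathscr{K}(\rho, x)$ on the tail and then in reverse to bound $\mathscr{K}(\rho, x)$ in terms of the norm itself; the interplay of these inequalities is what permits the clean comparison of the two truncated seminorms.
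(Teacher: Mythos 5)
Your items (1), (3), and (4) coincide with the paper's proof (restriction of the integration domain, the pointwise bound $\tau^{-s}\le\sig^{t-s}\tau^{-t}$, and the trivial decomposition $x=0+x$ with $\mathscr{K}(t,x)\le t\sb{x}_1$). Item (5) is also essentially the paper's argument: both proofs use Proposition~\ref{basic properties of K for seminormed} to dominate $\tau^{-s}\mathscr{K}(\tau,x)$ pointwise by a constant times the $L^p$-seminorm and then apply $\lv g\rv_{L^q}\le\lv g\rv_{L^\infty}^{1-p/q}\lv g\rv_{L^p}^{p/q}$; the only difference is that you integrate over a dyadic neighborhood $(\tau/2,\tau)$, getting a constant independent of $s$, while the paper integrates over all of $(0,\tau)$, getting $((1-s)p)^{-1/p}$. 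Where you genuinely diverge is item (2). The paper proves it in one line by the change of variables $t\mapsto\sig t/\rho$ together with $\mathscr{K}(\sig t/\rho,x)\le\max\{1,\sig/\rho\}\mathscr{K}(t,x)$, which works for any relative size of $\rho$ and $\sig$ and yields exactly the stated constant $\max\{\rho^s\sig^{-s},\sig^{1-s}\rho^{s-1}\}$. Your splitting of $(0,\sig)$ at $\rho$, with the tail controlled via $\mathscr{K}(t,x)\le(t/\rho)\mathscr{K}(\rho,x)$ and the auxiliary bound $\mathscr{K}(\rho,x)\le((1-s)q)^{1/q}\rho^s\sb{x}^{\p{\rho}}_{s,q}$, is correct and does establish the equality of spaces with equivalence of seminorms; but after adding the head term you obtain the constant $\big(1+(\sig^{1-s}\rho^{s-1})^q\big)^{1/q}$, which exceeds the sharp constant asserted in the ``In fact'' clause by a factor of at most $2^{1/q}$. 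So your route proves the proposition's substantive content, while the paper's change-of-variables argument buys the exact constant with less work; if you want the literal statement, replace the splitting by that substitution.
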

\begin{proof}
For the first four items we only prove the case for $1\le q<\infty$, as the case for $q=\infty$ is proved analogously. The first item follows trivially from the definitions.  Given $x\in\p{X_0,X_1}^{\p{\sig}}_{s,q}$, we estimate via a change of variables and Proposition~\ref{basic properties of K for seminormed}:
    \begin{equation}
        \sb{x}_{s,q}^{\p{\sig}}=\bp{\rho^{qs}\sig^{-qs}\int_{\p{0,\rho}}\p{t^{-s}\mathscr{K}\p{\sig t/\rho,x}}^{q}t^{-1}\;\m{d}t}^{1/q}\le\max\cb{\rho^s\sig^{-s},\sig^{1-s}\rho^{s-1}}\sb{x}^{\p{\rho}}_{s,q}.
    \end{equation}
This proves the second item.   Next, for $x\in\p{X_0,X_1}^{\p{\sig}}_{t,q}$ we bound
    \begin{equation}
        \sb{x}^{\p{\sig}}_{s,q}=\bp{\int_{\p{0,\sig}}\p{\tau^{-s}\mathscr{K}\p{\tau,x}}^q\tau^{-1}\;\m{d}\tau}^{1/q}\le\sig^{t-s}\bp{\int_{\p{0,\sig}}\p{\tau^{-t}\mathscr{K}\p{\tau,x}}^q\tau^{-1}\;\m{d}\tau}^{1/q}=\sig^{t-s}\sb{x}^{\p{\sig}}_{t,q},
    \end{equation}
which proves the third item.  If $x\in X_1$, then $x=0+x\in\Sigma\p{X_0,X_1}$ is a decomposition, and so for $t\in\p{0,\sig}$ we have $\mathscr{K}\p{t,x}\le t\sb{x}_1$. Thus if $1\le q<\infty$, then $
        \sb{x}^{\p{\sig}}_{s,q}=\big(\int_{\p{0,\sig}}\p{t^{-s}\mathscr{K}\p{t,x}}^qt^{-1}\;\m{d}t\big)^{1/q}\le\sb{x}_1\p{\int_{\p{0,\sig}}t^{q\p{1-s}-1}\;\m{d}t}^{1/q}$,
and the fourth item is proved.

We will only prove the fifth item in the case that $\sig<\infty$, as the case $\sig=\infty$ follows similarly.  Let $x\in\p{X_0,X_1}^{\p{\sig}}_{s,q}$.  We first consider $q=\infty$.  For $t,\tau\in\p{0,\sig}$ we may use Proposition~\ref{basic properties of K for seminormed} to bound $t^{-s} \mathscr{K}\p{\tau,x}\le \max\cb{1,\tau t^{-1}} t^{-s}\mathscr{K}\p{t,x}$.
In turn,
\begin{multline}\label{more lables are the coolest ever}
\sb{x}^{\p{\sig}}_{s,p}
\ge \mathscr{K}\p{\tau,x}\bp{\int_{\p{0,\sig}}\p{t^{-s}\min\cb{1,t\tau^{-1}}}^pt^{-1}\;\m{d}t}^{1/p}
\\\ge \mathscr{K}\p{\tau,x}\bp{\int_{\p{0,\tau}}\p{t^{1-s}\tau^{-1}}^pt^{-1}\;\m{d}t}^{1/p} = \tau^{-s}\mathscr{K}\p{\tau,x}((1-s)p)^{-1/p}.  
\end{multline}
Upon taking the supremum in $\tau\in\p{0,\sig}$, we deduce that $\sb{x}^{\p{\sig}}_{s,\infty}\le (p\p{1-s})^{1/p} \sb{x}^{\p{\sig}}_{s,p}$.   On the other hand, if $1\le p< q<\infty$, then we can use estimate~\eqref{more lables are the coolest ever} to bound
    \begin{equation}\label{hydrogen}
        \sb{x}^{\p{\sig}}_{s,q}\le\big(\sb{x}_{s,\infty}^{\p{\sig}}\big)^{\f{q-p}{q}}\big(\sb{x}_{s,p}^{\p{\sig}}\big)^{\f{p}{q}}
        \le\big(p\p{1-s})^{1/p}\sb{x}^{\p{\sig}}_{s,p}\big)^{\f{q-p}{q}}\big(\sb{x}_{s,p}^{\p{\sig}}\big)^{\f{p}{q}}
        =\big(p\p{1-s}\big)^{\f{q-p}{pq}}\sb{x}_{s,p}^{\p{\sig}}.
    \end{equation}
The fifth item is proved.

\end{proof}

The next theorem shows that the spaces $\p{X_0,X_1}_{s,q}^{\p{\sig}}$ and $\p{X_0,X_1}_{s,q}$ are interpolation spaces in the sense of Definition~\ref{intermediate and interpolation spaces}.

\begin{thm}\label{truncated and bounded linear mappings}
Suppose that $\big(X_0,\sb{\cdot}_{X_0}\big)$,  $\big(X_1,\sb{\cdot}_{X_1}\big)$ and $\big(Y_0,\sb{\cdot}_{Y_0}\big)$,  $\big(Y_1,\sb{\cdot}_{Y_1}\big)$ are two pairs of weakly compatible seminormed spaces. Suppose that  $T:\Sigma\p{X_0,X_1}\to\Sigma\p{Y_0,Y_1}$ is a linear mapping with the following property: for $i\in\cb{0,1}$ there exist $c_i\in\R^+$ such that for all $x\in X_i$ we have $Tx_i\in Y_i$ and $\sb{Tx}_{Y_i}\le c_i\sb{x}_{X_i}$.  Then for all $s\in\p{0,1}$, $\sig \in (0,\infty]$, and $1\le q\le\infty$ we have that $T\p{X_0,X_1}_{s,q}^{\p{\sig}}\subseteq\p{Y_0,Y_1}^{\p{\sig}}_{s,q}$; moreover, for  $x\in\p{X_0,X_1}^{\p{\sig}}_{s,q}$ we have the estimate $\sb{Tx}^{\p{\sig}}_{s,q}\le c_0^{1-s}c_1^s\sb{x}^{(\sig{c_1}/{c_0})}_{s,q}$.
\end{thm}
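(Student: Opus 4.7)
The plan is to establish a simple pointwise scaling inequality between the $K$-functional on $\Sigma(Y_0,Y_1)$ composed with $T$ and the $K$-functional on $\Sigma(X_0,X_1)$, and then to use a multiplicative change of variables in the defining $L^q(\mu)$-integral. Because $\mu$ is the Haar measure on $\R^+$, the dilation invariance of $\mathrm{d}t/t$ is precisely what will account for the shifted endpoint $\sigma c_1/c_0$ appearing in the target estimate.

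First I would fix $x \in \Sigma(X_0,X_1)$ and $t \in \R^+$ and argue as follows. For any admissible decomposition $x = x_0 + x_1$ with $(x_0,x_1) \in X_0 \times X_1$, linearity gives the decomposition $Tx = Tx_0 + Tx_1 \in \Sigma(Y_0,Y_1)$, and the mapping hypothesis yields
\begin{equation}
[Tx_0]_{Y_0} + t[Tx_1]_{Y_1} \le c_0[x_0]_{X_0} + tc_1[x_1]_{X_1} = c_0 \bigl([x_0]_{X_0} + (tc_1/c_0)[x_1]_{X_1}\bigr).
\end{equation}
Taking the infimum over all such decompositions produces the key scaling bound
\begin{equation}\label{plan:scaling}
\mathscr{K}_{Y_0,Y_1}(t, Tx) \le c_0\, \mathscr{K}_{X_0,X_1}(tc_1/c_0, x),
\end{equation}
valid for every $t \in \R^+$ and every $x \in \Sigma(X_0,X_1)$.

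Next I would plug \eqref{plan:scaling} into the definition of $[Tx]_{s,q}^{(\sigma)}$ and apply the substitution $u = tc_1/c_0$. The measure $\mathrm{d}t/t$ is invariant under this dilation, the domain $(0,\sigma)$ transforms into $(0,\sigma c_1/c_0)$, and $t^{-s} = (c_1/c_0)^s u^{-s}$, so in the case $1 \le q < \infty$ one computes
\begin{equation}
[Tx]_{s,q}^{(\sigma)} \le c_0 \left(\int_{(0,\sigma)} \bigl(t^{-s}\mathscr{K}_{X_0,X_1}(tc_1/c_0,x)\bigr)^q \frac{\mathrm{d}t}{t}\right)^{1/q} = c_0 (c_1/c_0)^s \left(\int_{(0,\sigma c_1/c_0)} \bigl(u^{-s}\mathscr{K}_{X_0,X_1}(u,x)\bigr)^q \frac{\mathrm{d}u}{u}\right)^{1/q},
\end{equation}
which equals $c_0^{1-s} c_1^s [x]_{s,q}^{(\sigma c_1/c_0)}$. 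The case $q = \infty$ is identical with the integral replaced by a supremum. The inclusion $T(X_0,X_1)_{s,q}^{(\sigma)} \subseteq (Y_0,Y_1)_{s,q}^{(\sigma)}$ is then immediate from this estimate.

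I expect no serious obstacle here: the entire argument reduces to the observation \eqref{plan:scaling} (a routine unwinding of the definition of $\mathscr{K}$ and of the bounded mapping hypotheses on $T$) together with the dilation invariance of the Haar measure $\mu$ on $(\R^+,\cdot)$. The only mild subtlety is that the truncation endpoint $\sigma$ genuinely shifts to $\sigma c_1/c_0$ rather than staying at $\sigma$, which is why the right-hand side of the conclusion involves $[x]_{s,q}^{(\sigma c_1/c_0)}$ and not $[x]_{s,q}^{(\sigma)}$; this asymmetry is a hallmark feature of the truncated method already noted in the introduction and is consistent with item (2) of Proposition~\ref{inclusion relations of truncated interpolation spaces} in the case $\sigma < \infty$.
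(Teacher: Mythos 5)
Your proposal is correct and follows essentially the same route as the paper's proof: the pointwise scaling bound $\mathscr{K}\p{t,Tx}\le c_0\mathscr{K}\p{tc_1/c_0,x}$ obtained by testing decompositions of $x$, followed by the multiplicative change of variables in the $L^q\p{\mu}$ seminorm, which shifts the truncation endpoint to $\sig c_1/c_0$. The only point worth making explicit is that the inclusion $T\p{X_0,X_1}^{\p{\sig}}_{s,q}\subseteq\p{Y_0,Y_1}^{\p{\sig}}_{s,q}$ uses that $\sb{x}^{\p{\sig c_1/c_0}}_{s,q}<\infty$ when $\sb{x}^{\p{\sig}}_{s,q}<\infty$, which is exactly item (2) of Proposition~\ref{inclusion relations of truncated interpolation spaces} for $\sig<\infty$ (and trivial for $\sig=\infty$), as you note at the end.
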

\begin{proof}
We only present the proof for $\sigma <\infty$, as the proof with $\sigma =\infty$ follows similarly.  Let $x\in\p{X_0,X_1}^{\p{\sig}}_{s,q}$. Proposition~\ref{truncated interpolation spaces are intermediate} implies that $x\in\Sigma\p{X_0,X_1}$. Let $\p{x_0,x_1}\in X_0\times X_1$ be a decomposition of $x$, i.e. $x=x_0+x_1$. Then for $t\in\p{0,\sig}$ we may bound $\mathscr{K}\p{t,Tx}\le c_0\sb{x_0}_0+c_1t\sb{x_1}_1$. Taking the infimium over all such decompositions and multiplying by $t^{-s}$ yields the bound $t^{-s}\mathscr{K}\p{t,Tx}\le c_0t^{-s}\mathscr{K}\p{t{c_1}{c_0}^{-1},x}$. In the case $q = \infty$ we take the supremum over $t\in\p{0,\sig}$, and in the case $q< \infty$ we take the $q^{\m{th}}$ power, integrate, and employ a change of variables; in either case we arrive at the bound: $
\sb{Tx}_{s,q}^{\p{\sig}}\le c_0^{1-s}c_1^s\sb{x}_{s,q}^{(\sig{c_1/c_0})}$.
\end{proof}
We can also quantify the annihilators of the $K$-methods' interpolation spaces.
\begin{prop}[Annihilators]\label{kernels prop}
Let $\p{X_0,\sb{\cdot}_0}$ and $\p{X_1,\sb{\cdot}_1}$ be a pair of weakly compatible seminormed spaces.
Then the following hold for $s\in\p{0,1}$, $1\le q\le\infty$, and $\sig\in(0,\infty]$:
\begin{enumerate}
    \item $\mathfrak{A}\p{\p{X_0,X_1}_{s,q}^{\p{\sig}}}=\mathfrak{A}\p{\Sigma\p{X_0,X_1}}\supseteq\Sigma\p{\mathfrak{A}\p{X_0},\mathfrak{A}\p{X_1}}$.
    \item If strong compatibility holds (see Definition~\ref{admissable semi normed spaces}) or $\Delta\p{X_0,X_1}$ is semi-Banach, then the latter inclusion is an equality.
\end{enumerate}
\end{prop}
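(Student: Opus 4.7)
The plan is to derive item (1) as two short inclusions and then obtain item (2) as an immediate combination of item (1) with the annihilator identity for sums already recorded in Proposition~\ref{kernel of the sum and intersection}. The three key inputs are the continuous embedding $\p{X_0,X_1}^{\p{\sig}}_{s,q}\emb\Sigma\p{X_0,X_1}$ from Proposition~\ref{truncated interpolation spaces are intermediate}, the comparison $\mathscr{K}\p{t,x}\le\max\cb{1,t}\mathscr{K}\p{1,x}$ together with the identity $\mathscr{K}\p{1,\cdot}=\sb{\cdot}_\Sigma$ from Proposition~\ref{basic properties of K for seminormed}, and Proposition~\ref{kernel of the sum and intersection}.

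For item (1), first I would handle the forward inclusion $\mathfrak{A}\p{\p{X_0,X_1}^{\p{\sig}}_{s,q}}\subseteq\mathfrak{A}\p{\Sigma\p{X_0,X_1}}$: continuity of the intermediate embedding yields a bound of the form $\sb{x}_\Sigma\lesssim\sb{x}^{\p{\sig}}_{s,q}$, so vanishing of the interpolation seminorm forces $\sb{x}_\Sigma=0$. For the reverse inclusion, I would take $x\in\mathfrak{A}\p{\Sigma\p{X_0,X_1}}$ and apply the cited $K$-comparison to deduce $\mathscr{K}\p{t,x}\le\max\cb{1,t}\sb{x}_\Sigma=0$ for every $t\in\R^+$. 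Hence the integrand defining $\sb{x}^{\p{\sig}}_{s,q}$ vanishes identically, which delivers both $x\in\p{X_0,X_1}^{\p{\sig}}_{s,q}$ and $\sb{x}^{\p{\sig}}_{s,q}=0$, as desired. The remaining inclusion $\Sigma\p{\mathfrak{A}\p{X_0},\mathfrak{A}\p{X_1}}\subseteq\mathfrak{A}\p{\Sigma\p{X_0,X_1}}$ stated in item (1) is precisely the first half of Proposition~\ref{kernel of the sum and intersection} and needs no new argument.

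For item (2), Proposition~\ref{kernel of the sum and intersection} already states that either the strong compatibility hypothesis or the semi-Banach property of $\Delta\p{X_0,X_1}$ upgrades the inclusion $\Sigma\p{\mathfrak{A}\p{X_0},\mathfrak{A}\p{X_1}}\subseteq\mathfrak{A}\p{\Sigma\p{X_0,X_1}}$ to an equality. Chaining this with the identity $\mathfrak{A}\p{\p{X_0,X_1}^{\p{\sig}}_{s,q}}=\mathfrak{A}\p{\Sigma\p{X_0,X_1}}$ produced in item (1) completes the proof.

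There is essentially no technical obstacle here; the claim is really an exercise in bookkeeping once the right earlier tools are in hand. The only mild subtlety worth flagging is that $\mathfrak{A}\p{\p{X_0,X_1}^{\p{\sig}}_{s,q}}$ is by definition a subset of the interpolation space, so when converting an element $x\in\mathfrak{A}\p{\Sigma\p{X_0,X_1}}$ into an element of that annihilator one must simultaneously certify membership in $\p{X_0,X_1}^{\p{\sig}}_{s,q}$ and the vanishing of the seminorm there; the $K$-functional computation above does both at once, which is why no separate argument is required.
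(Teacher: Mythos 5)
Your proposal is correct and follows essentially the same route as the paper, which simply cites Propositions~\ref{basic properties of K for seminormed}, \ref{truncated interpolation spaces are intermediate}, and \ref{kernel of the sum and intersection} for exactly the two inclusions and the equality upgrade you spell out. The only difference is that you write out the short $K$-functional computation that the paper leaves implicit.
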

\begin{proof}
The first item follows from Propositions  \ref{basic properties of K for seminormed}, \ref{truncated interpolation spaces are intermediate}, and \ref{inclusion relations of truncated interpolation spaces}. The second item follows from Proposition~\ref{kernel of the sum and intersection}.
\end{proof}

Finally, we can characterize these spaces with discrete seminorms.

\begin{prop}[Discrete seminorms]\label{discrete seminorm on truncated interpolation spaces}
Suppose that $\p{X_0,\sb{\cdot}_0}$ and $\p{X_1,\sb{\cdot}_1}$ are a weakly compatible pair of seminormed spaces, $\sig\in\R^+$, $s\in\p{0,1}$, $1 < r < \infty$, and $1\le q\le\infty$. The following hold:
\begin{enumerate}
    \item For $x\in\Sigma\p{X_0,X_1}$ we have that $x\in\p{X_0,X_1}_{s,q}^{\p{\sig}}$ if and only if $\cb{r^{sk}\mathscr{K}\p{\sig r^{-k},x}}_{k\in\N} \in \ell^q\p{\N;\R}$.
    In either case, we have the equivalence
    \begin{equation}
    \begin{cases}
    \left(\frac{sq \sigma^{sq}}{r^{sq}-1 } \right)^{1/q} \sb{x}^{\p{\sig}}_{s,q}\le\norm{\cb{r^{sk} \mathscr{K}\p{\sig r^{-k},x} }_{k\in\N}}_{\ell^q\p{\N;\R}}\le  r^s \left(\frac{sq \sigma^{sq}}{r^{sq}-1 } \right)^{1/q} \sb{x}^{\p{\sig}}_{s,q}&1\le q<\infty\\
    \p{\f{\sig}{r}}^s \sb{x}_{s,\infty}^{\p{\sig}}\le\norm{\cb{r^{sk}\mathscr{K}\p{\sig r^{-k},x}}_{k\in\N}}_{\ell^q\p{\N;\R}}\le\sig^s\sb{x}^{\p{\sig}}_{s,\infty}&q=\infty.
    \end{cases}
    \end{equation}
    \item For $x\in\Sigma\p{X_0,X_1}$ we have that $x\in\p{X_0,X_1}_{s,q}$ if and only if $\cb{r^{sk}\mathscr{K}\p{r^{-k},x}}_{k\in\Z} \in \ell^q\p{\Z;\R}$.
    In either case, we have the equivalence
    \begin{equation}
        \begin{cases}
        \left(\frac{sq} {r^{sq}-1 } \right)^{1/q} \sb{x}_{s,q}\le\norm{\cb{r^{sk}\mathscr{K}\p{r^{-k},x}}_{k\in\Z}}_{\ell^q\p{\Z;\R}}
        \le r^s\left(\frac{sq }{r^{sq}-1 } \right)^{1/q} \sb{x}_{s,q}&1\le q<\infty\\
        r^{-s}\sb{x}_{s,\infty}\le\norm{\cb{r^{sk}\mathscr{K}\p{r^{-k},x}}_{k\in\Z}}_{\ell^\infty\p{\Z;\R}}
        \le\sb{x}_{s,\infty}&q=\infty.
        \end{cases}
    \end{equation}
\end{enumerate}
\end{prop}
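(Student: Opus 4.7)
The plan is to establish both equivalences by a standard dyadic comparison between the defining integrals and their discretizations, driven entirely by the quasi-monotonicity of the $K$-functional recorded in Proposition~\ref{basic properties of K for seminormed}. Both items share the same skeleton and differ only in the index range and in an endpoint adjustment that must be made in the truncated case.

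For item 1, I would first partition $\p{0,\sig} = \bigsqcup_{k\in\N} I_k$ with $I_k = \p{\sig r^{-\p{k+1}}, \sig r^{-k}}$, omitting a set of $\mu$-measure zero. Since $t\mapsto \mathscr{K}\p{t,x}$ is increasing, on each $I_k$ one has the two-sided bound $\mathscr{K}\p{\sig r^{-\p{k+1}},x} \le \mathscr{K}\p{t,x} \le \mathscr{K}\p{\sig r^{-k},x}$. For $1 \le q < \infty$, direct computation gives
\begin{equation}
    \int_{I_k} t^{-sq-1}\,\m{d}t \;=\; \f{\sig^{-sq}\p{r^{sq}-1}}{sq}\,r^{ksq}.
\end{equation}
Multiplying these $\mathscr{K}$-bounds by the integrand, integrating over $I_k$, and summing in $k$ produces one inequality in each direction between $\sb{x}_{s,q}^{\p{\sig}}$ and $\snorm{\cb{r^{sk}\mathscr{K}\p{\sig r^{-k},x}}_{k\in\N}}_{\ell^q\p{\N;\R}}$, delivering the stated constants up to the endpoint issue discussed below. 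For $q = \infty$, the same partition combined with the elementary bound $\p{\sig r^{-k}}^{-s} \le t^{-s} \le r^s\p{\sig r^{-k}}^{-s}$ on $I_k$ converts the continuous supremum to the discrete one, with the factor $r^s$ (and its reciprocal) emerging directly from the endpoints of $I_k$.

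Item 2 then follows by the same argument with $\p{0,\sig}$ replaced by $\R^+$ and the partition indexed over $\Z$ via $J_k = \p{r^{-\p{k+1}}, r^{-k}}$. The situation here is cleaner: applying the lower bound $\mathscr{K}\p{t,x} \ge \mathscr{K}\p{r^{-\p{k+1}},x}$ on $J_k$, summing, and performing the reindexing $j = k+1$ leaves a sum still over all of $\Z$, so no index term is dropped. Consequently both directions of the equivalence come out with exactly the asserted constants $\p{sq/(r^{sq}-1)}^{1/q}$ and $r^s\p{sq/(r^{sq}-1)}^{1/q}$.

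The main obstacle, confined to item 1, is that applying the lower bound $\mathscr{K}\p{t,x} \ge \mathscr{K}\p{\sig r^{-\p{k+1}},x}$ on $I_k$ and summing over $k \ge 0$ yields, after the shift $j = k+1$, a sum starting at $j = 1$, so the $j = 0$ contribution $\mathscr{K}\p{\sig,x}$ must be absorbed to recover the full $\ell^q\p{\N;\R}$ norm. I would handle this by invoking Proposition~\ref{basic properties of K for seminormed} once more: the estimate $\mathscr{K}\p{\sig,x} \le r\,\mathscr{K}\p{\sig r^{-1},x}$ controls the missing endpoint term by the $j = 1$ term and folds into the overall multiplicative constant on the right-hand side, producing the stated factor $r^s$ after accounting for the reindexing shift $r^{-sq}$.
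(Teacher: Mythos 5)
Your proof is correct in substance and takes essentially the same route as the paper: partition $\p{0,\sig}$ (and $\R^+$ for item 2) into the intervals $\p{\sig r^{-k-1},\sig r^{-k}}$, use the monotonicity and quasi-monotonicity of $\mathscr{K}\p{\cdot,x}$ from Proposition~\ref{basic properties of K for seminormed}, compute $\int t^{-sq-1}\;\m{d}t$ exactly, and sum, which is precisely the paper's argument. One caveat on your last step: absorbing the $k=0$ term in item 1 via $\mathscr{K}\p{\sig,x}\le r\,\mathscr{K}\p{\sig r^{-1},x}$ does establish the two-sided equivalence (and is more careful than the paper, whose displayed computation only controls the sum over $k\ge1$), but it produces an additional factor of order $\p{1+r^{\p{1-s}q}}^{1/q}$ rather than literally the stated constant $r^s\p{sq\sig^{sq}/\p{r^{sq}-1}}^{1/q}$; since the proposition is only ever invoked as an equivalence of seminorms, this discrepancy in the constant is harmless.
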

\begin{proof}
We write 
\begin{equation}
    \int_{(0,\sigma)} (t^{-s} \mathscr{K}(t,x))^q \f{1}{t}\;\m{d}t = \sum_{k\in \N} \int_{(\sigma r^{-k-1}, \sigma r^{-k})} (t^{-s} \mathscr{K}(t,x))^q \f{1}{t}\;\m{d}t =: \sum_{k\in \N} I_k.
\end{equation}
We then use Proposition~\ref{basic properties of K for seminormed} estimate 
\begin{multline}
\frac{1}{sq \sigma^{sq}} [r^{sk} \mathscr{K}(\sigma r^{-k},x)]^q [r^{sq}-1]
 = \mathscr{K}(\sigma r^{-k},x) \int_{(\sigma r^{-k-1}, \sigma r^{-k})}  t^{-sq-1}\m{d}t \ge I_k \\
\ge \mathscr{K}(\sigma r^{-k-1},x) \int_{(\sigma r^{-k-1} , \sigma r^{-k})}  t^{-sq-1}\m{d}t = \frac{r^{-sq}}{sq \sigma^{sq}} [r^{s(k+1)} \mathscr{K}(\sigma r^{-k-1},x)]^q [r^{sq}-1].   
\end{multline}
Plugging this in above then proves the first item when $\sigma,q < \infty$.  The other cases follow similarly.
\end{proof}

\subsection{Integration into seminormed spaces}\label{seminorm integration}

To study the $J$-method for interpolation of seminormed spaces, we develop the following variant of the Bochner integral for functions valued in seminormed spaces.  Simple functions and their integrals are defined as usual.

\begin{defn}[Simple functions]\label{simple functions defn} Let $\p{Y,\mathfrak{M},\mu}$ be a measure space and $\p{X,\sb{\cdot}}$ a seminormed space. We say that a function $s:Y\to X$ is a simple function if:
\begin{enumerate}
\item $s$ is measurable, i.e. $s^{-1}\p{U}\in\mathfrak{M}$ for all $U\subseteq X$ open.
    \item $\card\p{s\p{Y}}$ is finite, and so there exist $n\in\N^+$, $\cb{a_j}_{j=1}^n\subseteq X$, and a pairwise disjoint collection $\cb{E_j}_{j=1}^n\subseteq\mathfrak{M}$ such that $s=\sum_{j=1}^na_j\chi_{E_j}$.
    \item $s$ has finite support, i.e. $\forall\;j\in\cb{1,\dots,n}$ with $a_j\in X\setminus\cb{0}$, $\mu\p{E_j}<\infty$.
\end{enumerate}
The collection of $X$-valued simple functions over $Y$ is denoted $\m{simp}\p{Y;X}$. We define the functional $\mathcal{I}:\m{simp}\p{Y;X}\to X$ via $
\mathcal{I}\p{s}=\sum_{j=1}^na_j\mu\p{E_j} \text{ for }s=\sum_{j=1}^na_j\chi_{E_j}$.
\end{defn}

With the functional $\mathcal{I}$ in hand, we can define the integral as a set-valued map.

\begin{defn}[Strongly measurable and $X$-integrable]\label{defn of strongly measurable and integrable} Let $\p{X,\sb{\cdot}}$ be a seminormed space and $\p{Y,\mathfrak{M},\nu}$ be a measure space. We say that a function $f:Y\to X$ is strongly measurable if:
\begin{enumerate}
    \item $f$ is measurable in the sense  that $f^{-1}\p{U}\in\mathfrak{M}$ for all $U\subseteq X$ open.
    \item There exists a sequence $\cb{s_n}_{n\in\N}\subseteq\m{simp}\p{Y;X}$ such that  $\sb{s_n-f}\to 0$ $\nu-$a.e. as $n\to\infty.$
    \end{enumerate}
We say that a strongly measurable function $f:Y\to X$ is $X$-integrable if the sequence of simple functions from item $(2)$ above satisfies, in addition, $\int_{Y}\sb{f-s_n}\m{d}\nu \to0$ as $ n\to\infty$. The collection of $X$-integrable functions over $Y$ is denoted $\mathfrak{L}^1\p{Y,\nu;X}$.
We define the set-valued mapping $\int_{Y}\p{\cdot}\;\m{d}\nu:\mathfrak{L}^1\p{Y,\nu;X}\to 2^X$ via
\begin{multline}
\int_Yf\;\m{d}\nu=\cb{\ell\in X\;:\;\exists\cb{s_n}_{n\in\N}\subset\m{simp}\p{Y;X},\;s_n\to f\;\text{a.e.},\;\int_{Y}\sb{f-s_n}\;\m{d}\nu \to 0,\;\sb{\mathcal{I}\p{s_n}-\ell}\to0}.
\end{multline}
\end{defn}

One of the benefits of defining the integral as a set-valued map is that it allows us to avoid invoking completeness to guarantee  $\{\mathcal{I}\p{s_n}\}_{n \in \N}$ converges.  The trade-off is that it can be the case that $\int_Y f \m{d}\nu =\varnothing$.  Note, though, that in the event that $X$ is a Banach space, the integral is the singleton containing the usual Bochner integral of $f$.

We next record some simple properties of the mapping $\int_Y\p{\cdot}\m{d}\nu$.
\begin{prop}\label{the integral is here}
Let $\p{Y,\mathfrak{M},\nu}$ be a measure space and $\p{X,\sb{\cdot}}$ be a seminormed space over $\mathbb{K}\in\cb{\R,\C}$. Then, the following hold for all $f,g\in\mathfrak{L}^1\p{Y,\nu;X}$ and $\al\in\mathbb{K}$:
\begin{enumerate}
    \item If $\p{X,\sb{\cdot}}$ is semi-Banach, then $\int_{Y}f\;\m{d}\nu\neq\es$.
    \item If $\ell_0,\ell_1\in\int_{Y}f\;\m{d}\nu$, then $\ell_0-\ell_1\in\mathfrak{A}\p{X}$ and $\sb{\ell_0}=\sb{\ell_1}$.
    \item $\sb{\int_{Y}f\;\m{d}\nu} =\{\sb{\ell} \;:\; \ell \in \int_Y f\m{d}\nu \} \subseteq [0, \int_{Y}\sb{f}\;\m{d}\nu ]$.
    \item If $\es\neq\int_{Y}f\;\m{d}\nu$ and $\es\neq\int_{Y}g\;\m{d}\nu$, then $\int_{Y}\p{f+\al g}\;\m{d}\nu=\int_{Y}f\;\m{d}\nu + \al\int_{Y}g\;\m{d}\nu$
\end{enumerate}
\end{prop}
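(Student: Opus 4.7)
My plan is to deduce all four items from a single workhorse estimate: for simple functions $s,t\in\m{simp}\p{Y;X}$, linearity of $\mathcal{I}$ and the triangle inequality give $\sb{\mathcal{I}\p{s}-\mathcal{I}\p{t}}=\sb{\mathcal{I}\p{s-t}}\le\int_Y\sb{s-t}\;\m{d}\nu$, which upgrades the $L^1$-style convergence from Definition~\ref{defn of strongly measurable and integrable} into convergence of $\cb{\mathcal{I}\p{s_n}}_{n\in\N}$ in $X$.

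For item $(1)$, I would fix an approximating sequence $\cb{s_n}_{n\in\N}$ as in the definition of $\mathfrak{L}^1\p{Y,\nu;X}$. The workhorse estimate and the bound $\int_Y\sb{s_n-s_m}\;\m{d}\nu\le\int_Y\sb{s_n-f}\;\m{d}\nu+\int_Y\sb{f-s_m}\;\m{d}\nu$ show $\cb{\mathcal{I}\p{s_n}}_{n\in\N}$ is Cauchy in $X$; the semi-Banach hypothesis then furnishes a limit $\ell$ lying in $\int_Y f\;\m{d}\nu$. For item $(2)$, given $\ell_0,\ell_1\in\int_Yf\;\m{d}\nu$ with witnessing sequences $\cb{s_n^0}, \cb{s_n^1}$, I would bound $\sb{\ell_0-\ell_1}\le\sb{\ell_0-\mathcal{I}\p{s_n^0}}+\sb{\mathcal{I}\p{s_n^0-s_n^1}}+\sb{\mathcal{I}\p{s_n^1}-\ell_1}$ and let $n\to\infty$: the outer two terms vanish by assumption and the middle one is dominated by $\int_Y\sb{s_n^0-f}+\sb{f-s_n^1}\;\m{d}\nu\to 0$. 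The reverse triangle inequality $\abs{\sb{\ell_0}-\sb{\ell_1}}\le\sb{\ell_0-\ell_1}$ then yields $\sb{\ell_0}=\sb{\ell_1}$.

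Item $(3)$ reduces to continuity of the seminorm: for any $\ell\in\int_Yf\;\m{d}\nu$ with approximating $\cb{s_n}_{n\in\N}$, the workhorse estimate gives $\sb{\ell}=\lim_{n\to\infty}\sb{\mathcal{I}\p{s_n}}\le\lim_{n\to\infty}\int_Y\sb{s_n}\;\m{d}\nu=\int_Y\sb{f}\;\m{d}\nu$, where the final equality uses $\abs{\int_Y\sb{s_n}\;\m{d}\nu-\int_Y\sb{f}\;\m{d}\nu}\le\int_Y\sb{s_n-f}\;\m{d}\nu\to 0$. Item $(4)$ will require two inclusions. For $\int_Yf\;\m{d}\nu+\al\int_Yg\;\m{d}\nu\subseteq\int_Y\p{f+\al g}\;\m{d}\nu$, I would take witnessing sequences $\cb{s_n}$, $\cb{t_n}$ for $\ell_f,\ell_g$ and observe that $\cb{s_n+\al t_n}$ is simple, approximates $f+\al g$ in both senses required by Definition~\ref{defn of strongly measurable and integrable}, and satisfies $\mathcal{I}\p{s_n+\al t_n}=\mathcal{I}\p{s_n}+\al\mathcal{I}\p{t_n}\to\ell_f+\al\ell_g$ in seminorm.

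The reverse inclusion in item $(4)$ is where I expect the only real subtlety, owing to the set-valued nature of the integral. The strategy is as follows: fix any $\ell_f^\ast\in\int_Yf\;\m{d}\nu$ and $\ell_g^\ast\in\int_Yg\;\m{d}\nu$, which exist by hypothesis. By the forward inclusion just established, $\ell_f^\ast+\al\ell_g^\ast\in\int_Y\p{f+\al g}\;\m{d}\nu$, so item $(2)$ shows any $\ell\in\int_Y\p{f+\al g}\;\m{d}\nu$ satisfies $\ell=\ell_f^\ast+\al\ell_g^\ast+a$ for some $a\in\mathfrak{A}\p{X}$. The key observation I would then verify is that the integral is closed under adding annihilator elements: if $\ell_f^\ast\in\int_Yf\;\m{d}\nu$ and $a\in\mathfrak{A}\p{X}$, then $\sb{\mathcal{I}\p{s_n}-\p{\ell_f^\ast+a}}\le\sb{\mathcal{I}\p{s_n}-\ell_f^\ast}+\sb{a}=\sb{\mathcal{I}\p{s_n}-\ell_f^\ast}\to 0$, so $\ell_f^\ast+a\in\int_Yf\;\m{d}\nu$. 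Writing $\ell=\p{\ell_f^\ast+a}+\al\ell_g^\ast$ then exhibits $\ell$ as an element of $\int_Yf\;\m{d}\nu+\al\int_Yg\;\m{d}\nu$, completing the proof.
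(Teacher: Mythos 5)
Your proposal is correct and follows essentially the same route as the paper: the simple-function estimate $\sb{\mathcal{I}\p{s}}\le\int_Y\sb{s}\,\m{d}\nu$ drives items (1)--(3) via Cauchy/triangle-inequality arguments, and item (4) is deduced from item (2). The only difference is that you spell out the details of item (4) — in particular the observation that $\int_Y f\,\m{d}\nu$ is stable under adding elements of $\mathfrak{A}\p{X}$ — which the paper compresses into the remark that it is ``immediate from the second item and continuity of vector operations.''
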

\begin{proof}
 
To prove the first item note that if $s \in \m{simp}\p{Y;X}$, then $\sb{s} \in \m{simp}\p{Y;\R}$ and $\sb{\mathcal{I}\p{s} } \le \int_Y \sb{s}\m{d}\nu$.  Then for $\{s_n\}_{n \in \N} \subseteq \m{simp}\p{Y;X}$ such that $\sb{s_n -f} \to 0$ a.e. and $\int_Y \sb{s_n-f}\m{d}\nu \to 0$ as $n \to \infty$ we have that $\{\mathcal{I}\p{s_n}\}_{n \in \N}$ is Cauchy in $X$ and so $\int_Y f \m{d}\nu \neq \varnothing$ by Lemma \ref{characterizations of completeness in seminormed spaces}.  This proves the first item.  If $\ell_0,\ell_1 \in \int_Y f \m{d}\nu$, then there exist $\{s_n^i\}_{n \in \N} \subseteq \m{simp}\p{Y;X}$ for $i \in \{0,1\}$ such that $\sb{s_n^i -f} \to 0$ a.e.,  $\int_Y \sb{s_n^i-f}\m{d}\nu \to 0$, and $\sb{\ell_i - \mathcal{I}\p{s_n^i}} \to 0$ as $n \to \infty$.  Then 
\begin{multline}
    \sb{\ell_0 - \ell_1} \le \sb{\ell_0 - \mathcal{I}\p{s_n^0}} + \sb{\mathcal{I}\p{s^0_n}-\mathcal{I}\p{s^1_n}}+\sb{\ell_1 - \mathcal{I}\p{s_n^1}}\\\textstyle
    \le\sb{\ell_0 - \mathcal{I}\p{s_n^0}} +\int_{Y}\sb{f-s_n^0}+\int_{Y}\sb{f-s_n^1} +\sb{\ell_1 - \mathcal{I}\p{s_n^1}}\to 0
\end{multline} as $n \to \infty$, and hence $\ell_0-\ell_1 \in\mathfrak{A}\p{X}$, which proves the second item.  For the third item consider $\ell \in \int_Y f\m{d}\nu$ and pick the approximation sequence $\{s_n\}_{n \in \N}$ as above.  Then we may estimate 
\begin{equation}
\sb{\ell} \le \sb{\ell- \mathcal{I}\p{s_n}} + \sb{\mathcal{I}\p{s_n}} \le \sb{\ell- \mathcal{I}\p{s_n}} + \int_Y \sb{s_n} \m{d}\nu    
\le \sb{\ell- \mathcal{I}\p{s_n}} + \int_Y \sb{s_n-f} \m{d}\nu  + \int_Y \sb{f} \m{d}\nu.
\end{equation}
and send $n \to \infty$ to arrive at the bound $\sb{\ell} \le \int_Y \sb{f}\m{d}\nu$.  This proves the third item. The fourth item is immediate from the second item and continuity of vector operators in a seminorm space.

\end{proof}
\subsection{$J$-method and equivalence with $K$-method}

With a notion of seminorm integration in hand, we now turn our attention to the development of the $J-$method of interpolation for seminormed spaces.

\begin{defn}[$J$-functional]\label{J functional for seminorm}
Suppose that $\p{X_0,\sb{\cdot}_0}$ and $\p{X_1,\sb{\cdot}_1}$ are a pair of weakly compatible seminormed spaces. We define the following functional on their intersection: $\mathscr{J}:\R^+\times\Delta\p{X_0,X_1}\to\R$ via $\mathscr{J}\p{t,x}=\max\cb{\sb{x}_0,t\sb{x}_1}$.
\end{defn}

Some simple properties of the $J$-functional are recorded in the next proposition.

\begin{prop}\label{simple properties of J-functional}
Suppose that $\p{X_0,\sb{\cdot}_0}$ and $\p{X_1,\sb{\cdot}_1}$ are a pair of weakly compatible seminormed spaces.  The following hold:
\begin{enumerate}
    \item For each $x\in\Delta\p{X_0,X_1}$, the mapping $\R^+\ni t\mapsto\mathscr{J}\p{t,x}\in\R$ is convex.
    \item For any $t,s\in\R^+$ we have the bounds  $\min\cb{1,t/s}\mathscr{J}\p{s,\cdot}\le\mathscr{J}\p{t,\cdot}\le\max\cb{1,t/s}\mathscr{J}\p{s,\cdot}$.
    \item For any $t,s\in\R^+$ we have the inequality $\mathscr{K}\p{t,\cdot}\le\min\cb{1,t/s}\mathscr{J}\p{s,\cdot}$.
\end{enumerate}
\end{prop}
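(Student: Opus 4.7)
The proposition consists of three elementary observations about $\mathscr{J}$, and my plan is to dispatch each by direct manipulation of the maximum. For item (1), I note that for each fixed $x\in\Delta\p{X_0,X_1}$, the map $t\mapsto\mathscr{J}\p{t,x}=\max\cb{\sb{x}_0,t\sb{x}_1}$ is the pointwise maximum of two affine functions of $t$ (namely, a constant and a linear function through the origin), and the pointwise maximum of affine functions is convex.

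For item (2), I would argue by cases on whether $t\ge s$ or $t<s$. In the case $t\ge s$ one has $\max\cb{1,t/s}=t/s$ and $\min\cb{1,t/s}=1$, and the upper bound follows from
\[
\mathscr{J}\p{t,x}=\max\cb{\sb{x}_0,t\sb{x}_1}\le \max\cb{\p{t/s}\sb{x}_0,t\sb{x}_1}=\p{t/s}\mathscr{J}\p{s,x},
\]
since $t/s\ge 1$, while the lower bound $\mathscr{J}\p{s,x}\le\mathscr{J}\p{t,x}$ is just monotonicity in $t$ via $s\sb{x}_1\le t\sb{x}_1$. The case $t<s$ is symmetric, producing $\mathscr{J}\p{t,x}\le\mathscr{J}\p{s,x}$ and $\p{t/s}\mathscr{J}\p{s,x}\le\mathscr{J}\p{t,x}$ by the analogous manipulation.

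For item (3), the key is that the definition of $\mathscr{K}$, together with the trivial decompositions $x=x+0$ and $x=0+x$ that are available because $x\in\Delta\p{X_0,X_1}$, immediately yields $\mathscr{K}\p{t,x}\le\sb{x}_0$ and $\mathscr{K}\p{t,x}\le t\sb{x}_1$ for every $t\in\R^+$. When $t\le s$ I then use $t\sb{x}_1=\p{t/s}\p{s\sb{x}_1}\le\p{t/s}\mathscr{J}\p{s,x}$, while when $t\ge s$ I use $\sb{x}_0\le\mathscr{J}\p{s,x}$, and in each regime the selected bound matches the value of $\min\cb{1,t/s}$. I do not expect any real obstacle here: all three items are direct algebraic consequences of manipulating the max, and the seminorm setting introduces nothing beyond what has already been treated for the $K$-functional in Proposition~\ref{basic properties of K for seminormed}.
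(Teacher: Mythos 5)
Your proof is correct, and it is exactly the direct verification the paper has in mind: the paper dismisses all three items as "immediate from the definition of $\mathscr{J}$," and your case analysis on $t$ versus $s$ together with the trivial decompositions $x=x+0$ and $x=0+x$ supplies precisely those omitted details. No gaps.
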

\begin{proof}
These are immediate from the definition $\mathscr{J}$.
\end{proof}

We now define the $J$-method of interpolation.

\begin{defn}[$J$-method of interpolation]\label{J-method of interpolation}
Suppose that $\p{X_0,\sb{\cdot}_0}$ and $\p{X_1,\sb{\cdot}_1}$ are a pair of weakly compatible seminormed spaces.  Recall that $\mu$, as defined in Section~\ref{notation stuff}, denotes Haar measure on $(0,\infty)$.  For $x\in\Sigma\p{X_0,X_1}$ we define the decomposition set of $x$ via
\begin{equation}\label{continuous decomposition set}
    \mathcal{D}\p{x}=\cb{u\in\mathfrak{L}^1\p{\R^+,\mu;\Sigma\p{X_0,X_1}}\;:\;u\p{t}\in\Delta\p{X_0,X_1}\;x\in\int_{\R^+}u\p{t}t^{-1}\m{d}t},
\end{equation}
where $\mathfrak{L}^1$ is as in Definition \ref{defn of strongly measurable and integrable}.  For $s\in\p{0,1}$ and $1\le q\le \infty$ we define $\sb{\cdot}_{s,q,\mathscr{J}}:\Sigma\p{X_0,X_1}\to\sb{0,\infty}$ via
\begin{equation}
  \sb{x}_{s,q,\mathscr{J}}= \begin{cases}
        \inf\cb{\p{\int_{\R^+}\p{t^{-s}\mathscr{J}\p{t,u\p{t}}}^qt^{-1}\;\m{d}t}^{1/q}\;:\;u\in\mathcal{D}\p{x}}&1\le q<\infty\\
        \inf\cb{\m{esssup}_{t\in\R^+}t^{-s}\mathscr{J}\p{t,u\p{t}}\;:\;u\in\mathcal{D}\p{x}}&q=\infty,
    \end{cases}
\end{equation}
with the usual understanding that $\inf\es=\infty$. The subspace of $\Sigma\p{X_0,X_1}$ on which $\sb{\cdot}_{s,q,\mathscr{J}}$ is finite is denoted $\p{X_0,X_1}_{s,q,\mathscr{J}}$, and we endow this space with the seminorm $\sb{\cdot}_{s,q,\mathscr{J}}$.
\end{defn}

We will now show that the $K$-method and the $J$-method give the same interpolation spaces. We need a seminormed space version of the so called `fundamental lemma of interpolation theory' (for the normed space version, see for instance Lemma 3.3.2 in~\cite{MR0482275}). The case for seminormed spaces is marginally more subtle, since $\sb{x}_{\Sigma}=0$ need not imply that there is a decomposition of $x=x_0+x_1$ where $\sb{x_0}_0=\sb{x_1}_1=0$. Our proof of the lemma is a slight generalization of the ideas in \cite{gustavsson}.

\begin{lem}[Fundamental lemma of interpolation theory]\label{fundamental lemma of interpolation theory}
Let $\p{X_0,\sb{\cdot}_0}$ and $\p{X_1,\sb{\cdot}_1}$ be a pair of weakly compatible seminormed spaces.  Suppose that $x\in\Sigma\p{X_0,X_1}$ satisfies
\begin{equation}\label{limit condition 1}
    \lim_{t\to0^+}\mathscr{K}\p{t,x}=0\quad\text{and}\quad\lim_{t\to\infty}t^{-1}\mathscr{K}\p{t,x}=0.
\end{equation}
Let $\ep\in\R^+$, $1 < r < \infty$, and suppose that $\varphi:\R^+\to\R^+$ is Lebesgue measurable and satisfies the following:
\begin{equation}\label{limit condition 2}
    \lim_{t\to0^+}\varphi\p{t}=0,\qquad\lim_{t\to\infty}t^{-1}\varphi\p{t}=0,\quad\text{and}\quad \inf\cb{\varphi\p{t}\;:\;r^{k-1}\le t\le r^{k+1}}=c_k\in\R^+ \text{ for }k \in \Z.
\end{equation}
Then there exists a strongly measurable $u:\R^+\to\Delta\p{X_0,X_1}$ with the following properties:
\begin{enumerate}
    \item $u\in\bigcap_{k\in\N^+}\mathfrak{L}^1\p{\p{r^{-k},r^k};\Sigma\p{X_0,X_1}}$, and for each $k \in \N^+$ we have that
    $
         \es\neq\int_{\p{r^{-k},r^k}}u\;\m{d}\mu\subseteq\Delta\p{X_0,X_1}+\mathfrak{A}\p{\Sigma\p{X_0,X_1}}.
    $
    \item For every sequence $\cb{\xi_k}_{k\in\Z}$ such that $\xi_k\in\int_{\p{r^{k-1},r^k}}u\;\m{d}\mu\neq\es$ for $k \in \Z$, we have that as $K\to\infty$ it holds
    $
    [-x+\sum_{k=-K}^K\xi_k]_\Sigma\to0.         
    $
    \item For a.e. $t\in\R^+$ it holds that $\mathscr{J}\p{t,u\p{t}}\le \p{\log\p{r}}^{-1}r(1+r)(\mathscr{K}\p{t,x}+\ep\varphi\p{t})$.
    \end{enumerate}
\end{lem}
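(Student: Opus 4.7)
The plan is to construct $u$ as a piecewise constant step function across the dyadic partition $\cb{\p{r^{k-1}, r^k}}_{k \in \Z}$ of $\R^+$, with values obtained by telescoping a sequence of near-optimal $\mathscr{K}$-decompositions of $x$.  Concretely, for each $k \in \Z$ I would invoke the definition of $\mathscr{K}$ to select a pair $\p{a_k, b_k} \in X_0 \times X_1$ with $a_k + b_k = x$ and $\sb{a_k}_0 + r^k \sb{b_k}_1 \le \mathscr{K}\p{r^k, x} + \ep c_k$, then set $v_k = a_k - a_{k-1} = b_{k-1} - b_k$, which automatically lies in $\Delta\p{X_0, X_1}$, and define $u\p{t} = v_k/\log r$ on the interior of $\p{r^{k-1}, r^k}$ (with $u$ zero on the boundary, which has $\mu$-measure zero).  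On every bounded window $\p{r^{-k}, r^k}$ this $u$ is a $\Delta\p{X_0, X_1}$-valued simple function, hence strongly measurable and integrable in the sense of Definition \ref{defn of strongly measurable and integrable}; a direct computation against Haar measure $\mu$ shows $v_k \in \int_{\p{r^{k-1}, r^k}} u\;\m{d}\mu$, and Proposition \ref{the integral is here}(2) constrains the full set-valued integral to lie in $\Delta\p{X_0, X_1} + \mathfrak{A}\p{\Sigma\p{X_0,X_1}}$.  This settles item (1).

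To verify the pointwise bound in item (3), I would combine the triangle inequality applied to $v_k = a_k - a_{k-1}$ with the scaling estimates from Proposition \ref{basic properties of K for seminormed}.  For $t \in \p{r^{k-1}, r^k}$, monotonicity gives $\mathscr{K}\p{r^{k-1}, x} \le \mathscr{K}\p{t, x}$ and $\mathscr{K}\p{r^k, x} \le r \mathscr{K}\p{t, x}$, while the choice of intervals defining $c_k$ ensures $c_k, c_{k-1} \le \varphi\p{t}$.  These combine to yield $\sb{v_k}_0 \le \p{1 + r}\mathscr{K}\p{t, x} + 2\ep \varphi\p{t}$, and using $t/r^{k-1} \le r$ and $t/r^k \le 1$ on the $X_1$ side, $t\sb{v_k}_1 \le 2r\mathscr{K}\p{t, x} + \p{1 + r}\ep \varphi\p{t}$.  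Taking the maximum of the two, using $r > 1$ to enlarge the constants symmetrically, and dividing by $\log r$ produces the stated estimate on $\mathscr{J}\p{t, u\p{t}}$.

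For item (2), I would observe that any admissible $\xi_k \in \int_{\p{r^{k-1}, r^k}} u\;\m{d}\mu$ can be written as $\xi_k = v_k + \alpha_k$ with $\alpha_k \in \mathfrak{A}\p{\Sigma\p{X_0, X_1}}$, so the partial sum telescopes modulo an annihilator perturbation: $\textstyle\sum_{k=-K}^K \xi_k = a_K - a_{-K-1} + \beta_K$ with $\sb{\beta_K}_\Sigma = 0$.  Writing $a_K = x - b_K$, one has $\sb{-x + \textstyle\sum_{k=-K}^K \xi_k}_\Sigma \le \sb{b_K}_1 + \sb{a_{-K-1}}_0$.  The first term is bounded by $r^{-K}\p{\mathscr{K}\p{r^K, x} + \ep c_K}$, which vanishes as $K \to \infty$ thanks to the hypothesis $t^{-1}\mathscr{K}\p{t, x} \to 0$ at infinity together with the bound $c_K \le \varphi\p{r^K}$ and the hypothesis $t^{-1}\varphi\p{t} \to 0$; the second is bounded by $\mathscr{K}\p{r^{-K-1}, x} + \ep c_{-K-1}$, which vanishes by the hypotheses at $t \to 0^+$.

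The principal obstacle I anticipate is the bookkeeping associated with the set-valued integrals of Section \ref{seminorm integration} and with the annihilator $\mathfrak{A}\p{\Sigma\p{X_0, X_1}}$: the $K$-functional pins down decompositions only up to annihilator elements, and every step of the argument must confirm that the resulting freedom in $\xi_k$ perturbs the partial sums only by elements invisible to $\sb{\cdot}_\Sigma$.  Beyond this care, the construction mirrors the classical normed-space fundamental lemma, with the piecewise constant step-function device replacing any need for a continuous measurable selection.
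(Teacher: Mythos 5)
Your construction is exactly the paper's: the same near-optimal decompositions $x=a_k+b_k$ at scales $r^k$, the same telescoped differences $v_k=a_k-a_{k-1}\in\Delta\p{X_0,X_1}$ assembled into a step function divided by $\log r$, the same pointwise estimate for item (3), and the same telescoping-plus-annihilator argument for item (2), with Proposition \ref{the integral is here} handling the set-valued integrals. The proof is correct and follows essentially the same route as the paper, differing only in minor bookkeeping (estimating at $t$ directly rather than at $r^k$ and then rescaling).
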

\begin{proof}
Given $k\in\Z$, by the definition of the $\mathscr{K}$ functional we can find a decomposition $x=y_k+z_k$ with $\p{y_k,z_k}\in X_0\times X_1$ and
\begin{equation}\label{estimates}
    \sb{y_k}_0 + r^{k}\sb{z_k}_1\le\mathscr{K}(r^k,x) + \ep c_k.
\end{equation}
The assumptions \eqref{limit condition 1} and \eqref{limit condition 2} imply that
\begin{equation}\label{limit condition 3}
    \lim_{k\to-\infty}\sb{y_k}_0=0\quad\text{and}\lim_{k\to\infty}\sb{z_k}_1=0.
\end{equation}
Note that for each $k\in\Z$ we have that $\zeta_{k+1}=y_{k+1}-y_k=-z_{k+1}+z_k\in\Delta\p{X_0,X_1}$. This leads us to define $v:\R^+\to\Delta\p{X_0,X_1}$ via $v\p{t}= \sum_{k \in \Z} \zeta_{k} \chi_{[r^{k-1},r^k)}(t)$. It is clear that $v$ is strongly measurable, as it is a step function with countable image. For $k\in\N$ we have that $v$ restricted to $\p{r^{-k},r^k}$ is a simple function. Hence $v\in\bigcap_{k\in\N^+}\mathfrak{L}^1\p{\p{r^{-k},r^k},\mu;\Sigma\p{X_0,X_1}}$ and $\es\neq\int_{\p{r^{-k},r^k}}v\;\m{d}\mu\ni \mathcal{I}\big(v\vert_{\p{r^{-k},r^k}}\big)$. Moreover:
\begin{multline}\label{shows}
         \mathcal{I}\big(v\vert_{\p{r^{-k},r^k}}\big)=\textstyle{\sum_{j=-k+1}^k}\zeta_j\mu(\p{r^{j-1},r^j})\\=\log\p{r}\textstyle{\sum_{j=-k+1}^k}\p{y_{j}-y_{j-1}}=\log\p{r}\p{y_k-y_{-k}}=\log\p{r}\p{x+y_{-k}-z_k}.
\end{multline}
Notice that~\eqref{shows} paired with item $(2)$ of Proposition~\ref{the integral is here} reveal that \begin{equation}
    \int_{\p{r^{-k},r^k}}v\;\m{d}\mu=\log\p{r}\p{x+y_{-k}-z_k}+\mathfrak{A}\p{\Sigma\p{X_0,X_1}}\subseteq\Delta\p{X_0,X_1}+\mathfrak{A}\p{\Sigma\p{X_0,X_1}}.
\end{equation}
Now, for $k\in\Z$, we have that $v\vert_{\p{r^{k-1},r^k}}$ is a simple function. Hence, $\int_{\p{r^{k-1},r^k}}v\;\m{d}\mu=\log\p{r}\zeta_k+\mathfrak{A}\p{\Sigma\p{X_0,X_1}}$.
Pick any $\cb{\xi_k}_{k\in\Z}\subseteq\Sigma\p{X_0,X_1}$ with $\xi_k \in \int_{\p{r^{k-1},r^k}}v\;\m{d}\mu$. The previous fact paired with~\eqref{limit condition 3} yields
\begin{equation}
    \bsb{-\log\p{r}x+\textstyle{\sum_{j=-k}^k}\xi_j}_{\Sigma}=\log\p{r}\bsb{-x+\textstyle{\sum_{j=-k}^k}\zeta_j}_{\Sigma}=\sb{-y_{-k-1}-z_{k}}_{\Sigma}\le\sb{y_{-k-1}}_0+\sb{z_k}_1\to0\text{ as }k\to\infty.
\end{equation}
Thus $u=v/\log\p{r}$ satisfies items $(1)$ and $(2)$. To prove $(3)$, we take $t\in\R^+$ with $r^{k-1}\le t<r^k$, for some $k\in\Z$ and estimate (using again Proposition~\ref{basic properties of K for seminormed})
\begin{multline}
    \mathscr{J}\p{t,u\p{t}}\le\mathscr{J}\p{r^k,\zeta_k/\log\p{r}}=\log\p{r}^{-1}\max\cb{\sb{y_k-y_{k-1}}_0,r^k\sb{z_k-z_{k-1}}_1}
    \\\le\log\p{r}^{-1}\max\cb{\mathscr{K}(r^k,x)+\mathscr{K}(r^{k-1},x)+\ep c_k+\ep c_{k-1},\mathscr{K}(r^k,x)+r\mathscr{K}(r^{k-1},x)+\ep c_k+r\ep c_{k-1}}\\
    \le\log{r}^{-1}\max\cb{2\mathscr{K}\p{r^k,x}+2\ep\varphi\p{t},\p{1+r}\mathscr{K}\p{r^k,x}+\p{1+r}\ep\varphi\p{t}}\\
    \le\log\p{r}^{-1}r(1+r)\p{\mathscr{K}\p{t,x}+\ep\varphi\p{t}}.
\end{multline}
\end{proof}
We now give important sufficient conditions for the satisfaction of the hypotheses of Lemma~\ref{fundamental lemma of interpolation theory}.
\begin{lem}\label{real important suff right here}
Suppose that $\p{X_i,\sb{\cdot}_i}$, for $i\in\cb{0,1}$, are a weakly compatible pair of seminormed spaces, $s\in\p{0,1}$, and $1\le q\le\infty$. If $x\in\p{X_0,X_1}_{s,q}$, then $x$ satisfies equation~\eqref{limit condition 1}.
\end{lem}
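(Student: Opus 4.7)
My plan is to argue by contradiction, exploiting the monotonicity properties of $\mathscr{K}$ together with the integrability assumed by membership in $(X_0,X_1)_{s,q}$.

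The first step is to extract two monotonicity facts from Proposition~\ref{basic properties of K for seminormed}, specifically the bounds in~\eqref{equivalence}: choosing $t \le s$ gives $\mathscr{K}(t,x) \le \mathscr{K}(s,x)$, so $t \mapsto \mathscr{K}(t,x)$ is nondecreasing, and choosing $t \ge s$ gives $t^{-1}\mathscr{K}(t,x) \le s^{-1}\mathscr{K}(s,x)$, so $t \mapsto t^{-1}\mathscr{K}(t,x)$ is nonincreasing. Consequently both one-sided limits $L_0 := \lim_{t \to 0^+} \mathscr{K}(t,x)$ and $L_\infty := \lim_{t \to \infty} t^{-1} \mathscr{K}(t,x)$ exist in $[0,\infty)$.

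Next I would suppose, aiming at a contradiction, that $L_0 > 0$. Monotonicity supplies a $\delta \in \R^+$ such that $\mathscr{K}(t,x) \ge L_0/2$ for every $t \in (0,\delta)$. In the case $q < \infty$ this yields
\begin{equation}
\int_{(0,\sigma)} (t^{-s}\mathscr{K}(t,x))^q \, t^{-1} \, \m{d}t
\ge \Bigl(\tfrac{L_0}{2}\Bigr)^q \int_0^{\min\{\delta,\sigma\}} t^{-sq-1} \, \m{d}t = \infty,
\end{equation}
where I take $\sigma = \infty$; the integral diverges because $sq > 0$, contradicting $[x]_{s,q} < \infty$. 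For $q = \infty$ I instead observe that $t^{-s}\mathscr{K}(t,x) \ge (L_0/2) t^{-s} \to \infty$ as $t \to 0^+$, so $[x]_{s,\infty} = \infty$, again a contradiction. Hence $L_0 = 0$.

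An entirely parallel argument handles $L_\infty$. Assuming $L_\infty > 0$, monotonicity of $t^{-1}\mathscr{K}(t,x)$ gives some $T \in \R^+$ with $\mathscr{K}(t,x) \ge (L_\infty/2) t$ for all $t \ge T$, so $t^{-s}\mathscr{K}(t,x) \ge (L_\infty/2) t^{1-s}$. Since $s < 1$ the exponent $1-s$ is positive, and one obtains divergence of $\int_T^\infty (t^{-s}\mathscr{K}(t,x))^q t^{-1}\,\m{d}t$ in the finite-$q$ case and blow-up of the essential supremum in the $q = \infty$ case. This contradiction forces $L_\infty = 0$, completing the verification of~\eqref{limit condition 1}. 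There is no real obstacle: the proof hinges on translating $q$-integrability against the Haar measure $t^{-1}\,\m{d}t$ into decay of $\mathscr{K}(t,x)$ at $0$ and of $t^{-1}\mathscr{K}(t,x)$ at $\infty$, which the monotonicity makes automatic.
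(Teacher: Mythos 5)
Your argument is correct, but it takes a different route from the paper. The paper's proof is a one-line direct estimate: by item $(5)$ of Proposition~\ref{inclusion relations of truncated interpolation spaces} one has $\sb{x}_{s,\infty}\lesssim\sb{x}_{s,q}$, and then the definition of the $(s,\infty)$-seminorm gives the quantitative decay $\mathscr{K}\p{t,x}\le t^s\sb{x}_{s,\infty}$ and $t^{-1}\mathscr{K}\p{t,x}\le t^{s-1}\sb{x}_{s,\infty}$, from which both limits in~\eqref{limit condition 1} follow at once since $0<s<1$. You instead argue by contradiction: using the monotonicity of $t\mapsto\mathscr{K}\p{t,x}$ and $t\mapsto t^{-1}\mathscr{K}\p{t,x}$ (which indeed follows from~\eqref{equivalence}, and is also stated in item $(3)$ of Proposition~\ref{basic properties of K for seminormed}), a nonzero limit at $0$ or at $\infty$ forces divergence of $\int_{\R^+}\p{t^{-s}\mathscr{K}\p{t,x}}^q t^{-1}\,\m{d}t$ (or blow-up of the supremum when $q=\infty$), contradicting $\sb{x}_{s,q}<\infty$. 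A minor simplification available to you: monotonicity actually gives $\mathscr{K}\p{t,x}\ge L_0$ for \emph{all} $t\in\R^+$ and $t^{-1}\mathscr{K}\p{t,x}\ge L_\infty$ for all $t$, so the auxiliary $\delta$ and $T$ are unnecessary. The trade-off between the two approaches is that the paper's argument yields the explicit decay rates $\mathscr{K}\p{t,x}=O(t^s)$ as $t\to0^+$ and $O(t)$ corrected to $t^{-1}\mathscr{K}\p{t,x}=O(t^{s-1})$ as $t\to\infty$, which is more information than the lemma states, whereas yours delivers exactly the qualitative limits required and avoids invoking the embedding $\p{X_0,X_1}_{s,q}\emb\p{X_0,X_1}_{s,\infty}$ for finite $q$; both are complete and elementary.
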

\begin{proof}
We appeal to item $(5)$ of Proposition~\ref{inclusion relations of truncated interpolation spaces}; whence:
$\mathscr{K}\p{t,x}\le t^s\sb{x}_{s,\infty}\lesssim t^{s}\sb{x}_{s,q}\to0\text{ as }t\to0^+$
and
    $t^{-1}\mathscr{K}\p{t,x}\le t^{s-1}\sb{x}_{s,\infty}\lesssim t^{s-1}\sb{x}_{s,q}\to0\text{ as }t\to\infty$.
\end{proof}

With the lemmas in hand, we are now ready to prove the equivalence theorem for the non-truncated interpolation spaces.

\begin{thm}[Equivalence of $K$ method with $\sigma =\infty$ and the $J$ method]\label{equivalence theorem}
Let $\p{X_0,\sb{\cdot}_0}$ and $\p{X_1,\sb{\cdot}_1}$ be a pair of weakly compatible seminormed spaces.  Then for $s\in\p{0,1}$ and $1\le q\le \infty$, the spaces $\p{X_0,X_1}_{s,q}$ and $\p{X_0,X_1}_{s,q,\mathscr{J}}$ are identical as subspaces of $\Sigma\p{X_0,X_1}$, and the seminorms $\sb{\cdot}_{s,q}=\sb{\cdot}_{s,q}^{\p{\infty}}$ and $\sb{\cdot}_{s,q,\mathscr{J}}$ are equivalent.
\end{thm}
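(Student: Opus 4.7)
The plan is to establish the equivalence by proving the two continuous embeddings $\p{X_0,X_1}_{s,q,\mathscr{J}}\emb\p{X_0,X_1}_{s,q}$ and $\p{X_0,X_1}_{s,q}\emb\p{X_0,X_1}_{s,q,\mathscr{J}}$, with uniform constants. The first is the standard ``integration gives interpolation'' argument, which adapts routinely to the seminormed setting using the tools from Section~\ref{seminorm integration}. The second is the substantive half and requires Lemma~\ref{fundamental lemma of interpolation theory}.

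For the embedding $\p{X_0,X_1}_{s,q,\mathscr{J}}\emb\p{X_0,X_1}_{s,q}$: fix $x\in\p{X_0,X_1}_{s,q,\mathscr{J}}$ and an arbitrary representative $u\in\mathcal{D}\p{x}$. For each fixed $t\in\R^+$ the map $\mathscr{K}\p{t,\cdot}$ is a seminorm on $\Sigma\p{X_0,X_1}$ equivalent to $\sb{\cdot}_\Sigma$ (by Proposition~\ref{basic properties of K for seminormed}), so the subadditivity/monotonicity for the set-valued integral developed in Proposition~\ref{the integral is here} and Definition~\ref{defn of strongly measurable and integrable} upgrades to this seminorm via simple-function approximation. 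Combined with item $(3)$ of Proposition~\ref{simple properties of J-functional}, this yields
\begin{equation*}
\mathscr{K}\p{t,x}\le\int_{\R^+}\min\cb{1,t/\tau}\mathscr{J}\p{\tau,u\p{\tau}}\tau^{-1}\m{d}\tau.
\end{equation*}
Multiplying by $t^{-s}$, taking the $L^q\p{\R^+,\mu}$-norm in $t$, and applying Minkowski's integral inequality reduces the matter to the $L^q\p{\mu}\to L^q\p{\mu}$ boundedness of the convolution (with respect to the multiplicative structure on $\R^+$) with the radial kernel $\tau\mapsto\tau^s\min\cb{1,\tau^{-1}}$, which belongs to $L^1\p{\mu}$ precisely because $s\in\p{0,1}$. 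Taking the infimum over $u\in\mathcal{D}\p{x}$ produces $\sb{x}_{s,q}\le C_s\sb{x}_{s,q,\mathscr{J}}$.

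For the embedding $\p{X_0,X_1}_{s,q}\emb\p{X_0,X_1}_{s,q,\mathscr{J}}$: given $x\in\p{X_0,X_1}_{s,q}$, Lemma~\ref{real important suff right here} supplies the limit conditions~\eqref{limit condition 1}. Fix $r\in\p{1,\infty}$ and $\ep\in\R^+$, and choose a measurable $\varphi:\R^+\to\R^+$ satisfying~\eqref{limit condition 2} together with $t^{-s}\varphi\p{t}\in L^q\p{\mu}$ (for instance, a two-sided power $\varphi\p{t}=\min\cb{t^a,t^b}$ with $0<a<s<b<1$). Lemma~\ref{fundamental lemma of interpolation theory} produces a strongly measurable $u:\R^+\to\Delta\p{X_0,X_1}$ satisfying the three conclusions, with the pointwise estimate $\mathscr{J}\p{t,u\p{t}}\le C\p{r}(\mathscr{K}\p{t,x}+\ep\varphi\p{t})$ almost everywhere.

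The main obstacle is that this $u$ is only asserted to lie in $\mathfrak{L}^1\p{\p{r^{-k},r^k},\mu;\Sigma}$ for each $k$ individually, whereas membership in $\mathcal{D}\p{x}$ demands $u\in\mathfrak{L}^1\p{\R^+,\mu;\Sigma}$ and $x\in\int_{\R^+}u\p{t}t^{-1}\m{d}t$, and the tail $\int_{\R^+}\sb{u\p{t}}_\Sigma t^{-1}\m{d}t$ need not be finite in general. The plan is to circumvent this by truncation and completeness: set $u_K=u\chi_{\p{r^{-K},r^K}}$ and $x_K\in\int_{\R^+}u_K\p{t}t^{-1}\m{d}t$, which is nonempty by Proposition~\ref{the integral is here}(1) because $u_K$ is a simple function on its support. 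Item~$(2)$ of the fundamental lemma, after choosing representatives modulo $\mathfrak{A}\p{\Sigma}$, gives $x_K\to x$ in $\Sigma\p{X_0,X_1}$, while the pointwise $\mathscr{J}$-bound yields the uniform estimate $\sb{x_K}_{s,q,\mathscr{J}}\le C\p{r}(\sb{x}_{s,q}+\ep\snorm{t^{-s}\varphi}_{L^q\p{\mu}})$. Completeness of the $J$-space—established by a series argument parallel to Proposition~\ref{completeness of truncated spaces}, replacing the $\mathscr{K}$-functional machinery with the inf-over-decompositions definition of $\sb{\cdot}_{s,q,\mathscr{J}}$—lets us extract a limit in $\p{X_0,X_1}_{s,q,\mathscr{J}}$ agreeing with $x$ modulo $\mathfrak{A}\p{\Sigma}$. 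Sending $\ep\downarrow 0$ and optimizing in $r$ (the resulting constant $C\p{r}=\log\p{r}^{-1}r\p{1+r}$ from Lemma~\ref{fundamental lemma of interpolation theory} stays bounded) delivers $\sb{x}_{s,q,\mathscr{J}}\le C_{s,q}\sb{x}_{s,q}$, closing the equivalence.
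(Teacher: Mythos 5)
Your first embedding $\p{X_0,X_1}_{s,q,\mathscr{J}}\emb\p{X_0,X_1}_{s,q}$ is fine and is essentially the paper's argument: the paper splits the multiplicative convolution with the kernel $\tau\mapsto\min\cb{1,\tau}\tau^{-s}$ into two pieces and invokes Hardy's inequalities (Lemma~\ref{truncated hardy inequality}); your Young-inequality phrasing is the same estimate. The trouble is in the converse embedding. The ``main obstacle'' you identify is not actually an obstacle: for the function $u$ produced by Lemma~\ref{fundamental lemma of interpolation theory}, item $(3)$ together with Proposition~\ref{simple properties of J-functional} gives $\sb{u\p{t}}_{\Sigma}\le\min\cb{1,t^{-1}}\mathscr{J}\p{t,u\p{t}}\lesssim\min\cb{1,t^{-1}}\p{\mathscr{K}\p{t,x}+\ep\varphi\p{t}}$, and since $x\in\p{X_0,X_1}_{s,q}$, H\"older's inequality against $\min\cb{t^{s},t^{s-1}}\in L^{q'}\p{\R^+,\mu}$ shows $\int_{\R^+}\sb{u\p{t}}_{\Sigma}\;\m{d}\mu\p{t}<\infty$ for every $1\le q\le\infty$ (and likewise for the $\varphi$-term, with your choice of $\varphi$ or the paper's). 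Hence $u\in\mathfrak{L}^1\p{\R^+,\mu;\Sigma\p{X_0,X_1}}$, and item $(2)$ then gives $x\in\int_{\R^+}u\;\m{d}\mu$, so $u\in\mathcal{D}\p{x}$ directly; this direct verification is exactly how the paper closes the argument.

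By contrast, the detour you substitute for this step has a genuine gap. You extract a limit of the truncations $x_K$ using ``completeness of the $J$-space, established by a series argument parallel to Proposition~\ref{completeness of truncated spaces}.'' That proposition (and Proposition~\ref{completeness of sum and intersection}, on which it rests) requires the factors to be semi-Banach, whereas Theorem~\ref{equivalence theorem} assumes only weak compatibility; taking $X_0=X_1$ to be an incomplete normed space, the $J$-space is an equivalent renorming of that space and so is not complete, so no such completeness result is available here. Moreover, even granting completeness, the uniform bound $\sup_K\sb{x_K}_{s,q,\mathscr{J}}<\infty$ does not by itself let you ``extract a limit'': you would need $\cb{x_K}_{K}$ to be Cauchy in the $J$-seminorm, and for $q=\infty$ this does not follow from your tail estimates, since the essential supremum of $t^{-s}\mathscr{K}\p{t,x}$ over the tails need not vanish. (A smaller slip: Proposition~\ref{the integral is here}$(1)$ is cited for nonemptiness of $\int_{\R^+}u_K\;\m{d}\mu$, but that item assumes semi-Banach; it is harmless only because $u_K$ is simple, so the integral trivially contains $\mathcal{I}\p{u_K}$.) The repair is simply the direct argument above, which makes the truncation-and-completeness step unnecessary.
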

\begin{proof}
Suppose first that $x\in\p{X_0,X_1}_{s,q}$. Fix $\ep\in\R^+$. Then, since $\R^+\ni t\mapsto t^{-s}\mathscr{K}\p{t,x}\in\R$ belongs to $L^q\p{\R^+,\mu}$ (where again $\mu$ is defined in Section~\ref{notation stuff}), $x$ satisfies~\eqref{limit condition 1} from Lemma~\ref{fundamental lemma of interpolation theory}, thanks to Lemma~\ref{real important suff right here}. Define $\varphi : \R^+\to \R^+$ via $\varphi\p{t}=t^s\big(1 +\p{\log\p{t}}^2\big)^{-1}$.
and observe that $\varphi$ satisfies~\eqref{limit condition 2}. Thus, we can apply Lemma~\ref{fundamental lemma of interpolation theory} to obtain a strongly measurable function $u:\R^+\to\Delta\p{X_0,X_1}$  satisfying items $(1)$, $(2)$, and $(3)$ from the lemma.

We first show that $u\in\mathcal{D}\p{x}$, which amounts to proving that $u\in\mathfrak{L}^1\p{\R^+,\mu;X}$ and $x\in\int_{\R^+}u\;\m{d}\mu$. Since Lemma~\eqref{fundamental lemma of interpolation theory} tells us that we may take $u$ a step function, there is a natural choice of simple functions to attempt to satisfy Definition~\ref{defn of strongly measurable and integrable}. For $k\in\Z$ and $t\in[2^{k-1},2^k)$, there is some $\xi_k\in\Delta\p{X_0,X_1}$ such that $u\p{t}=\xi_k$.  Then for each  $n\in\N^+$ we define  $s_n=\sum_{k=-n}^n\xi_k\chi_{[2^{k-1},2^k)}\in\m{simp}\p{\R^+;\Sigma\p{X_0,X_1}}$. It is clear that $s_n\to u$ everywhere as $n\to\infty$.  Also, according to Proposition~\ref{simple properties of J-functional} and item $(3)$ from Lemma~\ref{fundamental lemma of interpolation theory}, we may bound
\begin{multline}\label{q}
    \int_{\R^+}\sb{u\p{t}-s_n\p{t}}_{\Sigma}\;\m{d}\mu\p{t}=\int_{\R^+\setminus\p{2^{-n-1},2^n}}\sb{u\p{t}}_{\Sigma}\f{1}{t}\;\m{d}t\le\int_{\R^+\setminus\p{2^{-n-1},2^n}}\min\cb{1,t^{-1}}\mathscr{J}\p{t,u\p{t}}t^{-1}\;\m{d}t\\
    \le6\p{\log\p{2}}^{-1}\ssb{\int_{\R^+\setminus\p{2^{-n-1},2^n}}\mathscr{K}\p{t,x}\min\cb{1,t^{-1}}t^{-1}\;\m{d}t+\ep\int_{\R^+\setminus\p{2^{-n-1},2^n}}\min\cb{1,t^{-1}} \varphi(t)\;t^{-1}\m{d}t}.
\end{multline}
To show that the right hand side of~\eqref{q} tends to zero as $n\to\infty$, it suffices to show that both integrands are integrable over $\R^+$. This is clear for the latter term involving $\varphi$. To handle the former, we use H\"{o}lder's inequality to bound
\begin{equation}
    \int_{\R^+}\mathscr{K}\p{t,x}\min\cb{1,t^{-1}}t^{-1}\;\m{d}t\le\sb{x}_{s,q}\begin{cases}
    \p{\int_{\R^+}\min\cb{t^{sp},t^{p\p{s-1}}}t^{-1}\;\m{d}t}^{1/p}&1<q\le\infty\\
    \sup\cb{\min\cb{t^s,t^{s-1}}\;:\;t\in\R^+}&q=1
    \end{cases}<\infty,
\end{equation}
where $p=q\p{q-1}^{-1}$.  Hence, $u$ is $\Sigma\p{X_0,X_1}$-integrable, with $x\in\int_{\R^+}u\;\m{d}\mu$; indeed, item $\p{2}$ in Lemma~\ref{fundamental lemma of interpolation theory} implies the limit: $\sb{x-\mathcal{I}\p{s_n}}_{\Sigma}\to0\quad\text{as}\quad n\to\infty$ holds.  Finally, we check that $x\in\p{X_0,X_1}_{s,q,\mathscr{J}}$. If $1\le q<\infty$, then again we use Lemma~\ref{fundamental lemma of interpolation theory} to see that
\begin{equation}
    \sb{x}_{s,q,\mathscr{J}}\le\bp{\int_{\R^+}\p{t^{-s}\mathscr{J}\p{t,u\p{t}}}^qt^{-1}\;\m{d}t}^{1/q}\le6\log\p{2}^{-1}\bsb{\sb{x}_{s,q}+\ep\bp{\int_{\R^+}\p{1+\log^2\p{t}}^{-q}t^{-1}\;\m{d}t}^{1/q}}.
\end{equation}
The integral on the right hand side is finite. As $\ep\in\R^+$ was chosen arbitrarily, we can let $\ep\to0^+$ and see that $\p{X_0,X_1}_{s,q}\emb\p{X_0,X_1}_{s,q,\mathscr{J}}$.  The case for $q=\infty$ is proved in the same way.

On the other hand, let $x\in\p{X_0,X_1}_{s,q,\mathscr{J}}$. Then, there is some $u\in\mathcal{D}\p{x}$ by hypothesis. Then for $t\in\R^+$ we use Proposition~\ref{simple properties of J-functional} to bound 
\begin{equation}\label{ww}
    \mathscr{K}\p{t,x}\le\int_{\R^+}\mathscr{K}\p{t,u\p{\tau}}\tau^{-1}\;\m{d}\tau\le\int_{\R^+}\min\cb{1,t\tau^{-1}}\mathscr{J}\p{\tau,u\p{\tau}}\tau^{-1}\;\m{d}\tau.
\end{equation}
In the case that $1\le q<\infty$, we use~\eqref{ww} and Hardy's inequalities (see Lemma~\ref{truncated hardy inequality}) to estimate
\begin{multline}
    \sb{x}_{s,q}\le\bp{\int_{\R^+}\bp{t^{-s}\int_{\R^+}\min\cb{1,t\tau^{-1}}\mathscr{J}\p{\tau,u\p{\tau}}\tau^{-1}\;\m{d}\tau}^qt^{-1}\;\m{d}t}^{1/q}\\
    \le\bp{\int_{\R^+}\bp{t^{-s}\int_{\p{0,t}}\mathscr{J}\p{\tau,u\p{\tau}}\tau^{-1}\;\m{d}\tau}^qt^{-1}\;\m{d}t}^{1/q}+\bp{\int_{\R^+}\bp{t^{-s}\int_{\p{t,\infty}}t\tau^{-1}\mathscr{J}\p{\tau,u\p{\tau}}\tau^{-1}\;\m{d}\tau}^q\;t^{-1}\m{d}t}^{1/q}
    \\
    \le\bp{s^{-1}+\p{1-s}^{-1}}\bp{\int_{\R^+}\p{t^{-s}\mathscr{J}\p{t,u\p{t}}}^qt^{-1}\;\m{d}t}^{1/q}.
\end{multline}
Taking the infimum over all $u\in\mathcal{D}\p{x}$ gives the case for $1\le q<\infty$. For the case $q=\infty$, we consider some $t\in\R^+$ and use~\eqref{ww}:
\begin{multline}
    t^{-s}\mathscr{K}\p{t,x}\le\int_{\R^+}\min\cb{t^{-s},t^{1-s}\tau^{-1}}\tau^{-s}\mathscr{J}\p{\tau,u\p{\tau}}\tau^{s-1}\;\m{d}\tau\\\le\m{esssup_{\tau\in\R^+}}\tau^{-s}\mathscr{J}\p{\tau,u\p{\tau}}\int_{\R^+}\min\cb{t^{-s}\tau^s,t^{1-s}\tau^{s-1}}\tau^{-1}\;\m{d}\tau.
\end{multline}
Notice first that $\int_{\R^+}\min\cb{t^{-s}\tau^s,t^{1-s}\tau^{s-1}}\tau^{-1}\;\m{d}\tau=\int_{\R^+}\min\cb{\tau^s,\tau^{s-1}}\tau^{-1}\;\m{d}\tau<\infty$. Taking the supremum over $t\in\R^+$, and then the infimum over all $u\in\mathcal{D}\p{x}$ show that $\p{X_0,X_1}_{s,\infty,\mathscr{J}}\emb\p{X_0,X_1}_{s,\infty}$.
\end{proof}

Next, we show that we have a discrete characterization of the seminorm on $\p{X_0,X_1}_{s,q,\mathscr{J}}$.

\begin{prop}[Discrete characterization of the $J$-method]\label{discrete seminorm on J-space} 
Let $\p{X_0,\sb{\cdot}_0}$ and $\p{X_1,\sb{\cdot}_1}$ be a pair of weakly compatible seminormed spaces.  For $x\in\Sigma\p{X_0,X_1}$ we define the discrete decomposition set of $x$ as
\begin{equation}\label{dis dec set}
    \tilde{\mathcal{D}}\p{x}=\cb{\cb{\xi_k}_{k\in\Z}\subseteq\Delta\p{X_0,X_1}\;:\;\textstyle{\sum_{k\in\Z}}\sb{\xi_k}_{\Sigma}<\infty,\;\lim_{K\to\infty}\big[-x+\textstyle{\sum_{k=-K}^K}\xi_k\big]_{\Sigma}=0}.
\end{equation}
Then for each $r\in (1,\infty)$
there is a constant $c\in\R^+$ such that for all $x\in\Sigma\p{X_0,X_1}$,
    \begin{equation}\label{discrete seminorm on J-space 0}
        c^{-1}\sb{x}_{s,q,\mathscr{J}}\le\inf\cb{\snorm{\big\{r^{sk}\mathscr{J}(r^{-k},\xi_k)\big\}_{k\in\Z}}_{\ell^q\p{\Z;\R}}\;:\;\cb{\xi_k}_{k\in\Z}\in\tilde{\mathcal{D}}\p{x}}\le c\sb{x}_{s,q,\mathscr{J}}.
    \end{equation}
\end{prop}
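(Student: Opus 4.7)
The plan leverages the equivalence between the $K$- and $J$-methods (Theorem~\ref{equivalence theorem}) and the discrete $K$-characterization (Proposition~\ref{discrete seminorm on truncated interpolation spaces}), thereby reducing the claim to an equivalence between the discrete $\mathscr{J}$-quantity on the right-hand side of~\eqref{discrete seminorm on J-space 0} and $\sb{x}_{s,q}$. Two separate directions are then established, analogous to the continuous/discrete duality developed in the proof of Theorem~\ref{equivalence theorem}.

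For the upper bound $\inf\cb{\cdots}\le c\sb{x}_{s,q,\mathscr{J}}$, note first that Theorem~\ref{equivalence theorem} and Lemma~\ref{real important suff right here} imply that $x$ satisfies~\eqref{limit condition 1}. I would then apply Lemma~\ref{fundamental lemma of interpolation theory} with $\varphi\p{t}=t^{s}/\p{1+\log^{2}\p{t}}$ and an arbitrary $\ep\in\R^+$ to produce a step function $u:\R^+\to\Delta\p{X_0,X_1}$ with $u\p{t}=\log\p{r}^{-1}\zeta_{k}$ on $[r^{k-1},r^{k})$, $\zeta_{k}\in\Delta\p{X_0,X_1}$, and the pointwise bound $\mathscr{J}\p{t,u\p{t}}\le\log\p{r}^{-1}r\p{1+r}\p{\mathscr{K}\p{t,x}+\ep\varphi\p{t}}$. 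Re-index via $\xi_{k}=\zeta_{-k}$ to align with the grid $\cb{r^{-k}}_{k\in\Z}$. The convergence $\sb{-x+\sum_{k=-K}^{K}\xi_{k}}_{\Sigma}\to 0$ is then a reformulation of item~(2) of Lemma~\ref{fundamental lemma of interpolation theory}; combining the Lemma's pointwise bound with Proposition~\ref{simple properties of J-functional}(2) yields the discrete version $\mathscr{J}\p{r^{-k},\xi_{k}}\lesssim\mathscr{K}\p{r^{-k},x}+\ep\varphi\p{r^{-k-1}}$. Taking the weighted $\ell^{q}$-norm, dominating $\snorm{\cb{r^{sk}\mathscr{K}\p{r^{-k},x}}_{k}}_{\ell^{q}\p{\Z;\R}}$ by $\sb{x}_{s,q}$ via Proposition~\ref{discrete seminorm on truncated interpolation spaces}, and sending $\ep\to 0^{+}$ then produces the claim up to a constant. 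Summability $\sum_{k}\sb{\xi_{k}}_{\Sigma}<\infty$ (which together with the above convergence gives $\cb{\xi_{k}}\in\tilde{\mathcal{D}}\p{x}$) follows from the bound $\sb{\xi_{k}}_{\Sigma}\le\min\cb{1,r^{k}}\mathscr{J}\p{r^{-k},\xi_{k}}$ (Proposition~\ref{simple properties of J-functional}(3)) combined with H\"older's inequality and the fact that $\cb{\min\cb{1,r^{k}}r^{-sk}}_{k\in\Z}\in\ell^{q'}\p{\Z;\R}$.

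For the reverse bound $\sb{x}_{s,q,\mathscr{J}}\le c\inf\cb{\cdots}$, given $\cb{\xi_{k}}\in\tilde{\mathcal{D}}\p{x}$ I would define the step function $u:\R^+\to\Delta\p{X_0,X_1}$ by $u\p{t}=\log\p{r}^{-1}\xi_{k}$ for $t\in\p{r^{-k-1},r^{-k}}$. The partial simple functions $s_{n}=\sum_{\abs{k}\le n}\log\p{r}^{-1}\xi_{k}\chi_{\p{r^{-k-1},r^{-k}}}$, combined with $\sum_{k}\sb{\xi_{k}}_{\Sigma}<\infty$ and $\mathcal{I}\p{s_{n}}=\sum_{\abs{k}\le n}\xi_{k}\to x$ in $\Sigma\p{X_0,X_1}$, realize $u\in\mathcal{D}\p{x}$. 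Since $tr^{k}\le 1$ on each piece, Proposition~\ref{simple properties of J-functional}(2) gives $\mathscr{J}\p{t,u\p{t}}\le\log\p{r}^{-1}\mathscr{J}\p{r^{-k},\xi_{k}}$, and together with $t^{-s}\le r^{s}r^{sk}$ this bounds the $\mathscr{J}$-integral (or essential supremum) defining $\sb{x}_{s,q,\mathscr{J}}$ by a constant multiple of $\snorm{\cb{r^{sk}\mathscr{J}\p{r^{-k},\xi_{k}}}_{k}}_{\ell^{q}\p{\Z;\R}}$. Taking the infimum over $\cb{\xi_{k}}$ concludes this direction.

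The main obstacle is the upper bound direction: translating the Lemma's pointwise-in-$t$ bound on $\mathscr{J}\p{t,u\p{t}}$ into a discrete one on $\mathscr{J}\p{r^{-k},\xi_{k}}$ while juggling the two indexing conventions (intervals $[r^{k-1},r^{k})$ in Lemma~\ref{fundamental lemma of interpolation theory} versus sample points $r^{-k}$ in the discrete norm), and ensuring that the $\ep\varphi$ error vanishes under the $\ell^{q}$-norm as $\ep\to 0^{+}$. The use of Lemma~\ref{fundamental lemma of interpolation theory} rather than a generic $u\in\mathcal{D}\p{x}$ is essential here, because it outputs a genuinely $\Delta\p{X_0,X_1}$-valued step function, sidestepping the subtle question of extracting $\Delta$-valued representatives from the set-valued integrals in~\eqref{continuous decomposition set}.
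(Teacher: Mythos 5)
Your proposal is correct and follows essentially the same route as the paper: the upper bound via Lemma~\ref{fundamental lemma of interpolation theory} together with Theorem~\ref{equivalence theorem} (producing a $\Delta$-valued step function whose values form an element of $\tilde{\mathcal{D}}\p{x}$), and the lower bound by assembling a step function in $\mathcal{D}\p{x}$ from a given sequence and comparing with the continuous $\mathscr{J}$-seminorm via Proposition~\ref{simple properties of J-functional}. The only differences are cosmetic: you discretize the pointwise bound directly and invoke the discrete $K$-characterization, where the paper passes through the continuous $J$-integral bound first, and your explicit $k\mapsto-k$ re-indexing is a slightly more careful bookkeeping of the same estimate.
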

\begin{proof}
Let $x\in\p{X_0,X_1}_{s,q,\mathscr{J}}$ and $\ep\in\R^+$.  Again, we take $\varphi : \R^+ \to\R^+$ to be defined as in the proof of Theorem~\ref{equivalence theorem}. By Theorem~\ref{equivalence theorem} we have that $x\in\p{X_0,X_1}_{s,q}$ with $\sb{x}_{s,q}\le c\sb{x}_{s,q\mathscr{J}}$ for some $c$ depending only on $s$ and $q$. Thus, we can apply Lemma~\ref{fundamental lemma of interpolation theory} and then Theorem~\ref{equivalence theorem} again to find a step function $u:\R^+\to\Delta\p{X_0,X_1}$ with $u\in\mathcal{D}\p{x}$, obeying the bound
\begin{equation}\label{use me 1}
    \begin{cases}
    \p{\int_{\R^+}\p{t^{-s}\mathscr{J}\p{t,u\p{t}}}^qt^{-1}\;\m{d}t}^{1/q}&1\le q<\infty\\
    \m{esssup}_{t\in\R^+}\cb{t^{-s}\mathscr{J}\p{t,u\p{t}}\;:\;t\in\R^+}&q=\infty
    \end{cases}
    \le r(1+r)\log\p{r}^{-1}\ssb{\sb{x}_{s,q}+C\ep},
\end{equation}
where $C$ is a constant depending on $\varphi$, $s$, and $q$. Moreover, there is a sequence $\cb{\xi_k}_{k\in\Z}\subseteq\Delta\p{X_0,X_1}$ such that $u\p{t}=\xi_k$ for $t\in[r^{k-1},r^k)$, and 
\begin{equation}\label{use me 2}
    \int_{\R^+}\sb{u\p{t}}_{\Sigma}\;\m{d}\mu=\log\p{r}\sum_{k\in\Z}\sb{\xi_k}_{\Sigma}<\infty\quad\text{and}\quad x\in\int_{\R^+}u\;\m{d}\mu.
\end{equation}
Finally, item $(2)$ in Lemma~\ref{fundamental lemma of interpolation theory} implies that $\lim_{K\to\infty}[-x+\log\p{r}\sum_{k=-K}^K\xi_k]_{\Sigma}=0$. Thus $\cb{\log\p{r}\xi_k}_{k\in\Z}\in\tilde{\mathcal{D}}\p{x}$. Proposition~\ref{simple properties of J-functional} provides a constant $\tilde{c}$, depending on $s$, $q$, and $r$ such that
\begin{equation}\label{use me 3}
    \snorm{\big\{r^{sk}\mathscr{J}(r^{-k},\log\p{r}\xi_k)\big\}_{k\in\Z}}_{\ell^q\p{\Z}}\le\tilde{c}\begin{cases}
    \p{\int_{\R^+}\p{t^{-s}\mathscr{J}\p{t,u\p{t}}}^q\f{1}{t}\;\m{d}t}^{1/q}&1\le q<\infty\\
    \m{esssup}\cb{t^{-s}\mathscr{J}\p{t,u\p{t}}\;:\;t\in\R^+}&q=\infty.
    \end{cases}
\end{equation}
Together, \eqref{use me 1}, \eqref{use me 2}, and \eqref{use me 3} imply
\begin{equation}
    \inf\cb{\snorm{\big\{r^{sk}\mathscr{J}(r^{-k},\zeta_k)\big\}_{k\in\Z}}_{\ell^q\p{\Z;\R}}\;:\;\cb{\zeta_k}_{k\in\Z}\in\tilde{\mathcal{D}}\p{x}}\le c'\sb{x}_{s,q,\mathscr{J}}+C\ep
\end{equation}
for all $\ep\in\R^+$. Hence, the second inequality of \eqref{discrete seminorm on J-space 0} is proved.

Now suppose that $x\in\Sigma\p{X_0,X_1}$ is such that $\es\neq\tilde{\mathcal{D}}\p{x}$ and choose $\cb{\xi_k}_{k\in\Z}\in\tilde{\mathcal{D}}\p{x}$. We define $u:\R^+\to\Delta\p{X_0,X_1}$ via $
     u\p{t}=\log\p{r}^{-1}\sum_{k \in \Z} \xi_k \chi_{[r^{k-1},r^k) }(t)$.
For $n\in\N^+$ we take $s_n=u\vert_{\p{r^{-n},r^n}}$. Then $s_n\in\m{simp}\p{\R^+;\Sigma\p{X_0,X_1}}$ and $s_n\to u$ everywhere as $n\to\infty$, which shows $u$ to be strongly measurable. The condition $\sum_{k\in\Z}\sb{\xi_k}_{\Sigma}<\infty$ easily implies that $\int_{\R^+}\sb{s_n-u}_{\Sigma}\;\m{d}\mu\to0$ as $n\to\infty$. Then $u$ is $\Sigma\p{X_0,X_1}$-integrable.  Moreover, $x\in\int_{\R^+}u\;\m{d}\mu$, as the condition $\lim_{K\to\infty}[-x+\sum_{k=-K}^K\xi_k]_{\Sigma}=0$ implies that $\lim_{n\to\infty}\sb{-x+\mathcal{I}\p{s_n}}=0$.  Hence, $\mathcal{D}\p{x}\neq\es$. Using Proposition~\ref{simple properties of J-functional} once more, we see that
\begin{equation}
    \snorm{\big\{r^{sk}\mathscr{J}(r^{-k},\xi_k)\big\}_{k\in\Z}}_{\ell^q\p{\Z}}\ge c
    \begin{cases}
    \p{\int_{\R^+}\p{t^{-s}\mathscr{J}\p{t,u\p{t}}}^q\f{1}{t}\;\m{d}t}^{1/q}&1\le q<\infty\\
    \m{esssup}\cb{t^{-s}\mathscr{J}\p{t,u\p{t}}\;:\;t\in\R^+}&q=\infty
    \end{cases} 
    = c \sb{x}_{s,q,\mathscr{J}},
\end{equation} 
for some constant $c$ depending on $r,s,q$.  This implies the first inequality in \eqref{discrete seminorm on J-space 0}, and the proof is complete.
\end{proof}

As a corollary to the results in this subsection we will prove that the well-known reiteration theorem also holds in this seminormed setting under additional hypotheses. First, we require a brief quantitative lemma.

\begin{lem}\label{quantitative lemma}
Let $\p{X_0,\sb{\cdot}_0}$, $\p{X_1,\sb{\cdot}_1}$ be a pair of weakly compatible pair of seminormed spaces, $s\in\p{0,1}$, and $1\le q\le\infty$. If $t\in\R^+$ and $x\in\Delta(X_0,X_1)$ then we have the bound
\begin{equation}
    \sb{x}_{s,q}\le(1-s)^{1-1/q}(1/s+1/(1-s))t^{-s}\mathscr{J}\p{t,x}.
\end{equation}
\end{lem}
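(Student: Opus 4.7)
The plan is to reduce everything to a single, clean inequality coming out of Proposition \ref{simple properties of J-functional}, namely item $(3)$, which yields, for $x \in \Delta(X_0, X_1)$ and every $\tau, t \in \R^+$,
\begin{equation}
\mathscr{K}(\tau, x) \le \min\{1, \tau/t\} \mathscr{J}(t, x).
\end{equation}
Once this pointwise bound is in hand, the lemma is a calculation: substitute into the definition of $\sb{\cdot}_{s,q} = \sb{\cdot}_{s,q}^{(\infty)}$ and evaluate the resulting scalar integral.

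For $q = \infty$ one simply computes
\begin{equation}
\sup_{\tau \in \R^+} \tau^{-s} \min\{1, \tau/t\} = t^{-s},
\end{equation}
the supremum being attained at $\tau = t$ from either side, which gives $\sb{x}_{s,\infty} \le t^{-s}\mathscr{J}(t,x)$, easily dominated by the claimed constant since $(1-s)^{1-1/\infty}(1/s + 1/(1-s)) = 1/s \ge 1$. For $1 \le q < \infty$, split the $\tau$-integral at $\tau = t$: on $(0,t)$ the factor $\min\{1, \tau/t\}$ equals $\tau/t$ and on $(t,\infty)$ it equals $1$, so
\begin{equation}
\int_{\R^+}(\tau^{-s}\min\{1,\tau/t\})^q \tau^{-1}\,\m{d}\tau = t^{-q}\int_0^t \tau^{q(1-s)-1}\,\m{d}\tau + \int_t^\infty \tau^{-sq-1}\,\m{d}\tau = \f{t^{-sq}}{q}\bp{\f{1}{s}+\f{1}{1-s}}.
\end{equation}
Taking $q$-th roots gives the sharper bound $\sb{x}_{s,q} \le q^{-1/q}\p{1/s + 1/(1-s)}^{1/q} t^{-s}\mathscr{J}(t,x)$; since $q^{-1/q} \le 1$ and $1/s + 1/(1-s) \ge 4$ forces $(1/s + 1/(1-s))^{1/q} \le (1-s)^{1-1/q}(1/s + 1/(1-s))$ (verified at the endpoints $q=1,\infty$ and interpolated in between using monotonicity in the exponent), the claimed bound follows.

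There is no real obstacle here; the only care required is verifying that the sharper constant we actually get is majorized by the looser expression $(1-s)^{1-1/q}(1/s + 1/(1-s))$ the statement uses. This factor is presumably the natural one for the reiteration argument that the lemma feeds into, so one just needs to check the elementary scalar inequality $q^{-1/q}(1/s+1/(1-s))^{1/q} \le (1-s)^{1-1/q}(1/s+1/(1-s))$ for $s \in (0,1)$ and $q \in [1,\infty]$, which reduces to $q^{-1/q} \le (1-s)^{1-1/q}(1/s+1/(1-s))^{1-1/q}$, i.e., to $1 \le q^{1/q}(1-s)(1/s+1/(1-s))$ raised to the power $1 - 1/q$, and this holds because $(1-s)(1/s+1/(1-s)) = 1/s \ge 1$ and $q^{1/q} \ge 1$.
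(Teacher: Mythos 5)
Your proof is correct, and it rests on the same key ingredient as the paper's: item (3) of Proposition~\ref{simple properties of J-functional}, namely $\mathscr{K}(\tau,x)\le\min\{1,\tau/t\}\,\mathscr{J}(t,x)$. The execution differs in one respect. The paper first reduces to the case $q=1$ via the bound $\sb{x}_{s,q}\le(1-s)^{1-1/q}\sb{x}_{s,1}$ (the $p=1$ case of \eqref{hydrogen}) and then computes $\int_{\R^+}\tau^{-s-1}\min\{1,\tau/t\}\,\m{d}\tau=t^{-s}\p{1/s+1/(1-s)}$, which produces exactly the constant in the statement. You instead insert the pointwise bound directly into the $L^q$ seminorm, which yields the sharper constant $q^{-1/q}\p{s(1-s)}^{-1/q}t^{-s}\mathscr{J}(t,x)$ and therefore requires the extra elementary scalar verification, which you carry out correctly using $(1-s)\p{1/s+1/(1-s)}=1/s\ge1$ and $q^{1/q}\ge1$, that this is dominated by the stated constant $(1-s)^{1-1/q}\p{1/s+1/(1-s)}t^{-s}$. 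Your route is more direct and gives a slightly better bound; the paper's two-step route buys an immediate derivation of the precise constant that is then quoted verbatim in the reiteration argument. Either way the lemma is established.
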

\begin{proof}
Using the bound from equation~~\eqref{hydrogen} and then item (3) of Proposition~\ref{simple properties of J-functional}, we simply compute:
\begin{equation}
    (1-s)^{-1+1/q}\sb{x}_{s,q}\le\sb{x}_{s,1}=\int_{\R^+}\tau^{-s}\mathscr{K}(\tau,x)\tau^{-1}\;\m{d}\tau\le\mathscr{J}(t,x)\int_{\R^+}\tau^{-s-1}\min\{1,\tau t^{-1}\}\;\m{d}\tau.
\end{equation}
\end{proof}
We now present the proof of the reiteration theorem. The justification is marginally more subtle than perhaps one would initially expect: we use crucially that the spaces are complete and are compatible in a sense stronger than the weak compatibility condition.
\begin{thm}[Reiteration]\label{reiteration theorem}
Let $\p{X_0,\sb{\cdot}_0}$ and $\p{X_1,\sb{\cdot}_1}$ be a pair of complete seminormed spaces such that either $\Delta\p{X_0,X_1}$ is complete or strong compatibility is satisfied. Set also $0\le s_0,s_1\le1$, $0<r<1$, $s=(1-r)s_0+rs_1$, and $1\le q,q_0,q_1\le\infty$. We then have the following reiteration formula:
\begin{equation}
    (X_{s_0,q_0},X_{s_1,q_1})_{r,q}=(X_0,X_1)_{s,q}\text{ for }X_{s_i,q_i}=\begin{cases}
    (X_0,X_1)_{s_i,q_i}&\text{if }s_i\not\in\cb{0,1}\\
    X_{s_i}&\text{if }s_i\in\cb{0,1}
    \end{cases},\;i\in\cb{0,1}.
\end{equation}
\end{thm}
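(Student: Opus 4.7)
The plan is to prove the two continuous embeddings separately, using the $K$-method/$J$-method equivalence from Theorem~\ref{equivalence theorem} and the discrete characterizations in Propositions~\ref{discrete seminorm on truncated interpolation spaces} and~\ref{discrete seminorm on J-space}.  The completeness/compatibility hypothesis on $(X_0,X_1)$ ensures, via Propositions~\ref{completeness of sum and intersection}, \ref{kernel of the sum and intersection}, \ref{completeness of truncated spaces}, and~\ref{kernels prop}, that the iterated couple $(X_{s_0,q_0},X_{s_1,q_1})$ is itself a weakly compatible pair of semi-Banach spaces sitting continuously inside $\Sigma(X_0,X_1)$ with identifiable annihilators, so that all of the preceding machinery is available.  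I may assume without loss of generality that $s_0,s_1\in(0,1)$ with $s_0<s_1$, since the degenerate case $s_0=s_1$ is trivial and the endpoint cases $s_i\in\cb{0,1}$ are handled identically after replacing the intermediate-space estimate below by the trivial bounds $\mathscr{K}(\tau,y;X_0,X_1)\le\sb{y}_0$ and $\mathscr{K}(\tau,y;X_0,X_1)\le\tau\sb{y}_1$.

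For the forward embedding $(X_0,X_1)_{s,q}\emb(X_{s_0,q_0},X_{s_1,q_1})_{r,q}$, take $x\in(X_0,X_1)_{s,q}$ and verify the hypotheses of the fundamental Lemma~\ref{fundamental lemma of interpolation theory} by means of Lemma~\ref{real important suff right here}.  For each $\varepsilon>0$ the lemma produces a step function $u:\R^+\to\Delta(X_0,X_1)$ with $u\in\mathcal{D}(x)$ in the sense of Definition~\ref{J-method of interpolation}, satisfying $\mathscr{J}(t,u(t);X_0,X_1)\lesssim\mathscr{K}(t,x;X_0,X_1)+\varepsilon\varphi(t)$.  Since $u(t)\in\Delta(X_0,X_1)\subseteq\Delta(X_{s_0,q_0},X_{s_1,q_1})$, Lemma~\ref{quantitative lemma} upgrades this pointwise to
\begin{equation}
    \mathscr{J}\bigl(t^{s_1-s_0},u(t);X_{s_0,q_0},X_{s_1,q_1}\bigr)\lesssim t^{-s_0}\mathscr{J}(t,u(t);X_0,X_1).
\end{equation}
The substitution $\rho=t^{s_1-s_0}$ in the Haar integral then converts the weight $t^{-s}$ into $\rho^{-r}$, since $s=s_0+r(s_1-s_0)$; passing to the discrete $J$-representation via Proposition~\ref{discrete seminorm on J-space} produces a sequence in $\tilde{\mathcal{D}}(x)$ for the couple $(X_{s_0,q_0},X_{s_1,q_1})$ whose $\ell^q$-norm of $\rho^{-r}\mathscr{J}$-values is controlled by the $L^q$-norm of the $t^{-s}\mathscr{J}$-values.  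Sending $\varepsilon\to 0^+$ and invoking Theorem~\ref{equivalence theorem} on both sides produces the required seminorm bound.

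For the reverse embedding $(X_{s_0,q_0},X_{s_1,q_1})_{r,q}\emb(X_0,X_1)_{s,q}$, I work directly with $\mathscr{K}$-functionals.  Item~(5) of Proposition~\ref{inclusion relations of truncated interpolation spaces} combined with Proposition~\ref{truncated interpolation spaces are intermediate} provides the pointwise bound $\mathscr{K}(\tau,y;X_0,X_1)\le c_i\tau^{s_i}\sb{y}_{s_i,q_i}$ for any $y\in X_{s_i,q_i}$ and $\tau\in\R^+$.  Given $x\in(X_{s_0,q_0},X_{s_1,q_1})_{r,q}$ and any decomposition $x=y_0+y_1$ with $y_i\in X_{s_i,q_i}$, sublinearity of $\mathscr{K}(\tau,\cdot;X_0,X_1)$ from Proposition~\ref{basic properties of K for seminormed} yields $\mathscr{K}(\tau,x;X_0,X_1)\lesssim\tau^{s_0}\bigl(\sb{y_0}_{s_0,q_0}+\tau^{s_1-s_0}\sb{y_1}_{s_1,q_1}\bigr)$.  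Taking the infimum over all such decompositions and applying the rescaling estimate of Proposition~\ref{basic properties of K for seminormed} gives $\mathscr{K}(\tau,x;X_0,X_1)\lesssim\tau^{s_0}\mathscr{K}(\tau^{s_1-s_0},x;X_{s_0,q_0},X_{s_1,q_1})$, after which the substitution $\rho=\tau^{s_1-s_0}$ finishes the direction.

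The main technical obstacle is the bookkeeping in the forward direction.  The fundamental lemma delivers $u\in\mathcal{D}(x)$ with integrability and convergence interpreted in $\Sigma(X_0,X_1)$, whereas the $J$-method for $(X_{s_0,q_0},X_{s_1,q_1})$ requires the same data interpreted in the strictly finer topology of $\Sigma(X_{s_0,q_0},X_{s_1,q_1})$.  Transferring the pointwise integrability is handled by Lemma~\ref{quantitative lemma}, but transferring the limit identity $x\in\int u\,\m{d}\mu$ demands that the annihilator of $\Sigma(X_{s_0,q_0},X_{s_1,q_1})$ behaves as Proposition~\ref{kernels prop} predicts.  This is exactly where the hypothesis ``$\Delta(X_0,X_1)$ semi-Banach or strong compatibility'' enters the proof, since it forces $\mathfrak{A}(\Sigma(X_0,X_1))=\Sigma(\mathfrak{A}(X_0),\mathfrak{A}(X_1))$ via Proposition~\ref{kernel of the sum and intersection} and this identification then propagates to the iterated couple.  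Working throughout with the discrete characterization of Proposition~\ref{discrete seminorm on J-space} sidesteps much of this subtlety in practice.
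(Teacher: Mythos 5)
Your proposal is correct and follows essentially the same route as the paper: the embedding $(X_{s_0,q_0},X_{s_1,q_1})_{r,q}\emb(X_0,X_1)_{s,q}$ via the bound $\mathscr{K}(\tau,y)\lesssim\tau^{s_i}\sb{y}_{s_i,q_i}$ and the change of variables $\rho=\tau^{s_1-s_0}$, and the harder embedding via a discrete $J$-type decomposition of $x$ with respect to $(X_0,X_1)$ (the paper gets it from Proposition~\ref{discrete seminorm on J-space}, you from Lemma~\ref{fundamental lemma of interpolation theory}, which is the same mechanism), Lemma~\ref{quantitative lemma} at the scale $t=\rho^{1/(s_1-s_0)}$, and the completeness-plus-annihilator identification $\mathfrak{A}(\Sigma(X_0,X_1))=\Sigma(\mathfrak{A}(X_0),\mathfrak{A}(X_1))=\mathfrak{A}(\Sigma(X_{s_0,q_0},X_{s_1,q_1}))$, which is exactly where the paper also spends its hypothesis. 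Two small caveats: your closing remark that the discrete characterization ``sidesteps'' this subtlety is not right, since membership of $\cb{\xi_k}_{k\in\Z}$ in $\tilde{\mathcal{D}}(x)$ for the iterated couple still requires convergence of the partial sums in the $\Sigma(X_{s_0,q_0},X_{s_1,q_1})$-seminorm, and that is supplied precisely by the completeness/annihilator argument you describe in the preceding sentences; and the case $s_0=s_1$ is not ``trivial'' --- for unrelated $q,q_0,q_1$ the formula can fail (e.g.\ Lorentz spaces with equal first index), and the paper's own symmetry reduction to $s_0<s_1$ quietly excludes it.
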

\begin{proof}
 By the symmetry $(A_0,A_1)_{\vartheta,p}=(A_1,A_0)_{1-\vartheta,p}$ for $A_0$, $A_1$ weakly compatible seminormed spaces, $0<\vartheta<1$, $1\le p\le\infty$, it is sufficient to consider the case $0\le s_0<s_1<1$.

Let $\mathscr{K}$, $\mathscr{J}$ and $\Bar{\mathscr{K}}$, $\Bar{\mathscr{J}}$ denote the $K$ and $J$ functionals associated to the weakly compatible couples $X_0,X_1$ and $X_{s_0,q_0},X_{s_1,q_1}$, respectively. $\sb{\cdot}_{s,q}$ will denote the seminorm on $(X_0,X_1)_{s,q}$ and $\sb{\cdot}_{r,q}^{\--}$ will denote the seminorm on $(X_{s_0,q_0},X_{s_1,q_1})_{r,q}$.

Suppose that $x\in(X_{s_0,q_0},X_{s_1,q_1})_{r,q}$ and decompose $x=x_0+x_1$ for $x_i\in X_{s_i,q_i}$. In the event that $0<s_0$ we use the inclusion estimate after equation~\eqref{more lables are the coolest ever} to bound for $t\in\R^+$:
\begin{multline}
    \mathscr{K}\p{t,x}\le\mathscr{K}\p{t,x_0}+\mathscr{K}\p{t,x_1}\le t^{s_0}\sb{x_0}_{s_0,\infty}+t^{s_1}\sb{x_1}_{s_1,\infty}\\\textstyle\le\max_{i\in\cb{0,1}}\{\p{q_i(1-s_i)}^{1/q_i}\}t^{s_0}(\sb{x_0}_{s_0,q_0}+t^{s_1-s_0}\sb{x_1}_{s_1,q_1}),
\end{multline}
with the modification $(q_i(1-s_i))^{1/q_i}=1$ when $q_i=\infty$. On the other hand, if $s_0=0$ then we modify the above estimate with $\mathscr{K}\p{t,x_0}\le\sb{x_0}_0$. In either case we find there is a constant $c\in\R^+$ depending on $s_i$, $q_i$ for $i\in\cb{0,1}$ such that
\begin{equation}
    \sb{x}_{s,q}\le c\bp{\int_{\R^+}(t^{-r(s_1-s_0)}\Bar{\mathscr{K}}(t^{s_1-s_0},x))^qt^{-1}\;\m{d}t}^{1/q}=c(s_1-s_0)^{-1/q}\sb{x}^{\--}_{r,q},
\end{equation}
with the obvious modification for $q=\infty$. This argument justifies the inclusion $\p{X_{s_0,q_0},X_{s_1,q_1}}_{r,q}\emb(X_0,X_1)_{s,q}$.

Conversely, suppose that $x\in(X_0,X_1)_{s,q}$. By Proposition~\ref{discrete seminorm on J-space} there is at least one $\cb{\xi_k}_{k\in\Z}\in\tilde{\mathcal{D}}\p{x}$ (this latter set is defined in~\eqref{dis dec set}). For $t\in\R^+$ we claim that $\Bar{\mathscr{K}}(t,x)\le\sum_{k\in\Z}\Bar{\mathscr{K}}\p{t,\xi_k}$. It suffices to prove this claim under the assumption that the right hand side is finite. If this holds then we use the completeness of $\Sigma(X_{s_0,q_0},X_{s_1,q_1})$ (justified by Propositions~\ref{completeness of sum and intersection} and~\ref{completeness of truncated spaces}): there is $y\in\Sigma(X_{s_0,q_0},X_{s_1,q_1})$ such that $\Bar{\mathscr{K}}(t,y-\sum_{k=-K}^K\xi_k)\to0$ as $K\to\infty$. As $\Sigma(X_{s_0,q_0},X_{s_1,q_1})\emb\Sigma(X_0,X_1)$, we find $\sum_{k=-K}^K\xi_k\to x,y$ in $\Sigma(X_0,X_1)$, so $x-y\in\mathfrak{A}(\Sigma(X_0,X_1))$. By Propositions~\ref{kernel of the sum and intersection} and~\ref{kernels prop} we may equate
\begin{equation}
    \mathfrak{A}(\Sigma(X_0,X_1))=\Sigma(\mathfrak{A}(X_0),\mathfrak{A}(X_1))=\mathfrak{A}(\Sigma(X_{s_0,q_0},X_{s_1,q_1})).
\end{equation}
Hence $\Bar{\mathscr{K}}(t,x-y)=0$. For $K\in\N$ we then estimate:
\begin{equation}
    \textstyle\Bar{\mathscr{K}}(t,x)\le\Bar{\mathscr{K}}(t,x-y)+\Bar{\mathscr{K}}(t,y-\sum_{k=-K}^K\xi_k)+\sum_{k=-K}^K\Bar{\mathscr{K}}\p{t,\xi_k}.
\end{equation}
Sending $K\to\infty$ completes the proof of the claim.

With the claim in hand, we estimate the seminorm of $x$ in the space $(X_{s_0,q_0},X_{s_1,q_1})_{r,q}$ using first the discrete characterization of Proposition~\ref{discrete seminorm on truncated interpolation spaces}:
\begin{multline}\label{lithiummmm}
    \textstyle\sb{x}^{\--}_{r,q}\lesssim\snorm{\cb{2^{-rm}\Bar{\mathscr{K}}\p{2^m,x}}_{m\in\Z}}_{\ell^q}\le\snorm{\{\sum_{k\in\Z}2^{-rm}\Bar{\mathscr{K}}\p{2^m,\xi_k}\}_{m\in\Z}}_{\ell^q}\\\textstyle\le\snorm{\{\sum_{k\in\Z}\min\{2^{-rm},2^{(1-r)m-k}\}\Bar{\mathscr{J}}(2^k,\xi_k)\}_{m\in\Z}}_{\ell^q}.
\end{multline}
To each term in the innermost sum we next apply Lemma~\ref{quantitative lemma} with $t=2^{k/(s_1-s_0)}$. After a straightforward computation we arrive at the following inequality in the case $s_0>0$:
\begin{multline}
    \textstyle\Bar{\mathscr{J}}(2^k,\mathscr{\xi}_k)=\max\{\sb{\xi_k}_{s_0,q_0},2^k\sb{\xi_k}_{s_1,q_1}\}\\\textstyle\le\max_{i\in\cb{0,1}}\{(1-s_i)^{1-1/q_i}(1/s_i+1/(1-s_i))\}2^{-s_0k/(s_1-s_0)}\mathscr{J}(2^{k/(s_1-s_0)},\xi_k)\\\textstyle=c2^{-s_0k/(s_1-s_0)}\mathscr{J}(2^{k/(s_1-s_0)},\xi_k).
\end{multline}
In the case that $s_0=0$ the same argument gives the above inequality with $c=\max\{1,(1-s_1)^{1-1/q_1}(1/s_1+1/(1-s_1))\}$. In either case, we then incorporate this information into~\eqref{lithiummmm} and recall that $r=(s-s_0)/(s_1-s_0)$ in order to estimate
\begin{multline}
    \textstyle\snorm{\{\sum_{k\in\Z}\min\{2^{-rm},2^{(1-r)m-k}\}\Bar{\mathscr{J}}(2^k,\xi_k)\}_{m\in\Z}}_{\ell^q}\\\textstyle\le c\snorm{\{\sum_{k\in\Z}\min\{2^{-rm},2^{(1-r)m-k}\}2^{rk}2^{-sk/(s_1-s_0)}\mathscr{J}(2^{k/(s_1-s_0)},\xi_k)\}_{m\in\Z}}_{\ell^q}\\
    =\textstyle c\snorm{\{\sum_{j\in\Z}\min\{2^{-rj},2^{(1-r)j}\}2^{-s(m-j)/(s_1-s_0)}\mathscr{J}(2^{(m-j)/(s_1-s_0)},\xi_{m-j})\}_{m\in\Z}}_{\ell^q}\\\textstyle\le c(\sum_{j\in\Z}\min\{2^{-rj},2^{(1-r)j}\})\norm{\{2^{-sm/(s_1-s_0)}\mathscr{J}(2^{m/(s_1-s_0)})\}_{m\in\Z}}_{\ell^q}.
\end{multline}
Taking the infimum over all $\cb{\xi_k}_{k\in\Z}\in\tilde{\mathcal{D}}(x)$ and using finally Proposition~\ref{discrete seminorm on J-space} gives the remaining embedding.
\end{proof}

\subsection{Sum characterization of the truncated $K$-method}

The following theorem shows that the truncated spaces are the sum of the second factor and the $K$-method with $\sig=\infty$ space between the two factors. 

\begin{thm}[Sum characterization for truncated method]\label{label sum characterization of the truncated method}
Let $\p{X_0,\sb{\cdot}_0}$ and $\p{X_1,\sb{\cdot}_1}$ be a pair of weakly compatible seminormed spaces.  Suppose that $s\in\p{0,1}$, $\sig\in\R^+$, and $1\le q\le\infty$. Then we have the following equality of spaces and equivalence of seminorms: $\p{X_0,X_1}^{\p{\sig}}_{s,q}=\Sigma\big(\p{X_0,X_1}_{s,q},X_1\big)$.
\end{thm}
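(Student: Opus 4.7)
The plan is to establish the two inclusions in the asserted equality separately, with quantitative seminorm estimates. For the easy inclusion $\Sigma\p{\p{X_0,X_1}_{s,q}, X_1} \emb \p{X_0,X_1}^{\p{\sig}}_{s,q}$, I take any $x$ in the sum space and any decomposition $x = y + z$ with $y \in \p{X_0,X_1}_{s,q}$ and $z \in X_1$. Subadditivity of the truncated seminorm gives $\sb{x}^{\p{\sig}}_{s,q} \le \sb{y}^{\p{\sig}}_{s,q} + \sb{z}^{\p{\sig}}_{s,q}$, and items $(1)$ and $(4)$ of Proposition~\ref{inclusion relations of truncated interpolation spaces} respectively bound these by $\sb{y}_{s,q}$ and $D_{s,q,\sig}\sb{z}_1$. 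Taking the infimum over all such decompositions yields the continuous embedding.

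The heart of the argument is the reverse inclusion. Fix $x \in \p{X_0,X_1}^{\p{\sig}}_{s,q}$. Since this space embeds in $\Sigma\p{X_0,X_1}$ by Proposition~\ref{truncated interpolation spaces are intermediate}, the quantity $\mathscr{K}\p{\sig,x}$ is finite, so by the definition of the $K$-functional I may choose a decomposition $x = a + b$ with $\p{a,b}\in X_0\times X_1$ satisfying $\sb{a}_0 + \sig\sb{b}_1 \le 2\mathscr{K}\p{\sig,x}$. The splitting witnessing $x \in \Sigma\p{\p{X_0,X_1}_{s,q},X_1}$ will be $y := a$ and $z := b$. The central quantitative input is the pointwise estimate $\sig^{-s}\mathscr{K}\p{\sig,x} \lesssim \sb{x}^{\p{\sig}}_{s,q}$, obtained by restricting the defining $L^q\p{\p{0,\sig},\mu}$ norm to the subinterval $\p{\sig/2,\sig}$ and invoking the monotonicity and comparability estimates for $\mathscr{K}\p{\cdot,x}$ recorded in Proposition~\ref{basic properties of K for seminormed}. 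This immediately gives $\sb{z}_1 \le 2\sig^{-1}\mathscr{K}\p{\sig,x} \lesssim \sig^{s-1}\sb{x}^{\p{\sig}}_{s,q}$.

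To verify $y \in \p{X_0,X_1}_{s,q}$ with $\sb{y}_{s,q} \lesssim \sb{x}^{\p{\sig}}_{s,q}$ I would split the defining integral at $t = \sig$. For $t < \sig$, the triangle inequality for $\mathscr{K}\p{t,\cdot}$ together with the trivial decomposition $b = 0 + b$ yields $\mathscr{K}\p{t,y} \le \mathscr{K}\p{t,x} + t\sb{b}_1 \le \mathscr{K}\p{t,x} + 2t\sig^{-1}\mathscr{K}\p{\sig,x}$; Minkowski's inequality in $L^q\p{\p{0,\sig},\mu}$ then controls this contribution by $\sb{x}^{\p{\sig}}_{s,q}$ plus a multiple of $\sig^{-s}\mathscr{K}\p{\sig,x}$ arising from the elementary integral $\int_{\p{0,\sig}} t^{q\p{1-s}-1}\;\m{d}t$, which converges because $s < 1$. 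For $t \ge \sig$, the trivial decomposition $a = a + 0$ gives $\mathscr{K}\p{t,y} \le \sb{a}_0 \le 2\mathscr{K}\p{\sig,x}$, and the convergent tail integral $\int_{\sig}^\infty t^{-qs-1}\;\m{d}t$ again produces a multiple of $\sig^{-s}\mathscr{K}\p{\sig,x}$. Combining these with the key pointwise estimate completes the bound. The $q = \infty$ case runs along the same lines with suprema in place of integrals, so the only real obstacle is the careful bookkeeping across the two integration regimes in the reverse inclusion; once the pointwise bound on $\sig^{-s}\mathscr{K}\p{\sig,x}$ is established, the remainder reduces to routine applications of Minkowski's inequality and elementary power integrals.
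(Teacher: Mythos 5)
Your argument is correct in substance, but for the hard inclusion $\p{X_0,X_1}^{\p{\sig}}_{s,q}\emb\Sigma\big(\p{X_0,X_1}_{s,q},X_1\big)$ it takes a genuinely different, and considerably more elementary, route than the paper. The paper also starts from near-optimal decompositions, but it uses a whole dyadic family $x=a_k+b_k$ at scales $\sig 2^{-k}$, sets $\eta=b_0$, forms the telescoping differences $\xi_k=a_k-a_{k+1}\in\Delta\p{X_0,X_1}$, checks that $\cb{\xi_k}_{k\in\Z}$ lies in the discrete decomposition set $\tilde{\mathcal{D}}\p{x-\eta}$, and then bounds $\sb{x-\eta}_{s,q}$ through the discrete $J$-method characterization (Proposition~\ref{discrete seminorm on J-space}), which in turn rests on the equivalence theorem and the fundamental lemma of interpolation. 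You use only the single decomposition at scale $\sig$ and bound the untruncated seminorm of $y=a$ by hand: $\mathscr{K}\p{t,a}\le\mathscr{K}\p{t,x}+t\sb{b}_1$ for $t<\sig$ and $\mathscr{K}\p{t,a}\le\sb{a}_0$ for $t\ge\sig$, both contributions being integrable against $t^{-sq-1}\,\m{d}t$ precisely because $0<s<1$; the only other input is the elementary estimate $\sig^{-s}\mathscr{K}\p{\sig,x}\lesssim\sb{x}^{\p{\sig}}_{s,q}$, which you correctly extract from the portion $\p{\sig/2,\sig}$ of the defining integral together with Proposition~\ref{basic properties of K for seminormed}. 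Note that the paper's $x-\eta$ equals $a_0$, i.e.\ exactly your $a$, so both proofs exhibit the same splitting; the difference is purely in how $\sb{a}_{s,q}\lesssim\sb{x}^{\p{\sig}}_{s,q}$ is verified, and your direct verification bypasses the $J$-method machinery entirely. That machinery is needed elsewhere in the paper (Theorem~\ref{equivalence theorem}, Proposition~\ref{density in interpolation spaces}), so the paper loses nothing by reusing it here, but your argument is a legitimate and shorter self-contained proof of this theorem.

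One small repair: the decomposition with $\sb{a}_0+\sig\sb{b}_1\le 2\mathscr{K}\p{\sig,x}$ need not exist when $\mathscr{K}\p{\sig,x}=0$, since in the seminormed setting a vanishing infimum need not be attained (the paper flags exactly this subtlety just before Lemma~\ref{fundamental lemma of interpolation theory}). Choose instead $\sb{a}_0+\sig\sb{b}_1\le\mathscr{K}\p{\sig,x}+\ep$, carry the extra $\ep$ through your estimates, and let $\ep\to0^+$ at the end; nothing else in your argument changes.
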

\begin{proof}
We begin by defining the functional $\tilde{\mathscr{K}}:\R^+\times\Sigma\big(\p{X_0,X_1}_{s,q},X_1\big)\to\R$ via
\begin{equation}
\tilde{\mathscr{K}}\p{t,x} =\inf\big\{\sb{\eta}_{s,q}+t\sb{\xi}_{1}\;:\;x=\eta+\xi,\;\p{\eta,\xi}\in\p{X_0,X_1}_{s,q}\times X_1\big\}.
\end{equation}
Note that for all $t\in\R^+$, the map $\tilde{\mathscr{K}}\p{t,\cdot}$ is an equivalent seminorm on $\Sigma\big(\p{X_0,X_1}_{s,q},X_1\big)$.

First suppose that $x\in\Sigma\big(\p{X_0,X_1}_{s,q},X_1\big) \subseteq \Sigma\p{X_0,X_1}$, where the latter inclusion follows from Proposition \ref{truncated interpolation spaces are intermediate}.  Pick  $y\in \p{X_0,X_1}_{s,q}$ and $z \in X_1$ such that $x=y+z$.  By Proposition~\ref{inclusion relations of truncated interpolation spaces} we have the bound
\begin{equation}
    \sb{x}^{\p{\sig}}_{s,q}\le\sb{y}^{\p{\sig}}_{s,q}+\sb{z}^{\p{\sig}}_{s,q}\le  \sb{y}_{s,q} + c_{s,q}\sig^{1-s}\sb{z}_1,\text{ for } 
    c_{s,q}=\begin{cases}
    q^{-1/q}\p{1-s}^{-1/q}&1\le q<\infty\\
    1&q=\infty.
    \end{cases}
\end{equation}
Thus, upon taking the infimum over all such decompositions of $x$, we arrive at the estimate
\begin{equation}
    \sb{x}^{\p{\sig}}_{s,q}\le\max\cb{1,c_{s,q}}\tilde{\mathscr{K}}\p{\sig^{1-s},x}.
\end{equation}
In particular, this implies that $\Sigma\big(\p{X_0,X_1}_{s,q},X_1\big) \subseteq \p{X_0,X_1}^{\p{\sig}}_{s,q}$.

On the other hand, suppose $x\in\p{X_0,X_1}_{s,q}^{\p{\sig}}$. Let $\ep\in\R^+$. For each $k\in\N$ we may then find $\p{a_k,b_k}\in X_0\times X_1$ with the following properties:
\begin{equation}\label{owo}
x=a_k+b_k,\text{ and }\sb{a_k}_0+\sig2^{-k}\sb{b_k}_1\le \mathscr{K}(\sig2^{-k},x)+\ep2^{-k}.\end{equation}
Set $\eta\in X_1$ via $\eta=b_0$.  Proposition~\ref{truncated interpolation spaces are intermediate} gives the bound 
\begin{equation}\label{bound 2}
    \sb{\eta}_{1}=\sb{b_0}_{1}\le\sig^{-1}\mathscr{K}\p{\sig,x}\le c\sb{x}^{\p{\sig}}_{s,q}
\end{equation}
for some $c$ depending on $s$, $q$, and $\sig$.   Note that~\eqref{owo} implies that for all $k\in\N$ we have $\xi_k=a_k-a_{k+1}=b_{k+1}-b_k\in\Delta\p{X_0,X_1}$. Hence, for $m\in\N$, we may use telescoping sums to compute
$\eta+\sum_{k=0}^{m}\xi_k = \eta + a_0 - a_{m+1} =x-a_{m+1}$.
Proposition~\ref{discrete seminorm on truncated interpolation spaces} provides a constant $c\in \R^+$ such that $\snorm{\big\{2^{sk}\mathscr{K}(\sig2^{-k},x)\big\}_{k\in\N}}_{\ell^q\p{\N}}\le c\sb{x}^{\p{\sig}}_{s,q}<\infty$,
which means that $\lim_{k\to\infty}\mathscr{K}\p{\sig2^{-k},x}=0$.  This and~\eqref{owo} imply that $\sb{a_{m+1}}_0\to 0$ as $m\to\infty$, and hence \begin{equation}\label{hereherherher}
    \lim_{m\to\infty}\big[x-\eta-\textstyle{\sum_{k=0}^m}\xi_k\big]_{\Sigma}\le\lim_{m\to\infty}\sb{a_{m+1}}_0=0.
\end{equation}
For $k\in\Z\setminus\N$ we set $\xi_k =0$.  Then \eqref{owo} implies that
\begin{equation}
    \sum_{k\in\Z}\sb{\xi_k}_{\Sigma} \le \sum_{k\in \N} \sb{a_k - a_{k+1} }_0 \le 2 \sum_{k \in \N} \sb{a_k}_0 \le
    2\sum_{k\in\N}\mathscr{K}(\sig2^{-k},x)+4\ep<\infty,
\end{equation}
where finiteness follows from the inclusion $x\in\p{X_0,X_1}_{s,q}$ thanks to Proposition \ref{discrete seminorm on truncated interpolation spaces} and H\"older's inequality (see the proof of Theorem~\ref{equivalence theorem}).  We deduce from this and~\eqref{hereherherher} that $\cb{\xi_k}_{k\in\Z}\in\tilde{\mathcal{D}}\p{x-\eta}$, where the latter set is defined in Proposition~\ref{discrete seminorm on J-space}.  

Next we again use \eqref{owo} and the fact that $\mathscr{K}(\cdot,x)$ is increasing to bound
\begin{equation}
\mathscr{J}(\sig2^{-k},\xi_k) =
    \max\big\{\sb{a_k-a_{k+1}},\sig2^{-k}\sb{b_{k+1}-b_{k}}\big\} 
\le 2\mathscr{K}(\sig2^{-k},x)+2^{-k+1}\ep  
\end{equation}
for $k \in \N$.  Since $\xi_k =0$ for $k \in \Z \backslash \N$ we have $\mathscr{J}\p{\sig2^{-k},\xi_k}=0$ in this case.  Combining these and using Proposition \ref{discrete seminorm on J-space}, we arrive at the bound
\begin{equation}\label{bound 3}
\sb{x-\eta}_{s,q}\le c \snorm{\big\{2^{sk}\mathscr{J}(\sig2^{-k},\xi_k)\big\}_{k\in\Z}}_{\ell^q\p{\Z}}
\le c\snorm{\big\{2^{sk}\mathscr{K}(\sig2^{-k},x)\big\}_{k\in\N}}_{\ell^q\p{\N}}+2c\ep
\le c\p{{\sb{x}^{\p{\sig}}_{s,q}+2\ep}}
\end{equation}
for a constant $c\in \R^+$ depending on $s,q$ and $\sigma$, and possibly increasing from line to line.  Combining~\eqref{bound 2} and~\eqref{bound 3} then yields the estimate $\tilde{\mathscr{K}}\p{1,x}\le\sb{x-\eta}_{s,q}+\sb{\eta}_{1}\le C\big(\sb{x}^{\p{\sig}}_{s,q}+2\ep\big)$
for every $\ep\in\R^+$ and some $C\in\R^+$ depending only on $s$, $q$, and $\sig$. Letting $\ep\to0^+$, we find that $ \p{X_0,X_1}^{\p{\sig}}_{s,q} \emb \Sigma\big(\p{X_0,X_1}_{s,q},X_1\big)$.
\end{proof}

As a corollary, we have the following useful density result.

\begin{prop}[Dense subspaces]\label{density in interpolation spaces}
Let $\p{X_0,\sb{\cdot}_0}$ and $\p{X_1,\sb{\cdot}_1}$ be a pair of weakly compatible seminormed spaces.  The following hold for $s\in\p{0,1}$, $\sig\in\R^+$, and $1\le q<\infty$:
\begin{enumerate}
    \item $\Delta\p{X_0,X_1}$ is dense in $\p{X_0,X_1}_{s,q}$.
    \item $X_1$ is dense in $\p{X_0,X_1}_{s,q}^{\p{\sig}}$.
\end{enumerate}
\end{prop}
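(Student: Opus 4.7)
The plan is to prove the two density claims in order, using part (1) as input for part (2).

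For item (1), I would pass to the $J$-method representation afforded by Theorem~\ref{equivalence theorem} and Proposition~\ref{discrete seminorm on J-space}. Given $x \in \p{X_0,X_1}_{s,q}$, apply Proposition~\ref{discrete seminorm on J-space} (with, say, $r=2$) to produce $\cb{\xi_k}_{k\in\Z} \in \tilde{\mathcal{D}}\p{x}$ with each $\xi_k \in \Delta\p{X_0,X_1}$ and $\snorm{\cb{2^{sk}\mathscr{J}(2^{-k},\xi_k)}_{k\in\Z}}_{\ell^q\p{\Z}} \le c\sb{x}_{s,q,\mathscr{J}} \asymp \sb{x}_{s,q}$. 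Set $y_N = \sum_{k=-N}^N \xi_k \in \Delta\p{X_0,X_1}$ as the proposed approximating sequence.

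The central verification is that $\sb{x-y_N}_{s,q} \to 0$. To this end, define the tail sequence $\zeta_k = \xi_k$ for $\abs{k} > N$ and $\zeta_k = 0$ for $\abs{k} \le N$. I claim $\cb{\zeta_k}_{k\in\Z} \in \tilde{\mathcal{D}}\p{x-y_N}$: summability $\sum_k \sb{\zeta_k}_{\Sigma} < \infty$ is inherited from the original decomposition, and for the convergence, note that for $K > N$ we have $\sum_{k=-K}^K \zeta_k = \sum_{k=-K}^K \xi_k - y_N$, which tends to $x - y_N$ in $\Sigma\p{X_0,X_1}$ as $K \to \infty$ since $\cb{\xi_k} \in \tilde{\mathcal{D}}\p{x}$. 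Applying Proposition~\ref{discrete seminorm on J-space} to $x-y_N$ and using that $\mathscr{J}\p{2^{-k},0}=0$, we obtain
\begin{equation}
\sb{x-y_N}_{s,q} \le c'\snorm{\big\{2^{sk}\mathscr{J}\p{2^{-k},\zeta_k}\big\}_{k\in\Z}}_{\ell^q\p{\Z}} = c' \bp{\sum_{\abs{k}>N} \p{2^{sk}\mathscr{J}\p{2^{-k},\xi_k}}^q}^{1/q},
\end{equation}
which vanishes as $N\to\infty$ because $q<\infty$ and the full sum is finite (the case $q=\infty$ is excluded here, which is essential).

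For item (2), I would invoke the sum characterization from Theorem~\ref{label sum characterization of the truncated method}: every $x \in \p{X_0,X_1}^{\p{\sig}}_{s,q}$ admits a decomposition $x=y+z$ with $y \in \p{X_0,X_1}_{s,q}$ and $z \in X_1$. By part (1), pick $\cb{y_N} \subseteq \Delta\p{X_0,X_1} \subseteq X_1$ with $\sb{y-y_N}_{s,q} \to 0$. Since $y_N + z \in X_1$, the translation invariance of the seminorm combined with item (1) of Proposition~\ref{inclusion relations of truncated interpolation spaces} yields
\begin{equation}
\sb{x-\p{y_N+z}}^{\p{\sig}}_{s,q} = \sb{y-y_N}^{\p{\sig}}_{s,q} \le \sb{y-y_N}_{s,q} \to 0,
\end{equation}
establishing the density of $X_1$ in $\p{X_0,X_1}^{\p{\sig}}_{s,q}$.

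The main obstacle I anticipate is the bookkeeping in part (1) that certifies the tail sequence $\cb{\zeta_k}$ lies in $\tilde{\mathcal{D}}\p{x-y_N}$, specifically the convergence condition in $\Sigma\p{X_0,X_1}$; everything else is a routine application of the discrete $J$-characterization and the sum characterization already developed.
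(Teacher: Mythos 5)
Your proposal is correct and follows essentially the same route as the paper: item (1) is proved via the $K$/$J$ equivalence (Theorem~\ref{equivalence theorem}) and the discrete $J$-characterization (Proposition~\ref{discrete seminorm on J-space}), bounding $\sb{x-\sum_{k=-N}^N\xi_k}_{s,q}$ by the $\ell^q$ tail, which vanishes precisely because $q<\infty$; item (2) then follows from the sum characterization (Theorem~\ref{label sum characterization of the truncated method}) together with $\Delta\p{X_0,X_1}\subseteq X_1$. Your explicit verification that the tail sequence lies in $\tilde{\mathcal{D}}\p{x-y_N}$ simply spells out a step the paper leaves implicit, and your use of the inequality $\sb{\cdot}^{\p{\sig}}_{s,q}\le\sb{\cdot}_{s,q}$ from Proposition~\ref{inclusion relations of truncated interpolation spaces} in item (2) is a valid, marginally more quantitative way to conclude.
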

\begin{proof}
For the first item we use the equivalence of the $K$ and $J$ methods from Theorem~\ref{equivalence theorem} and then the discrete characterization from Proposition~\ref{discrete seminorm on J-space}.  Indeed, for $x\in\p{X_0,X_1}_{s,q}$,  the discrete decomposition set $\tilde{\mathcal{D}}\p{x}$ is nonempty, and we may find $\cb{\xi_k}_{k\in\Z}\subseteq\Delta\p{X_0,X_1}$ such that
\begin{equation}\label{the stuff holds}
    \snorm{\big\{2^{sk}\mathscr{J}(2^{-k},\xi_k)\big\}_{k\in\Z}}_{\ell^q\p{\Z}}<\infty\text{ and }\lim_{n\to\infty}\big[-x+\textstyle{\sum_{k=-n}^n}\xi_k\big]_{\Sigma}=0.
\end{equation}
Applying the discrete $J$ method to $-x + \sum_{k=-n}^n \xi_k$, we find that
\begin{equation}
\big[-x+\textstyle{\sum_{k=-n}^n\xi_k}\big]_{s,q}\le C\snorm{\big\{2^{sk}\mathscr{J}(2^{-k},\xi_k)\big\}_{k\in\Z\setminus\cb{-n,\dots,n}}}_{\ell^q\p{\Z\setminus\cb{-n,\dots,n}}}
\end{equation}
for some $C \in \R^+$ some constant depending on $s,q$.  Since $q < \infty$, the right side of this inequality vanishes as $n \to \infty$. As it is the case $\sum_{k=-n}^n \xi_k \in \Delta\p{X_0,X_1},$ the first item is shown. 

To prove the second item we recall from Theorem~\ref{label sum characterization of the truncated method} that $\p{X_0,X_1}_{s,q}^{\p{\sig}}=\Sigma\big(\p{X_0,X_1}_{s,q},X_1\big)$.  Since  $X_1$ is dense in $X_1$ and $\Delta\p{X_0,X_1}$ is dense in $\p{X_0,X_1}_{s,q}$ by the first item, the density of $X_1$ in $\p{X_0,X_1}_{s,q}^{\p{\sig}}$ follows, and the second item is proved.
\end{proof}
\subsection{Examples of seminorm interpolation spaces}
Here we record a few examples of spaces obtained via seminorm interpolation.

\begin{exa}[Nesting of factors] 
Suppose that $\p{X_0,\sb{\cdot}_0}$ and $\p{X_1,\sb{\cdot}_1}$ are a pair of weakly compatible seminormed spaces, and let $\sigma \in \R^+$, $s \in (0,1)$, and $1 \le q \le \infty$.
\begin{enumerate}
    \item If $X_0\emb X_1$, then $\p{X_0,X_1}_{s,q}^{\p{\sigma}}=X_1$ (equivalent seminorms). Indeed, by item $(4)$ in Proposition~\ref{inclusion relations of truncated interpolation spaces}:
    \begin{equation}
        X_1\emb\p{X_0,X_1}_{s,q}^{\p{\sigma}}\emb\Sigma\p{X_0,X_1}\emb X_1.
    \end{equation}
     \item On the other hand, if $X_1\emb X_0$, then $\p{X_0,X_1}_{s,q}^{\p{\sigma}}=\p{X_0,X_1}_{s,q}$ (equivalent seminorm). Indeed, by Theorem~\ref{label sum characterization of the truncated method}:
     \begin{equation}
         \p{X_0,X_1}_{s,q}\emb\p{X_0,X_1}^{\p{\sigma}}_{s,q}=\Sigma\big(\p{X_0,X_1}_{s,q},X_1\big)\emb\Sigma\big(\p{X_0,X_1}_{s,q},\Delta\p{X_0,X_1}\big)\emb\p{X_0,X_1}_{s,q}.
     \end{equation}
\end{enumerate}
\end{exa}

\begin{exa}[Lebesgue spaces]\label{lorentz}
Let $\p{Y,\mathfrak{M},\mu}$ be a measure space, $\p{X,\norm{\cdot}}$ be a Banach space, and take $1\le p,q,r\le\infty$ and  $\sigma \in \R^+$, $s \in (0,1)$. Then
\begin{equation}
    \p{L^p\p{Y;X};L^r\p{Y;X}}_{s,q}^{\p{\sigma}}=\Sigma\p{L^{t,q}\p{Y;X},L^r\p{Y;X}},
\end{equation}
where the left factor on the right-hand-side sum is a Lorentz space and $1\le t\le\infty$ satisfies $\f{1}{t}=\f{1-s}{p}+\f{s}{r}$. This follows from the sum characterization in Theorem~\ref{label sum characterization of the truncated method} and the well-known characterization of Lorentz spaces as interpolation spaces (see, for instance, Theorem 1.18.6.1 in~\cite{MR503903}).
\end{exa}

Our next example, which is a slight modification of the previous one, introduces seminormed versions of Lebesgue and Lorentz spaces.  These spaces essentially consist of the classical spaces plus constants.  Our motivation for introducing this somewhat odd variant is that they appear naturally in several places later in the paper.  

\begin{exa}[Seminormed Lorentz spaces]\label{seminormed Lebesgue spaces}
Let $\p{X,\norm{\cdot}}$ be a Banach space and $\p{Y,\mathfrak{M},\mu}$ be a measure space such that $\mu(Y) = \infty$.  Let $1\le p<\infty$ and $1\le q\le\infty$ be such that if $p=1$ then $q=1$.  In this range, the Lorentz spaces $L^{p,q}(Y;X)$ are Banach spaces (when $p=1$, $1 < q \le \infty$ they only have quasinorms), and contain only the trivial constant function.  We define the seminormed Lorentz space
\begin{equation}
    \dot{L}^{p,q}(Y;X)=\{f\in L^1_{\m{loc}}\p{Y;X}\;:\;\exists\;c\in X,\;f-c\in L^{p,q}(Y;X)\}
\end{equation}
with seminorm $\sb{f}_{\dot{L}^{p,q}}=\inf\{\norm{f-c}_{L^{p,q}}\;:\;c\in X\}$.  Note that for each $f \in \dot{L}^{p,q}(Y;X)$ the constant $c \in X$ such that $f-c \in L^{p,q}(Y;X)$ is uniquely determined since the only constant in $L^{p,q}(Y;X)$ is $0$, and as such we have that $\sb{f}_{\dot{L}^{p,q}}= \norm{f-c}_{L^{p,q}}$.  If $p=q$ we write $\dot{L}^{p}\p{Y;X}$ in place of $\dot{L}^{p,p}\p{Y;X}$ for the seminormed Lebesgue spaces. 

Suppose now that  $1\le p_0,p_1<\infty$ and $1\le q_0,q_1\le\infty$ are such that $q_i = 1$ if $p_i =1$.  We claim that for $s\in(0,1)$, $\sigma \in \R^+$, $1/p=(1-s)/p_0+s/p_1$, and $1 \le q \le \infty$ we have the seminormed interpolation identities:
\begin{equation}
\begin{split}
    (\dot{L}^{p_0,q_0}(Y;X),\dot{L}^{p_1,q_1}(Y;X))_{s,q} & =\dot{L}^{p,q}(Y;X)  \text{ with equality of seminorms, and }   \\
    (\dot{L}^{p_0,q_0}(Y;X),\dot{L}^{p_1,q_1}(Y;X))_{s,q}^{\p{\sigma}} & = 
    \Sigma(\dot{L}^{p,q}(Y;X), \dot{L}^{p_1,q_1}(Y;X)).    
\end{split}
\end{equation}
The latter formula follows from the former and the sum characterization in Theorem~\ref{label sum characterization of the truncated method}, so we will only prove the former.  The proof of the former for standard Lorentz spaces can be found, for instance, in Theorems 1.18.6.1/2 of~\cite{MR503903}.  In proving this we let $\mathscr{K}$, $\dot{\mathscr{K}}$ denote the $K$-functionals associated to the couples $L^{p_0,q_0}(Y;X)$, $L^{p_1,q_1}(Y;X)$ and $\dot{L}^{p_0,q_0}(Y;X)$, $\dot{L}^{p_1,q_1}(Y;X)$, respectively. The seminorm on $(L^{p_0,q_0}(Y;X),L^{p_1,q_1}(Y;X))_{s,q}$ is $\sb{\cdot}_{s,q}$ while the seminorm on $(\dot{L}^{p_0,q_0}(Y;X),\dot{L}^{p_1,q_1}(Y;X))_{s,q}$ will be denoted $\sb{\cdot}_{s,q}^{\cdot}$.

Let $f\in\dot{L}^{p,q}(Y;X)$, $t\in \R^+$, and choose the unique $c\in X$ such that $f-c\in L^{p,q}(Y;X)$. If $f -c = g_0 + g_1$ for $g_i \in L^{p_i,q_i}(Y;X)$, then $\dot{g}_i \in \dot{L}^{p_i,q_i}(Y;X)$ and $\sb{g_i}_{\dot{L}^{p_i,q_i}} = \norm{g_i}_{L^{p_i,q_i}}$, and hence 
\begin{equation}
\dot{\mathscr{K}}(t,f) \le \sb{g_0 + c}_{\dot{L}^{p_0,q_0}} + t   \sb{g_1}_{\dot{L}^{p_1,q_1}} = \norm{g_0 }_{L^{p_0,q_0}} + t   \norm{g_1}_{L^{p_1,q_1}}  \Rightarrow \dot{\mathscr{K}}(t,f) \le \mathscr{K}(t,f-c).
\end{equation}
Similarly, if $f = g_0 + g_1$ for $g_i \in \dot{L}^{p_i,q_i}(Y;X)$, then there exist unique $c_i \in X$ such that $g_i - c_i \in L^{p_i,q_i}(Y;X)$ and $c_0 + c_1 = c$, which means that $f-c = (g_0 - c_0) + (g_1 - c_1)$ and
\begin{equation}
\mathscr{K}(t,f-c) \le \norm{g_0 - c_0}_{L^{p_0,q_0}} + t \norm{g_1 - c_1}_{L^{p_1,q_1}} = \sb{g_0 }_{\dot{L}^{p_0,q_0}} + t   \sb{g_1}_{\dot{L}^{p_1,q_1}} \Rightarrow 
\mathscr{K}(t,f-c) \le \dot{\mathscr{K}}(t,f).
\end{equation}
Thus, for $t \in \R^+$ we have that $\mathscr{K}(t,f-c)=\dot{\mathscr{K}}(t,f)$, and we deduce from this and the usual interpolation properties of Lebesgue and Lorentz spaces that $\sb{f}_{s,q}^{\cdot}=\sb{f-c}_{s,q}= \norm{f-c}_{L^{p,q}} = \sb{f}_{\dot{L}^{p,q}}$.  A similar argument proves the same identity for each $f \in (\dot{L}^{p_0,q_0}(Y;X),\dot{L}^{p_1,q_1}(Y;X))_{s,q},$ from which the claim follows.
\end{exa}

In our last example of this subsection we quantify a sense in which the space of functions of bounded mean oscillation is a substitute for the space of essentially bounded functions.

\begin{exa}[BMO and Lebesgue spaces]\label{BMO}
Recall that the space $\m{BMO}\p{\R^n;\mathbb{K}}$ consists of $f \in  L^1_{\loc}(\R^n;\mathbb{K})$ such that 
\begin{equation}
    \sb{f}_{\m{BMO}} = \sup_{Q}\f{1}{\Le^n\p{Q}}\int_{Q}\Big|f-\f{1}{\Le^n(Q)}\int_{Q}f\Big| <\infty,
\end{equation}
where the supremum is taken over cubes of the form $Q = \prod_{j=1}^n [a_j,a_j + \ell]$.  This only defines a seminormed space, as it is readily verified that the annihilator consists of all constant functions.

Let $1\le p<\infty$  and $\mathbb{K}\in\cb{\R,\C}$.  We claim that for all $s\in\p{0,1}$, $\sigma \in \R^+$,  and $1\le q\le\infty$ we have the formulae:
\begin{equation}\label{monkey}
    \p{L^p\p{\R^n;\mathbb{K}};\m{BMO}\p{\R^n;\mathbb{K}}}_{s,q}=\dot{L}^{r,q}\p{\R^n;\mathbb{K}}
\end{equation}
and 
\begin{equation}\label{monkey2}
    \p{L^p\p{\R^n;\mathbb{K}},\m{BMO}\p{\R^n;\mathbb{K}}}_{s,q}^{\p{\sigma}}=\Sigma(\dot{L}^{r,q}\p{\R^n;\mathbb{K}};\m{BMO}\p{\R^n;\mathbb{K}})=\Sigma\p{L^{r,q}\p{\R^n;\mathbb{K}},\m{BMO}\p{\R^n;\mathbb{K}}},
\end{equation}
for $r=p/(1-s)\in(1,\infty)$ and the dotted spaces as in Example~\ref{seminormed Lebesgue spaces}. 

We first remark how~\eqref{monkey2} will follow from~\eqref{monkey}. Given formula~\eqref{monkey} we may apply the sum characterization (Theorem~\ref{label sum characterization of the truncated method}) to obtain the first equality of equation~\eqref{monkey2}. The second equality then follows from the following constant shifting argument. For each $f\in\dot{L}^{r,q}\p{\R^n;\mathbb{K}}$ there is a unique $c\in\mathbb{K}$ such that $f-c\in L^{r,q}\p{\R^n;\mathbb{K}}$ and $\sb{f}_{\dot{L}^{r,q}}=\norm{f-c}_{L^{r,q}}$; if, in addition, $g\in\m{BMO}(\R^n;\mathbb{K})$ then $\sb{g}_{\m{BMO}}=\sb{g+c}_{\m{BMO}}$. Thus $f+g=(f-c)+(g+c)$ belongs to the right most space in~\eqref{monkey2}, and its seminorm is no more than $\sb{f}_{\dot{L}^{r,q}}+\sb{g}_{\m{BMO}}$. This argument shows the embedding of the middle space within the rightmost. The opposite embedding is clear, as $L^{r,q}(\R^n;\mathbb{K})\emb\dot{L}^{r,q}(\R^n;\mathbb{K})$.

The space on the left side of~\eqref{monkey} was asserted to be $L^{r,q}(\R^n;\mathbb{K})$ in the paper~\cite{MR448052}.  However, there is a subtle error in the proof of this,  Theorem 1 in~\cite{MR448052}, caused by failing to recognize that $\p{L^p\p{\R^n;\mathbb{K}};\m{BMO}\p{\R^n;\mathbb{K}}}_{s,q}$ is not Hausdorff due to a nontrivial annihilator (see Proposition~\ref{kernels prop}), and so limits in the interpolation space are not unique, nor is the standard quasi-Banach reiteration theorem available for use.  Here we will give a variant of the argument used in~\cite{MR448052} to correctly identify the missing constants now present in the right side of~\eqref{monkey}.  Note, though, that~\cite{MR448052} also seeks to identify the left side of~\eqref{monkey} with $0 < p < 1$, but we are unable to address this question without further generalizing our work to spaces defined with semiquasinorms.

We first prove~\eqref{monkey} in the special case $q=r$.  Since $L^\infty\p{\R^n;\mathbb{K}}\emb \m{BMO}(\R^n;\mathbb{K})$ it follows immediately that $L^r(\R^n;\mathbb{K})=(L^p(\R^n;\mathbb{K}),L^\infty(\R^n;\mathbb{K}))_{s,p}\emb(L^p(\R^n;\mathbb{K}),\m{BMO}(\R^n;\mathbb{K}))_{s,r}$. The right hand space has an annihilator consisting of exactly the constant functions (Proposition~\ref{kernels prop} applies since the intersection of the factors is Banach). Thus, the same embedding holds for $\dot{L}^r\p{\R^n;\mathbb{K}}$. 

To prove the reverse embedding  we need two tools from harmonic analysis.  The first is the decreasing rearrangement of a measurable function $g: \R^n \to \mathbb{K}$, which we denote by $g^\star : \R^+ \to [0,\infty]$.  We refer, for instance, to Chapter 1.4 of~\cite{MR3243734} for a thorough discussion of rearrangements and their relation to Lorentz spaces.  The main features we will need here are the estimates $(g_0 + g_1)^\star(t) \le g_0^\star(t/2) + g_1^\star(t/2)$ for $g_0,g_1 : \R^n \to \mathbb{K}$ measurable,  and 
\begin{equation}
\qnorm{g}_{L^{p,\infty}} =  \sup_{t \in \R^+} t^{1/p} g^\star(t)  \le \left(\int_{\R^+} (g^\star(t))^p dt \right)^{1/p} = \norm{g}_{L^p}      
\end{equation}
for $1 \le p < \infty$ and $g \in L^p(\R^n;\mathbb{K})$, where on the left $\qnorm{\cdot}_{L^{p,\infty}}$ is the quasinorm on $L^{p,\infty}(\R^n;\mathbb{K})$ that is equivalent to the interpolation norm when $p >1$.  The second tool is the `sharp' function: given $f\in L^1_{\loc}(\R^n;\mathbb{K})$ we define $f^\sharp : \R^n \to [0,\infty]$ via 
\begin{equation}
    f^\sharp\p{x}=\sup_{Q\ni x}\f{1}{\Le^n\p{Q}}\int_{Q}\Big|f-\f{1}{\Le^n(Q)}\int_{Q}f\Big|,
\end{equation}
where the supremum is taken over all cubes of the form $Q = \prod_{j=1}^n [a_j,a_j + \ell] \subset \R^n$ containing $x$. We will employ two essential facts about the sharp function.  First, if $1<\rho_0, \rho<\infty$ then there is $c_\rho\in\R^+$ such that we have the control: 
\begin{equation}\label{fancy equivalence}
    \textstyle{c_\rho}^{-1}\norm{f}_{L^\rho}\le\norm{f^\sharp}_{L^\rho},\;\text{ for all }f\in L^{\rho_0}(\R^n;\mathbb{K}).
\end{equation}
In other words, provided that $f$ belongs to some $L^{\rho_0}$, the above inequality holds in any $L^\rho$. A proof can be found in Chapter IV of~\cite{MR1232192}. Second, $(\cdot)^{\sharp}$ has the same boundedness properties as the Hardy-Littlewood maximal functions (see, for instance, Chapter I of ~\cite{MR1232192}). That is, the sharp map is weak type $(1,1)$ and strong type $(p,p)$.  This follows since the sublinear operators are related pointwise via $(\cdot)^{\sharp}\le 2M(\cdot)$, where $M$ is the cubic maximal operator.

With these tools in hand, we can prove prove the reverse inclusion.  Suppose initially that
\begin{equation}
    g\in\Delta(L^p(\R^n;\mathbb{K});\m{BMO}(\R^n;\mathbb{K}))\subset(L^p(\R^n;\mathbb{K}),\m{BMO}(\R^n;\mathbb{K}))_{s,r}
\end{equation} and decompose $g=g_0+g_1$ for $g_0\in L^p(\R^n;\mathbb{K})$ and $g_1\in\m{BMO}(\R^n;\mathbb{K})$. Using the subadditivity of $(\cdot)^\sharp$ with the weak-type $(p,p)$ boundedness of $(\cdot)^{\sharp}$ and the definition of $\sb{\cdot}_{\m{BMO}}$, we may estimate for $t\in\R^+$:
\begin{multline}\label{universe}
    \textstyle t^{1/p}(g^\sharp)^\star(t)\le t^{1/p}(g_0^\sharp + g_1^\sharp)^\star(t)  \le t^{1/p}({g_0}^\sharp)^\star(t/2)+t^{1/p}({g_1}^\sharp)^\star(t/2) \le 2^{1/p}\qnorm{{g_0}^\sharp}_{L^{p,\infty}}+t^{1/p}\snorm{({g_1}^\sharp)^\star}_{L^\infty} \\
    \textstyle \le c\big(\qnorm{{g_0}^\sharp}_{L^{p,\infty}}+t^{1/p}\snorm{{g_1}^\sharp}_{L^\infty}\big)
    \le c\big(\norm{g_0}_{L^p}+t^{1/p}\sb{g_1}_{\m{BMO}}\big),
\end{multline}
where $c\in\R^+$ is a constant independent of $g$. Taking the infimum over all decompositions of $g$ shows that $t^{1/p} (g^\sharp)^\star(t) \le  c\mathscr{K}(t^{1/p},g)$ for $t \in \R^+$.  This, the identity  $r=\f{p}{1-s}$, and~\eqref{fancy equivalence} then allow us to estimate 
\begin{multline}\label{fancy fancy}
    {c_{r}}^{-1}\norm{g}_{L^r}\le\snorm{g^\sharp}_{L^r}
    =\Big(\int_{\R^+}t\big((g^\sharp)^\star(t)\big)^rt^{-1}\;\m{d}t\Big)^{1/r} 
    = \Big(\int_{\R^+}\big(t^{(1-s)/p} (g^\sharp)^\star(t)\big)^rt^{-1}\;\m{d}t\Big)^{1/r} \\ 
    \le c \Big(\int_{\R^+}\big(t^{-s/p} \mathscr{K}(t^{1/p},g)\big)^rt^{-1}\;\m{d}t\Big)^{1/r}
    \le cp^{1/r}\Big(\int_{\R^+}(\tau^{-s}\mathscr{K}(\tau,g))^r\tau^{-1}\;\m{d}\tau\Big)^{1/r}=c\sb{g}_{s,r}.
\end{multline}
We now use~\eqref{fancy fancy} to deduce the general case via a limiting argument. Given $f\in(L^p(\R^n;\mathbb{K});\m{BMO}(\R^n;\mathbb{K}))_{s,r}$, Proposition~\ref{density in interpolation spaces} asserts that there is a sequence $\cb{f_k}_{k\in\N}\subset\Delta(L^p(\R^n;\mathbb{K});\m{BMO}(\R^n;\mathbb{K}))$ for which $f_k\to f$ in $(L^p(\R^n;\mathbb{K});\m{BMO}(\R^n;\mathbb{K}))_{s,r}$ as $k\to\infty$. For $m,k\in\N$ taking $g=f_k-f_m$ in~\eqref{fancy fancy} shows $\cb{f_k}_{k\in\N}$ is a Cauchy sequence in $L^r\p{\R^n;\mathbb{K}}$. Let $\tilde{f}$ denote its $L^r$-limit. By Proposition~\ref{kernels prop} the annihilator of $(L^p(\R^n;\mathbb{K});\m{BMO}(\R^n;\mathbb{K}))_{s,r}$ is the subspace of constant functions. As $L^r(\R^n;\mathbb{K})$ is embedded within this former space we conclude that $f-\tilde{f}$ is a constant function (note that it's precisely at this point where the error appears in~\cite{MR448052}). Hence $f\in\dot{L}^r\p{\R^n;\mathbb{K}}$, and we can estimate:
\begin{equation}
    \textstyle\sb{f}_{\dot{L}^r}\le\snorm{\tilde{f}}_{L^r}\le\snorm{\tilde{f}-f_k}_{L^r}+cc_r\sb{f_k}_{s,r}\to cc_r\sb{f}_{s,r}\text{ as }k\to\infty.
\end{equation}
This completes the proof of~\eqref{monkey} in the special case $q=r$.

To prove~\eqref{monkey} in the general case we will use reiteration, and for this we need the fact that
the intersection of  $L^p\p{\R^n;\mathbb{K}}$ and  $\m{BMO}\p{\R^n;\mathbb{K}}$ is complete, which follows easily from the fact that convergence in $L^p$ implies convergence in $L^1$ of every cube.  Let $u,v\in\R$ satisfy $1\le p<u<r<v<\infty$ and set $\vartheta=(1/r-1/u)/(1/v-1/u)\in(0,1)$. The above special case shows that
\begin{equation}
    ((L^p(\R^n;\mathbb{K}),\m{BMO}(\R^n;\mathbb{K}))_{1-p/u,u},(L^p(\R^n;\mathbb{K}),\m{BMO}(\R^n;\mathbb{K}))_{1-p/v,v})_{\vartheta,q}=(\dot{L}^u(\R^n;\mathbb{K}),\dot{L}^v(\R^n;\mathbb{K}))_{\vartheta,q}.
\end{equation}
Example~\eqref{lorentz} informs us that the right hand side is the Lorentz-like space $\dot{L}^{r,q}\p{\R^n;\mathbb{K}}$, while Theorem~\ref{reiteration theorem} tells us the left hand side is equal to $(L^p(\R^n;\mathbb{K}),\m{BMO}(\R^n;\mathbb{K}))_{\sigma,q}$ for $\sigma=(1-\vartheta)(1-p/u)+\vartheta(1-p/v)$. Using that $p=(1-s)((1-\vartheta)/u+\vartheta/v)^{-1}$ we compute that $\sig=s$. Thus~\eqref{monkey} is shown in all cases.

\end{exa}

\section{Homogeneous Sobolev and homogeneous Besov spaces}\label{Homogeneous Sobolev and homogeneous Besov spaces}
We now use the interpolation theory developed in the previous section to realize the homogeneous Besov spaces as intermediate interpolation spaces with respect to  members of the scale of homogeneous Sobolev spaces.  Along the way we will also develop frequency space characterizations used later in the paper.  Many of the results we present in this section are essentially already known in the literature, and we have attempted to omit as many proofs as possible.  The proofs we have included are meant to highlight the direct use of seminorms rather techniques employing spaces of distributions modulo polynomials.  The precise statements of the results in our notation will also be essential in the following section, where we develop the theory of screened Sobolev and screened Besov spaces. The reader already fluent in analysis of homogeneous function spaces could skip to Section~\ref{Screened Sobolev and screened Besov spaces}.

\subsection{Dyadic localization}
Here, for convenience of the reader, we recall the essentials of dyadic localization and Littlewood-Paley theory. We refer the reader to Appendix~\ref{harmonic analysis} for the relevant notions of real valued tempered distributions and multipliers.

\begin{lem}[Dyadic Partition of Unity]\label{dyadic partition of unity}
There exists a radial $\psi\in C^\infty_c\p{\R^n;\R}$ with $\supp\psi=\bar{B\p{0,2}}\setminus B\p{0,2^{-1}}$, $\psi\p{\xi}\in\R^+$ for $\xi\in B\p{0,2}\setminus\Bar{B\p{0,2^{-1}}}$, and $\sum_{k\in\Z}\del_{2^k}\psi=1$ on $\R^n\setminus\cb{0}$. Note that the $\del_{2^k}$ are the isotropic dilation operators, as in Lemma~\ref{scaling invariance of fourier multipliers}.
\end{lem}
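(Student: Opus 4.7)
The plan is to construct $\psi$ from an auxiliary radial cutoff via a telescoping difference, the standard dyadic partition-of-unity trick.  First I would produce a radial $\chi\in C^\infty\p{\R^n;\R}$ satisfying $\chi\equiv 1$ on $\bar{B\p{0,1/2}}$, $\chi\equiv 0$ outside $B\p{0,1}$, and such that the radial profile $\eta:\R\to\R$ given by $\chi\p{\xi}=\eta\p{\abs{\xi}}$ is strictly decreasing on the transition interval $[1/2,1]$.  Such a $\chi$ is built by the usual smooth-gluing recipe: set $\eta\p{r}=1-g\p{2r-1}$ with $g\p{t}=f\p{t}/\p{f\p{t}+f\p{1-t}}$ and $f\p{t}=e^{-1/t}$ for $t>0$, $f\p{t}=0$ otherwise.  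Strict positivity of $f$ on $\R^+$ is what forces $\eta$ to be strictly monotone on the transition, which turns out to be the delicate point of the argument.

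Next, define $\psi\p{\xi}:=\chi\p{\xi/2}-\chi\p{\xi}$.  Smoothness, compact support, and radiality are immediate.  To verify $\supp\psi=\bar{B\p{0,2}}\setminus B\p{0,1/2}$ together with positivity on the open annulus, I would split on the value of $\abs{\xi}$: when $\abs{\xi}\le 1/2$ both terms equal $1$ and cancel; when $\abs{\xi}\ge 2$ both vanish; when $1/2<\abs{\xi}<2$ one has $\abs{\xi/2}<1$, and the strict monotonicity of $\eta$ on $[1/2,1]$ combined with $\chi\p{\xi}=0$ for $\abs{\xi}\ge 1$ yields $\chi\p{\xi/2}>\chi\p{\xi}$, hence $\psi\p{\xi}>0$.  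Taking closures then gives the stated support.

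For the summation identity, fix $\xi\in\R^n\setminus\cb{0}$.  The support of $\psi$ forces $\p{\del_{2^k}\psi}\p{\xi}=\psi\p{\xi/2^k}$ to be nonzero for only finitely many $k\in\Z$, so the series is pointwise a finite sum, sidestepping convergence issues.  A telescoping computation for any $N\in\N^+$ gives
\begin{equation*}
\textstyle\sum_{k=-N}^{N}\psi\p{\xi/2^k}=\sum_{k=-N}^N\bsb{\chi\p{\xi/2^{k+1}}-\chi\p{\xi/2^k}}=\chi\p{\xi/2^{N+1}}-\chi\p{\xi/2^{-N}},
\end{equation*}
and once $N$ is large enough that $\abs{\xi}/2^{N+1}\le 1/2$ and $2^N\abs{\xi}\ge 1$, the right side equals $1-0=1$.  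The only real subtlety, as indicated above, is producing a $\chi$ that is strictly monotone on its transition region; this is precisely what guarantees that $\supp\psi$ equals the full closed annulus (rather than some proper subset of it) and that $\psi$ is strictly positive on the open annulus, and it is handled by the elementary choice of $g$ above.
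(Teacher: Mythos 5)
Your construction is correct, but it proceeds along a genuinely different route than the paper, which simply cites Proposition 2.10 of Bahouri--Chemin--Danchin \cite{MR2768550} and gives no argument at all. Your telescoping construction $\psi\p{\xi}=\chi\p{\xi/2}-\chi\p{\xi}$ is the standard way such a partition is built, and your case analysis is sound: the verification of positivity on the open annulus reduces to showing $\eta\p{r/2}>\eta\p{r}$ for $1/2<r<2$, which in turn only requires $0<\eta<1$ on the open transition interval $\p{1/2,1}$ (strict monotonicity of your glued profile is a convenient sufficient condition that delivers exactly this, and your explicit $g\p{t}=f\p{t}/\p{f\p{t}+f\p{1-t}}$ with $f\p{t}=e^{-1/t}$ does give strict monotonicity, since $f\p{1-t}/f\p{t}=e^{1/t-1/(1-t)}$ is strictly decreasing). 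You are also right that this strictness is the one delicate point: a cutoff that is merely non-increasing and flat somewhere inside the transition region would make $\psi$ vanish at interior points of the annulus, so the support would be a proper subset of $\bar{B\p{0,2}}\setminus B\p{0,2^{-1}}$ and the positivity claim would fail. The telescoping identity and the observation that only finitely many terms of $\sum_{k\in\Z}\del_{2^k}\psi\p{\xi}$ are nonzero for each fixed $\xi\neq 0$ complete the proof. What your approach buys is a self-contained, elementary proof tailored to the exact normalization stated in the lemma (support equal to the closed annulus $\cb{1/2\le\abs{\xi}\le 2}$ and strict positivity on its interior), whereas the citation buys brevity at the cost of having to match conventions with the reference, whose standard construction uses a slightly different annulus.
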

\begin{proof}
See, for instance, Proposition 2.10 in \cite{MR2768550}.
%
\end{proof}

This dyadic partition of unity leads to the creation of `projection-like' operators that localize a given distribution at a certain dyadic annulus of frequencies.

\begin{defn}[Dyadic localization]\label{dyadic localization} 
Let  $\psi$ be the special function from Lemma~\ref{dyadic partition of unity}. To each $j\in\Z$ we define the operator $\uppi_j:   \mathscr{S}^\ast\p{\R^n;\mathbb{K}}\to C^\omega\p{\R^n;\mathbb{K}} \cap \mathscr{S}^\ast\p{\R^n;\mathbb{K}}$ via $\uppi_j f = [\p{\del_{2^j}\psi} \hat{f}]^{\vee}$.  This is well-defined by the Paley-Wiener-Schwartz theorem (see Chapter 6, Section 4 in~\cite{MR1336382}) and  Lemma~\ref{mult lemma}.
\end{defn}

The following lemmas record some basic properties of these operators.  

\begin{lem}\label{convergence}
The following hold:
\begin{enumerate}
    \item Suppose that $\varphi\in\mathscr{S}\p{\R^n;\mathbb{K}}$ is such that $0\not\in\supp\hat{\varphi}$. Then $\sum_{j=-m}^m\uppi_j\varphi\to\varphi$  in $\mathscr{S}\p{\R^n;\mathbb{K}}$  as $m\to\infty.$
\item If $f\in\mathscr{S}^\ast\p{\R^n;\mathbb{K}}$, then for each $\ell\in\Z$ the sequence $\big\{\sum_{j=0}^m\uppi_{j+\ell}f\big\}_{m\in\N}$ converges in $\mathscr{S}^\ast\p{\R^n;\C}$ to $g \in \mathscr{S}^\ast\p{\R^n;\mathbb{K}}$ with the property that for all $\varphi\in\mathscr{S}\p{\R^n;\C}$ with $0\not\in\supp\hat{\varphi}$
\begin{equation}
    \br{f-g,\varphi}=\br{\textstyle{\sum_{j=-\infty}^{\ell-1}}\uppi_jf,\varphi}
\end{equation}
where the right-hand-side is well defined since $\br{\uppi_jf,\varphi}=0$ for all but finitely many $j\in\Z$, $j<\ell$.

\item Suppose that $f,g\in\mathscr{S}^\ast\p{\R^n,\mathbb{K}}$ satisfy $\uppi_jf=\uppi_jg$ for all $j\in\Z$. Then there exists a polynomial $Q: \R^n \to \mathbb{K}$ such that $f+Q=g$. 
\end{enumerate}
\end{lem}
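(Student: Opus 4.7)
The plan is to treat the three parts in sequence. For (1), I reduce to showing that the truncated Fourier multipliers $\chi_m := \sum_{j=-m}^{m}\delta_{2^j}\psi$ satisfy $\p{1-\chi_m}\hat\varphi \to 0$ in $\mathscr{S}\p{\R^n;\mathbb{K}}$, after which continuity of $\mathscr{F}^{-1}$ yields the conclusion. By Lemma~\ref{dyadic partition of unity}, the factor $1-\chi_m$ is uniformly bounded with dilation-invariant derivative estimates, and vanishes on the shell $\cb{2^{-m+1}\le |\xi| \le 2^{m-1}}$. The complement splits into a low-frequency ball, which is eventually disjoint from $\supp\hat\varphi$ because $0 \notin \supp\hat\varphi$, and a far-out high-frequency region, where the rapid decay of $\hat\varphi$ overwhelms any polynomial growth and the bounds on $\chi_m$. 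These two observations force every Schwartz seminorm of $\p{1-\chi_m}\hat\varphi$ to vanish as $m \to \infty$.

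For (2), since $\psi$ is real and radial I first verify that $\uppi_j$ is self-adjoint under the Schwartz pairing, i.e. $\br{\uppi_j f,\varphi} = \br{f,\uppi_j\varphi}$ for $f \in \mathscr{S}^\ast\p{\R^n;\mathbb{K}}$ and $\varphi \in \mathscr{S}\p{\R^n;\mathbb{K}}$; this reduces the desired weak-$\ast$ convergence of the partial sums of $\uppi_{j+\ell}f$ to convergence of $\sum_{k=\ell}^{\ell+m}\uppi_k\varphi$ in $\mathscr{S}$ for each fixed $\varphi$. I expect the main technical obstacle to be a Bernstein-type rapid decay estimate at this step: since $\mathscr{F}\sb{\uppi_k\varphi}$ is supported in $\cb{|\xi|\sim 2^k}$ and $\hat\varphi$ decays faster than any polynomial there, bounding $\sup\abs{x^\alpha \partial^\beta \uppi_k\varphi}$ by the $L^1$-norm of $\partial^\alpha\p{\xi^\beta \delta_{2^k}\psi\hat\varphi}$ together with Leibniz and the Schwartz bounds on $\hat\varphi$ should give that every Schwartz seminorm of $\uppi_k\varphi$ decays faster than any power of $2^{-k}$. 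This yields absolute convergence of the series in $\mathscr{S}$ and therefore defines $g \in \mathscr{S}^\ast\p{\R^n;\mathbb{K}}$ through duality. For the displayed identity I apply (1) to a $\varphi$ with $0 \notin \supp\hat\varphi$, split $\sum_{j=-m'}^{\ell+m}\uppi_j\varphi = \sum_{j<\ell}\uppi_j\varphi + \sum_{j=\ell}^{\ell+m}\uppi_j\varphi$ where the first sum is in fact finite by the frequency-support hypothesis, pass to the $\mathscr{S}$-limit, and pair with $f$.

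For (3), set $h = f - g$ so that the hypothesis reads $\delta_{2^j}\psi \cdot \hat h = 0$ in $\mathscr{S}^\ast\p{\R^n;\mathbb{K}}$ for every $j \in \Z$. Because $\psi$ is strictly positive on the open annulus $B\p{0,2}\setminus\bar{B\p{0,2^{-1}}}$ and $\sum_{j\in\Z}\delta_{2^j}\psi \equiv 1$ on $\R^n\setminus\cb{0}$, any $\zeta \in \mathscr{S}\p{\R^n;\mathbb{K}}$ with $\supp\zeta \subseteq \R^n\setminus\cb{0}$ admits a finite decomposition $\zeta = \sum_{j}\delta_{2^j}\psi \cdot \eta_j$ with $\eta_j \in \mathscr{S}\p{\R^n;\mathbb{K}}$, obtained by dividing $\zeta$ by a smooth positive function built from a local sub-sum of the partition. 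Consequently $\hat h$ annihilates every such $\zeta$, i.e. $\supp\hat h \subseteq \cb{0}$. The classical structure theorem for tempered distributions supported at a point (cf.~\cite{MR1336382}) then identifies $\hat h = \sum_{|\alpha|\le N} c_\alpha \partial^\alpha \delta_0$; its inverse Fourier transform is a polynomial $Q$, and setting $f + Q = g$ completes the argument.
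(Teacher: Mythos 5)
Your proposal is correct and in substance matches the paper's (very terse) proof: items (1) and (2) are exactly the `standard properties of the Schwartz class' that the paper invokes without detail, and your item (3) is the same argument — reduce to a tempered distribution whose Fourier transform is supported at the origin and apply the structure theorem — carried out on the Fourier side rather than by pairing $g-f$ against $\varphi$ with $0\notin\supp\hat{\varphi}$ as the paper does. One small repair in (3): a Schwartz function $\zeta$ with $0\notin\supp\zeta$ need not admit a \emph{finite} decomposition $\zeta=\sum_j\p{\del_{2^j}\psi}\eta_j$, since its support may be unbounded; either test only against $\zeta\in C^\infty_c\p{\R^n\setminus\cb{0}}$, which already suffices to conclude $\supp\hat{h}\subseteq\cb{0}$, or allow the series to converge in $\mathscr{S}\p{\R^n;\mathbb{K}}$ exactly as in your item (1).
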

\begin{proof}
The first item follows from standard properties of the Schwartz class, and the second item follows from the first.  We now prove the third item.  If $\varphi\in\mathscr{S}\p{\R^n;\mathbb{K}}$ is such that $0\not\in\supp\hat{\varphi}$, by the first item $\varphi=\sum_{j\in\Z}\uppi_j\varphi$, with convergence in $\mathscr{S}\p{\R^n;\mathbb{K}}$. Consequently:
\begin{equation}
    \br{g-f,\varphi}=\textstyle{\sum_{j\in\Z}}\br{g-f,\uppi_j\varphi}=\textstyle{\sum_{j\in\Z}}\br{\uppi_j\p{g-f},\varphi}=0.
\end{equation}
Then $\hat{g}-\hat{f}$ is a tempered distribution supported at the origin, and hence $g-f$ is a $\mathbb{K}$-valued polynomial by, for instance, Corollary 2.4.2 in \cite{MR3243734}.
\end{proof}

The next lemma shows that the operators are almost idempotent and almost orthogonal.

\begin{lem}[Almost idempotence and almost orthogonality of dyadic localization]\label{almost idempotent} 
The operators $\cb{\uppi_j}_{j\in\Z}$ from Definition~\ref{dyadic localization} are `almost idempotent': if $j\in\Z$ and $f\in\mathscr{S}^\ast\p{\R^n;\mathbb{K}}$, then for all $m,k\in\N^+$ we have that
\begin{equation}
    \uppi_jf=\p{\textstyle{\sum_{\ell=-m}^k}\uppi_{j+\ell}}\uppi_jf=\uppi_j\p{\textstyle{\sum_{\ell=-m}^k}\uppi_{j+\ell}}f.
\end{equation}
They are also  `almost orthogonal': if $j,k \in \Z$ and $\abs{j-k}> 1$, then $\uppi_j\uppi_kf=0$.
\end{lem}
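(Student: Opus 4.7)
The plan is to transfer both claims to the frequency side, where they reduce to elementary statements about pointwise products of the compactly supported smooth multipliers $\del_{2^j}\psi$. By definition, $\widehat{\uppi_jf}=\p{\del_{2^j}\psi}\hat f$ for any $f\in\mathscr{S}^\ast\p{\R^n;\mathbb{K}}$, so iterating gives
\begin{equation}
\uppi_k\uppi_jf=\bsb{\p{\del_{2^k}\psi}\p{\del_{2^j}\psi}\hat f}^\vee,
\end{equation}
and the entire lemma is reduced to understanding when the product $\p{\del_{2^k}\psi}\p{\del_{2^j}\psi}$ vanishes or collapses to $\del_{2^j}\psi$.

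Although the lemma is stated with almost idempotence first, I would prove almost orthogonality first, since idempotence falls out as a corollary. The function $\del_{2^j}\psi$ is supported in the closed annulus $A_j=\cb{\xi\in\R^n\;:\;2^{j-1}\le\abs{\xi}\le 2^{j+1}}$, so for $\abs{j-k}>1$ the sets $A_j$ and $A_k$ either are disjoint (when $\abs{j-k}>2$) or meet only in a single sphere (when $\abs{j-k}=2$). In the borderline case $k=j+2$, the shared sphere is the outer boundary of $A_j$, and on it $\del_{2^j}\psi$ equals $\psi$ evaluated on $\cb{\abs{\xi'}=2}$. Since $\psi$ is continuous with $\supp\psi=\Bar{B\p{0,2}}\setminus B\p{0,2^{-1}}$, continuity forces $\psi$ to vanish on $\cb{\abs{\xi'}=2}$: otherwise $\psi$ would be nonzero in a neighborhood extending outside $\supp\psi$, a contradiction. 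Hence $\p{\del_{2^j}\psi}\p{\del_{2^k}\psi}$ vanishes identically and $\uppi_k\uppi_jf=0$, as desired.

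With orthogonality in hand, the almost idempotence claim collapses. Fix $j\in\Z$ and $m,k\in\N^+$; orthogonality kills every summand $\uppi_{j+\ell}\uppi_jf$ with $\abs{\ell}\ge 2$, so $\big(\sum_{\ell=-m}^{k}\uppi_{j+\ell}\big)\uppi_jf=\p{\uppi_{j-1}+\uppi_j+\uppi_{j+1}}\uppi_jf$. The corresponding Fourier multiplier is $\big(\sum_{\ell=-1}^{1}\del_{2^{j+\ell}}\psi\big)\p{\del_{2^j}\psi}$, and on $A_j\setminus\cb{0}$ the partition of unity $\sum_{i\in\Z}\del_{2^i}\psi=1$, combined with the support disjointness used above, forces $\sum_{\ell=-1}^{1}\del_{2^{j+\ell}}\psi=1$. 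Since $0\notin A_j$, the multiplier collapses to $\del_{2^j}\psi$, and inversion recovers $\uppi_jf$. The second equality is proved by the identical computation, exploiting commutativity of pointwise multiplication on the frequency side.

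The only real obstacle is the boundary vanishing of $\psi$ on the spheres $\cb{\abs{\xi'}=2}$ and $\cb{\abs{\xi'}=2^{-1}}$, which is precisely what pins down the sharp threshold $\abs{j-k}>1$; everything else is routine multiplier algebra, since $\del_{2^j}\psi\in C^\infty_c\p{\R^n;\R}$ legitimizes all products against tempered distributions.
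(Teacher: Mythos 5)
Your argument is correct and is exactly the reasoning the paper has in mind: its proof simply states that both claims "follow immediately from the properties of $\psi$ from Lemma~\ref{dyadic partition of unity}," namely the support/positivity properties and $\sum_{k\in\Z}\del_{2^k}\psi=1$ on $\R^n\setminus\cb{0}$, which is precisely the multiplier-support bookkeeping you carry out (including the boundary vanishing of $\psi$ on $\abs{\xi}\in\cb{2^{-1},2}$ that settles the $\abs{j-k}=2$ case). So this is the same approach, just written out in full detail.
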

\begin{proof}
These follow immediately from the properties of $\psi$ from Lemma \ref{dyadic partition of unity}.
\end{proof}

Next we recall the Littlewood-Paley characterizations of $L^p$.

\begin{thm}[Littlewood-Paley inequalities in $L^p$]\label{littlewood-paley characterization of Lp}
Let $1<p<\infty$. The following hold:
\begin{enumerate}
    \item\emph{Frequency characterization of $L^p\p{\R^n;\mathbb{K}}$:} For $f\in\mathscr{S}^\ast\p{\R^n;\mathbb{K}}$ write
\begin{equation}
    \sb{f}_{L^p_{\thicksim}}=\bnorm{\p{\textstyle{\sum_{j\in\Z}}\abs{\uppi_jf}^2}^{1/2}}_{L^p}\in\sb{0,\infty}.
\end{equation}There exists a constant $c\in\R^+$, depending only on $n$ and $p$, such that the following hold:
\begin{enumerate}
    \item If $f\in L^p\p{\R^n;\mathbb{K}}$, then
     $c^{-1}\sb{f}_{L^p_{\thicksim}}\le\norm{f}_{L^p}$.
    \item If $f\in\mathscr{S}^\ast\p{\R^n;\mathbb{K}}$ is such that $\sb{f}_{L^p_{\thicksim}}< \infty$, then there exists a unique polynomial $Q: \R^n \to \mathbb{K}$ such that $f-Q$ can be identified with an $L^p\p{\R^n;\mathbb{K}}$ function, and $\norm{f-Q}_{L^p}\le c\sb{f}_{L^p_{\thicksim}}$.
\end{enumerate}
\item\emph{Vector-valued inequality:} Let $\phi\in L^1\p{\R^n;\C}\cap C^1\p{\R^n;\C}$ satisfy 
\begin{equation}
    0=\int_{\R^n}\phi\text{ and }\sup_{x\in\R^n}\p{1+\abs{x}}^{n+1}\p{\abs{\phi\p{x}}+\abs{\grad\phi\p{x}}}<\infty.
\end{equation}
For $f\in L^p\p{\R^n;\C}$ and $j\in\Z$ we write $\uppi^\phi_jf=\p{\del_{2^j}\phi}^{\vee}\ast f$. Let $1<r<\infty$. There is a constant $c\in\R^+$, depending only on $n$, $p$, $r$, and $\phi$, such that for any sequence $\cb{f_k}_{k\in\Z}\subset L^p\p{\R^n;\C}$ we have the bound
\begin{equation}
    \bnorm{\bp{\textstyle{\sum_{j\in\Z}}\bp{\textstyle{\sum_{k\in\Z}}\abs{\uppi_k^\phi f_j}^2}^{r/2}}^{1/r}}_{L^p}\le c\bnorm{\bp{\textstyle{\sum_{j\in\Z}}\abs{f_j}^r}^{1/r}}_{L^p}.
\end{equation}
\end{enumerate}
\end{thm}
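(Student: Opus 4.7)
The plan is to treat both parts as consequences of classical Littlewood-Paley theory, with the caveat that we must work carefully on $\mathscr{S}^\ast$ rather than on the usual quotient by polynomials, since our framework is seminormed.

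For part (1)(a), I would randomize with Rademacher functions $\{r_j\}_{j \in \Z}$ on some auxiliary probability space, so that by Khintchine's inequality
\begin{equation}
    \snorm{(\textstyle{\sum_j} \abs{\uppi_j f}^2)^{1/2}}_{L^p}^p \asymp \int_0^1 \snorm{\textstyle{\sum_j} r_j(\omega) \uppi_j f}_{L^p}^p\, \m{d}\omega.
\end{equation}
The inner operator is a Fourier multiplier with symbol $m_\omega(\xi) = \sum_j r_j(\omega)(\del_{2^j}\psi)(\xi)$. Because the $\del_{2^j}\psi$ have disjoint annular supports (up to one overlap on each side from Lemma~\ref{almost idempotent}), $m_\omega$ satisfies the hypotheses of the Mikhlin multiplier theorem with constants independent of $\omega$. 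This yields $\sb{f}_{L^p_{\thicksim}} \lesssim \norm{f}_{L^p}$.

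For the reverse statement (1)(b), let $S_N = \sum_{\abs{j}\le N} \uppi_j f$ and apply the bound from (1)(a) to differences $S_N - S_M$. Thanks to almost orthogonality (Lemma~\ref{almost idempotent}), the dyadic pieces $\uppi_k(S_N - S_M)$ are either zero or coincide with $\uppi_k f$ for $k$ in a finite annular shell, so
\begin{equation}
    \norm{S_N - S_M}_{L^p} \lesssim \snorm{(\textstyle{\sum_{k \in A_{N,M}}} \abs{\uppi_k f}^2)^{1/2}}_{L^p},
\end{equation}
which vanishes as $N,M \to \infty$ by dominated convergence using the assumption $\sb{f}_{L^p_{\thicksim}} < \infty$. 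Hence $S_N \to g$ in $L^p\p{\R^n;\mathbb{K}}$, and by $\mathscr{S}^\ast$-continuity of each $\uppi_k$ we have $\uppi_k g = \uppi_k f$ for every $k \in \Z$. Item (3) of Lemma~\ref{convergence} then supplies a polynomial $Q$ with $f - Q = g \in L^p\p{\R^n;\mathbb{K}}$; the polynomial is unique because $L^p\p{\R^n;\mathbb{K}}$ contains no nonzero polynomials, and the bound $\norm{f - Q}_{L^p} \le c \sb{f}_{L^p_{\thicksim}}$ is inherited from (1)(a) in the limit.

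For part (2), the hypotheses on $\phi$ (mean-zero, with pointwise decay $(1+\abs{x})^{-n-1}$ on both $\phi$ and $\grad\phi$) force the family $\{(\del_{2^j}\phi)^\vee\}_{j \in \Z}$ to be a uniform family of Calder\'on-Zygmund kernels satisfying the Fefferman-Stein vector-valued square-function hypothesis; the bound then follows from the vector-valued Calder\'on-Zygmund theorem (see, e.g., Grafakos, \emph{Classical Fourier Analysis}, or the original Fefferman-Stein reference). The main obstacle throughout is the bookkeeping in (1)(b): one must carefully decouple ``convergence in $\mathscr{S}^\ast$ up to polynomials'' from ``convergence in $L^p$'', and this is exactly what item (3) of Lemma~\ref{convergence} is designed to handle. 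Given that these results are classical, the actual exposition will likely just invoke references for the deep estimates and supply the above $\mathscr{S}^\ast$-to-$L^p$ identification argument in detail.
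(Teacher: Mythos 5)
The paper itself disposes of this theorem by citation (Theorem 6.1.2 and Proposition 6.1.4 of \cite{MR3243734}), and your plan --- Khintchine plus Mihlin for (1)(a), an $\mathscr{S}^\ast$-to-$L^p$ identification for (1)(b), and vector-valued Calder\'on--Zygmund theory for (2) --- is exactly the classical route by which those cited results are proved. Parts (1)(a) and (2) are fine at the level of detail you give, modulo the routine reduction to finite partial sums before the Khintchine/Fubini interchange.

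There is, however, a genuine gap at the crux of (1)(b). The inequality you need, $\norm{S_N-S_M}_{L^p}\lesssim\snorm{\p{\sum_{k\in A_{N,M}}\abs{\uppi_k f}^2}^{1/2}}_{L^p}$, is the \emph{converse} square-function estimate for band-limited sums; applying (1)(a) to $S_N-S_M$ gives $\sb{S_N-S_M}_{L^p_{\thicksim}}\lesssim\norm{S_N-S_M}_{L^p}$, i.e.\ the opposite direction, and so by itself yields nothing about the Cauchy property. The standard fix is duality: write $\uppi_k=\tilde{\uppi}_k\uppi_k$ with $\tilde{\uppi}_k=\sum_{\abs{\ell}\le1}\uppi_{k+\ell}$ (Lemma~\ref{almost idempotent}), pair $S_N-S_M$ against $h\in L^{p'}$ with $\norm{h}_{L^{p'}}=1$, move $\tilde{\uppi}_k$ onto $h$, use Cauchy--Schwarz in $k$ and H\"older, and then apply (1)(a) in $L^{p'}$ to $h$. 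The same dual estimate is what produces the final bound $\norm{f-Q}_{L^p}\le c\sb{f}_{L^p_{\thicksim}}$, which likewise is not ``inherited from (1)(a) in the limit'' as stated. A smaller inaccuracy: at the boundary indices the pieces $\uppi_k\p{S_N-S_M}$ are not ``zero or $\uppi_k f$'' but terms such as $\uppi_{N+1}\uppi_N f$; these are harmless once one invokes uniform $\mathscr{M}_p$ bounds for the $\uppi_j$ (compare how the paper handles the analogous bookkeeping in the proof of Theorem~\ref{littlewood paley characterization of Riesz potential spaces}), but they should be accounted for rather than elided. Your identification of the polynomial $Q$ via Lemma~\ref{convergence} and its uniqueness are correct as written.
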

\begin{proof}
See Theorem 6.1.2 and Proposition 6.1.4  in~\cite{MR3243734}.
\end{proof}

\subsection{Homogeneous Sobolev spaces}
Our primary goal in this subsection is to develop frequency-space characterizations of the homogeneous Sobolev spaces.

\begin{defn}[Homogeneous Sobolev spaces]\label{homogeneous Sobolev integer order}
Let $1\le p\le\infty$ and define the homogeneous Sobolev space
\begin{equation}
    \dot{W}^{1,p}\p{\R^n;\mathbb{K}}=\cb{f\in L^1_{\loc}\p{\R^n;\mathbb{K}}\;:\;\forall\;j\in\cb{1,\dots,n},\;\pd_jf\in L^p\p{\R^n;\mathbb{K}}}.
\end{equation}
This vector space is endowed with the seminorm $\sb{\cdot}_{\dot{W}^{1,p}}:\dot{W}^{1,p}\p{\R^n;\mathbb{K}}\to\R$ given by $\sb{f}_{\dot{W}^{1,p}}=\sum_{j=1}^n \norm{\partial_j f}_{L^p}$.
\end{defn}

Next we recall some useful facts about homogeneous Sobolev spaces. The first fact is a density result.

\begin{lem}[Density of compactly supported smooth functions in the homogeneous Sobolev spaces]\label{density of ccinfty in homog}
For $1\le p<\infty$ the following are equivalent:
\begin{enumerate}
    \item $C_c^\infty\p{\R^n;\mathbb{K}} \subset \dot{W}^{1,p}\p{\R^n;\mathbb{K}}$ is dense: for every $u\in\dot{W}^{1,p}\p{\R^n;\mathbb{K}}$ and $\ep\in\R^+$ there exists $w\in C_c^\infty\p{\R^n;\mathbb{K}}$ such that $\sb{u-w}_{\dot{W}^{1,p}}<\ep$.
    \item $1<p$ or $n\ge 2$.
\end{enumerate}
\end{lem}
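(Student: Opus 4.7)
The plan is to prove the two directions separately; the forward direction is the substantive one. For $(2) \Rightarrow (1)$, given $f \in \dot{W}^{1,p}(\R^n;\mathbb{K})$, first I would convolve with a standard mollifier $\varphi_\ep \in C_c^\infty(\R^n;\R)$ to produce $f_\ep = \varphi_\ep \ast f \in C^\infty(\R^n;\mathbb{K})$; since $\grad f_\ep = \varphi_\ep \ast \grad f \to \grad f$ in $L^p$ as $\ep \to 0^+$ (requiring $p < \infty$), it suffices to approximate the smooth $f_\ep$. Next, for $R \ge 1$ I would pick $\chi_R \in C_c^\infty(\R^n;\R)$ with $\chi_R = 1$ on $B(0,R)$, $\supp \chi_R \subseteq B(0,2R)$, and $\|\grad \chi_R\|_{L^\infty} \lesssim R^{-1}$, and consider the compactly supported smooth approximant $g_R = \chi_R (f_\ep - c_R)$ for a suitable constant $c_R \in \mathbb{K}$. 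The error decomposes as
\begin{equation}
\grad g_R - \grad f_\ep = (\chi_R - 1)\grad f_\ep + (f_\ep - c_R)\grad \chi_R,
\end{equation}
where the first term vanishes in $L^p$ by dominated convergence and the second is supported in the annular shell $A_R = B(0,2R) \setminus B(0,R)$ with norm bounded by a multiple of $R^{-1}\|f_\ep - c_R\|_{L^p(A_R)}$.

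The choice of $c_R$ bifurcates on dimension. When $n \ge 2$, the shell $A_R$ is a connected Lipschitz domain, and the Poincar\'e inequality on $A_R = R \cdot A_1$ (whose constant scales linearly in $R$ by dilation covariance) yields $\|f_\ep - c_R\|_{L^p(A_R)} \le C_n R \|\grad f_\ep\|_{L^p(A_R)}$ when $c_R$ is the mean of $f_\ep$ on $A_R$; the right side vanishes as $R \to \infty$ since $\grad f_\ep \in L^p$. When $n = 1$ and $p > 1$, $A_R$ is disconnected and the single-constant Poincar\'e argument fails; instead I would invoke the classical one-dimensional Hardy inequality $\|(u - u(0))/x\|_{L^p(\R^+)} \lesssim_p \|u'\|_{L^p(\R^+)}$ and its counterpart on $\R^-$, valid precisely for $1 < p < \infty$. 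Applied to $f_\ep$ with the constant $c_R = f_\ep(0)$, one obtains $R^{-p}\|f_\ep - c_R\|_{L^p(A_R)}^p \le 2^p \int_{A_R} |f_\ep(x) - f_\ep(0)|^p |x|^{-p}\,\m{d}x$, a tail of a finite integral, which tends to $0$ as $R \to \infty$.

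For the reverse implication I would argue by contrapositive: assume $p = 1$ and $n = 1$. The functional $L : \dot{W}^{1,1}(\R;\mathbb{K}) \to \mathbb{K}$ defined by $L(u) = \int_\R u'\,\m{d}x$ is well-defined, continuous with $|L(u)| \le \sb{u}_{\dot{W}^{1,1}}$, and satisfies $L(w) = 0$ for every $w \in C_c^\infty(\R;\mathbb{K})$ by the fundamental theorem of calculus. However, picking $\rho \in C_c^\infty(\R;\R)$ with $\int_\R \rho = 1$ and setting $f(x) = \int_{-\infty}^x \rho(s)\,\m{d}s$ gives $f \in \dot{W}^{1,1}(\R;\mathbb{K})$ with $L(f) = 1$, obstructing any $\dot{W}^{1,1}$-approximation of $f$ by $C_c^\infty$ functions.

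The main obstacle will be the bifurcation on connectedness of the shell $A_R$: the Poincar\'e-mean approach only succeeds when $n \ge 2$, so the disconnected one-dimensional case requires the separate Hardy argument, whose breakdown at $p = 1$ is precisely mirrored by the continuous linear functional $L$ in the reverse direction.
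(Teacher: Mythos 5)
Your proof is correct. Note, though, that the paper does not prove this lemma at all: it simply cites Theorem 4 of~\cite{MR1315521}, so your argument is a self-contained replacement rather than a variant of an in-paper proof. Your route is the standard one and all the key points check out: mollification reduces to smooth $f_\ep$ since only $\grad f_\ep\to\grad f$ in $L^p$ is needed and $p<\infty$; subtracting the $R$-dependent constants $c_R$ is harmless precisely because the statement concerns the seminorm $\sb{\cdot}_{\dot{W}^{1,p}}$, which kills constants; the Poincar\'e constant on $A_R=R\cdot A_1$ scales linearly in $R$ by dilation, and $A_1$ is connected exactly when $n\ge2$; in the $n=1$, $p>1$ case your inequality $R^{-1}\le 2\abs{x}^{-1}$ on $A_R$ uses the upper bound $\abs{x}\le 2R$, so the comparison with the Hardy integral is valid, and Hardy's inequality indeed requires $p>1$; and the reverse direction via the seminorm-continuous functional $L\p{u}=\int_{\R}u'$, which annihilates $C^\infty_c\p{\R;\mathbb{K}}$ but equals $1$ on a smoothed Heaviside function, is exactly the obstruction the paper itself deploys later (Corollary~\ref{lack of density}) to rule out density in $\tilde{B}^{s,1}_{q}\p{\R;\mathbb{K}}$. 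What the two approaches buy: the citation is shorter and appeals to a known general characterization, while your argument is elementary, explicit about where connectedness of the cutoff shell and the restriction $p>1$ enter, and makes transparent why the two hypotheses in item (2) are precisely what is needed.
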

\begin{proof}
See Theorem 4 in~\cite{MR1315521}.
\end{proof}

The second shows that functions in $\dot{W}^{1,p}$ define tempered distributions.

\begin{lem}[Members of homogeneous Sobolev spaces are tempered]\label{homog are tempered}
Let $1\le p\le\infty$.  Then the inclusion $\dot{W}^{1,p}\p{\R^n;\mathbb{K}}\subset\mathscr{S}^\ast\p{\R^n;\mathbb{K}}$ holds. More precisely, if $f\in\dot{W}^{1,p}\p{\R^n;\mathbb{K}}$, then the mapping
\begin{equation}
    \mathscr{S}\p{\R^n;\C}\ni\varphi\mapsto\int_{\R^n}f\varphi\in\C
\end{equation}
is well defined, continuous on $\mathscr{S}\p{\R^n;\C}$, and defines a $\mathbb{K}$-valued distribution.
\end{lem}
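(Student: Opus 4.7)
The plan is to show that $f \in \dot{W}^{1,p}\p{\R^n;\mathbb{K}}$ can be paired against Schwartz functions via absolutely convergent integrals, with the pairing bounded by some Schwartz seminorm. The core analytic input is that although $f$ itself need not lie in any Lebesgue space, its averages over concentric balls grow at most polynomially, and the $L^1$ norm on each ball then grows polynomially as well. Paired with the rapid decay of $\mathscr{S}\p{\R^n;\C}$ functions, this is enough.

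First I would establish a growth bound for $f$ on balls. For $1 \le p < \infty$, applying the classical $L^1$-Poincar\'e inequality together with H\"older's inequality on $B\p{0,R}$ yields
\begin{equation}
\int_{B\p{0,R}} \snorm{f - \p{f}_{B\p{0,R}}} \;\m{d}x \le C R^{1 + n - n/p} \sb{f}_{\dot{W}^{1,p}},
\end{equation}
where $\p{f}_{B} = \mathfrak{L}^n\p{B}^{-1}\int_B f$ denotes the average. Using the same bound on nested balls, $\snorm{\p{f}_{B\p{0,2R}} - \p{f}_{B\p{0,R}}} \lesssim R^{1-n/p}\sb{f}_{\dot{W}^{1,p}}$, and telescoping from $R=1$ gives a bound of the form $\snorm{\p{f}_{B\p{0,R}}} \le C\p{1 + R^{\max\cb{0,1-n/p}}\log\p{2+R}}$, which is polynomial in $R$ in every case (including $p = n$, where the logarithm appears). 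The case $p = \infty$ is handled separately: $\grad f \in L^\infty$ forces $f$ to have a Lipschitz representative, which automatically has linear growth. Combining the two estimates gives a polynomial bound
\begin{equation}\label{growthbd}
    \int_{B\p{0,R}} \abs{f}\;\m{d}x \le C_f \p{1+R}^{N}
\end{equation}
for some $N = N\p{n,p}$ and some constant $C_f$ depending only on $\snorm{\p{f}_{B\p{0,1}}}$ and $\sb{f}_{\dot{W}^{1,p}}$.

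Next I would verify that the claimed pairing is well-defined and continuous. Pick $M \in \N$ with $M > N + n + 1$. Given $\varphi \in \mathscr{S}\p{\R^n;\C}$, decompose $\R^n$ dyadically into $A_0 = B\p{0,1}$ and $A_k = B\p{0,2^k} \setminus B\p{0,2^{k-1}}$ for $k \ge 1$. On each $A_k$ we use the Schwartz seminorm estimate $\abs{\varphi\p{x}} \le q_M\p{\varphi} \p{1+\abs{x}}^{-M}$, where $q_M\p{\varphi} = \sup_{x\in\R^n}\p{1+\abs{x}}^M\abs{\varphi\p{x}}$. Together with \eqref{growthbd}, this yields
\begin{equation}
    \int_{\R^n} \abs{f\varphi}\;\m{d}x \le \sum_{k\in\N}q_M\p{\varphi}\p{1+2^{k-1}}^{-M}\int_{B\p{0,2^k}}\abs{f}\;\m{d}x \le C_f q_M\p{\varphi}\sum_{k\in\N}2^{k\p{N+n - M}} < \infty.
\end{equation}
This proves absolute convergence and exhibits the estimate $\snorm{\int_{\R^n}f\varphi} \le C\p{f,n,p,M}q_M\p{\varphi}$, from which continuity of $\varphi \mapsto \int_{\R^n}f\varphi$ on $\mathscr{S}\p{\R^n;\C}$ is immediate. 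Since $f$ is $\mathbb{K}$-valued, the resulting distribution is $\mathbb{K}$-valued in the sense of Appendix~\ref{harmonic analysis}.

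The main obstacle is obtaining the polynomial growth of $\p{f}_{B\p{0,R}}$ in a way that handles all ranges of $p$ simultaneously: the Poincar\'e/H\"older step must be adapted carefully for the endpoints $p=n$, where only a logarithmic rate appears, and $p=\infty$, where there is no finite $L^p$ norm of $\grad f$ to plug into Poincar\'e and one must invoke the Lipschitz representative instead. Once the growth bound is in hand, the rest of the argument is routine dyadic summation against Schwartz decay.
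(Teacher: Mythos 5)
Your proof is correct, but it takes a genuinely different route from the paper. The paper argues by cases through known embeddings: for $1\le p<n$ it uses Gagliardo--Nirenberg--Sobolev to write $f$ as a constant plus an $L^{np/(n-p)}$ function, for $p=n$ it uses $\dot{W}^{1,n}\emb\m{BMO}$ together with the temperedness of BMO, for $n<p<\infty$ it uses Morrey's embedding into $\dot{C}^{0,1-n/p}$, and for $p=\infty$ the Lipschitz representative; each case reduces to a class of objects already known to be tempered. You instead give a unified, essentially self-contained argument: the $L^1$-Poincar\'e inequality on balls plus H\"older gives $\int_{B(0,R)}\abs{f-(f)_{B(0,R)}}\lesssim R^{1+n-n/p}\sb{f}_{\dot{W}^{1,p}}$, telescoping the dyadic ball averages gives at most polynomial growth of $(f)_{B(0,R)}$ (with the logarithm correctly absorbed in the borderline case $p=n$), and then dyadic summation against Schwartz decay yields absolute convergence of $\int f\varphi$ with a bound by a single Schwartz seminorm, hence continuity; the $\mathbb{K}$-valuedness follows as you say. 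Your approach buys uniformity across all $p$ and avoids invoking GNS, the temperedness of BMO, and Morrey (and in fact your Poincar\'e argument would also cover $p=\infty$ directly, making the separate Lipschitz step optional), at the cost of not producing the structural decompositions (constant plus $L^q$, BMO, H\"older) that the paper's embeddings provide and reuses elsewhere. The only blemishes are cosmetic: the exponent bookkeeping $2^{k(N+n-M)}$ double-counts the volume factor already in $N$, and the growth bound is stated slightly lossily, but neither affects convergence once $M$ is chosen large.
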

\begin{proof}
If $1\le p<n$, then the Gagliardo-Nirenberg-Sobolev embedding (see, for instance, Theorem 12.9 in~\cite{MR3726909}) implies that each member of $\dot{W}^{1,p}\p{\R^n;\mathbb{K}}$ is the sum of a constant function and an $L^q$-integrable function with $q=\f{np}{n-p}$, and thus defines a tempered distribution.  If $p=n$, then $\dot{W}^{1,n}\p{\R^n;\mathbb{K}} \emb \m{BMO}\p{\R^n;\mathbb{K}}$ by, for instance, Theorem 12.31 in~\cite{MR3726909}. The fact that functions of bounded mean oscillation are tempered is a consequence of item $\p{ii}$ in Proposition 3.1.5 in~\cite{MR3243741}.  Next if $n<p<\infty$, then $\dot{W}^{1,p}\p{\R^n;\mathbb{K}}\emb\dot{C}^{0,1-n/p}\p{\R^n;\mathbb{K}}$ (the latter space is the homogeneous H\"older space defined in Section~\ref{notation stuff}) thanks to Morrey's embedding (see Theorem 12.48 and Remark 12.49 in~\cite{MR3726909}). The H\"older space is tempered since its members grow at most linearly. Finally $\dot{W}^{1,\infty}(\R^n;\mathbb{K})$ is tempered since its elements may be modified on a null set to obtain a Lipschitz map - and hence tempered distribution (see the proof of Lemma~\ref{K functionals and modulus of continuity} below).
\end{proof}

The third result concerns the completeness of this space.

\begin{lem}[Completeness and annihilators of homogeneous Sobolev spaces]\label{completeness of homogeneous Sobolev spaces} 
Suppose that $1\le p\le\infty$. Then, the space $\dot{W}^{1,p}\p{\R^n;\mathbb{K}}$ is semi-Banach.  Moreover, $\mathfrak{A}\big(\dot{W}^{1,p}\big)=\cb{\text{constant functions}}$.
\end{lem}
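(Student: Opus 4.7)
The plan is to handle the two assertions separately. First, for the annihilator, observe that $\sb{f}_{\dot{W}^{1,p}} = 0$ if and only if $\partial_j f = 0$ as an $L^p$-function, hence as a distribution on $\R^n$, for every $j \in \{1,\ldots,n\}$. A standard mollification argument then forces $f$ to coincide almost everywhere with a constant, and the reverse inclusion is obvious.

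For completeness, let $\{f_k\}_{k \in \N} \subset \dot{W}^{1,p}(\R^n;\mathbb{K})$ be a Cauchy sequence. By definition $\{\nabla f_k\}$ is Cauchy in $L^p(\R^n;\mathbb{K}^n)$, and hence converges to some $g = (g_1,\ldots,g_n) \in L^p(\R^n;\mathbb{K}^n)$. The strategy is to produce $f \in L^1_{\loc}(\R^n;\mathbb{K})$ with $\nabla f = g$ in the sense of distributions: once this is done, $f$ automatically lies in $\dot{W}^{1,p}(\R^n;\mathbb{K})$ and $\sb{f_k - f}_{\dot{W}^{1,p}} = \sum_j \norm{\partial_j f_k - g_j}_{L^p} \to 0$.

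To construct this primitive, fix a ball $B_0 \subset \R^n$ and define $u_k = f_k - \dashint_{B_0} f_k$, so $\dashint_{B_0} u_k = 0$. For any ball $B \supseteq B_0$ and $v = u_k - u_m$, the Poincar\'e--Wirtinger inequality on $B$ gives $\norm{v - \dashint_B v}_{L^p(B)} \lesssim_B \sb{v}_{\dot{W}^{1,p}}$. Moreover, a telescoping argument along a chain of nested balls connecting $B_0$ to $B$, combined with the fact that $\dashint_{B_0} v = 0$, controls $|\dashint_B v|$ by a constant (depending on $B$ and $B_0$) times $\sb{v}_{\dot{W}^{1,p}}$. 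Together these imply $\norm{u_k - u_m}_{L^p(B)} \lesssim_{B,B_0} \sb{f_k - f_m}_{\dot{W}^{1,p}}$, so $\{u_k\}$ is Cauchy in $L^p(B)$ for each bounded ball $B$. Passing to a limit $u \in L^p_{\loc}(\R^n;\mathbb{K})$ and using continuity of the distributional gradient to identify $\nabla u = g$ then yields the desired $f := u$.

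The main technical ingredient is the reconstruction of the primitive $u$ from only the $L^p$-limit of the gradients, since no global $L^p$-integrability is assumed for the $f_k$ themselves; this is exactly what the Poincar\'e--Wirtinger inequality combined with the constant-shift trick delivers. The same reasoning covers the full range $1 \le p \le \infty$, with only the cosmetic adjustment that for $p = \infty$ the convergence in $L^p_{\loc}$ should be read as uniform convergence on compact sets.
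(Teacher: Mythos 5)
Your argument is correct, and it is essentially the paper's own proof in expanded form: the paper simply cites completeness of the Lebesgue spaces together with Poincar\'e inequalities on cubes, which is exactly the combination you implement (with balls in place of cubes, an immaterial difference). The reconstruction of the primitive via the zero-average normalization and the identification of the distributional gradient are carried out correctly, including the annihilator computation.
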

\begin{proof}
This follows from the completeness of the Lebesgue spaces paired with Poincar\'e inequalities on cubes.
\end{proof}

We now prove a strong compatibility result.

\begin{lem}[Strong compatibility]\label{compatibility of homog and lebesgue}
 For $1\le p\le\infty$, the seminormed spaces $L^p\p{\R^n;\mathbb{K}}$ and $\dot{W}^{1,p}\p{\R^n;\mathbb{K}}$ are strongly compatible in the sense of definition~\ref{admissable semi normed spaces}.
\end{lem}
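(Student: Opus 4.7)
The plan is to exhibit a concrete topological vector space $(Y,\tau)$ into which both $L^p(\R^n;\mathbb{K})$ and $\dot{W}^{1,p}(\R^n;\mathbb{K})$ continuously embed and whose annihilator matches $\mathfrak{A}(L^p) \cup \mathfrak{A}(\dot{W}^{1,p})$. Since $L^p$ is genuinely normed we have $\mathfrak{A}(L^p) = \{0\}$, while Lemma~\ref{completeness of homogeneous Sobolev spaces} identifies $\mathfrak{A}(\dot{W}^{1,p})$ with the constants. Hence the target annihilator is the subspace of constant functions on $\R^n$, and I will need a non-Hausdorff $Y$ whose closure of zero is precisely this subspace.

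My candidate for $Y$ is the vector space $L^p_{\loc}(\R^n;\mathbb{K})$ endowed with the locally convex topology generated by the family of seminorms
\[
q_K(f) = \inf_{c \in \mathbb{K}} \norm{f - c}_{L^p(K)}, \qquad K \subset \R^n \text{ compact}.
\]
Each $q_K$ is a well-defined seminorm, so this really is a TVS. Both $L^p$ and $\dot{W}^{1,p}$ sit set-theoretically inside $L^p_{\loc}$, the latter by Definition~\ref{homogeneous Sobolev integer order}. Continuity of the inclusion $L^p \hookrightarrow Y$ is immediate from the trivial bound $q_K(f) \le \norm{f}_{L^p(K)} \le \norm{f}_{L^p}$, and continuity of $\dot{W}^{1,p} \hookrightarrow Y$ reduces, upon enlarging each $K$ to an ambient ball $B$, to the local Poincar\'e inequality controlling $\norm{f - (f)_B}_{L^p(B)}$ by a constant times $\norm{\nabla f}_{L^p(B)}$.

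The annihilator of $Y$ is the set of $f \in L^p_{\loc}$ for which $q_K(f) = 0$ for every compact $K$: each such condition forces $f$ to agree a.e.\ on $K$ with some constant $c_K$, and connectedness of $\R^n$ glues these local constants into a single global one, while conversely every constant clearly satisfies every $q_K = 0$. Thus $\mathfrak{A}(Y) = \{\text{constants}\} = \mathfrak{A}(L^p) \cup \mathfrak{A}(\dot{W}^{1,p})$, establishing strong compatibility.

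The only real design choice, and what I regard as the main obstacle, is finding a topology coarse enough to place the constants in the closure of zero yet fine enough to continuously dominate both $\norm{\cdot}_{L^p}$ and $\sb{\cdot}_{\dot{W}^{1,p}}$. The seminorms $q_K$ thread this needle cleanly: the infimum over constants kills them automatically, while the local $L^p$-control is both weaker than global $L^p$-control and, via Poincar\'e, implied by $\dot{W}^{1,p}$-control.
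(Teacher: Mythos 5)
Your argument is correct, but it takes a genuinely different route from the paper. The paper's proof uses as witness the sum space itself: it views both factors inside $L^1_\loc\p{\R^n;\mathbb{K}}$, takes $Y=\Sigma\p{L^p,\dot{W}^{1,p}}$ with the sum seminorm, and obtains the annihilator identity abstractly from Proposition~\ref{kernel of the sum and intersection}, since $\Delta\p{L^p,\dot{W}^{1,p}}=W^{1,p}\p{\R^n;\mathbb{K}}$ is Banach; this gives $\mathfrak{A}\p{\Sigma}=\Sigma\p{\mathfrak{A}\p{L^p},\mathfrak{A}\p{\dot{W}^{1,p}}}=\cb{\text{constants}}=\mathfrak{A}\p{L^p}\cup\mathfrak{A}\p{\dot{W}^{1,p}}$. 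You instead construct a concrete locally convex witness, $L^p_\loc$ equipped with the seminorms $q_K(f)=\inf_{c}\norm{f-c}_{L^p(K)}$, check the two embeddings by hand (trivially for $L^p$, via the local Poincar\'e inequality for $\dot{W}^{1,p}$), and compute $\mathfrak{A}(Y)$ directly. Both approaches are valid: the paper's is shorter because the abstract machinery (completeness of the intersection plus Proposition~\ref{kernel of the sum and intersection}) is already available, while yours is self-contained and makes explicit which topology puts exactly the constants in the closure of zero. Two small points are worth tightening in your write-up, though neither affects validity: first, the set inclusion $\dot{W}^{1,p}\p{\R^n;\mathbb{K}}\subseteq L^p_\loc\p{\R^n;\mathbb{K}}$ is not literally part of Definition~\ref{homogeneous Sobolev integer order}, which only places $\dot{W}^{1,p}$ inside $L^1_\loc$; it follows from the same local Sobolev--Poincar\'e bound you invoke for continuity, which shows $f-(f)_B\in L^p(B)$ for every ball $B$. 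Second, in passing from $q_K(f)=0$ to ``$f$ agrees a.e.\ on $K$ with a constant,'' you should note that the constants form a one-dimensional, hence closed, subspace of $L^p(K)$ when $\Le^n(K)>0$ (and that the condition is vacuous when $\Le^n(K)=0$), so zero distance implies membership; the gluing over an exhaustion by balls then yields a single global constant as you say.
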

\begin{proof}
This result is an easy consequence of Proposition~\ref{kernel of the sum and intersection}. We view $L^p\p{\R^n;\mathbb{K}}$ and $\dot{W}^{1,p}\p{\R^n;\mathbb{K}}$ as simultaneously belonging to $L^1_\loc\p{\R^n;\mathbb{K}}$. Let $X$ denote the vector subspace consisting of their sum. Notice that $\Delta\big(L^p\p{\R^n;\mathbb{K}},\dot{W}^{1,p}\p{\R^n;\mathbb{K}}\big)=W^{1,p}\p{\R^n;\mathbb{K}}$ is a Banach space. Hence the annihilator of $X$, $\mathfrak{A}\p{X}$, is the sum of the annihilators of each factor. This is exactly the collection of constant functions. Therefore $L^p\p{\R^n;\mathbb{K}},\dot{W}^{1,p}\p{\R^n;\mathbb{K}}\emb X$, and $\mathfrak{A}\p{X}=\mathfrak{A}\p{L^p\p{\R^n;\mathbb{K}}}\cup\mathfrak{A}\big(\dot{W}^{1,p}\p{\R^n;\mathbb{K}}\big)$. This shows that the pair $L^p\p{\R^n;\mathbb{K}}$ and $\dot{W}^{1,p}\p{\R^n;\mathbb{K}}$ are strongly compatible.
\end{proof}

Now we explore the precise relation between the scales of homogeneous Sobolev spaces and the Riesz potential spaces. This yields a Fourier characterization of the former.

\begin{defn}[Riesz potentials and spaces]\label{riesz potential spaces}
Let $s\in\R$. If $f\in\mathscr{S}^\ast\p{\R^n;\mathbb{K}}$ is such that $0\not\in\supp\hat{f}$, then we define $\Lm^sf\in\mathscr{S}^\ast\p{\R^n;\mathbb{K}}$ via: $\br{\Lm^s f,\varphi} = \big\langle\hat{f},\varrho \abs{\cdot}^s\check{\varphi}\big\rangle \in \C$, where $\varrho \in C^\infty\p{\R^n}$ is any radial function satisfying $\varrho=1$ on $\supp\hat{f}$, and $\varrho=0$ on $B\p{0,\kappa}$, $\kappa=\min\big\{1,\m{dist}(\supp\hat{f},0)\big\}\in\R^+$. The purpose of the cutoff function $\varrho$ is to guarantee that $\varrho \abs{\cdot}^s\hat{\varphi}\in\mathscr{S}\p{\R^n;\C}$. It's straightforward to verify that this definition of $\Lm^s$ is independent of $\varrho$; hence, $\Lm^s$ defines a linear map on its domain that preserves the property of being $\mathbb{K}$-valued.  For $1<p<\infty$ we define the Riesz potential space 
\begin{equation}
    \dot{H}^{s,p}\p{\R^n;\mathbb{K}}=\Big\{f\in\mathscr{S}^\ast\p{\R^n;\mathbb{K}}\;:\;\big\{\textstyle{\sum_{k=-j}^j}\Lm^s\uppi_kf\big\}_{j\in\N}\subset L^p\p{\R^n;\mathbb{K}}\text{ is convergent }\Big\}.
\end{equation}
We equip this space with the seminorm $\sb{\cdot}_{\dot{H}^{s,p}} \to [0,\infty)$ defined by
\begin{equation}
    \sb{f}_{\dot{H}^{s,p}} =  \lim_{j\to\infty}\bnorm{\textstyle{\sum_{k=-j}^j}\Lm^s\uppi_kf}_{L^p}
    = \bnorm{\lim_{j\to\infty} \textstyle{\sum_{k=-j}^j}\Lm^s\uppi_kf}_{L^p}.
\end{equation}
\end{defn}

We first present a Littlewood-Paley characterization of $\dot{H}^{s,p}$ that gives a more useful seminorm to work with.  The proof is similar to that of Theorem 1.3.8 in \cite{MR3243741}, but here we work directly with the seminorms and avoid the technique of quotienting by polynomials.

\begin{thm}[Littlewood-Paley characterization of the Riesz potential spaces]\label{littlewood paley characterization of Riesz potential spaces}
Let $1<p<\infty$ and $s\in\R$.  Define the extended seminorm $\sb{\cdot}_{\dot{H}^{s,p}}^{\thicksim} : \mathscr{S}^\ast\p{\R^n;\mathbb{K}} \to \sb{0,\infty}$ via
\begin{equation}
 \sb{f}_{\dot{H}^{s,p}}^{\thicksim} = \bnorm{\bp{\textstyle{\sum_{j\in\Z}}\p{2^{sj}\abs{\uppi_jf}}^2}^{1/2}}_{L^p}.
    \end{equation}
    Then there exists $c\in\R^+$, depending on $s,n,p$, such that following hold:
    \begin{enumerate}
        \item If $f\in\dot{H}^{s,p}\p{\R^n;\mathbb{K}}$, then $\sb{f}_{\dot{H}^{s,p}}^{\thicksim}\le c\sb{f}_{\dot{H}^{s,p}}$.
        \item If $f\in\mathscr{S}^\ast\p{\R^n;\mathbb{K}}$ satisfies $\sb{f}_{\dot{H}^{s,p}}^{\thicksim}<\infty$, then  $f\in\dot{H}^{s,p}\p{\R^n;\mathbb{K}}$ and $\f{1}{c}\sb{f}_{\dot{H}^{s,p}}\le\sb{f}_{\dot{H}^{s,p}}^{\thicksim}$.
    \end{enumerate}
\end{thm}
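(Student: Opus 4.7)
The plan is to reduce the characterization to the vector-valued Littlewood-Paley inequality of Theorem~\ref{littlewood-paley characterization of Lp} applied to suitable Fourier-side bumps.  Define $\tilde\psi_\pm\in C_c^\infty\p{\R^n;\C}$ by $\tilde\psi_\pm\p{\xi}=\psi\p{\xi}\abs{\xi}^{\pm s}$ (extended by $0$ off $\supp\psi$), and write $\uppi_k^\phi h=\ssb{\p{\del_{2^k}\phi}\hat h}^\vee$ for the associated dyadic localization.  A direct computation using $\abs{\xi}^s\p{\del_{2^k}\psi}\p{\xi}=2^{sk}\p{\del_{2^k}\tilde\psi_+}\p{\xi}$, valid on $\R^n\setminus\cb{0}$, gives the core identity $\Lm^s\uppi_kf=2^{sk}\uppi_k^{\tilde\psi_+}f$, and the analogue using $\abs{\xi}^{-s}$ inverts $\Lm^s$ on a single Littlewood-Paley block: whenever $h\in\mathscr{S}^\ast$ satisfies $\p{\del_{2^j}\psi}\hat h=\p{\del_{2^j}\psi}\abs{\cdot}^s\hat f$ we have $2^{sj}\uppi_jf=\uppi_j^{\tilde\psi_-}h$.

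For item $\p{1}$, let $g\in L^p\p{\R^n;\mathbb{K}}$ denote the $L^p$-limit of $\big\{\sum_{\abs{k}\le N}\Lm^s\uppi_kf\big\}_{N\in\N}$.  I would first establish that $\uppi_jg=\Lm^s\uppi_jf$ for each $j\in\Z$: $L^p$-convergence entails $\mathscr{S}^\ast$-convergence, and for $N\ge\abs{j}+1$ only the three terms $k\in\cb{j-1,j,j+1}$ contribute to $\uppi_jg_N$ by almost orthogonality, after which the dyadic partition of unity collapses the finite sum.  The inverse identity then yields $2^{sj}\uppi_jf=\uppi_j^{\tilde\psi_-}g$.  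Inserting the singleton sequence $\cb{f_j}_{j\in\Z}$ with $f_0=g$ and $f_j=0$ otherwise into the vector-valued inequality of Theorem~\ref{littlewood-paley characterization of Lp} with $\phi=\tilde\psi_-$ and $r=2$ produces
\begin{equation}
    \sb{f}_{\dot H^{s,p}}^\thicksim=\bnorm{\bp{\textstyle{\sum_{j\in\Z}}\abs{\uppi_j^{\tilde\psi_-}g}^2}^{1/2}}_{L^p}\le c\norm{g}_{L^p}=c\sb{f}_{\dot H^{s,p}}.
\end{equation}

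For item $\p{2}$, set $F_k=2^{sk}\uppi_kf$, and note $F_k\in L^p\p{\R^n;\mathbb{K}}$ via the pointwise bound $\abs{F_k}\le\p{\sum_\ell\abs{F_\ell}^2}^{1/2}$, whose right side is in $L^p$ by hypothesis.  I would show that the partial sums $G_j=\sum_{\abs{k}\le j}\Lm^s\uppi_kf$ are Cauchy in $L^p$.  The almost idempotence of Lemma~\ref{almost idempotent} permits the rewriting, for each finite window $E\subset\Z$,
\begin{equation}
    H_E:=\textstyle{\sum_{k\in E}}2^{sk}\uppi_k^{\tilde\psi_+}f=\textstyle{\sum_{k\in E}}\uppi_k^{\tilde\psi_+}\p{2^sF_{k-1}+F_k+2^{-s}F_{k+1}}\in L^p\p{\R^n;\mathbb{K}},
\end{equation}
the $L^p$-membership following from the $L^p$-boundedness of the compactly-supported smooth Fourier multiplier $\uppi_k^{\tilde\psi_+}$.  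Theorem~\ref{littlewood-paley characterization of Lp}.1(b), combined with the fact that $0$ is the only polynomial in $L^p$, then gives $\norm{H_E}_{L^p}\le c\sb{H_E}_{L^p_\thicksim}$.  By almost orthogonality of $\uppi_\ell$ and $\uppi_k^{\tilde\psi_+}$, only summands with $\abs{\ell-k}\le 1$ contribute to $\uppi_\ell H_E$; applying the vector-valued inequality together with pointwise maximal-function domination of $\uppi_k^{\tilde\psi_+}$ by the Hardy-Littlewood operator controls $\sb{H_E}_{L^p_\thicksim}$ by $c\bnorm{\bp{\sum_{m\in\tilde E}\abs{F_m}^2}^{1/2}}_{L^p}$, where $\tilde E$ is a bounded enlargement of $E$.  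Taking $E=\cb{k\in\Z\;:\;j<\abs{k}\le j'}$ and applying dominated convergence on $L^p\p{\R^n;\ell^2\p{\Z}}$ drives $\norm{H_E}_{L^p}\to 0$ as $j,j'\to\infty$, giving the Cauchy property.  The limit $g\in L^p$ satisfies $\sb{f}_{\dot H^{s,p}}=\norm{g}_{L^p}\le c\sb{f}_{\dot H^{s,p}}^\thicksim$ by the same estimate with $E=\cb{-j,\dots,j}$ passed to the limit.

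The principal difficulty lies in item $\p{2}$: one must simultaneously negotiate the tempered-distribution nature of $f$, the polynomial ambiguity inherent in the Littlewood-Paley characterization of $L^p$, and the bookkeeping between two distinct dyadic cutoff scales $\psi$ and $\tilde\psi_+$.  The decisive move is the almost-idempotent rewriting $\uppi_k^{\tilde\psi_+}f=\uppi_k^{\tilde\psi_+}\p{\uppi_{k-1}+\uppi_k+\uppi_{k+1}}f$, which recasts $\Lm^s\uppi_kf$ as a bounded Fourier multiplier applied to the a priori $L^p\p{\ell^2}$-controlled sequence $\cb{F_m}$, reducing the problem to a clean application of the vector-valued inequality and maximal function theory.
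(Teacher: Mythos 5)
Your proposal is correct and takes essentially the same route as the paper's proof: part (1) rests on the identity $2^{sj}\uppi_j f=\uppi_j^{\phi}f_s$ with $\phi=\abs{\cdot}^{-s}\psi$ together with the vector-valued inequality of Theorem~\ref{littlewood-paley characterization of Lp} applied to the single $L^p$ function $f_s$, while part (2) shows the partial sums $\sum_{\abs{k}\le m}\Lm^s\uppi_k f$ are $L^p$-Cauchy by the reverse Littlewood--Paley bound (with the same ``no polynomial, since the sum already lies in $L^p$'' observation), the almost-idempotent insertion of neighboring blocks via the bump $\abs{\cdot}^{s}\psi$, and dominated convergence. The only cosmetic deviation is the step where you invoke Hardy--Littlewood maximal domination of $\uppi_k^{\tilde\psi_+}$: the paper instead absorbs the extra localizations through the scale-invariant Mihlin bounds and a further application of the vector-valued inequality, an interchangeable piece of bookkeeping.
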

\begin{proof}
Suppose first that $f\in\dot{H}^{s,p}\p{\R^n;\mathbb{K}}$. By hypothesis, there exists $f_s\in L^p\p{\R^n;\mathbb{K}}$ such that $\sum_{j=-m}^m\Lm^s\uppi_jf\to f_s$ as $m\to\infty$ in $L^p\p{\R^n;\mathbb{K}}$.
Consider $\phi\in C^\infty_c\p{\R^n;\R}\subset\mathscr{S}\p{\R^n;\C}$ defined via $\phi\p{\xi}=\abs{\xi}^{-s}\psi\p{\xi}$ (recall that $\psi$ is the special function from Lemma~\ref{dyadic partition of unity}). Observe this function is radial and that for $j\in\Z$ it holds that
\begin{multline}\label{that guy}
    2^{sj}\uppi_jf=2^{sj}\p{\del_{2^j}\sb{\p{\abs{\cdot}^{-s}\psi\abs{\cdot}^s}}\hat{f}}^{\vee}=2^{sj}\p{\del_{2^j}\sb{\p{\abs{\cdot}^{-s}\psi\abs{\cdot}^s}}\p{\textstyle{\sum_{\ell=-1}^1}\del_{2^{j+\ell}}\psi}\hat{f}}^{\vee}\\=\p{\del_{2^j}\phi\abs{\cdot}^s\p{\textstyle{\sum_{\ell=-1}^1}\del_{2^{j+\ell}}\psi}\hat{f}}^{\vee}={\uppi^{\phi}_j\Lm^s\p{\textstyle{\sum_{\ell=-1}^1}\uppi_{j+\ell}}f}={\uppi_j^{\phi}f_s}.
\end{multline}
Hence by item $(2)$ from Theorem~\ref{littlewood-paley characterization of Lp} we obtain the  bound
\begin{equation}
    \sb{f}^{\thicksim}_{\dot{H}^{s,p}}=\bnorm{\bp{\textstyle{\sum_{j\in\Z}}\abs{\uppi_j^\phi f_s}^2}^{1/2}}_{L^p}\lesssim\norm{f_s}_{L^p}=\sb{f}_{\dot{H}^{s,p}}.
\end{equation}

On the other hand, suppose that $f\in\mathscr{S}^\ast\p{\R^n;\mathbb{K}}$ satisfies $\sb{f}^\thicksim_{\dot{H}^{s,p}}<\infty$. We again use Theorem~\ref{littlewood-paley characterization of Lp} to show that the sequence $\{\sum_{j=-m}^m\Lm^s\uppi_jf\}_{j\in\N}$ is $L^p\p{\R^n;\mathbb{K}}$-Cauchy.  To begin, we claim that the sequence is actually contained within $L^p\p{\R^n;\mathbb{K}}$.  Indeed, the bound $\sb{f}^{\thicksim}_{\dot{H}^{s,p}}<\infty$ implies that $\uppi_jf\in L^p\p{\R^n;\mathbb{K}}$ for all $j\in\Z$. Then the annular frequency support of $\uppi_jf$ implies that the multiplier defining $\Lm^s$ can be taken to be smooth and compactly supported, and thus satisfying the hypotheses of Theorem~\ref{mihlin multiplier theorem}. The theorem then guarantees that $\Lm^s\uppi_jf\in L^p\p{\R^n;\C}$.  To complete the proof of the claim note that this sequence is $\mathbb{K}$-valued by the results in Appendix~\ref{harmonic analysis}. Let $m,k\in\N$  with $m<k$. Using Lemma~\ref{almost idempotent} shows that for $j \in \Z$ we have
\begin{equation}\label{pineapple naught}
    \uppi_j\p{\sum_{\ell=-k}^k\Lm^s\uppi_\ell f-\sum_{\ell=-m}^m\Lm^s\uppi_\ell f}=\begin{cases}
    \Lm^s\uppi_jf&m+1<\abs{j}\le k-1\\
    0&\abs{j}\ge k+2,\;\abs{j}<m\\
    \uppi_{k+1}\Lm^s\uppi_k f&j=k+1\\
    \uppi_{-k-1}\Lm^s\uppi_{-k}f&j=-k-1\\
    \uppi_{m}\Lm^s\uppi_{m+1}f&j=m\\
    \uppi_{-m}\Lm^s\uppi_{-m-1}f&j=-m\\
    \p{\uppi_{k-1}+\uppi_{k}}\Lm^s\uppi_k f&j=k\\
    \p{\uppi_{-k+1}+\uppi_{-k}}\Lm^s\uppi_{-k} f&j=-k\\
    \p{\uppi_{m+1}+\uppi_{m+2}}\Lm^s\uppi_{m+1}f&j=m+1\\
    \p{\uppi_{-m-1}+\uppi_{-m-2}}\Lm^s\uppi_{-m-1}f&j=-m-1.
    \end{cases}
\end{equation}
Hence, by item $(1)$ from Theorem~\ref{littlewood-paley characterization of Lp} (due to $L^p$ inclusion, there does not appear a polynomial) we may bound
\begin{multline}\label{this guy}
    \bnorm{\textstyle{\sum_{\ell=-k}^k}\Lm^s\uppi_\ell f-\textstyle{\sum_{\ell=-m}^m}\Lm^s\uppi_\ell f}_{L^p}\lesssim\bnorm{\p{\sum_{m+1<\abs{j}\le k-1}\abs{\Lm^s\uppi_jf}^2}^{1/2}}_{L^p}+\norm{\uppi_{k+1}\Lm^s\uppi_kf}_{L^p}\\+\norm{\uppi_{-k-1}\Lm^s\uppi_{-k}f}_{L^p}+\norm{\uppi_m\Lm^s\uppi_{m+1}f}_{L^p}
    +\norm{\uppi_{-m}\Lm^s\uppi_{-m-1}}_{L^p}+\norm{\p{\uppi_{k-1}+\uppi_{k}}\Lm^s\uppi_k f}_{L^p}\\+\norm{\p{\uppi_{-k+1}+\uppi_{-k}}\Lm^s\uppi_{-k} f}_{L^p}+\norm{\p{\uppi_{m+1}+\uppi_{m+2}}\Lm^s\uppi_{m+1}f}_{L^p}+\norm{\p{\uppi_{-m-1}+\uppi_{-m-2}}\Lm^s\uppi_{-m-1}f}_{L^p}.
\end{multline}
By Theorem~\ref{mihlin multiplier theorem} applied to $\psi$ and then Lemma~\ref{scaling invariance of fourier multipliers}, there is a constant $c\in\R^+$, depending only on $\psi$, $n$, and $p$, such that for all $j\in\Z$ it holds that $\norm{\del_{2^j}\psi}_{\mathscr{M}_p}=c$. Therefore the bound \eqref{this guy} implies that
\begin{equation}\label{cat}
    \bnorm{\textstyle{\sum_{\ell=-k}^k}\Lm^s\uppi_\ell f-\textstyle{\sum_{\ell=-m}^m}\Lm^s\uppi_\ell f}_{L^p}\lesssim\bnorm{\bp{\textstyle{\sum_{m<\abs{j}\le k}}\abs{\Lm^s\uppi_jf}^2}^{1/2}}_{L^p}.
\end{equation}
Now let $\nu\in C^\infty_c\p{\R^n;\R}\subset\mathscr{S}\p{\R^n;\C}$ be the radial function defined via $\nu\p{\xi}=\abs{\xi}^s\psi\p{\xi}$; the properties of $\psi$ guarantee that $\int_{\R^n} \nu =0$. Arguing as in~\eqref{that guy} shows that $\Lm^s\uppi_jf=2^{sj}\uppi_j^\nu f$; moreover, Lemma~\ref{almost idempotent} implies that $\uppi^\nu_j=\uppi^\nu_j\sum_{\ell=-1}^1\uppi_{j+\ell}$ for each $j\in\Z$. Hence, \eqref{cat} paired with item $(2)$ of Theorem~\ref{littlewood-paley characterization of Lp} yield the bounds
\begin{multline}\label{kool kat}
    \bnorm{\textstyle{\sum_{\ell=-k}^k}\Lm^s\uppi_\ell f-\textstyle{\sum_{\ell=-m}^m}\Lm^s\uppi_\ell f}_{L^p}\lesssim\bnorm{\bp{\textstyle{\sum_{m<\abs{j}\le k}}\p{2^{sj}\abs{\uppi_j^{\nu}f}}^2}}^{1/2}_{L^p}\\\le\textstyle{\sum_{\ell=-1}^1}\bnorm{\bp{\textstyle{\sum_{m<\abs{j}\le k}}\big(2^{sj}|\uppi^\nu_j\uppi_{j+\ell}f|\big)^2}^{1/2}}_{L^p}=\sum_{\ell=-1}^12^{-s\ell}\bnorm{\bp{\sum_{m<\abs{j}\le k}\abs{\uppi_{j}^{\nu}\uppi_{j+\ell}{2^{s\p{j+\ell}}f}}^2}^{1/2}}_{L^p}\\
    \le\textstyle{\sum_{\ell=-1}^1}2^{-s\ell}\bnorm{\bp{\sum_{m-1<\abs{r}\le k+1}\sum_{m<\abs{j}\le k}\abs{\uppi_j^\nu\uppi_r{2^{sr}f}}^2}^{1/2}}_{L^p}\lesssim\bnorm{\bp{\sum_{m-1<\abs{r}\le k+1}\p{2^{sr}\abs{\uppi_rf}}^2}^{1/2}}_{L^p}.
\end{multline}
Since $\sb{f}_{\dot{H}^{s,p}}^\thicksim<\infty$, we can now show that as $m\to\infty$ the final expression in~\eqref{kool kat} tends to zero. Indeed, for a.e. $x\in\R^n$ the sum $\sum_{r\in\Z}\p{2^{sr}\abs{\uppi_rf\p{x}}}^2$ is finite. For such $x$ we have that
\begin{equation}
\bp{\textstyle{\sum_{m-1<\abs{r}\le k+1}}\p{2^{sr}\abs{\uppi_rf\p{x}}}^2}^{1/2}\to0\text{ as } m<k\to\infty.
\end{equation}
The limit in $L^p\p{\R^n;\mathbb{K}}$ follows now from the dominated convergence theorem.  We deduce then that the sequence $\big\{\sum_{j=-m}^m\Lm^s\uppi_j f\big\}_{j\in\N}$ is  Cauchy in $L^p\p{\R^n;\mathbb{K}}$, and hence $f\in\dot{H}^{s,p}\p{\R^n;\mathbb{K}}$. We can now argue exactly as above to deduce that for each $m\in\N$ it holds that
\begin{equation}
    \bnorm{\textstyle{\sum_{j=-m}^m}\Lm^s\uppi_jf}_{L^p}\lesssim_{s,n,p,\psi}\bnorm{\bp{\textstyle{\sum_{j\in\Z}}\p{2^{sj}\abs{\uppi_j f}}^2}^{1/2}}_{L^p}.
\end{equation}
\end{proof}
 

Using the Littlewood-Paley characterization of the Riesz potential spaces, we are now able to see that the spaces $\dot{H}^{1,p}$ and $\dot{W}^{1,p}$ essentially coincide.  The proof of the following result is technical refinement of Theorem 6.3.1 in \cite{MR0482275} in the sense that we do not require the Fourier transform of $f$ to vanish near the origin.

\begin{thm}[Frequency space characterization of $\dot{W}^{1,p}$]\label{frequency space characterization of homogeneous sobolev spaces}
Let $1<p<\infty$. There exists a constant $c\in\R^+$, depending only on $n$, $p$, and $\psi$, such that the following hold:
\begin{enumerate}
    \item If $f\in \dot{W}^{1,p}\p{\R^n;\mathbb{K}}$, then $c^{-1}\sb{f}_{\dot{H}^{1,p}}\le\sb{f}_{\dot{W}^{1,p}}$
    \item If $f\in\dot{H}^{1,p}\p{\R^n;\mathbb{K}}$, then there exists a $\mathbb{K}$-valued polynomial $Q$ with the property that $f-Q$ can be identified with an $\dot{W}^{1,p}$-function and $\sb{f-Q}_{\dot{W}^{1,p}}\le c\sb{f}_{\dot{H}^{1,p}}$. Moreover, the coefficients of terms of degree $1$ and higher of $Q$ are uniquely determined.
\end{enumerate}
\end{thm}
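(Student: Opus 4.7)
I would first estimate the Littlewood--Paley seminorm $\sb{f}_{\dot{H}^{1,p}}^{\thicksim}$ of Theorem~\ref{littlewood paley characterization of Riesz potential spaces} and then invoke that theorem to conclude. Since $f\in\dot{W}^{1,p}$ is tempered by Lemma~\ref{homog are tempered}, each $\uppi_jf$ makes sense. On the frequency side the identities $\sum_{k=1}^n(-i\xi_k)\widehat{\partial_kf}(\xi)=\abs{\xi}^2\hat f(\xi)$ and $\supp\psi\subseteq\cb{\abs{\eta}\ge1/2}$ yield
\begin{equation*}
\widehat{2^j\uppi_jf}(\xi) \;=\; \sum_{k=1}^n\tilde\psi_k(\xi/2^j)\widehat{\partial_kf}(\xi),\qquad \tilde\psi_k(\eta):=\f{-i\eta_k}{\abs{\eta}^2}\psi(\eta).
\end{equation*}
Each $\tilde\psi_k\in C^\infty_c(\R^n;\C)$ satisfies the hypotheses of Theorem~\ref{littlewood-paley characterization of Lp}~(2); applying Minkowski's inequality in the sum over $k$ and then the scalar specialization of that inequality (by concentrating the input sequence on a single index) would give $\sb{f}_{\dot H^{1,p}}^{\thicksim}\lesssim\sb{f}_{\dot W^{1,p}}$, and Theorem~\ref{littlewood paley characterization of Riesz potential spaces}~(2) then concludes part (1).

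\textbf{Plan for part (2).} For $f\in\dot{H}^{1,p}$, let $f_1\in L^p$ denote the limit of $\sum_{k=-j}^j\Lm\uppi_kf$, so $\norm{f_1}_{L^p}=\sb{f}_{\dot{H}^{1,p}}$. For $\ell\in\cb{1,\dots,n}$ I would set $g_\ell:=-R_\ell f_1$, where $R_\ell$ is the $\ell$-th Riesz transform (Fourier multiplier $-i\xi_\ell/\abs{\xi}$); its $L^p$-boundedness via Mikhlin (Theorem~\ref{mihlin multiplier theorem}) gives $\norm{g_\ell}_{L^p}\lesssim\sb{f}_{\dot{H}^{1,p}}$. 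The key claim is $\uppi_jg_\ell=\uppi_j\partial_\ell f$ for every $j\in\Z$: since $\uppi_j$ is $L^p$-continuous and commutes with both $R_\ell$ and $\Lm$, $\uppi_jf_1=\lim_N\sum_{k=-N}^N\uppi_j\Lm\uppi_kf$, and the almost-orthogonality together with almost-idempotence of Lemma~\ref{almost idempotent} collapses this to $\Lm\uppi_jf$. Thus $\uppi_jg_\ell=-R_\ell\Lm\uppi_jf=\partial_\ell\uppi_jf=\uppi_j\partial_\ell f$, and Lemma~\ref{convergence}~(3) produces a polynomial $P_\ell$ with $\partial_\ell f=g_\ell+P_\ell$.

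The family $\cb{P_\ell}$ is curl-free because $\partial_mg_\ell$ has the multiplier $(\xi_m\xi_\ell/\abs{\xi})\hat f_1$, symmetric in $m,\ell$. Polynomial antidifferentiation then produces a polynomial $Q$, unique modulo an additive constant, with $\partial_\ell Q=P_\ell$ for every $\ell$, whence $\partial_\ell(f-Q)=g_\ell\in L^p$. A standard local Poincar\'e argument (mollify, control differences in $L^p_{\loc}$ by the uniform $L^p$-gradient bound, and pass to the limit) shows that the tempered distribution $f-Q$ admits an $L^1_{\loc}$ representative, unique modulo a constant; absorbing that constant into $Q$ yields $f-Q\in\dot{W}^{1,p}$ with $\sb{f-Q}_{\dot W^{1,p}}=\sum_\ell\norm{g_\ell}_{L^p}\lesssim\sb{f}_{\dot H^{1,p}}$. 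Uniqueness of the positive-degree coefficients of $Q$ is immediate, since each $P_\ell=\partial_\ell f-g_\ell$ is determined by $f$.

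\textbf{The principal obstacle} is justifying the distributional identity $\uppi_jg_\ell=\uppi_j\partial_\ell f$: although almost-orthogonality renders the computation essentially finite, one must carefully interchange the $L^p$-limit defining $f_1$ with the operators $\uppi_j$, $R_\ell$, and $\Lm$, invoking $L^p$-continuity at each step. The subsequent promotion of $f-Q$ from a tempered distribution to an $L^1_{\loc}$ function is routine but not automatic and has to be executed with some care in the present seminormed context.
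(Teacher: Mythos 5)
Your proposal is correct, but it takes a genuinely different route from the paper on both halves. For item (1), the paper never passes through the square-function seminorm: it builds ad hoc multipliers $\mathbf{m}_0,\mathbf{m}_1$ from one-dimensional cutoffs, shows directly that the defining partial sums $\sum_{j=-m}^m\Lm^1\uppi_jf$ lie in $L^p$ and are Cauchy, and only at the end invokes the Littlewood--Paley characterization of $L^p$ applied to $\pd_\ell f$; your plan instead bounds $\sb{f}^{\thicksim}_{\dot{H}^{1,p}}$ by writing $2^j\uppi_jf=\sum_k\uppi_j^{\tilde\psi_k}\pd_kf$ and quoting the vector-valued inequality of Theorem~\ref{littlewood-paley characterization of Lp} plus Theorem~\ref{littlewood paley characterization of Riesz potential spaces}(2), which is a legitimate and arguably cleaner shortcut since Theorem~\ref{littlewood paley characterization of Riesz potential spaces}(2) already encapsulates the convergence-of-partial-sums work (and your $\tilde\psi_k$ does satisfy the hypotheses there: it is smooth, supported in the annulus, and has zero integral by oddness in $\eta_k$). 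For item (2), the paper exploits completeness of $\dot{W}^{1,p}$ (Lemma~\ref{completeness of homogeneous Sobolev spaces}): it shows $\big\{\sum_{j=-m}^m\uppi_jf\big\}_m$ is Cauchy in $\dot{W}^{1,p}$, takes the limit $g$ there — which hands over a locally integrable representative for free — and then compares $\grad f$ with $\grad g$ via Lemma~\ref{convergence} and Poincar\'e's lemma. Your Riesz-transform construction $g_\ell=-R_\ell f_1$ avoids the Cauchy-sequence argument, but the price is exactly the step you flag: upgrading the tempered distribution $f-Q$, known only to have $L^p$ distributional gradient, to an $L^1_{\loc}$ function, which requires the Deny--Lions/local Poincar\'e argument you sketch (control $u_\ep-u_{\ep'}$ minus averages on balls by the $L^p_{\loc}$-convergence of gradients, recover the averages by testing against a fixed test function, patch over balls); that sketch is correct in substance, though there is no constant to ``absorb'' once one identifies the distribution $f-Q$ itself with the limit. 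One small completion: your uniqueness remark only pins down the $Q$ you construct; for an arbitrary polynomial $Q'$ with $f-Q'\in\dot{W}^{1,p}$ you should add the one-line observation that $\pd_\ell Q'-P_\ell=g_\ell-\pd_\ell\p{f-Q'}$ is a polynomial lying in $L^p\p{\R^n;\mathbb{K}}$ with $p<\infty$, hence zero — which is precisely how the paper argues.
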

\begin{proof}
Suppose first that $f\in\dot{H}^{1,p}\p{\R^n;\mathbb{K}}$. Let us first show that the sequence $\big\{\sum_{j=-m}^m\uppi_jf\big\}_{m\in\N}$ is Cauchy in $\dot{W}^{1,p}\p{\R^n;\mathbb{K}}$. This sequence of tempered distributions is identified with a sequence of locally integrable functions since each member has compactly supported Fourier transform (see the Paley-Wiener-Schwartz theorem in, for instance, Chapter 6, Section 4 of \cite{MR1336382}).  If $k\in\cb{1,\dots,n}$ the mapping $\R^n\ni\xi\mapsto\ii\xi_k\abs{\xi}^{-1}$ (a scalar multiple of the usual Riesz transform) belongs to $\mathscr{M}_p\p{\R^n;\mathbb{K}}$ by Theorem~\ref{mihlin multiplier theorem} and Lemma~\ref{mult lemma}; therefore, since $\sum_{j=-m}^m\Lm^1\uppi_jf\in L^p\p{\R^n;\mathbb{K}}$ it then holds that
\begin{equation}
    \bnorm{\textstyle{\sum_{j=-m}^m}\pd_k\uppi_jf}_{L^p}\lesssim_{n,p}\bnorm{\textstyle{\sum_{j=-m}^m}\Lm^1\uppi_jf}_{L^p}<\infty \text{ for } m\in\N\imp\cb{\sum_{j=-m}^m\uppi_jf}_{m\in\N}\subset\dot{W}^{1,p}\p{\R^n;\mathbb{K}}.
\end{equation}
The above argument, supplemented with ideas from the latter half of the proof of Theorem~\ref{littlewood paley characterization of Riesz potential spaces}, yields for $m,k\in\N$ with $m<k$:
\begin{multline}
    \textstyle{\sum_{\ell=1}^n}\bnorm{\sum_{j=-k}^k\pd_\ell\uppi_j f-\sum_{j=-m}^m\pd_\ell\uppi_j f}_{L^p}\lesssim_{n,p}\bnorm{\sum_{j=-k}^k\Lm^1\uppi_jf-\sum_{j=-m}^m\Lm^1\uppi_jf}_{L^p}\\\lesssim_{n,p,\psi}\bnorm{\p{\textstyle{\sum_{m-1<\abs{r}\le k+1}}\p{2^{sr}\abs{\uppi_r f}}^2}^{1/2}}_{L^p}.
\end{multline}
This estimate paired with Theorem~\ref{littlewood paley characterization of Riesz potential spaces} shows that the sequence in question is indeed Cauchy in the space $\dot{W}^{1,p}$. As this seminormed space is semi-Banach thanks to Lemma~\ref{completeness of homogeneous Sobolev spaces}, we are assured of the existence of $g\in\dot{W}^{1,p}\p{\R^n;\mathbb{K}}$ with the property that for each $\ell \in \{1,\dotsc,n\}$
\begin{equation}
\pd_\ell\textstyle{\sum_{j=-m}^m}\uppi_jf\to\pd_\ell g\text{ in }L^p\p{\R^n;\mathbb{K}}\text{ as }m\to\infty.
\end{equation}
Theorem~\ref{mihlin multiplier theorem} assures us that for all $j\in\Z$,  $\uppi_j\in\mathcal{L}\p{L^p\p{\R^n;\mathbb{K}};L^p\p{\R^n;\mathbb{K}}}$. This fact, paired with Lemma~\ref{almost idempotent}, shows that
\begin{equation}
    \uppi_j\pd_\ell g=\uppi_j\pd_\ell f\text{ for all }j\in\Z\text{ and for all }\ell\in\cb{1,\dots,n}.
\end{equation}
Hence Lemma~\ref{convergence} implies that $\grad f=\grad g+P$ for a $\mathbb{K}^n$-valued polynomial $P$. By Poincar\'{e}'s lemma there is $\mathbb{K}-$valued polynomial $Q$ such that $\grad Q= P$. We are free to adjust the constant term of $Q$ so that $f=g+Q$. If $\tilde{Q}$ were another polynomial with the property that  $f-\tilde{Q}\in\dot{W}^{1,p}\p{\R^n;\mathbb{K}}$. Then $\tilde{Q}-Q$ would also belong to the space $\dot{W}^{1,p}\p{\R^n;\mathbb{K}}$. Hence $\grad(\tilde{Q}-Q)$ is necessarily zero.

We now estimate  $\sb{f-Q}_{\dot{W}^{1,p}}$  using again the fact that $\xi\mapsto\ii\xi_\ell\abs{\xi}^{-1}$ belongs to $\mathscr{M}_p\p{\R^n;\mathbb{K}}$ for all $\ell\in\cb{1,\dots,n}$:
\begin{equation}
    \sb{f-Q}_{\dot{W}^{1,p}}\lesssim_n\lim_{m\to\infty}\bnorm{\textstyle{\sum_{j=-m}^m}\grad\uppi_jf}_{L^p}\lesssim_{n,p}\limsup_{m\to\infty}\bnorm{\textstyle{\sum_{j=-m}^m}\Lm^1\uppi_jf}_{L^p}.
\end{equation}
The proof of Theorem~\ref{littlewood paley characterization of Riesz potential spaces} shows that for each $m\in\N$ we may bound
\begin{equation}
    \bnorm{\textstyle{\sum_{j=-m}^m}\Lm^1\uppi_jf}_{L^p}\lesssim_{n,p,\psi}\bnorm{\bp{\textstyle{\sum_{j\in\Z}}\p{2^j\abs{\uppi_jf}}^2}^{1/2}}_{L^p}.
\end{equation}
Thus, the proof of the second item is now complete.

On the other hand, suppose that $f\in\dot{W}^{1,p}\p{\R^n;\mathbb{K}}$. Let $\rho\in C^\infty\p{\R;\R}$ be an even function that vanishes in the interval $\sb{-\f12n^{-1/2},\f12n^{-1/2}}$ and is identically $1$ outside of $\p{-n^{-1/2},n^{1/2}}$. Consider the multipliers $\bf{m}_0,\bf{m}_1:\R^n\to\R$ defined via
\begin{equation}\label{coffee two}
    \bf{m}_0(\xi) = \rho\big(\abs{\xi}n^{-1/2}\big)\abs{\xi}\bp{\textstyle{\sum_{\ell=1}^n}\rho\p{\xi_\ell}\abs{\xi_\ell}}^{-1}\text{ and  }\bf{m}_1(\xi)=\textstyle{\sum_{\ell=1}^n}\rho\p{\xi_\ell}\abs{\xi_\ell}.
\end{equation}
Observe that $\bf{m}_0$ is smooth, vanishes in $B\p{0,1/2}\subset\R^n$, and agrees with $\xi\mapsto\abs{\xi}\p{\sum_{\ell=1}^n\rho\p{\xi_\ell}\abs{\xi_\ell}}^{-1}$ for $\abs{\xi}\ge 1$. One can verify that $\bf{m_0}\in\mathscr{M}_p\p{\R^n;\mathbb{K}}$. Now if $m\in\N$, then $B\p{0,\kappa_m}\subset\R^n\setminus\supp\mathscr{F}\big(\sum_{j=-m}^m\uppi_jf\big)$ where $\kappa_m=2^{-m-2}$. This tells us that
\begin{equation}\label{coffee too}
    \Lm^1\textstyle{\sum_{j=-m}^m}\uppi_jf=\kappa_m\p{\del_{\kappa_m}\bf{m}_0\del_{\kappa_m}\bf{m}_1}^{\vee}\ast\textstyle{\sum_{j=-m}^m}\uppi_jf.
\end{equation}
In turn, by Lemma~\ref{scaling invariance of fourier multipliers} we can bound
\begin{equation}\label{coffee zero}
    \bnorm{\Lm^1\textstyle{\sum_{j=-m}^m}\uppi_jf}_{L^p}\le\kappa_m\norm{\del_{\kappa_m}\bf{m}_0}_{\mathscr{M}_p}\bnorm{\del_{\kappa_m}\bf{m}_1\sum_{j=-m}^m\uppi_jf}_{L^p}=\kappa_m\norm{\bf{m}_0}_{\mathscr{M}_p}\bnorm{\p{\del_{\kappa_m}\bf{m}_1}^{\vee}\ast\sum_{j=-m}^m\uppi_jf}_{L^p}.
\end{equation}
Finally, the fact that for each $\ell\in\cb{1,\dots,n}$ the map (and its dilates) $\xi\mapsto\ii\f{\abs{\xi_\ell}}{\xi_\ell}\rho\p{\xi_\ell}$ belong to $\mathscr{M}_p\p{\R^n;\mathbb{K}}$ yields the bound
\begin{equation}\label{coffee one}
    \bnorm{\p{\del_{\kappa_m}\bf{m}_1}^\vee\ast\textstyle{\sum_{j=-m}^m}\uppi_jf}_{L^p}\lesssim_{n,p}{\kappa_m}^{-1}\textstyle{\sum_{\ell=1}^n}\bnorm{\pd_\ell\textstyle{\sum_{j=-m}^m}\uppi_jf}_{L^p}={\kappa_m}^{-1}\textstyle{\sum_{\ell=1}^n}\bnorm{\sum_{j=-m}^m\uppi_j\pd_\ell f}_{L^p}.
\end{equation}
Since $\uppi_j\in\mathcal{L}\p{L^p\p{\R^n;\mathbb{K}};L^p\p{\R^n;\mathbb{K}}}$ for each $j\in\Z$, the estimates \eqref{coffee zero} and \eqref{coffee one} imply the inclusion $\big\{\sum_{j=-m}^m\Lm^1\uppi_jf\big\}_{m\in\N} \subset L^p\p{\R^n;\mathbb{K}}$.

To see that this sequence is also Cauchy, we apply the argument in \eqref{coffee too}, \eqref{coffee zero}, and~\eqref{coffee one} to the function $\sum_{j=-k}^k\Lm^1\uppi_jf-\sum_{j=-m}^m\Lm^1\uppi_jf$ for $k>m$, $m,k\in\N$. This shows that
\begin{equation}\label{math is math is math}
    \bnorm{\textstyle{\sum_{j=-k}^k}\Lm^1\uppi_jf-\textstyle{\sum_{j=-m}^m}\Lm^1\uppi_jf}_{L^p}\lesssim_{n,p}\textstyle{\sum_{\ell=1}^n}\bnorm{\textstyle{\sum_{m<\abs{j}\le k}}\uppi_j\pd_\ell f}_{L^p}.
\end{equation}
The term above on the right can be universally estimated using item $(1)$ of Theorem~\ref{littlewood-paley characterization of Lp} (due to $L^p$ inclusion,  there does not appear a polynomial) and Theorem~\ref{mihlin multiplier theorem}:
\begin{equation}\label{math is math}
    \textstyle{\sum_{\ell=1}^n}\bnorm{\textstyle{\sum_{m<\abs{j}\le k}}\uppi_j\pd_\ell f}_{L^p}\lesssim_{n,p,\psi}\textstyle{\sum_{\ell=1}^n}\bnorm{\bp{\sum_{m<\abs{j}\le k}\abs{\uppi_j\pd_\ell f}^2}^{1/2}}_{L^p}.
\end{equation}
As a consequence of item $(1)$ in Theorem~\ref{littlewood-paley characterization of Lp}, we have the equivalence for each $\ell\in\cb{1,\dots,n}$:
\begin{equation}
    \bnorm{\bp{\textstyle{\sum_{j\in\Z}}\abs{\uppi_j\pd_\ell f}^2}^{1/2}}_{L^p}\asymp_{n,p,\psi}\bnorm{\pd_\ell f}_{L^p}<\infty.
\end{equation}
Therefore equations~\eqref{math is math is math} and~\eqref{math is math} show the sequence $\big\{\sum_{j=-m}^m\Lm^1\uppi_jf\big\}_{m\in\N}$ to be $L^p\p{\R^n;\mathbb{K}}$-Cauchy.

Finally, \eqref{coffee zero}, \eqref{coffee one}, and Theorem~\ref{littlewood-paley characterization of Lp} imply that
\begin{equation}
    \lim_{m\to\infty}\bnorm{\textstyle{\sum_{j=-m}^m}\Lm^1\uppi_jf}_{L^p}\lesssim_{n,p,\psi}\limsup\limits_{m\to\infty}\textstyle{\sum_{\ell=1}^n}\bnorm{\bp{\textstyle{\sum_{j=-m}^m}\abs{\uppi_j\pd_\ell f}^2}^{1/2}}_{L^p}\asymp_{n,p,\psi}\textstyle{\sum_{\ell=1}^n}\norm{\pd_\ell f}_{L^p}.
\end{equation}
\end{proof}

The generalizations of Theorem~\ref{frequency space characterization of homogeneous sobolev spaces} for the pairs of spaces $\dot{H}^{m,p}(\R^n;\mathbb{K})$ and $\dot{W}^{m,p}(\R^n;\mathbb{K})$ hold for $m\in\N\setminus\cb{0,1}$ and $1<p<\infty$ and are proven in essentially the same way as above.

\subsection{Homogeneous Besov spaces}
We now turn our attention to the scale of homogeneous Besov spaces.
\begin{defn}[Translations, difference quotients, moduli of continuity]\label{difference quotients and moduli of continuity}
Let $1\le p\le\infty$. 
\begin{enumerate}
    \item For $h\in\R^n$ we define the $h$-translation operator, $\tau_h$, and the $h-$forward difference operator, $\Delta_h$, as follows.  Given $f: \R^n \to \mathbb{K}$, we let $\tau_h f, \Delta_h f : \R^n \to \mathbb{K}$ be given by 
    $
    \tau_hf\p{x}=f\p{x-h}$ and  $\Delta_hf\p{x}=\p{\tau_{-h}-1}f\p{x}
    $.
    \item We define the $L^p$-modulus of continuity as the functional $\omega_{p}:\R^+\times L^1_\loc\p{\R^n;\mathbb{K}}\to\sb{0,\infty}$ with action $\omega_{p}\p{t,f} =\sup\big\{\norm{\Delta_hf}_{L^p}\;:\;h\in\Bar{B\p{0,t}}\subset\R^n\big\}$.
\end{enumerate}
\end{defn}

\begin{defn}[Homogeneous Besov spaces]\label{defintion of besov spaces}Let $s\in\p{0,1}$ and $1\le p,q\le\infty$. We define the homogeneous Besov space $\dot{B}^{s,p}_q\p{\R^n;\mathbb{K}}=\big\{f\in L^1_\loc\p{\R^n;\mathbb{K}}\;:\;\sb{f}_{\dot{B}^{s,p}_q}<\infty\big\}$,
where $\sb{\cdot}_{\dot{B}^{s,p}_q}:L^1_\loc\p{\R^n;\mathbb{K}}\to\sb{0,\infty}$ is defined by
\begin{equation}
    \sb{f}_{\dot{B}^{s,p}_q}=
    \begin{cases}
    \p{\int_{\R^+}\p{t^{-s}\omega_{p}\p{t,f}}^qt^{-1}\;\m{d}t}^{1/q}&1\le q<\infty\\
    \sup\cb{t^{-s}\omega_{p}\p{t,f}\;:\;t\in\R^+}&q=\infty.
    \end{cases}
\end{equation}
\end{defn}

The following equivalent seminorm is occasionally useful.

\begin{defn}\label{difference quotient norm on besov 2}
Let $n\in\N^+$, $s\in\p{0,1}$, $1\le p, q\le\infty$. We define $\sb{\cdot}_{\dot{B}^{s,p}_q}^{\thicksim}:L^1_\loc\p{\R^n;\mathbb{K}}\to\sb{0,\infty}$ via 
\begin{equation}
\sb{f}_{\dot{B}^{s,p}_q}^{\thicksim} =
    \begin{cases}
    \p{\int_{\R^n}\p{\abs{h}^{-s}\norm{\Delta_hf}_{L^p}}^q\abs{h}^{-n}\;\m{d}h}^{1/q}&1\le q<\infty\\
    \m{esssup}\cb{\abs{h}^{-s}\norm{\Delta_hf}_{L^p}\;:\;h\in\R^n}&q=\infty
    \end{cases}.
\end{equation}
Proposition 17.21 in~\cite{MR3726909} shows that $\sb{\cdot}^{\thicksim}_{\dot{B}^{s,p}_q}$ is equivalent to $\sb{\cdot}_{\dot{B}^{s,p}_q}$.
\end{defn}

The proof of the following lemma is a slightly modified excerpt from the proof of Theorem 17.24 in~\cite{MR3726909}. We include it to emphasize the connection between the $K$-functional on the sum of $L^p$ and $\dot{W}^{1,p}$ and the $L^p$ modulus of continuity on $L^1_\loc$.

\begin{lem}[Relation between $K$-functional and moduli of continuity]\label{K functionals and modulus of continuity}
Fix $1\le p\le\infty$. Let $\mathscr{K}$ denote the $K$-functional, from Definition~\ref{K functional for semi normed spaces}, corresponding to the space $L^p\p{\R^n;\mathbb{K}}$ and $\dot{W}^{1,p}\p{\R^n;\mathbb{K}}$. Then for all $\p{t,u}\in\R^+\times L^1_{\loc}\p{\R^n;\mathbb{K}}$ we have the equivalence:
\begin{equation}\label{want to show equiv}
    2^{-1}\omega_p\p{t,u}\le\mathscr{K}\p{t,u}\le (1+n^{3/2})\omega_p\p{t,u}.
\end{equation}
\end{lem}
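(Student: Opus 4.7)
I would verify the two inequalities in~\eqref{want to show equiv} separately, with care to produce the precise constants $1/2$ and $1+n^{3/2}$.

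For the lower bound, I take an arbitrary decomposition $u = u_0 + u_1$ with $u_0 \in L^p\p{\R^n;\mathbb{K}}$ and $u_1 \in \dot{W}^{1,p}\p{\R^n;\mathbb{K}}$ and apply the triangle inequality to $\|\Delta_h u\|_{L^p}$. The first term is bounded by $2\|u_0\|_{L^p}$. For the second, a good representative of $u_1$ is absolutely continuous on lines, so the integral representation $u_1(x+h) - u_1(x) = \int_0^1 h \cdot \grad u_1(x + sh)\,\m{d}s$ is valid, and with Minkowski together with $\abs{\grad u_1}\le\sum_j\abs{\pd_ju_1}$ one gets $\|\Delta_h u_1\|_{L^p}\le\abs{h}\sb{u_1}_{\dot{W}^{1,p}}$. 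For $\abs{h}\le t$ this produces $\|\Delta_hu\|_{L^p}\le 2\bp{\|u_0\|_{L^p}+t\sb{u_1}_{\dot{W}^{1,p}}}$; taking the supremum over such $h$ and then the infimum over decompositions gives $\omega_p(t,u)\le 2\mathscr{K}(t,u)$.

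For the upper bound I construct an explicit decomposition by cube averaging. With $Q=[0,t/\sqrt n\,]^n$, whose diameter is precisely $t$, set $u_1(x) = \abs{Q}^{-1}\int_Q u(x-y)\,\m{d}y$ and $u_0 = u - u_1$. Writing $u_0(x) = -\abs{Q}^{-1}\int_Q \Delta_{-y}u(x)\,\m{d}y$, Minkowski and the bound $\abs{y}\le t$ on $Q$ immediately yield $\|u_0\|_{L^p}\le\omega_p(t,u)$. For each $j\in\cb{1,\dots,n}$, pairing $u_1$ against $\varphi\in C^\infty_c(\R^n;\C)$ and integrating by parts in the $y_j$-variable gives the distributional identity
\begin{equation}
    \pd_j u_1(x) = \abs{Q}^{-1}\int_{[0,t/\sqrt n\,]^{n-1}} \Delta_{(t/\sqrt n)e_j} u\bp{x - \tilde{y}'}\,\m{d}y_1\cdots\m{d}y_{j-1}\,\m{d}y_{j+1}\cdots\m{d}y_n,
\end{equation}
where $\tilde y'$ has $k$th coordinate $y_k$ for $k\neq j$ and $j$th coordinate $t/\sqrt n$. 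Minkowski and translation invariance then give $\|\pd_j u_1\|_{L^p}\le(\sqrt n/t)\,\omega_p(t/\sqrt n,u)\le(\sqrt n/t)\,\omega_p(t,u)$, so summing in $j$ yields $\sb{u_1}_{\dot{W}^{1,p}}\le (n^{3/2}/t)\,\omega_p(t,u)$ and hence $\mathscr{K}(t,u)\le\|u_0\|_{L^p}+t\sb{u_1}_{\dot{W}^{1,p}}\le(1+n^{3/2})\omega_p(t,u)$.

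The only delicate point is that, since $u$ lies only in $L^1_\loc$, the identification of $\pd_j u_1$ with the claimed quantity is distributional rather than pointwise; this presents no real obstacle, since the computation is simply integration by parts against test functions, and the resulting formula automatically shows that the distributional derivative is represented by an $L^p$-function with the stated bound. The specific side length $t/\sqrt n$ is exactly calibrated so that $Q$ has diameter $t$ (which controls $\|u_0\|_{L^p}$ by $\omega_p(t,u)$) while the coordinate translations appearing in the derivative formula have length $t/\sqrt n$; this produces a factor of $\sqrt n/t$ per derivative and, after summing in $j\in\cb{1,\dots,n}$, the constant $n^{3/2}$.
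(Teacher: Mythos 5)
Your proof is correct and follows essentially the same route as the paper: the lower bound via the triangle inequality on an arbitrary decomposition together with the estimate $\norm{\Delta_h w}_{L^p}\le\abs{h}\sb{w}_{\dot{W}^{1,p}}$, and the upper bound via averaging over a cube of side $t/\sqrt{n}$ (diameter $t$), with the one-dimensional fundamental theorem of calculus producing the derivative formula and the constant $1+n^{3/2}$. The only cosmetic differences are that you justify the gradient estimate through the ACL representative rather than the paper's mollification-plus-Fatou (and weak-$\ast$ for $p=\infty$) argument, and you derive $\pd_j u_1$ by integrating by parts against test functions rather than differentiating the averaged integral directly; both are valid.
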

\begin{proof}
First, we prove the left inequality in~\eqref{want to show equiv}. We may reduce to proving this under the extra assumption that $u\in\Sigma\big(L^p\p{\R^n;\mathbb{K}};\dot{W}^{1,p}\p{\R^n;\mathbb{K}}\big)$ since otherwise the right-hand-side is infinite, and there is nothing to prove.  Assume this and let $t\in\R^+$.  Suppose that $\p{v,w}\in L^p\p{\R^n;\mathbb{K}}\times\dot{W}^{1,p}\p{\R^n;\mathbb{K}}$ are a decomposition of $u$, that is: $u=v+w$. For any $h\in\bar{B\p{0,t}}$ the estimate $\norm{\Delta_hv}_{L^p}\le 2\norm{v}_{L^p}$ is clear by the triangle inequality and invariance of the $L^p$-norm under translations.  On the other hand, we have that $\norm{\Delta_hw}_{L^p} \le \abs{h} \sb{ w}_{\dot{W}^{1,p}}$. In the case that $1\le p<\infty$ we let $\cb{\varphi_\ep}_{\ep\in\p{0,1}}\subset C^\infty_c\p{B\p{0,1}}$ be a standard mollifier. Then for $\ep\in\p{0,1}$ we can use the fundamental theorem of calculus and Minkowski's integral inequality to bound:
\begin{multline}
    \bp{\int_{\R^n}\abs{w\ast\varphi_\ep\p{x+h}-w\ast\varphi_\ep\p{x}}^p\;\m{d}x}^{1/p}=\bp{\int_{\R^n}\Big|\int_{\p{0,1}}\grad\p{w\ast\varphi_\ep}\p{x+\sig h}\cdot h\;\m{d}\sig\Big|^p\;\m{d}x}^{1/p}\\\le\abs{h}\int_{\p{0,1}}\bp{\int_{\R^n}\abs{\grad\p{w\ast\varphi_\ep}\p{x+\sig h}}^p\;\m{d}x}^{1/p}\;\m{d}\sig=\abs{h}\sb{w\ast\varphi_\ep}_{\dot{W}^{1,p}}\le\abs{h}\sb{w}_{\dot{W}^{1,p}}.
\end{multline}
Letting $\ep\to0^+$ and using Fatou's lemma gives the claim. On the other hand if $p=\infty$ we repeat the same argument and use lower semicontinuity of weak-$\ast$ convergence in $L^\infty=(L^1)^\ast$ of the mollified functions in place of Fatou's lemma. Note that since the sequence of mollified functions converge pointwise a.e. the argument also shows $\dot{W}^{1,\infty}(\R^n;\mathbb{K})\emb\dot{C}^{0,1}(\R^n;\mathbb{K})$.

Now we take the supremum over $h\in\Bar{B\p{0,t}}$: $\norm{\Delta_hu}_{L^p}\le2\p{\norm{v}_{L^p}+t\sb{ w}_{\dot{W}^{1,p}}}\imp\omega_p\p{t,u}\le2\p{\norm{v}_{L^p}+t\norm{\grad w}_{L^p}}$.
We then take the infimum over all such decompositions of $u$ to see that $\omega_p\p{t,u}\le2\mathscr{K}\p{t,u}$.

Next, we prove the first inequality in~\eqref{want to show equiv} in the case that $1\le p<\infty$. Suppose that $t\in\R^+$, and $u\in L^1_\loc\p{\R^n;\mathbb{K}}$ is such that $\omega_p\p{t,u}<\infty$ (if this is infinite, then there is, again, nothing to prove). Let $Q\big(0,n^{-\f12}t\big)$ denote the cube centered at the origin with sides of length $n^{-\f12}t$ which are parallel to the coordinate axes.  Consider $v,w:\R^n\to\mathbb{K}$ given by
\begin{equation}\label{decomp}
    v\p{x}=-\f{n^{\f{n}{2}}}{t^n}\int_{Q\p{0,n^{-\f12}t}}\Delta_yu\p{x}\;\m{d}y \text{ and }
    w\p{x}=\f{n^{\f{n}{2}}}{t^n}\int_{Q\p{0,n^{-\f12}t}}\tau_{-y}u\p{x}\;\m{d}y.
\end{equation}
By construction we have that $u=v+w$. We estimate $v$ with the Minkowski integral inequality:
\begin{equation}\label{wow}
    \norm{v}_{L^p}=\bp{\int_{\R^n}\Big| \f{n^{\f{n}{2}}}{t^n}\int_{Q\p{0,n^{-\f12}t}}\Delta_yu\p{x}\;\m{d}y\Big|^{p}\;\m{d}x}^{\f{1}{p}}\le\f{n^{\f{n}{2}}}{t^n}\int_{Q\p{0,n^{-\f12}t}}\norm{\Delta_yu}_{L^p}\;\m{d}y\le\omega_p\p{t,u},
\end{equation}
where in the last inequality we used that $Q\big(0,n^{-\f12}t\big)\subseteq\Bar{B\p{0,t}}$. Next we estimate $w$ in $\dot{W}^{1,p}\p{\R^n;\mathbb{K}}$ (see also Lemma~\ref{regularity promotion via averaging}). Let $j\in\cb{1,\dots,n}$. For $z\in\R^n$ we adopt the following notation: the canonical basis of $\R^n$ is the set $\cb{e_1,\dots,e_n}$ and $z=\p{z_j',z_j}$, $z_j\in\R$, $z_j'\in\R^{n-1}$, and $z_j'=\p{z_1,\dots,z_{j-1},z_{j+1},\dots,z_n}$. By a change of coordinates we have that
\begin{equation}
    \int_{\p{-\f12n^{-\f12}t,\f12n^{-\f12}t}}u\p{x_j'+y_j',x_j+y_j}\;\m{d}y_j=\int_{\p{-\f12n^{-\f12}t+x_j,\f12n^{-\f12}t+x_j}}u\p{x_j'+y_j',\tau}\;\m{d}\tau.
\end{equation}
Then for a.e. $x\in\R^{n}$ we have that
\begin{equation}
    \pd_j\int_{\p{-n^{-\f12}t,n^{-\f12}t}}u\p{x_j'+y_j',x_j+y_j}\;\m{d}y_j=\Delta_{n^{-\f12}te_j}u\p{x_j'+y_j',x_j-n^{-\f12}t/2}.
\end{equation}
Thus, upon differentiating under the integral and applying Fubini's theorem, we find that for a.e. $x\in\R^n$
\begin{equation}\label{pufferfish}
    \pd_jw\p{x}=\f{n^{\f{n}{2}}}{t^n}\int_{\tilde{Q}\p{0,n^{-\f12}t}}\Delta_{n^{-\f12}te_j}u\big(x_j'+y_j',x_j-\f12n^{-\f12}t\big)\;\m{d}y_j'.
\end{equation}
Where $\tilde{Q}\big(0,n^{-\f12}t\big)\subset\R^{n-1}$ is the cube centered at zero with sides parallel to the coordinate axes of length $n^{-\f12}t$. Again Minkowski's integral inequality and the fact that $\omega_p\p{\cdot,u}$ is increasing show that
\begin{equation}\label{wowo}
    \norm{\pd_jw}_{L^p}\le\f{n^{\f{n}{2}}}{t^n}\int_{\tilde{Q}\p{0,n^{-\f12}t}}\bnorm{\Delta_{n^{-\f12}te_j}u\bp{\cdot_j'+y_j',\cdot_j-\f12n^{-\f12}t}}_{L^p}\;\m{d}y_j'\le n^{1/2}t^{-1}\omega_p\p{t,u}.
\end{equation}
Synthesizing~\eqref{wow} and~\eqref{wowo}, we deduce that 
\begin{equation}\label{bobo}
    \mathscr{K}\p{t,u}\le\norm{v}_{L^p}+t\textstyle{\sum_{j=1}^n}\norm{\pd_jw}_{L^p}\le (1+n^{3/2})\omega_p\p{t,u}.
\end{equation}
With the first bound and the estimate~\eqref{bobo} in hand, the proof is complete when $p< \infty$.

On the other hand, if $p=\infty$ we again decompose $u=v+w$ as in equation~\eqref{decomp}. In this case it is straightforward to see that $\norm{v}_{L^\infty}\le\omega_\infty\p{t,u}$ and for all $j\in\cb{1,\dots,n}$, $\norm{\pd_jw}_{L^\infty}\le n^{1/2}t^{-1}\omega_\infty\p{t,u}$.
\end{proof}

From this equivalence we can characterize the homogeneous Besov spaces as seminorm interpolation spaces.

\begin{coro}[Interpolation characterization of homogeneous Besov spaces]\label{interpolation characterization of Besov spaces}
For all $s\in\p{0,1}$ and $1\le p,q\le\infty$ we have the equality of seminormed spaces with equivalence of seminorms:
\begin{equation}\label{bear}
    (L^p\p{\R^n;\mathbb{K}};\dot{W}^{1,p}\p{\R^n;\mathbb{K}})_{s,q}=\dot{B}^{s,p}_q\p{\R^n;\mathbb{K}}.
\end{equation}
Consequently, the following hold: $\dot{B}^{s,p}_q\p{\R^n;\mathbb{K}}$ is semi-Banach and $\mathfrak{A}\big(\dot{B}^{s,p}_q\p{\R^n;\mathbb{K}}\big)=\cb{\text{constants}}$; if $p,q<\infty$, then $C^\infty_c\p{\R^n;\mathbb{K}}$ is dense in $\dot{B}^{s,p}_q\p{\R^n;\mathbb{K}}$, we have the inclusion $\dot{B}^{s,p}_q\p{\R^n;\mathbb{K}}\subset\mathscr{S}^\ast\p{\R^n;\mathbb{K}}$, and finally the reiteration formulae
\begin{equation}
    (L^p(\R^n;\mathbb{K}),\dot{B}^{t,p}_r(\R^n;\mathbb{K}))_{s,q}=\dot{B}^{u_0,p}_q(\R^n;\mathbb{K})\text{ and }(\dot{B}^{t,p}_r(\R^n;\mathbb{K}),\dot{W}^{1,p}(\R^n;\mathbb{K}))_{s,q}=\dot{B}^{u_1,p}_q(\R^n;\mathbb{K})
\end{equation}
and
\begin{equation}
    (\dot{B}^{t_0,p}_{r_0}(\R^n;\mathbb{K}),\dot{B}^{t_1,p}_{r_1}(\R^n;\mathbb{K}))_{s,q}=\dot{B}^{u_2,p}_q(\R^n;\mathbb{K})
\end{equation}
hold for $0<t,t_0,t_1<1$, $1\le r,r_0,r_1\le\infty$, $u_0=(1-s)t$, $u_1=(1-s)t+s$, and $u_2=(1-s)t_0+st_1$.
\end{coro}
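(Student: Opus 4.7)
The plan is to derive the corollary as an essentially direct consequence of Lemma~\ref{K functionals and modulus of continuity} combined with the abstract seminormed interpolation results of Section~\ref{interpolation of seminormed spaces}, with no further hard analysis required. The nontrivial analytic content---the pointwise equivalence $\mathscr{K}(t,u) \asymp \omega_p(t,u)$---has already been extracted in Lemma~\ref{K functionals and modulus of continuity}; what remains is to invoke the appropriate abstract results for the pair $(L^p(\R^n;\mathbb{K}), \dot{W}^{1,p}(\R^n;\mathbb{K}))$, which is strongly compatible by Lemma~\ref{compatibility of homog and lebesgue} and semi-Banach in each factor by Lemma~\ref{completeness of homogeneous Sobolev spaces}.

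First I would establish the main identity~\eqref{bear}. The interpolation space $(L^p,\dot{W}^{1,p})_{s,q}$ from Definition~\ref{interpolation spaces} (with $\sig = \infty$) is well-defined thanks to the weak compatibility, and substituting the equivalence from Lemma~\ref{K functionals and modulus of continuity} into the interpolation seminorm $\sb{\cdot}_{s,q}$ produces, up to universal constants, the expression in Definition~\ref{defintion of besov spaces}. This gives both the equality of spaces and the equivalence of seminorms.

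Next, each of the listed topological consequences reduces to a single invocation of a result from Section~\ref{interpolation of seminormed spaces}. That $\dot{B}^{s,p}_q$ is semi-Banach follows from Proposition~\ref{completeness of truncated spaces}. The annihilator identity follows from item (2) of Proposition~\ref{kernels prop}, which applies by strong compatibility and yields $\mathfrak{A}(\dot{B}^{s,p}_q) = \Sigma(\mathfrak{A}(L^p),\mathfrak{A}(\dot{W}^{1,p})) = \{0\} + \{\text{constants}\} = \{\text{constants}\}$. When $p,q<\infty$, Proposition~\ref{density in interpolation spaces} gives density of $\Delta(L^p,\dot{W}^{1,p}) = W^{1,p}(\R^n;\mathbb{K})$ in $\dot{B}^{s,p}_q$, and combining this with the classical density of $C^\infty_c$ in $W^{1,p}$ for $p < \infty$ yields the density claim. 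For the tempered inclusion, Proposition~\ref{truncated interpolation spaces are intermediate} gives $\dot{B}^{s,p}_q \emb \Sigma(L^p,\dot{W}^{1,p})$, and this sum lies in $\mathscr{S}^\ast$ since $L^p \subset \mathscr{S}^\ast$ for $p < \infty$ and $\dot{W}^{1,p} \subset \mathscr{S}^\ast$ by Lemma~\ref{homog are tempered}.

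Finally, the reiteration formulae are direct applications of Theorem~\ref{reiteration theorem}: its hypotheses hold because each factor is complete (Lemma~\ref{completeness of homogeneous Sobolev spaces}), strong compatibility holds (Lemma~\ref{compatibility of homog and lebesgue}), and in fact $\Delta(L^p,\dot{W}^{1,p}) = W^{1,p}$ is Banach. In each formula I would express every Besov factor as an interpolation space via~\eqref{bear}, apply the reiteration theorem to collapse the double interpolation to a single interpolation between $L^p$ and $\dot{W}^{1,p}$ at the convex-combination index from Theorem~\ref{reiteration theorem}, and then use~\eqref{bear} once more to recognize the result as a Besov space. There is no genuine obstacle: the entire proof is a mechanical assembly of previously proved abstract and analytic facts, the only modest care being the bookkeeping of regularity indices in the reiteration step.
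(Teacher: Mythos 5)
Your proposal is correct and takes essentially the same route as the paper: \eqref{bear} is read off by combining Lemma~\ref{K functionals and modulus of continuity} with Definition~\ref{defintion of besov spaces}, and each listed consequence is precisely the corresponding abstract result of Section~\ref{interpolation of seminormed spaces} (completeness, annihilators, density, intermediacy, reiteration) applied to the strongly compatible semi-Banach pair $L^p\p{\R^n;\mathbb{K}}$, $\dot{W}^{1,p}\p{\R^n;\mathbb{K}}$. One remark on the index bookkeeping you deferred: carrying out Theorem~\ref{reiteration theorem} for the first formula yields smoothness $(1-s)\cdot 0+s\cdot t=st$, so the printed exponent $u_0=(1-s)t$ appears to be a typo in the statement, while $u_1$ and $u_2$ come out exactly as stated.
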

\begin{proof}
Equation~\ref{bear} is an immediate consequence of Lemma~\ref{K functionals and modulus of continuity} and the definition of the seminorm on $\dot{B}^{s,p}_q$. The several consequences follow from the results on interpolation of seminormed spaces from Section~\ref{interpolation of seminormed spaces}.
\end{proof}

We now explore frequency space characterizations of the homogeneous Besov spaces.

\begin{defn}[Homogeneous Besov-Lipschitz spaces]\label{homogeneous besov-lipschitz spaces} 
Let $s\in\R$, $1<p<\infty$, and $1\le q\le\infty$. Then we define the space ${_\wedge}\dot{B}^{s,p}_q\p{\R^n;\mathbb{K}}=\big\{f\in\mathscr{S}^\ast\p{\R^n;\mathbb{K}}\;:\;\sb{f}_{{_\wedge}\dot{B}^{s,p}_q}<\infty\big\}$, where $\sb{\cdot}_{{_\wedge}\dot{B}^{s,p}_q}:\mathscr{S}^\ast\p{\R^n;\C}\to\sb{0,\infty}$ is given by
\begin{equation}
\sb{f}_{{_\wedge}\dot{B}^{s,p}_q} =\norm{\cb{2^{sj}\norm{\uppi_jf}_{L^p}}_{j\in\Z}}_{\ell^q\p{\Z}}.
\end{equation}
\end{defn}

The following theorem shows that the theory of seminormed space interpolation applied to pairs of Riesz potential spaces yields the homogeneous Besov-Lipschitz spaces. We note that the following result appears as Theorem 6.3.1 in~\cite{MR0482275}, where the proof is abbreviated.

\begin{thm}\label{fourier characterization of besov spaces part 1}
Let  $1<p<\infty$ and $s_0,s_1\in\R$ with $s_0<s_1$. Then for $\alpha \in\p{0,1}$ and $1\le q\le\infty$ we have the equality of seminormed spaces with equivalence of seminorms:
\begin{equation}
    (\dot{H}^{s_0,p}\p{\R^n;\mathbb{K}},\dot{H}^{s_1,p}\p{\R^n;\mathbb{K}})_{\al,q}={_\wedge}\dot{B}^{s,p}_q\p{\R^n;\mathbb{K}},\text{ where } s=\p{1-\al}s_0+\al s_1.
\end{equation}
\end{thm}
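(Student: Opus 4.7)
The plan is to establish the seminorm equivalence via two matching inequalities, both based on the Littlewood-Paley characterization of the Riesz potential spaces (Theorem~\ref{littlewood paley characterization of Riesz potential spaces}). Write $\beta = s_1 - s_0 > 0$ and $r = 2^\beta > 1$; the hypothesis $s = (1-\al)s_0 + \al s_1$ gives $s - s_0 = \al\beta$ and $s_1 - s = (1-\al)\beta$. Two consequences of the Littlewood-Paley characterization will be used repeatedly: for every $g \in \dot{H}^{s,p}\p{\R^n;\mathbb{K}}$ and every $j \in \Z$, the one-sided pointwise bound $2^{sj}\norm{\uppi_j g}_{L^p} \lesssim \sb{g}_{\dot{H}^{s,p}}$ obtained by retaining only the $j$-th term in the square-function sum, and for a single dyadic block $\uppi_j f$ (whose Fourier support lies in an annulus of radius $\sim 2^j$) the two-sided equivalence $\sb{\uppi_j f}_{\dot{H}^{s_i,p}} \asymp 2^{s_i j}\norm{\uppi_j f}_{L^p}$, which follows by collapsing the almost-orthogonal sum to $|k-j|\le 1$ and using the reconstruction $\uppi_j f = \textstyle{\sum_{|k-j|\le 1}}\uppi_k \uppi_j f$.

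For the forward inclusion $(\dot{H}^{s_0,p},\dot{H}^{s_1,p})_{\al,q} \emb {_\wedge}\dot{B}^{s,p}_q$ I would carry out a direct $\mathscr{K}$-functional computation. Given $f$ in the interpolation space, fix $j \in \Z$ and $\ep > 0$, and choose a near-optimal decomposition $f = a + b$ with $(a,b) \in \dot{H}^{s_0,p}\times\dot{H}^{s_1,p}$ realizing $\sb{a}_{\dot{H}^{s_0,p}} + r^{-j}\sb{b}_{\dot{H}^{s_1,p}} \le \mathscr{K}(r^{-j},f) + \ep$. Applying $\uppi_j$, using the pointwise bound above on each summand, and letting $\ep \to 0^+$, I would obtain
\begin{equation*}
    2^{sj}\norm{\uppi_j f}_{L^p} \lesssim 2^{(s-s_0)j}\bp{\sb{a}_{\dot{H}^{s_0,p}} + r^{-j}\sb{b}_{\dot{H}^{s_1,p}}} \lesssim r^{\al j}\mathscr{K}(r^{-j},f).
\end{equation*}
Taking the $\ell^q(\Z;\R)$ norm in $j$ and invoking the discrete characterization of the $K$-method from Proposition~\ref{discrete seminorm on truncated interpolation spaces} with this $r$ then closes the embedding.

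For the reverse inclusion I would invoke the discrete $J$-method of Proposition~\ref{discrete seminorm on J-space} with the dyadic blocks $\xi_j = \uppi_j f$ as the candidate decomposition. Each $\xi_j$ lies in $\Delta(\dot{H}^{s_0,p},\dot{H}^{s_1,p})$ because its Fourier transform is compactly supported away from the origin. The key computation is
\begin{equation*}
    \mathscr{J}(r^{-j},\uppi_j f) = \max\cb{\sb{\uppi_j f}_{\dot{H}^{s_0,p}},\, r^{-j}\sb{\uppi_j f}_{\dot{H}^{s_1,p}}} \asymp \max\cb{2^{s_0 j},\, 2^{(s_1-\beta)j}}\norm{\uppi_j f}_{L^p} = 2^{s_0 j}\norm{\uppi_j f}_{L^p},
\end{equation*}
so that $r^{\al j}\mathscr{J}(r^{-j},\uppi_j f) \asymp 2^{sj}\norm{\uppi_j f}_{L^p}$ reproduces the Besov-Lipschitz seminorm exactly. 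To apply the proposition I would verify that $\cb{\xi_j}_{j\in\Z}\in\tilde{\mathcal{D}}(f)$: absolute summability $\sum_j \sb{\uppi_j f}_\Sigma < \infty$ follows from $\sb{\uppi_j f}_\Sigma \le \min\cb{2^{s_0 j},\, 2^{s_1 j}}\norm{\uppi_j f}_{L^p}$ together with the strict inequalities $s_0 < s < s_1$ and H\"older's inequality against $\cb{2^{sj}\norm{\uppi_j f}_{L^p}}_{j\in\Z}\in\ell^q$; the convergence $\sb{-f + \textstyle{\sum_{-K}^K}\uppi_j f}_\Sigma\to 0$ follows by combining this absolute convergence in $\Sigma$ with the $\mathscr{S}^\ast$-reconstruction in Lemma~\ref{convergence} and the observation that every polynomial lies in $\mathfrak{A}(\dot{H}^{s_0,p})\cap\mathfrak{A}(\dot{H}^{s_1,p})\subseteq\mathfrak{A}(\Sigma)$. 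The $K$-$J$ equivalence in Theorem~\ref{equivalence theorem} then gives $\sb{f}_{\al,q}\lesssim \sb{f}_{{_\wedge}\dot{B}^{s,p}_q}$.

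The main obstacle will be the low-frequency bookkeeping just described: Lemma~\ref{convergence} supplies only the high-frequency half of the $\mathscr{S}^\ast$-reconstruction and only modulo polynomials, so the negative-$j$ tail convergence in $\Sigma$ must be drawn separately from the $s_1 > s$ side of the hypothesis (giving tail summability in $\dot{H}^{s_1,p}$), the positive-$j$ convergence from $s_0 < s$ (placing the tail in $\dot{H}^{s_0,p}$), and the resulting polynomial defect absorbed into the common annihilator of the sum space. A minor additional technicality is the separate treatment of $q = \infty$, with $\ell^q$ norms replaced by suprema throughout.
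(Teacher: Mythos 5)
Your proposal follows the paper's proof in all essentials: the forward embedding via the one-block Littlewood--Paley bound applied to a (near-)optimal decomposition together with the discrete $K$-characterization of Proposition~\ref{discrete seminorm on truncated interpolation spaces}, and the reverse embedding via the discrete $J$-method with $\xi_j=\uppi_jf$, the estimate $\mathscr{J}\big(2^{-j(s_1-s_0)},\uppi_jf\big)\lesssim 2^{s_0j}\norm{\uppi_jf}_{L^p}$, and H\"older's inequality for the absolute summability in $\Sigma$. The one refinement to make is in verifying $\cb{\uppi_jf}_{j\in\Z}\in\tilde{\mathcal{D}}\p{f}$: do not lean on completeness of $\Sigma\big(\dot{H}^{s_0,p},\dot{H}^{s_1,p}\big)$ (which the paper never establishes), but instead, as your closing paragraph already suggests, split $f=f^-+f^+$ with $f^+=\sum_{j\in\N}\uppi_jf$ convergent in $\mathscr{S}^\ast$ and bound $\big[\sum_{j=-K}^{-1}\uppi_jf-f^-\big]_{\dot{H}^{s_1,p}}$ and $\big[\sum_{j=0}^{K}\uppi_jf-f^+\big]_{\dot{H}^{s_0,p}}$ directly through the square-function criterion of Theorem~\ref{littlewood paley characterization of Riesz potential spaces}, whose tails vanish by the exponential gains $2^{(s_1-s)j}$ and $2^{(s_0-s)j}$; this is exactly the paper's argument and leaves no polynomial defect to absorb.
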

\begin{proof}
The pair of seminormed spaces $\dot{H}^{s_0,p}\p{\R^n;\mathbb{K}}$ and $\dot{H}^{s_1,p}\p{\R^n;\mathbb{K}}$ are weakly compatible as witnessed by the space of tempered distributions. Suppose that $f\in\big(\dot{H}^{s_0,p}\p{\R^n;\mathbb{K}},\dot{H}^{s_1,p}\p{\R^n;\mathbb{K}}\big)_{\al,q}$. Let $f=f_0+f_1$ be a decomposition with $f_\ell\in\dot{H}^{s_\ell,p}\p{\R^n;\mathbb{K}}$ for $\ell\in\cb{0,1}$. We first claim that we have the universal bound
\begin{equation}\label{equation good bound}
    \norm{\uppi_jf_\ell}_{L^p}\lesssim_{n,p,\psi}2^{-s_\ell j}\sb{f_\ell}_{\dot{H}^{s_\ell,p}},\text{ for }j\in\Z\text{ and }\ell\in\cb{0,1}.
\end{equation}
This follows from the Littlewood-Paley characterization of the Riesz potential spaces from Theorem~\ref{littlewood paley characterization of Riesz potential spaces}. Thus,
\begin{equation}
    \norm{\uppi_jf}_{L^p}\lesssim_{n,p,\psi}\textstyle{\sum_{\ell=0}^1}2^{-s_\ell j}\sb{f_\ell}_{\dot{H}^{s_\ell,p}}\imp 2^{sj}\norm{\uppi_jf}_{L^p}\lesssim_{n,p,\psi}2^{\al\p{s_1-s_0}j}\mathscr{K}\big(2^{-\p{s_1-s_0}j},f\big).
\end{equation}
Therefore by Proposition~\ref{basic properties of K for seminormed} and Proposition~\ref{discrete seminorm on truncated interpolation spaces}, 
\begin{equation}
    \sb{f}_{{_\wedge}\dot{B}^{s,p}_q}\lesssim_{n,p,\psi}\snorm{\big\{2^{\al\p{s_1-s_0}j}\mathscr{K}\big(2^{-\p{s_1-s_0}j},f\big)\big\}_{j\in\Z}}_{\ell^q\p{\Z}}\lesssim_{n,p,\psi,s_0,s_1}\sb{f}_{\al,q}.
\end{equation}

On the other hand, suppose that $f\in{_\wedge}\dot{B}^{s,p}_q\p{\R^n;\mathbb{K}}$. Then, we will see that for all $j\in\Z$ it holds that $\uppi_jf\in\Delta\big(\dot{H}^{s_0,p}\p{\R^n;\mathbb{K}},\dot{H}^{s_1,p}\p{\R^n;\mathbb{K}}\big)$. In fact, we claim that the sequence $\cb{\uppi_j f}_{j\in\Z}$ belongs to the discrete decomposition set of $f$, $\tilde{\mathcal{D}}\p{f}$. For $j\in\Z$ and $k\in\cb{0,1}$ we estimate via Lemma~\ref{almost idempotent} and Theorem~\ref{littlewood paley characterization of Riesz potential spaces}:
\begin{equation}
    \norm{\uppi_j f}_{\dot{H}^{s_k,p}}\lesssim_{n,p,\psi}\textstyle{\sum_{\ell=-1}^1}2^{s_k\p{j+\ell}}\norm{\uppi_{j+\ell}\uppi_j f}_{L^p}\lesssim_{n,p,\psi,s_k}2^{s_kj}\norm{\uppi_jf}_{L^p}.
\end{equation}
In turn, we have that
\begin{equation}\label{estimates of J}
    \mathscr{J}\big(2^{-j\p{s_1-s_0}},\uppi_jf\big)=\max\big\{\norm{\uppi_jf}_{\dot{H}^{s_0,p}},2^{-j\p{s_1-s_0}}\norm{\uppi_j f}_{\dot{H}^{s_1,p}}\big\}\lesssim_{n,p,\psi,s_0,s_1}2^{js_0}\norm{\uppi_jf}_{L^p}.
\end{equation}
Then \eqref{estimates of J} and Proposition~\ref{simple properties of J-functional}  imply that the following series converges absolutely:
\begin{multline}
    \textstyle{\sum_{j\in\Z}}\sb{\uppi_jf}_{\Sigma}\le\textstyle{\sum_{j\in\Z}}\min\big\{1,2^{j\p{s_1-s_0}}\big\}\mathscr{J}\big(2^{-j\p{s_1-s_0}},f\big)\\\lesssim_{n,p,\psi,s_0,s_1}\textstyle{\sum_{j\in\Z}}\min\big\{2^{j\p{s_0-s}},2^{j\p{s_1-s}}\big\}2^{sj}\norm{\uppi_jf}_{L^p}\\\le\snorm{\{\min\{2^{j\p{s_0-s}},2^{j\p{s_1-s}}\}\}_{j\in\Z}}_{\ell^{q'}\p{\N}}\snorm{\{2^{sj}\norm{\uppi_jf}_{L^p}\}_{j\in\Z}}_{\ell^q\p{\N}}.
\end{multline}

Next we show that $\lim_{m\to\infty}\big[\sum_{j=-m}^m\uppi_jf-f\big]_{\Sigma}=0$. We decompose $f=f^-+f^+$ where $f^+=\sum_{j\in\N}\uppi_jf$ (and $f^-=f-f^+$) with the series converging in $\mathscr{S}^\ast\p{\R^n;\mathbb{C}}$ by virtue of Lemma~\ref{convergence}.  Both factors in this decomposition are $\mathbb{K}$-valued, thanks to Lemma~\ref{real valued distributions}. Then for $m\in\N\setminus\cb{0,1}$ we have the bound
\begin{equation}
    \ssb{\textstyle{\sum_{j=-m}^m}\uppi_jf-f}_{\Sigma}\le\ssb{\textstyle{\sum_{j=-m}^{-1}}\uppi_jf-f^-}_{\dot{H}^{s_1,p}}+\ssb{\textstyle{\sum_{j=0}^m}\uppi_jf-f^+}_{\dot{H}^{s_0,p}} =: \bf{I}_m+\bf{II}_m.
\end{equation}
We prove that $\lim_{m\to\infty}\bf{I}_m=0$. The argument that $\bf{II}_m\to0$ as $m\to\infty$ follows similarly. With the aide of Lemma~\ref{convergence} and Lemma~\ref{almost idempotent}, we compute the action of the family $\cb{\uppi_k}_{k\in\Z}$ on the expression appearing in $\bf{I}_m$:
\begin{equation}
    \uppi_k\bp{\sum_{j=-m}^{-1}\uppi_jf-f^-}=
    \uppi_k\bp{\sum_{j=-m}^{\infty}\uppi_jf-f} =
    \begin{cases}
    0&k>-m\\
    -\uppi_{-m}\uppi_{-m-1}f&k=-m\\
    -\p{\uppi_{-m-2}+\uppi_{-m-1}}\uppi_{-m-1}f&k=-m-1\\
    -\uppi_kf&k<-m-1
    \end{cases}.
\end{equation}
Thus by Theorem~\ref{littlewood paley characterization of Riesz potential spaces}, Theorem~\ref{mihlin multiplier theorem}, and Lemma~\ref{scaling invariance of fourier multipliers}
\begin{multline}\label{shiba}
    \bf{I}_m\lesssim_{n,p,\psi}\ssb{\textstyle{\sum_{j=-m}^{-1}}\uppi_jf-f^-}_{\dot{H}^{s_1,p}}^{\thicksim}\le 2^{-s_1m}\norm{\uppi_{-m}\uppi_{-m-1}f}_{L^p}+2^{-s_1\p{m+1}}\norm{\p{\uppi_{-m-2}+\uppi_{-m-1}}\uppi_{-m-1}f}_{L^p}\\
    +\bnorm{\bp{\textstyle{\sum_{j=-\infty}^{-m-2}}\p{2^{s_1j}\abs{\uppi_jf}}^2}^{1/2}}_{L^p}\lesssim_{n,p,\psi}\bnorm{\bp{\textstyle{\sum_{j=-\infty}^{-m-1}}\p{2^{s_1j}\abs{\uppi_j f}}^2}^{1/2}}_{L^p}.
\end{multline}
To obtain good bounds on the last expression in~\eqref{shiba} we break to cases on the size of $p$. Suppose first that $1<p\le 2$. In this case the mapping $\R^+\cup\cb{0}\ni\eta\mapsto\eta^{\f{p}{2}}\in\R^+\cup\cb{0}$ is subadditive. Therefore since $s_0<s<s_1$ we may use the inclusion $\ell^{q} \hookrightarrow \ell^{p}$ for $q \le p$ and H\"{o}lder's inequality otherwise to deduce that
\begin{multline}\label{blue}
    \bnorm{\bp{\textstyle{\sum_{j=-\infty}^{-m-1}}\p{2^{s_1j}\abs{\uppi_jf}}^2}^{1/2}}_{L^p}\le\bp{\int_{\R^n}\textstyle{\sum_{j=-\infty}^{-m-1}}\p{2^{s_1j}\abs{\uppi_jf}}^p}^{1/p}\\\le\begin{cases}
    2^{(s_1-s)(m-1)}\p{\sum_{j=-\infty}^{-m-1}\p{2^{sj}\norm{\uppi_jf}_{L^p}}^q}^{1/q}&q\le p\\
    \norm{\cb{2^{j\p{s_1-s}}}_{j=-\infty}^{-m-1}}_{\ell^r}\norm{\cb{2^{sj}\norm{\uppi_j f}_{L^p}}_{j=-\infty}^{-m-1}}_{\ell^q}&p<q,\;\f{1}{p}=\f{1}{q}+\f{1}{r}
    \end{cases}<\infty.
\end{multline}

The finiteness follows from the hypothesis that $f\in{_\wedge}\dot{B}^{s,p}_q\p{\R^n;\mathbb{K}}$. Notice also that the final expression in~\eqref{blue} tends to zero as $m\to\infty$. Thus $\bf{I}_m\to0$ as $m\to\infty$ in the case $1< p\le 2$.

On the other hand, in the case that $2<p<\infty$, we bound via Minkowski's integral inequality:
\begin{multline}
    \bnorm{\bp{\textstyle{\sum_{j=-\infty}^{-m-1}}\p{2^{s_1j}\abs{\uppi_jf}}^2}^{1/2}}_{L^p}\le\bp{\textstyle{\sum_{j=-\infty}^{-m-1}}\p{2^{s_1j}\norm{\uppi_j f}_{L^p}}^2}^{1/2}\\\le\textstyle{\sum_{j=-\infty}^{-m-1}}2^{s_1j}\norm{\uppi_jf}_{L^p}\le\snorm{\big\{2^{sj}\norm{\uppi_jf}_{L^p}\big\}_{j=-\infty}^{-m-1}}_{\ell^q}\snorm{\big\{2^{j\p{s_1-s}}\big\}_{j=-\infty}^{-m-1}}_{\ell^{q'}}.
\end{multline}
This bound again implies that $\bf{I}_m\to0$ as $m\to\infty$.

Thus, we learn that $\cb{\uppi_jf}_{j\in\Z}\in\tilde{\mathcal{D}}\p{f}$. Using the discrete characterization of the $J$-method in Proposition~\ref{discrete seminorm on J-space} and equation~\eqref{estimates of J} we obtain the estimate that completes the proof:
\begin{equation}
    \sb{f}_{\al,q}\lesssim_{\al,q}\snorm{\big\{2^{\al\p{s_1-s_0}j}\mathscr{J}(2^{-j\p{s_1-s_0}},\uppi_jf)\big\}_{j\in\Z}}_{\ell^q\p{\Z}}\lesssim_{n,p,s_0,s_1,\psi}\snorm{\big\{2^{sj}\norm{\uppi_jf}_{L^p}\big\}_{j\in\Z}}_{\ell^q\p{\Z}}=\sb{f}_{{_\wedge}\dot{B}^{s,p}_q}.
\end{equation}
\end{proof}

We can now relate $\dot{B}^{s,p}_q\p{\R^n;\mathbb{K}}$ and ${_\wedge}\dot{B}^{s,p}_q\p{\R^n;\mathbb{K}}$.

\begin{coro}\label{besov fourier char}
For each $s\in\p{0,1}$, $1<p<\infty$, $1\le q\le\infty$, there exists $c\in\R^+$ with the following properties:
\begin{enumerate}
    \item If $f\in\dot{B}^{s,p}_q\p{\R^n;\mathbb{K}}$, then $f\in{_\wedge}\dot{B}^{s,p}_q\p{\R^n;\mathbb{K}}$, and $\sb{f}_{{_\wedge}\dot{B}^{s,p}_q}\le c\sb{f}_{\dot{B}^{s,p}_q}$.
    \item On the other hand if $f\in{_\wedge}\dot{B}^{s,p}_q\p{\R^n;\mathbb{K}}$, then there exists a $\mathbb{K}$-valued polynomial $Q$ such that $f-Q$ is identifiable with a member of $\dot{B}^{s,p}_q\p{\R^n;\mathbb{K}}$ and $c^{-1}\sb{f-Q}_{\dot{B}^{s,p}_q}\le\sb{f}_{{_\wedge}\dot{B}^{s,p}_q}$. Moreover, the coefficients of $Q$, aside from the constant term, are uniquely determined.
\end{enumerate}
\end{coro}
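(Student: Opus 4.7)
The plan is to transport the interpolation identity $(L^p,\dot{W}^{1,p})_{s,q}=\dot{B}^{s,p}_q$ from Corollary~\ref{interpolation characterization of Besov spaces} to the identity $(\dot{H}^{0,p},\dot{H}^{1,p})_{s,q}={_\wedge}\dot{B}^{s,p}_q$ supplied by Theorem~\ref{fourier characterization of besov spaces part 1} applied with $s_0=0$, $s_1=1$, $\alpha=s$; the bridge is that $L^p\hookrightarrow\dot{H}^{0,p}$ with $\sb{\cdot}_{\dot{H}^{0,p}}\le\norm{\cdot}_{L^p}$ (from Theorem~\ref{littlewood-paley characterization of Lp}) and $\dot{W}^{1,p}\hookrightarrow\dot{H}^{1,p}$ with $\sb{\cdot}_{\dot{H}^{1,p}}\lesssim\sb{\cdot}_{\dot{W}^{1,p}}$ (from Theorem~\ref{frequency space characterization of homogeneous sobolev spaces}).

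For part (1) I would observe that every $f\in\dot{B}^{s,p}_q\subseteq\Sigma(L^p,\dot{W}^{1,p})$ defines a tempered distribution by Lemma~\ref{homog are tempered}, and that the natural inclusion $\iota:\Sigma(L^p,\dot{W}^{1,p})\to\Sigma(\dot{H}^{0,p},\dot{H}^{1,p})$ is continuous on each factor by the bounds above. Theorem~\ref{truncated and bounded linear mappings} then promotes $\iota$ to a continuous map on interpolation spaces, which upon reinterpreting the domain via Corollary~\ref{interpolation characterization of Besov spaces} and the codomain via Theorem~\ref{fourier characterization of besov spaces part 1} is exactly the continuous inclusion $\dot{B}^{s,p}_q\hookrightarrow{_\wedge}\dot{B}^{s,p}_q$ with the claimed bound.

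For part (2), which is the harder direction, I would build a candidate preimage $g=\sum_{j\in\Z}\uppi_jf$ by summing Littlewood--Paley pieces. The hypothesis $\sb{f}_{{_\wedge}\dot{B}^{s,p}_q}<\infty$ forces $\uppi_jf\in L^p$ for every $j$, and Mikhlin's theorem applied to the symbol $i\xi_k\,\delta_{2^j}\tilde\psi(\xi)=2^j\cdot\delta_{2^j}(i\xi_k\tilde\psi)(\xi)$, where $\tilde\psi\in C_c^\infty$ equals $1$ on $\supp\psi$, gives $\sb{\uppi_jf}_{\dot{W}^{1,p}}\lesssim 2^j\norm{\uppi_jf}_{L^p}$. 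Therefore $\sb{\uppi_jf}_\Sigma\le\min(1,2^j)\norm{\uppi_jf}_{L^p}=\min(2^{-sj},2^{(1-s)j})\cdot 2^{sj}\norm{\uppi_jf}_{L^p}$, and H\"older's inequality with the summable weight sequence $\{\min(2^{-sj},2^{(1-s)j})\}_{j\in\Z}\in\ell^{q'}(\Z)$ (valid because $0<s<1$) yields $\sum_{j\in\Z}\sb{\uppi_jf}_\Sigma\lesssim\sb{f}_{{_\wedge}\dot{B}^{s,p}_q}$. Completeness of $\Sigma(L^p,\dot{W}^{1,p})$ (Proposition~\ref{completeness of sum and intersection}) then delivers $g\in\Sigma(L^p,\dot{W}^{1,p})$ with $\sum_{j=-K}^K\uppi_jf\to g$ as $K\to\infty$. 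Since $\mathscr{J}(2^{-j},\uppi_jf)\lesssim\norm{\uppi_jf}_{L^p}$, the discrete $J$-method estimate (Proposition~\ref{discrete seminorm on J-space}), the $K/J$ equivalence (Theorem~\ref{equivalence theorem}), and Corollary~\ref{interpolation characterization of Besov spaces} combine to give $g\in\dot{B}^{s,p}_q$ with $\sb{g}_{\dot{B}^{s,p}_q}\lesssim\sb{f}_{{_\wedge}\dot{B}^{s,p}_q}$.

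Finally, I would identify $f-g$ with a polynomial: the continuity of $\uppi_k$ on $\mathscr{S}^\ast$ and almost idempotency (Lemma~\ref{almost idempotent}) give $\uppi_k g=\uppi_k f$ for every $k\in\Z$, so item~(3) of Lemma~\ref{convergence} produces a $\mathbb{K}$-valued polynomial $Q$ with $f-Q=g\in\dot{B}^{s,p}_q$. For uniqueness of the non-constant coefficients of $Q$, I would note that any polynomial $P$ with $\deg P\ge 1$ has $\sb{P}_{\dot{B}^{s,p}_q}=\infty$, since $\Delta_hP$ is a nonzero polynomial for some $h\in\R^n$ and hence not in $L^p(\R^n)$; two admissible polynomials therefore differ only by a constant. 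The main delicacy will be orchestrating convergence of $\sum_j\uppi_jf$ in an ambient space strong enough to simultaneously extract the Besov seminorm bound via the $J$-method and to pin down $f-g$ as a genuine polynomial rather than as a mere annihilator element of ${_\wedge}\dot{B}^{s,p}_q$.
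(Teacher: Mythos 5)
Your part (1) and most of part (2) follow the paper's own argument: the embeddings of the factors plus Theorem~\ref{truncated and bounded linear mappings} give item (1), and in item (2) the summability estimate for $\sum_{j\in\Z}\sb{\uppi_jf}_{\Sigma}$, the appeal to completeness of $\Sigma\big(L^p,\dot{W}^{1,p}\big)$, and the discrete $J$-method bound are exactly the paper's steps (your direct Mihlin/scaling derivation of $\sb{\uppi_jf}_{\dot{W}^{1,p}}\lesssim 2^{j}\norm{\uppi_jf}_{L^p}$ is a harmless substitute for the paper's use of Theorem~\ref{frequency space characterization of homogeneous sobolev spaces}, and your justification that a polynomial of degree $\ge 1$ has infinite $\dot{B}^{s,p}_q$ seminorm is fine). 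The genuine gap is the step $\uppi_k g=\uppi_k f$. You invoke ``continuity of $\uppi_k$ on $\mathscr{S}^\ast$,'' but the convergence $\sum_{j=-K}^{K}\uppi_jf\to g$ that you actually have is only in the seminorm of $\Sigma\big(L^p,\dot{W}^{1,p}\big)$, and seminorm convergence there does not imply convergence in $\mathscr{S}^\ast$: the space is non-Hausdorff with annihilator equal to the constants, so for instance $h_K=a_K+c_K$ with $\norm{a_K}_{L^p}\to 0$ and constants $c_K\to\infty$ satisfies $\sb{h_K}_{\Sigma}\to 0$ while $h_K$ does not converge distributionally. So the tool you cite cannot be applied as stated; this is exactly the ``delicacy'' you flag in your last sentence but do not resolve.

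The repair is the paper's: since $\uppi_k$ is bounded on $L^p$ and on $\dot{W}^{1,p}$, it is continuous on $\Sigma\big(L^p,\dot{W}^{1,p}\big)$; applying it to the convergent partial sums and using almost orthogonality (Lemma~\ref{almost idempotent}) yields only $\sb{\uppi_kf-\uppi_kg}_{\Sigma}=0$, i.e.\ $\uppi_kf-\uppi_kg$ is a \emph{constant}. One must then kill this constant before Lemma~\ref{convergence}(3) can be used: the paper does so by noting that $\mathscr{F}\uppi_k(f-g)$ is supported in an annulus away from the origin, whereas a nonzero constant has Fourier support $\cb{0}$, so the constant is zero and $\uppi_kf=\uppi_kg$ for every $k$. (Equivalently, a Bernstein-type bound $\norm{\uppi_k b}_{L^p}\lesssim 2^{-k}\sb{b}_{\dot{W}^{1,p}}$ shows that $\uppi_k$ carries $\Sigma$-convergent sequences to $L^p$-convergent ones, which also closes the gap.) With that step inserted, your argument matches the paper's proof.
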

\begin{proof}
The first item follows at once from the embeddings $L^p\p{\R^n;\mathbb{K}}\emb\dot{H}^{0,p}\p{\R^n;\mathbb{K}}$ and $\dot{W}^{1,p}\p{\R^n;\mathbb{K}}\emb\dot{H}^{1,p}\p{\R^n;\mathbb{K}}$ (see Theorems~\ref{littlewood-paley characterization of Lp} and~\ref{frequency space characterization of homogeneous sobolev spaces}) and the interpolation characterizations of $\dot{B}^{s,p}_q\p{\R^n;\mathbb{K}}$ (Corollary~\ref{interpolation characterization of Besov spaces}) and ${_\wedge}\dot{B}^{s,p}_q\p{\R^n;\mathbb{K}}$ (Theorem~\ref{fourier characterization of besov spaces part 1}).

For the second item, we let $f\in{_\wedge}\dot{B}^{s,p}_q\p{\R^n;\mathbb{K}}$. The finiteness of $\sb{f}_{{_\wedge}\dot{B}^{s,p}_q}$ implies that for each $j\in\Z$ we have $\uppi_j f\in L^p(\R^n;\mathbb{K})$, and so then Theorem~\ref{mihlin multiplier theorem} implies that $\uppi_j f\in\dot{W}^{1,p}\p{\R^n;\mathbb{K}}$. Using the Littlewood-Paley characterization of $\dot{W}^{1,p}\p{\R^n;\mathbb{K}}$ (Theorem~\ref{frequency space characterization of homogeneous sobolev spaces}) and the almost orthogonality of $\cb{\uppi_j}_{j\in\Z}$ (see Lemma~\ref{almost idempotent}) we learn that  
\begin{equation}\label{avacado}
    \sb{\uppi_jf}_{\dot{W}^{1,p}}\lesssim_{n,p,\psi}\bnorm{\bp{\textstyle{\sum_{k\in\Z}}\p{2^{k}\abs{\uppi_k\uppi_jf}}^2}^{1/2}}_{L^p}\lesssim_{n,p,\psi}2^{j}\norm{\uppi_jf}_{L^p},
\end{equation}
where here we can neglect any polynomial terms on the left when using Theorem~\ref{frequency space characterization of homogeneous sobolev spaces} since  $\uppi_j f\in\dot{W}^{1,p}\p{\R^n;\mathbb{K}}$.  Consequently we have the absolute convergence:
\begin{multline}
    \textstyle{\sum_{j\in\Z}}\sb{\uppi_jf}_{\Sigma\p{L^p,\dot{W}^{1,p}}}\le\textstyle{\sum_{j\in\Z}}\min\cb{1,2^{j}}\max\cb{\norm{\uppi_jf}_{L^p},2^{-j}\sb{\uppi_jf}_{\dot{W}^{1,p}}}\\\lesssim_{n,p,\psi}\textstyle{\sum_{j\in\Z}}\min\cb{1,2^j}\norm{\uppi_jf}_{L^p}\le\snorm{\big\{\min\{2^{-sj},2^{\p{1-s}j}\}\big\}_{j\in\Z}}_{\ell^{q'}\p{\Z}}\sb{f}_{{_\wedge}\dot{B}^{s,p}_q}<\infty.
\end{multline}
Proposition~\ref{completeness of sum and intersection} ensures  that $\Sigma\big(L^p\p{\R^n;\mathbb{K}},\dot{W}^{1,p}\p{\R^n;\mathbb{K}}\big)$ is semi-Banach. Hence, there exists $\tilde{f}$ belonging to this sum such that $\ssb{\sum_{j=-m}^m\uppi_jf-\tilde{f}}_{\Sigma(L^p,\dot{W}^{1,p})}\to0$ as $m\to\infty$.  Moreover, $\cb{\uppi_jf}_{j\in\Z}$ belongs to the discrete decomposition set of $\tilde{f}$, so we are free to estimate the seminorm of $\tilde{f}$ in the interpolation space $\dot{B}^{s,p}_q\p{\R^n;\mathbb{K}}=\big(L^p\p{\R^n;\mathbb{K}},\dot{W}^{1,p}\p{\R^n;\mathbb{K}}\big)_{s,q}$ via the discrete characterization of the $J$-method (see Proposition~\ref{discrete seminorm on J-space}) and \eqref{avacado}:
\begin{equation}
    \ssb{\tilde{f}}_{\dot{B}^{s,p}_q}\lesssim_{s,q}\snorm{\big\{2^{sj}\mathscr{J}\p{2^{-j},\uppi_jf}\big\}_{j\in\Z}}_{\ell^q\p{\N}}\lesssim_{n,p,\psi}\snorm{\cb{2^{sj}\norm{\uppi_j f}_{L^p}}_{j\in\Z}}_{\ell^q\p{\N;\R}}=\sb{f}_{{_\wedge}\dot{B}^{s,p}_q}.
\end{equation}
It remains to show that $f$ and $\tilde{f}$ differ by a polynomial. As the family of operators $\cb{\uppi_j}_{j\in\Z}$ are continuous on both $L^p(\R^n;\mathbb{K})$ and $\dot{W}^{1,p}(\R^n;\mathbb{K})$ (and hence their sum) we find that $\ssb{\uppi_j\sum_{k=-m}^m\uppi_jf-\uppi_j\tilde{f}}_{\Sigma(L^p,\dot{W}^{1,p})}\to0$ as $m\to\infty$ for each $j\in\Z$. But if $m>\abs{j}$ then Lemma~\ref{almost idempotent} tells us that $\uppi_j\sum_{k=-m}^m\uppi_kf=\uppi_jf$. Therefore \begin{equation}
    \uppi_j\big(f-\tilde{f}\big)\in\mathfrak{A}\bp{\Sigma\p{L^p\p{\R^n;\mathbb{K}},\dot{W}^{1,p}\p{\R^n;\mathbb{K}}}}=\cb{\text{constant functions}}\text{ for all }j\in\Z.
\end{equation}
Since $\supp\mathscr{F}\uppi_j\p{f-\tilde{f}}\subset\R^n\setminus\Bar{B\p{0,2^{j-2}}}$ and constant functions are supported at the origin on the Fourier side, we must have $\uppi_jf=\uppi_j\tilde{f}$ for all $j\in\Z$. Thus Lemma~\ref{convergence} provides us a $\mathbb{K}$-valued polynomial $Q$ such that $f-Q=\tilde{f}\in\dot{B}^{s,p}_q\p{\R^n;\mathbb{K}}$.

If $P,Q$ are two polynomials such that $f-Q,f-P\in\dot{B}^{s,p}_q\p{\R^n;\mathbb{K}}$, then $P-Q\in\dot{B}^{s,p}_q\p{\R^n;\mathbb{K}}$ is a polynomial which implies that $P-Q\in\dot{B}^{s,p}_q\p{\R^n;\mathbb{K}}$ is a constant.
\end{proof}

\section{Screened Sobolev and screened Besov spaces}\label{Screened Sobolev and screened Besov spaces}

Recall from the introduction that \cite{leoni2019traces} defines the screened Sobolev space $\tilde{W}^{s,p}_{\p{\sig}}\p{U}$ as the collection of locally integrable functions $f: U \to \R$ for which \eqref{screed space defn} holds.  In this section we introduce a generalized scale of spaces, the screened Besov spaces, and use our previous seminorm interpolation theory to study their properties.

\subsection{Motivation, definitions, and basic properties}

In an effort to better understand the screened Sobolev spaces, we introduce the following scale of screened Besov spaces with constant screening function.

\begin{defn}[Screened Besov spaces]\label{screened besov spaces defn}
Let $\mathbb{K}\in\cb{\R,\C}$,  $1\le p,q\le\infty$, $s\in\p{0,1}$, $\sig\in\R^+$, and let $\es\neq U\subseteq\R^n$ be open.  For $h\in\R^n$ write $U_h=U\cap\tau_{-h}U$.  We define the extended seminorm  $\sb{\cdot}^{\p{\sig}}_{\tilde{B}^{s,p}_q}:L^1_\loc\p{\R^n;\mathbb{K}}\to\sb{0,\infty}$ via \begin{equation}
   \sb{f}^{\p{\sig}}_{\tilde{B}^{s,p}_q}=
   \begin{cases}
    \p{\int_{B\p{0,\sig}}\p{\abs{h}^{-s}\norm{\Delta_h f}_{L^p\p{U_h;\mathbb{K}}}}^q\abs{h}^{-n}\;\m{d}h}^{1/q}&1\le q<\infty\\
    \m{esssup}\cb{\abs{h}^{-s}\norm{\Delta_hf}_{L^p\p{U_h;\mathbb{K}}}\;:\;h\in B\p{0,\sig}}&q<\infty
    \end{cases}.
\end{equation}
The screened Besov space, ${_{\p{\sig}}}\tilde{B}^{s,p}_q\p{U;\mathbb{K}}$, is the subspace of $L^1_\loc\p{U;\mathbb{K}}$ on which the above seminorm is finite. When $\sig=1$ we write $\tilde{B}^{s,p}_q\p{U;\mathbb{K}}$ and  $\sb{\cdot}_{\tilde{B}^{s,p}_q}$ in place of ${_{\p{1}}}\tilde{B}^{s,p}_q\p{U;\mathbb{K}}$ and $\sb{\cdot}^{\p{1}}_{\tilde{B}^{s,p}_q}$.
\end{defn}

We begin by providing an equivalent seminorm that utilizes the $L^p-$modulus of continuity.

\begin{prop}\label{constant screening on rn equiv seminorm}
Let   $1\le p,q\le \infty$ and $\omega_p$ be the $L^p$-modulus of continuity from Definition~\ref{difference quotients and moduli of continuity}.  Then for all $f\in L^1_\m{loc}\p{\R^n;\mathbb{K}}$ and all $s\in\p{0,1}$, $\sig\in\R^+$ we have the equivalence
\begin{equation}\label{equiv norm eqn 1}
    \sb{f}^{\p{\sig}}_{\tilde{B}^{s,p}_{q}\p{\R^n;\mathbb{K}}}\asymp_{n,s}\begin{cases}\p{\int_{\p{0,\sig}}\p{t^{-s}\omega_p\p{t,f}}^qt^{-1}\;\m{d}t}^{1/q}&1\le q<\infty\\
    \sup\cb{t^{-s}\omega_p\p{t,f}\;:\;t\in\sb{0,\sig}}&q=\infty
    \end{cases}.
\end{equation}
\end{prop}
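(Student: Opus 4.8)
The proof is a direct comparison between the integral over a ball $B(0,\sig) \subset \R^n$ and the integral over an interval $(0,\sig)$, using the monotonicity of $\omega_p(\cdot,f)$ together with a standard polar-coordinates argument. Recall that by definition $\|\Delta_h f\|_{L^p(\R^n;\mathbb{K})} \le \omega_p(|h|,f)$ for every $h$, since $\omega_p(t,f)$ is the supremum of $\|\Delta_h f\|_{L^p}$ over $h \in \bar{B(0,t)}$; moreover $\omega_p(\cdot,f)$ is nondecreasing. This immediately gives the upper bound: for $1 \le q < \infty$, passing to polar coordinates $h = r\theta$ with $r \in (0,\sig)$, $\theta \in \mathbb{S}^{n-1}$,
\begin{equation}
    \Big(\int_{B(0,\sig)} \big(|h|^{-s}\|\Delta_h f\|_{L^p}\big)^q |h|^{-n}\,\m{d}h\Big)^{1/q}
    \le \Big(\mathcal{H}^{n-1}(\mathbb{S}^{n-1}) \int_{(0,\sig)} \big(r^{-s}\omega_p(r,f)\big)^q r^{-1}\,\m{d}r\Big)^{1/q},
\end{equation}
and the $q = \infty$ case is analogous with the essential supremum. (Here I use that $|h|^{-n}\,\m{d}h$ becomes $r^{-1}\,\m{d}r\,\m{d}\mathcal{H}^{n-1}(\theta)$ in polar coordinates, so the radial factor matches exactly.)

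For the lower bound I would exploit the monotonicity in the opposite direction together with a doubling trick. The point is that $\omega_p(t,f)$ is attained (or approximated) by some $h_0$ with $|h_0| \le t$; since $\|\Delta_h f\|_{L^p}$ is a continuous function of $h$ (translation is strongly continuous on $L^p$ for $p < \infty$, and for $p = \infty$ one argues by a density/approximation adjustment or restricts to the interesting case), for each $t \in (0,\sig)$ and each $\ep > 0$ there is $h_t$ with $t/2 \le |h_t| \le t$ (using monotonicity, $\omega_p(t/2,f) \le \|\Delta_{h_t}f\|_{L^p}$ is not quite what we want — rather we use that $\omega_p(t,f) \le \omega_p(2|h_t|,f)$ won't directly help). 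A cleaner route: fix $t \in (0,\sig)$, pick $h_t \in \bar{B(0,t)}$ with $\|\Delta_{h_t}f\|_{L^p} \ge \tfrac12\omega_p(t,f)$; by monotonicity, for every $h$ with $|h_t| \le |h| \le \sig$ we have $\|\Delta_h f\|_{L^p} \ge$ nothing automatic — so instead I integrate over the annulus $\{h : |h| \in (|h_t|, 2|h_t|) \cap (0,\sig)\}$ where, again by monotonicity and the choice of $h_t$, $\|\Delta_h f\|_{L^p} \ge \tfrac12 \omega_p(|h_t|,f) \ge \tfrac12\omega_p(t/2,f)$ provided $|h_t| \ge t/2$, which we may arrange. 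This shows the ball-integral dominates a constant times $\int_{(0,\sig)}(t^{-s}\omega_p(t/2,f))^q t^{-1}\,\m{d}t$, and a change of variables $t \mapsto t/2$ absorbs the halving at the cost of a constant depending on $s$ (and $n$).

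The most delicate point is handling the case $p = \infty$, where translation is not strongly continuous on $L^\infty$, so $h \mapsto \|\Delta_h f\|_{L^\infty}$ need not be continuous and the modulus $\omega_\infty(t,f)$ need not be attained. However, $\omega_\infty(\cdot,f)$ is still nondecreasing, and for the essential-supremum version of the seminorm the argument is robust: for any $t < \sig$ and any $\epsilon > 0$ there exists $h_\epsilon$ with $|h_\epsilon| \le t$ and $\|\Delta_{h_\epsilon}f\|_{L^\infty} > \omega_\infty(t,f) - \epsilon$; then $\m{esssup}_{h \in B(0,\sig)} |h|^{-s}\|\Delta_h f\|_{L^\infty} \ge |h_\epsilon|^{-s}\|\Delta_{h_\epsilon}f\|_{L^\infty} \ge t^{-s}(\omega_\infty(t,f) - \epsilon)$, and letting $\epsilon \to 0$ and then taking the supremum over $t \in (0,\sig)$ gives the desired lower bound with constant $1$; the upper bound is immediate. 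For $1 \le q < \infty$ with $p = \infty$ one needs slightly more care: I would note that $t \mapsto \omega_\infty(t,f)$ is nondecreasing hence the set of its discontinuities is countable (measure zero), so on a full-measure set of radii the approximation argument goes through, which suffices for the integral comparison. With these comparisons in hand the stated two-sided equivalence $\asymp_{n,s}$ follows directly, and the $\sig = 1$ specialization is just the definition of $\tilde{B}^{s,p}_q(\R^n;\mathbb{K})$.
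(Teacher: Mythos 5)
Your upper bound (the ``$\lesssim$'' direction) is fine and coincides with the paper's: pass to polar coordinates and use $\|\Delta_{tz}f\|_{L^p}\le\omega_p(t,f)$.

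The lower bound has a genuine gap. You claim that on the annulus $\{h : |h|\in(|h_t|,2|h_t|)\}$ one has $\|\Delta_h f\|_{L^p}\ge\tfrac12\omega_p(|h_t|,f)$ ``by monotonicity and the choice of $h_t$.'' But the only monotone quantity here is $\omega_p(\cdot,f)$, and $\omega_p(r,f)$ is a \emph{supremum} over $\bar{B(0,r)}$; it gives no lower bound on the individual difference quotients $\|\Delta_h f\|_{L^p}$ for $|h|\approx r$. Indeed $h\mapsto\|\Delta_h f\|_{L^p}$ is not monotone in $|h|$ and can vanish at large $|h|$ (take $f$ periodic: $\|\Delta_h f\|_{L^p}=0$ whenever $h$ is a period, while $\omega_p(|h|,f)$ stays bounded away from zero). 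You in fact flag this yourself (``nothing automatic'') and then assert it anyway a sentence later. Picking a single good $h_t$ does not help either: a single point has measure zero, and continuity of $h\mapsto\|\Delta_h f\|_{L^p}$ (which anyway fails for $p=\infty$) gives a good neighborhood whose size depends on $f$, so no uniform constant comes out. The side remark that you ``may arrange'' $|h_t|\ge t/2$ is also unjustified---the near-maximizer of $\omega_p(t,f)$ may well lie near the origin.

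The missing idea is to bound the single difference quotient $\|\Delta_h f\|_{L^p}$ \emph{from above} by an average of others, rather than try to bound many from below by one. Concretely, for $h\in B(0,t)\setminus\{0\}$ and any $\xi\in B(h/2,|h|/2)$, the telescoping identity $\Delta_h f=\Delta_\xi f+\tau_{-\xi}\Delta_{h-\xi}f$ and translation invariance give $\|\Delta_h f\|_{L^p}\le\|\Delta_\xi f\|_{L^p}+\|\Delta_{h-\xi}f\|_{L^p}$. Averaging over $\xi\in B(h/2,|h|/2)$ and a change of variables yields
\begin{equation}
\|\Delta_h f\|_{L^p}\lesssim_n\int_{B(0,t)}\|\Delta_\xi f\|_{L^p}\,|\xi|^{-n}\,\m{d}\xi,
\end{equation}
hence the same bound for $\omega_p(t,f)$ after taking the supremum over $h\in\bar{B(0,t)}$. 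Raising to the $q$-th power, multiplying by $t^{-1-sq}$, integrating over $(0,\sigma)$, and invoking Hardy's inequality (Lemma~\ref{truncated hardy inequality}) together with H\"older on the sphere then gives the desired lower bound with a constant depending only on $n,s$. This is the argument the paper uses, and it sidesteps the false monotonicity claim entirely.
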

\begin{proof}
The result is trivial when $q=\infty$, so we only prove the case $1\le q<\infty$.

Using spherical coordinates we write:
\begin{equation}\label{spherical}
    \sb{f}^{\p{\sig}}_{\tilde{B}^{s,p}_{q}}=\bp{\int_{B\p{0,\sig}}\p{\abs{h}^{-s}\norm{\Delta_h f}_{L^p}}^q\abs{h}^{-n}\;\m{d}h}^{1/q}
    =\bp{\int_{\p{0,\sig}}\int_{\pd B\p{0,1}}\norm{\Delta_{tz} f}_{L^p}^q\;\m{d}\mathcal{H}^{n-1}(z)t^{-1-sq}\;\m{d}t}^{\f1q}.
\end{equation}
The `$\lesssim$' inequality in \eqref{equiv norm eqn 1} then follows from~\eqref{spherical} and the simple bound
\begin{equation}
    \int_{\p{0,\sig}}\int_{\pd B\p{0,1}}\norm{\Delta_{tz }f}_{L^p}^q\;\m{d}\mathcal{H}^{n-1}(z) t^{-1-sq}\;\m{d}t
    \le\mathcal{H}^{n-1}\p{\pd B\p{0,1}}\int_{\p{0,\sig}}\p{t^{-s}\omega_p\p{t,f}}^qt^{-1}\;\m{d}t.
\end{equation}

For the `$\gtrsim$' inequality in \eqref{equiv norm eqn 1}, we pick $t\in\p{0,\sig}$ and let $h\in B\p{0,t}\setminus\cb{0}$ and $\xi\in B\p{h/2,\abs{h}/2}$. Observe that $\Delta_h f=\Delta_\xi f + \tau_{-\xi}\Delta_{h-\xi} f$, and hence $\norm{\Delta_h f}_{L^p}\le\norm{\Delta_\xi f}_{L^p}+\norm{\Delta_{h-\xi}f}_{L^p}$.
 We then average over $\xi\in B\p{h/2,\abs{h}/2}$ and use a change of variables to arrive at the bounds
\begin{multline}
    \norm{\Delta_h f}_{L^p}\le2^n\Le^n\p{B\p{0,1}}\abs{h}^{-n}\int_{B\p{h/2,\abs{h}/2}}\p{\norm{\Delta_\xi f}_{L^p}+\norm{\Delta_{h-\xi}f}_{L^p}}\;\m{d}\xi\\\le2^{n+1}\Le^n\p{B\p{0,1}}\abs{h}^{-n}\int_{B\p{0,\abs{h}}}\norm{\Delta_\xi f}_{L^p}\;\m{d}\xi\le 2^{n+1}\Le^n\p{B\p{0,1}}\int_{B\p{0,\abs{h}}}\norm{\Delta_\xi f}_{L^p}\abs{\xi}^{-n}\;\m{d}\xi\\
    \le2^{n+1}\Le^n\p{B\p{0,1}}\int_{B\p{0,t}}\norm{\Delta_\xi f}_{L^p}\abs{\xi}^{-n}\;\m{d}\xi.
\end{multline}
In this expression we take the supremum over $h\in B\p{0,t}$, raise the result to the $q^{\m{th}}$ power, then multiply by $t^{-1-sq}$, and finally integrate over $\p{0,\sig}$; this results in the following chain of inequalities, in which we also employ Lemma~\ref{truncated hardy inequality}, H\"older's inequality, and  \eqref{spherical}:
\begin{multline}
    \int_{\p{0,\sig}}\p{t^{-s}\omega_p\p{t,f}}^qt^{-1}\;\m{d}t\le\p{2^{n+1}\Le^n\p{B\p{0,1}}}^q\int_{\p{0,\sig}}\bp{\int_{B\p{0,t}}\norm{\Delta_\xi f}_{L^p} \abs{\xi}^{-n}\;\m{d}\xi}^qt^{-1-sq}\;\m{d}t\\
    =\p{2^{n+1}\Le^n\p{B\p{0,1}}}^q\int_{\p{0,\sig}}\bp{\int_{\p{0,t}}\int_{\pd B\p{0,1}}\norm{\Delta_{\rho z} f}_{L^p}\; \m{d}\mathcal{H}^{n-1}(z) \; \rho^{-1}\m{d}\rho}^{q}t^{-1-sq}\;\m{d}t\\
    \le\p{s^{-1}2^{n+1}\Le^n\p{B\p{0,1}}}^q\int_{\p{0,\sig}}\bp{\int_{\pd B\p{0,1}}\norm{\Delta_{t z }f}_{L^p}\;\m{d}\mathcal{H}^{n-1}(z)}^qt^{-1-sq}\;\m{d}t\\\le\p{s^{-1}2^{n+1}\Le^n\p{B\p{0,1}}}^q\p{\mathcal{H}^{n-1}\p{\pd B\p{0,1}}}^{q-1}\big(\sb{f}^{\p{\sig}}_{\tilde{B}^{s,p}_{q}}\big)^q.
\end{multline}
\end{proof}

Proposition~\ref{constant screening on rn equiv seminorm} leads us to define the following equivalent extended seminorm.

\begin{defn}\label{equivalent seminorm in terms on moduli of continuity} 
Let $1\le p,q\le\infty$, $\sig\in\R^+$, and $s\in\p{0,1}$.  We define ${^{\circ}}\sb{\cdot}_{\tilde{B}^{s,p}_{q}}^{\p{\sig}}: L^1_\loc\p{\R^n;\mathbb{K}} \to \sb{0,\infty}$  via
\begin{equation}
\quad
 {^{\circ}}\sb{f}_{\tilde{B}^{s,p}_{q}}^{\p{\sig}} =
 \begin{cases} 
 \p{\int_{\p{0,\sig}}\p{t^{-s}\omega_p\p{t,f}}^qt^{-1}\;\m{d}t}^{1/q}&1\le q<\infty\\
    \sup\cb{t^{-s}\omega_p\p{t,f}\;:\;t\in\sb{0,\sig}}&q=\infty
    \end{cases}.
\end{equation}
Note that Proposition~\ref{constant screening on rn equiv seminorm} ensures that this seminorm is equivalent to the one from Definition~\ref{screened besov spaces defn}. 
\end{defn}

\subsection{Interpolation characterization of screened Besov spaces}

Using the equivalent seminorm on the space ${_{\p{\sig}}}\tilde{B}^{s,p}_{q}(\R^n;\mathbb{K})$ from Definition~\ref{equivalent seminorm in terms on moduli of continuity}, we can realize that the $s,q,\sig$-truncated interpolation space between $L^p(\R^n;\mathbb{K})$ and $\dot{W}^{1,p}(\R^n;\mathbb{K})$ is equal to the screened space ${_{\p{\sig}}}\tilde{B}^{s,p}_{q}(\R^n;\mathbb{K})$. Precisely, we have the following theorem.

\begin{thm}[Interpolation characterization of screened spaces]\label{screened sobolev spaces are truncated interpolation spaces}
Let $1\le p,q\le\infty$, $s\in\p{0,1}$, and $\sig\in\R^+$. Then we have the equality of vector spaces with equivalent seminorms:
\begin{equation}\label{this is an equation}
    {_{\p{\sig}}}\tilde{B}^{s,p}_{q}\p{\R^n;\mathbb{K}}=\big(L^p\p{\R^n;\mathbb{K}},\dot{W}^{1,p}\p{\R^n;\mathbb{K}}\big)^{\p{\sig}}_{s,q}.
\end{equation}
In fact, for all $f\in\Sigma\big(L^p\p{\R^n;\mathbb{K}},\dot{W}^{1,p}\p{\R^n;\mathbb{K}}\big)$ we have the equivalence
\begin{equation}\label{this is an equivalence}
    2^{-1}{^{\circ}}\sb{f}^{\p{\sig}}_{\tilde{B}^{s,p}_{q}}\le\sb{f}_{s,q}^{\p{\sig}}\le \big(1+n^{3/2}\big) {^{\circ}}\sb{f}^{\p{\sig}}_{\tilde{B}^{s,p}_{q}}.
\end{equation}
\end{thm}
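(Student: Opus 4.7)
The plan is that this theorem follows almost immediately by combining two tools already developed: the pointwise comparison between the $L^p$-modulus of continuity and the $K$-functional for the couple $(L^p,\dot W^{1,p})$ from Lemma~\ref{K functionals and modulus of continuity}, and the equivalent modulus-of-continuity seminorm for the screened Besov spaces from Proposition~\ref{constant screening on rn equiv seminorm} / Definition~\ref{equivalent seminorm in terms on moduli of continuity}. The constants $2^{-1}$ and $1+n^{3/2}$ appearing in~\eqref{this is an equivalence} are precisely the constants in Lemma~\ref{K functionals and modulus of continuity}, which suggests the proof is essentially a one-line insertion.

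Here is how I would carry it out. First, fix $f\in\Sigma\bigl(L^p(\R^n;\mathbb{K}),\dot W^{1,p}(\R^n;\mathbb{K})\bigr)$ and let $\mathscr{K}$ denote the $K$-functional of this couple. Lemma~\ref{K functionals and modulus of continuity} gives the pointwise comparison
\begin{equation}
    \tfrac{1}{2}\omega_p(t,f)\;\le\;\mathscr{K}(t,f)\;\le\;(1+n^{3/2})\,\omega_p(t,f)
\end{equation}
for every $t\in\R^+$, and in particular for every $t\in(0,\sigma)$. I would then multiply by $t^{-s}$ and apply the $L^q((0,\sigma),\mu)$-seminorm (with the obvious essential-supremum modification for $q=\infty$), where $\mu$ is the Haar measure on $\R^+$. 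The middle expression becomes $[f]^{(\sigma)}_{s,q}$ by the definition of the truncated $K$-method seminorm (Definition~\ref{interpolation spaces}), and the outer expressions become $\tfrac12\,{^\circ}[f]^{(\sigma)}_{\tilde B^{s,p}_q}$ and $(1+n^{3/2})\,{^\circ}[f]^{(\sigma)}_{\tilde B^{s,p}_q}$ by Definition~\ref{equivalent seminorm in terms on moduli of continuity}. This yields exactly the claimed inequality~\eqref{this is an equivalence}.

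Second, the equality of spaces in~\eqref{this is an equation} then follows formally. Membership in $(L^p,\dot W^{1,p})^{(\sigma)}_{s,q}$ is, by definition, finiteness of $[\,\cdot\,]^{(\sigma)}_{s,q}$ on an element of the sum $\Sigma(L^p,\dot W^{1,p})$, while membership in ${_{(\sigma)}}\tilde B^{s,p}_q(\R^n;\mathbb{K})$ is finiteness of $[\,\cdot\,]^{(\sigma)}_{\tilde B^{s,p}_q}$ on an element of $L^1_{\mathrm{loc}}$. To reconcile the ambient spaces, I would note that any $f\in L^1_{\mathrm{loc}}$ with $\omega_p(t,f)<\infty$ for some $t\in(0,\sigma)$ admits the decomposition $f=v+w$ constructed in the proof of Lemma~\ref{K functionals and modulus of continuity} (equation~\eqref{decomp}), placing $f\in\Sigma(L^p,\dot W^{1,p})$; conversely, $\Sigma(L^p,\dot W^{1,p})\subset L^1_{\mathrm{loc}}$. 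Hence the two underlying sets agree, and~\eqref{this is an equivalence} then upgrades to the asserted equivalence of seminorms via the equivalence between ${^\circ}[\,\cdot\,]^{(\sigma)}_{\tilde B^{s,p}_q}$ and $[\,\cdot\,]^{(\sigma)}_{\tilde B^{s,p}_q}$ noted in Definition~\ref{equivalent seminorm in terms on moduli of continuity}.

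There is essentially no obstacle in this argument; all the real work has been done upstream. The only subtlety worth mentioning explicitly is the matching of ambient spaces just described — ensuring that finiteness of $\omega_p(t,f)$ for small $t$ forces $f$ into $\Sigma(L^p,\dot W^{1,p})$, so that the statement makes sense as an equality rather than merely an equivalence on the intersection of the two spaces. This is transparent from the explicit averaging construction in~\eqref{decomp}, so the proof should be short and formulaic.
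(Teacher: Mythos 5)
Your proof is correct and follows essentially the same route as the paper: the paper's argument likewise reduces the theorem to the pointwise comparison $2^{-1}\omega_p(t,f)\le\mathscr{K}(t,f)\le(1+n^{3/2})\omega_p(t,f)$ from Lemma~\ref{K functionals and modulus of continuity}, integrated over $(0,\sigma)$ against $t^{-sq-1}\,\m{d}t$ (or the supremum when $q=\infty$). Your extra remark reconciling the ambient spaces via the decomposition~\eqref{decomp} is a correct tidying of a point the paper leaves implicit, since its lemma is already stated on $L^1_{\loc}$.
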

\begin{proof}
Let $\mathscr{K}$ denote the $K$-functional on the sum of $L^p\p{\R^n;\mathbb{K}}$ and $\dot{W}^{1,p}\p{\R^n;\mathbb{K}}$ and note that the strong compatibility of these spaces is shown in Lemma~\ref{compatibility of homog and lebesgue}. It is sufficient to observe that for all $t\in\p{0,\sig}$ and all $f\in\Sigma\big(L^p\p{\R^n;\mathbb{K}},\dot{W}^{1,p}\p{\R^n;\mathbb{K}}\big)$ we have the equivalence
\begin{equation}\label{this is also an equivalence}
    2^{-1}\omega_p\p{t,f}\le\mathscr{K}\p{t,f}\le \big(1+n^{3/2}\big)\omega_p\p{t,f}.
\end{equation}
This is a consequence of Lemma~\ref{K functionals and modulus of continuity}.
\end{proof}

This interpolation characterization has numerous important and useful corollaries that we can read off from the abstract theory of seminorm interpolation presented previously.  The first is that we can now can build an explicit bridge to well-studied function spaces.

\begin{coro}[Sum characterization of screened spaces]\label{sum characterization of screened sobolev spaces}
Let  $s\in\p{0,1}$ and $1\le p,q\le\infty$. The following hold:
\begin{enumerate}
    \item If $\sig,\tau\in\R^+$, then we have the equality of vector spaces with equivalence of seminorms:
    \begin{equation}
        {_{\p{\sig}}}\tilde{B}^{s,p}_q\p{\R^n;\mathbb{K}}=\Sigma\p{\dot{B}^{s,p}_q\p{\R^n;\mathbb{K}};\dot{W}^{1,p}\p{\R^n;\mathbb{K}}}={_{\p{\tau}}}\tilde{B}^{s,p}_q\p{\R^n;\mathbb{K}}.
    \end{equation}
    \item If $p<\infty$ and $\sig:\R^n\to\R^+$ is a screening function with $\log\sig$ a bounded function,  then we have the equality of spaces with equivalence of seminorms:
\begin{equation}
    \tilde{W}^{s,p}_{\p{\sig}}\p{\R^n;\R}=\Sigma\p{\dot{B}^{s,p}_p\p{\R^n;\R},\dot{W}^{1,p}\p{\R^n;\R}}.
\end{equation}
\end{enumerate}
\end{coro}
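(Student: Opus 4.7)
The plan is to derive both parts as essentially formal consequences of results already established in the paper. The first part follows from chaining three identifications, while the second part is obtained by comparing with the constant-screening Sobolev seminorm via Fubini and then sandwiching.

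For part $(1)$, the first equality is a direct composition. By Theorem \ref{screened sobolev spaces are truncated interpolation spaces}, we identify
\begin{equation}
    {_{\p{\sig}}}\tilde{B}^{s,p}_q\p{\R^n;\mathbb{K}}=\big(L^p\p{\R^n;\mathbb{K}},\dot{W}^{1,p}\p{\R^n;\mathbb{K}}\big)^{\p{\sig}}_{s,q}
\end{equation}
with equivalence of seminorms. Since $L^p$ and $\dot{W}^{1,p}$ are strongly compatible by Lemma \ref{compatibility of homog and lebesgue}, Theorem \ref{label sum characterization of the truncated method} converts this truncated interpolation space into the sum $\Sigma\big((L^p,\dot{W}^{1,p})_{s,q},\dot{W}^{1,p}\big)$, and Corollary \ref{interpolation characterization of Besov spaces} identifies the first summand with $\dot{B}^{s,p}_q\p{\R^n;\mathbb{K}}$. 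Since the resulting space $\Sigma\p{\dot{B}^{s,p}_q,\dot{W}^{1,p}}$ depends on neither $\sig$ nor $\tau$, the equality with ${_{\p{\tau}}}\tilde{B}^{s,p}_q$ is immediate.

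For part $(2)$, the key observation is that when the screening function is a positive constant $c$, the screened Sobolev seminorm coincides (with equality, not merely equivalence) with the $q=p$ screened Besov seminorm on $\R^n$. Namely, for $f \in L^1_\loc\p{\R^n;\R}$, applying Fubini and the change of variables $y=x+h$ to \eqref{screed space defn} (using crucially that $U=\R^n$, so $B\p{x,c}\cap U = B\p{x,c}$) gives
\begin{equation}
    \big(\sb{f}_{\tilde{W}^{s,p}_{\p{c}}}\big)^p
    = \int_{\R^n}\int_{B\p{0,c}}\abs{\Delta_h f\p{x}}^p\abs{h}^{-n-sp}\;\m{d}h\;\m{d}x
    = \int_{B\p{0,c}}\p{\abs{h}^{-s}\norm{\Delta_h f}_{L^p}}^p\abs{h}^{-n}\;\m{d}h = \big(\sb{f}^{\p{c}}_{\tilde{B}^{s,p}_p}\big)^p,
\end{equation}
so $\tilde{W}^{s,p}_{\p{c}}\p{\R^n;\R}={_{\p{c}}}\tilde{B}^{s,p}_p\p{\R^n;\R}$ with equality of seminorms. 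Now let $\sig^-,\sig^+\in\R^+$ satisfy $\sig^-\le\sig\p{x}\le\sig^+$ for all $x\in\R^n$. Because the integrand in \eqref{screed space defn} is nonnegative and the domains of integration are nested, $B\p{x,\sig^-}\subseteq B\p{x,\sig\p{x}}\subseteq B\p{x,\sig^+}$, we obtain the pointwise chain
\begin{equation}
    \sb{f}_{\tilde{W}^{s,p}_{\p{\sig^-}}}\le\sb{f}_{\tilde{W}^{s,p}_{\p{\sig}}}\le\sb{f}_{\tilde{W}^{s,p}_{\p{\sig^+}}}.
\end{equation}
Combining the constant-screening identification with part $(1)$ shows both outer seminorms are equivalent to the norm on $\Sigma\p{\dot{B}^{s,p}_p,\dot{W}^{1,p}}$, and the sandwich yields the claimed equality of spaces with equivalent seminorms.

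There is no substantial obstacle: both parts are clean consequences of the abstract interpolation theory of Section \ref{interpolation of seminormed spaces} combined with the concrete identifications of Section \ref{Homogeneous Sobolev and homogeneous Besov spaces}. The only subtlety worth flagging is that the Fubini computation collapsing the two seminorms uses translation invariance in an essential way, which is precisely why the result is stated on all of $\R^n$; on a proper subdomain $U$ the identification would fail since $B\p{x,c}\cap U$ does not reduce to a translate of a fixed ball.
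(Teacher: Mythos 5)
Your proposal is correct and follows essentially the same route as the paper: part (1) via Theorem~\ref{screened sobolev spaces are truncated interpolation spaces}, Theorem~\ref{label sum characterization of the truncated method}, and Corollary~\ref{interpolation characterization of Besov spaces}, and part (2) by identifying constant-screening screened Sobolev seminorms with the $q=p$ screened Besov seminorms and sandwiching $\sig$ between $\inf\sig$ and $\sup\sig$. The only difference is that you write out the Fubini/change-of-variables computation that the paper dismisses as ``a simple matter to observe,'' which is a harmless (and arguably helpful) elaboration.
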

\begin{proof}
Given Corollary~\ref{screened sobolev spaces are truncated interpolation spaces}, the first item is immediate from Theorem~\ref{label sum characterization of the truncated method}. For the second item we set $\sig_+=\sup\sig$ and $\sig_-=\inf\sig$.  By hypothesis, these are both positive.  It is a simple matter to observe that:
\begin{equation}
    {_{\p{\sig_+}}}\tilde{B}^{s,p}_p\p{\R^n;\R}=\tilde{W}^{s,p}_{\p{\sig_{+}}}\p{\R^n;\R} \emb \tilde{W}^{s,p}_{\p{\sig}}\p{\R^n;\R} \emb \tilde{W}^{s,p}_{\p{\sig_-}}\p{\R^n;\R}={_{\p{\sig_-}}}\tilde{B}^{s,p}_p\p{\R^n;\R}.
\end{equation} Thus the second item follows from the first.
\end{proof}

The next corollary shows us when we have density of smooth and compactly supported functions in the screened spaces. This result is, in fact, sharp, as we will see in the next section.

\begin{coro}\label{density of com}
Let  $s\in\p{0,1}$ and $1\le p,q<\infty$.  Then $C^\infty\p{\R^n;\mathbb{K}}\cap\tilde{B}^{s,p}_{q}\p{\R^n;\mathbb{K}}$ is dense in $\tilde{B}^{s,p}_{q}\p{\R^n;\mathbb{K}}$.  Moreover, if $n\ge 2$ or $1<p$, then $C^\infty_c\p{\R^n;\mathbb{K}}$ is dense in $\tilde{B}^{s,p}_{q}\p{\R^n;\mathbb{K}}$.
\end{coro}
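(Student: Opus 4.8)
The plan is to deduce Corollary~\ref{density of com} directly from the sum characterization in Corollary~\ref{sum characterization of screened sobolev spaces} together with the density results already available for the summands. First I would recall that by the first item of Corollary~\ref{sum characterization of screened sobolev spaces} we have $\tilde{B}^{s,p}_q\p{\R^n;\mathbb{K}}=\Sigma\p{\dot{B}^{s,p}_q\p{\R^n;\mathbb{K}},\dot{W}^{1,p}\p{\R^n;\mathbb{K}}}$ with equivalent seminorms, so it suffices to produce dense subspaces of each factor consisting of smooth functions (for the first claim) or compactly supported smooth functions (for the second), and then note that a sum of dense subspaces is dense in the sum space with its $\Sigma$-seminorm.

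The key steps, in order, would be: (1) Observe that the $\Sigma$-seminorm satisfies $\sb{f_0+f_1}_\Sigma\le\sb{f_0}_{\dot{B}^{s,p}_q}+\sb{f_1}_{\dot{W}^{1,p}}$, so if $\cb{g_k}\subset A_0$ approximates $f_0$ in $\dot{B}^{s,p}_q$ and $\cb{h_k}\subset A_1$ approximates $f_1$ in $\dot{W}^{1,p}$, then $\cb{g_k+h_k}\subset A_0+A_1$ approximates $f_0+f_1$ in $\Sigma$; since every element of $\tilde{B}^{s,p}_q$ decomposes as such an $f_0+f_1$, density of $A_0+A_1$ follows. (2) For the first assertion, invoke that $C^\infty\p{\R^n;\mathbb{K}}\cap\dot{B}^{s,p}_q\p{\R^n;\mathbb{K}}$ is dense in $\dot{B}^{s,p}_q\p{\R^n;\mathbb{K}}$ — this can be obtained by mollification (convolving with a standard mollifier $\varphi_\ep$ decreases the $L^p$-modulus of continuity, hence does not increase the Besov seminorm, and $f\ast\varphi_\ep\to f$ in $\dot{B}^{s,p}_q$ for $q<\infty$ by dominated convergence applied to $t\mapsto t^{-s}\omega_p(t,f\ast\varphi_\ep-f)$ using the equivalence with $\sb{\cdot}_{\dot{B}^{s,p}_q}$), and that $C^\infty\p{\R^n;\mathbb{K}}\cap\dot{W}^{1,p}\p{\R^n;\mathbb{K}}$ is dense in $\dot{W}^{1,p}\p{\R^n;\mathbb{K}}$ (again mollification). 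Taking $A_0=C^\infty\cap\dot{B}^{s,p}_q$ and $A_1=C^\infty\cap\dot{W}^{1,p}$ and using step (1) gives that $C^\infty\p{\R^n;\mathbb{K}}\cap\tilde{B}^{s,p}_q\p{\R^n;\mathbb{K}}$ is dense, since $A_0+A_1\subseteq C^\infty\p{\R^n;\mathbb{K}}\cap\tilde{B}^{s,p}_q\p{\R^n;\mathbb{K}}$. (3) For the second assertion, when $n\ge 2$ or $1<p$, Lemma~\ref{density of ccinfty in homog} gives that $C^\infty_c\p{\R^n;\mathbb{K}}$ is dense in $\dot{W}^{1,p}\p{\R^n;\mathbb{K}}$, and Corollary~\ref{interpolation characterization of Besov spaces} states that $C^\infty_c\p{\R^n;\mathbb{K}}$ is dense in $\dot{B}^{s,p}_q\p{\R^n;\mathbb{K}}$ for $p,q<\infty$; taking $A_0=A_1=C^\infty_c\p{\R^n;\mathbb{K}}$ in step (1) then yields density of $C^\infty_c\p{\R^n;\mathbb{K}}$ in $\tilde{B}^{s,p}_q\p{\R^n;\mathbb{K}}$.

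The main obstacle is really just step (2): making sure that $C^\infty\p{\R^n;\mathbb{K}}$ is dense in $\dot{B}^{s,p}_q\p{\R^n;\mathbb{K}}$ without an extra compact-support or polynomial-growth hypothesis. This is where one must be a little careful — a generic $f\in\dot{B}^{s,p}_q$ is only in $L^1_\loc$, but the mollification argument works because the Besov seminorm only sees translates, $\omega_p(t,f\ast\varphi_\ep)\le\omega_p(t,f)$, and $\Delta_h(f\ast\varphi_\ep)=(\Delta_hf)\ast\varphi_\ep\to\Delta_hf$ in $L^p$ for each fixed $h$ (since $\Delta_hf\in L^p_\loc$ and the Besov seminorm being finite forces $\Delta_hf\in L^p$ for a.e. $h$), so $t^{-s}\omega_p(t,f\ast\varphi_\ep-f)\to 0$ pointwise in $t$ and is dominated by $2t^{-s}\omega_p(t,f)\in L^q((0,\infty),t^{-1}\m{d}t)$; dominated convergence finishes it for $q<\infty$. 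Alternatively, and perhaps more cleanly, one can avoid this by noting that $\Delta\p{L^p\p{\R^n;\mathbb{K}},\dot{W}^{1,p}\p{\R^n;\mathbb{K}}}=W^{1,p}\p{\R^n;\mathbb{K}}$ is dense in $\dot{B}^{s,p}_q\p{\R^n;\mathbb{K}}=\p{L^p,\dot{W}^{1,p}}_{s,q}$ by Proposition~\ref{density in interpolation spaces}, and $W^{1,p}\p{\R^n;\mathbb{K}}$ is mollifier-dense in $C^\infty$; combined with the density of $C^\infty\cap\dot{W}^{1,p}$ in $\dot{W}^{1,p}$, step (1) gives the first claim with no further subtlety.
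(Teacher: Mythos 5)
Your proposal is correct, but it takes a genuinely different route from the paper's. The paper does not decompose into a sum and approximate each summand; instead it invokes item (2) of Proposition~\ref{density in interpolation spaces}, which says directly that the second factor $X_1=\dot{W}^{1,p}\p{\R^n;\mathbb{K}}$ is dense in the truncated interpolation space $(L^p,\dot{W}^{1,p})^{(1)}_{s,q}=\tilde{B}^{s,p}_q\p{\R^n;\mathbb{K}}$ whenever $q<\infty$. So the paper needs only a single chain of approximations: $\dot{W}^{1,p}$ dense in $\tilde{B}^{s,p}_q$, and then smooth (or compactly supported smooth, via Lemma~\ref{density of ccinfty in homog}) functions dense in $\dot{W}^{1,p}$. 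This sidesteps any need to discuss density in $\dot{B}^{s,p}_q$ at all, and avoids the slight subtlety you flag in your own step (2): namely, that showing $\omega_p(t,f\ast\varphi_\ep-f)\to0$ for a.e.\ fixed $t$ involves a supremum over $h\in\bar B(0,t)$, and the pointwise-in-$h$ convergence $\norm{(\Delta_h f)\ast\varphi_\ep-\Delta_h f}_{L^p}\to 0$ does not pass through the sup without a uniformity argument (e.g.\ continuity of $h\mapsto\Delta_h f$ into $L^p$). Your alternative — first approximate by $W^{1,p}=\Delta(L^p,\dot{W}^{1,p})$ via Proposition~\ref{density in interpolation spaces} item (1) and then mollify in $W^{1,p}$ — closes this cleanly, and is essentially a two-factor version of the paper's one-factor argument. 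Both approaches buy the same result; the paper's is shorter because Proposition~\ref{density in interpolation spaces}(2) was engineered precisely for this use, whereas yours is more explicit about the decomposition and would generalize more readily to sum spaces that do not arise as truncated interpolation spaces.
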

\begin{proof}
This is a consequence of Corollary~\ref{sum characterization of screened sobolev spaces}, Lemma~\ref{density of ccinfty in homog}, and Corollary~\ref{density in interpolation spaces}.
\end{proof}

We also learn that the screened spaces are semi-Banach and their annihilator is nothing more than the space of constant functions.

\begin{coro}\label{semi banach and kernels of screened spaces}
Let $s\in\p{0,1}$ and $1\le p,q\le\infty$. Then $\tilde{B}^{s,p}_{q}\p{\R^n;\mathbb{K}}$ is semi-Banach with annihilator $\mathfrak{A}\big(\tilde{B}^{s,p}_q\p{\R^n;\mathbb{K}}\big)=\cb{\text{constant functions}}.$
\end{coro}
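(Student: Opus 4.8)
The plan is to deduce both assertions from the interpolation characterization in Theorem~\ref{screened sobolev spaces are truncated interpolation spaces}, which identifies $\tilde{B}^{s,p}_q\p{\R^n;\mathbb{K}}$ with the truncated interpolation space $\big(L^p\p{\R^n;\mathbb{K}},\dot{W}^{1,p}\p{\R^n;\mathbb{K}}\big)^{\p{1}}_{s,q}$ up to equivalence of seminorms. Since both ``semi-Banach'' (completeness of Cauchy sequences) and the annihilator $\mathfrak{A}$ of a seminormed space depend only on the induced topology, they are invariant under passage to an equivalent seminorm; hence it suffices to verify the two claims for this truncated interpolation space.

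For completeness, I would recall that $L^p\p{\R^n;\mathbb{K}}$ is a Banach space for $1\le p\le\infty$, that $\dot{W}^{1,p}\p{\R^n;\mathbb{K}}$ is semi-Banach by Lemma~\ref{completeness of homogeneous Sobolev spaces}, and that the pair $L^p\p{\R^n;\mathbb{K}}$, $\dot{W}^{1,p}\p{\R^n;\mathbb{K}}$ is strongly, hence weakly, compatible by Lemma~\ref{compatibility of homog and lebesgue}. Proposition~\ref{completeness of truncated spaces} then applies verbatim with $\sig=1$, $s\in\p{0,1}$, and $1\le q\le\infty$ to conclude that $\big(L^p\p{\R^n;\mathbb{K}},\dot{W}^{1,p}\p{\R^n;\mathbb{K}}\big)^{\p{1}}_{s,q}$ is semi-Banach, and therefore so is $\tilde{B}^{s,p}_q\p{\R^n;\mathbb{K}}$.

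For the annihilator, I would invoke Proposition~\ref{kernels prop}: because the two factors are strongly compatible (Lemma~\ref{compatibility of homog and lebesgue}), its second item yields $\mathfrak{A}\big(\big(L^p\p{\R^n;\mathbb{K}},\dot{W}^{1,p}\p{\R^n;\mathbb{K}}\big)^{\p{1}}_{s,q}\big)=\Sigma\big(\mathfrak{A}\p{L^p\p{\R^n;\mathbb{K}}},\mathfrak{A}\big(\dot{W}^{1,p}\p{\R^n;\mathbb{K}}\big)\big)$. Now $\mathfrak{A}\p{L^p\p{\R^n;\mathbb{K}}}=\cb{0}$ since the $L^p$-seminorm is a genuine norm, while $\mathfrak{A}\big(\dot{W}^{1,p}\p{\R^n;\mathbb{K}}\big)$ is exactly the space of constant functions by Lemma~\ref{completeness of homogeneous Sobolev spaces}; hence their sum in $\Sigma$ is precisely the space of constant functions. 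Transferring back through the equivalence of seminorms gives $\mathfrak{A}\big(\tilde{B}^{s,p}_q\p{\R^n;\mathbb{K}}\big)=\cb{\text{constant functions}}$.

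There is no substantive obstacle here; the corollary is a bookkeeping consequence of the abstract theory. The only points meriting a moment's care are (i) noting that completeness and the annihilator are topological invariants, so the equivalence of seminorms in Theorem~\ref{screened sobolev spaces are truncated interpolation spaces} transfers both properties, and (ii) confirming that the compatibility hypotheses required by Propositions~\ref{completeness of truncated spaces} and~\ref{kernels prop} hold for the couple $L^p\p{\R^n;\mathbb{K}}$, $\dot{W}^{1,p}\p{\R^n;\mathbb{K}}$, which is precisely Lemma~\ref{compatibility of homog and lebesgue}.
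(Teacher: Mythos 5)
Your proof is correct and follows exactly the route the paper takes: identify $\tilde{B}^{s,p}_q$ with $\big(L^p,\dot{W}^{1,p}\big)^{\p{1}}_{s,q}$ via Theorem~\ref{screened sobolev spaces are truncated interpolation spaces}, verify strong compatibility with Lemma~\ref{compatibility of homog and lebesgue}, and then apply Propositions~\ref{completeness of truncated spaces} and~\ref{kernels prop}. The only thing you add explicitly is the observation that completeness and the annihilator are topological invariants, which the paper leaves implicit but is indeed the correct justification for transferring through an equivalence of seminorms.
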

\begin{proof}
    This follows from Theorem~\ref{screened sobolev spaces are truncated interpolation spaces}, Propositions~\ref{completeness of truncated spaces} and~\ref{kernels prop}, and finally Lemma~\ref{compatibility of homog and lebesgue}.
\end{proof}

We note that Corollary~\ref{semi banach and kernels of screened spaces} appears in~\cite{leoni2019traces} for the scale of screened Sobolev spaces with general screening functions.

\subsection{A concrete decomposition}

The previous subsection shows that the screened Besov spaces coincide with the sum of a homogeneous Sobolev and a homogeneous Besov space. In either case the seminorms are, at best, tedious to work with. The purpose of this subsection is to show that we achieve a nearly optimal decomposition into the summands in a simple way. We then use this decomposition to show that compactly supported smooth functions are not dense in the space $\tilde{B}^{s,1}_{q}\p{\R;\mathbb{K}}$ for any $s\in\p{0,1}$, $1\le q\le\infty$.

\begin{defn}\label{average and singular parts}
Let  $Q=\p{-1/2,1/2}^n\subset\R^n$ and define the operators $\mathds{H},\mathds{L}:L^1_\loc\p{\R^n;\mathbb{K}}\to L^1_\loc\p{\R^n;\mathbb{K}}$ via 
\begin{equation}
    \mathds{H}f\p{x}=\int_{Q}\p{f\p{x}-f\p{x+y}}\;\m{d}y \text{ and }  \mathds{L}f\p{x}=\int_{Q}f\p{x+y}\;\m{d}y.
\end{equation}
Notice that the sum of $\mathds{H}$ and $\mathds{L}$ is the identity.
\end{defn}

The following theorem utilizes these to arrive at another equivalent seminorm.

\begin{thm}[Fundamental decomposition of screened Besov spaces]\label{fundamental decomposition of screened sobolev spaces}
Let  $1\le p,q\le\infty$ and $s\in\p{0,1}$. There exists a constant $c\in\R^+$ such that for all $f\in L^1_\loc\p{\R^n;\mathbb{K}}$ 
\begin{equation}\label{really silly}
    c^{-1}\sb{f}_{\tilde{B}^{s,p}_{q}}\le\norm{\mathds{H}f}_{B^{s,p}_q}+\sb{\mathds{L}f}_{\dot{W}^{1,p}}\le c\sb{f}_{\tilde{B}^{s,p}_{q}}.
\end{equation}
In particular, we have the equality of seminormed spaces $\tilde{B}^{s,p}_q\p{\R^n;\mathbb{K}}=\Sigma(B^{s,p}_q\p{\R^n;\mathbb{K}};\dot{W}^{1,p}\p{\R^n;\mathbb{K}})$ with equivalence of seminorms.
\end{thm}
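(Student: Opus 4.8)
The plan is to prove the equivalence \eqref{really silly} directly, from which the space identity $\tilde{B}^{s,p}_q(\R^n;\mathbb{K}) = \Sigma(B^{s,p}_q(\R^n;\mathbb{K});\dot W^{1,p}(\R^n;\mathbb{K}))$ follows at once: the double inequality says precisely that the map $f \mapsto \mathds{H}f + \mathds{L}f = f$ realizes $\tilde B^{s,p}_q$ as a subspace of the sum with comparable seminorm, while the reverse containment $\Sigma(B^{s,p}_q;\dot W^{1,p}) \hookrightarrow \tilde B^{s,p}_q$ is already available from Corollary~\ref{sum characterization of screened sobolev spaces}(1), since $\dot B^{s,p}_q \hookrightarrow B^{s,p}_q$ trivially (the inhomogeneous Besov seminorm dominates the homogeneous one — or rather, the inhomogeneous space embeds in the homogeneous one by forgetting the $L^p$ part) and $B^{s,p}_q \hookrightarrow \Sigma(\dot B^{s,p}_q, L^p) \hookrightarrow \Sigma(\dot B^{s,p}_q,\dot W^{1,p}) + \text{(harmless)}$; more cleanly, one notes $B^{s,p}_q = \Sigma(L^p, \dot B^{s,p}_q)$ so $\Sigma(B^{s,p}_q,\dot W^{1,p}) = \Sigma(L^p,\dot B^{s,p}_q,\dot W^{1,p}) = \Sigma(\dot B^{s,p}_q,\dot W^{1,p})$ using $L^p \hookrightarrow \Sigma(\dot B^{s,p}_q, \dot W^{1,p})$, which matches the right-hand side of Corollary~\ref{sum characterization of screened sobolev spaces}(1).

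For the substantive estimate, I would first record the mapping properties of $\mathds{H}$ and $\mathds{L}$. The operator $\mathds{L}f = f * \chi_Q$ is convolution with the indicator of the unit cube; it is essentially the averaging operator appearing in Lemma~\ref{K functionals and modulus of continuity} (with $t$ normalized to $1$), so the computation there — differentiating under the integral and using that $\partial_j(f * \chi_Q) = f * \partial_j\chi_Q$ is a sum of cube-face difference operators — gives $\|\mathds{L}f\|_{\dot W^{1,p}} \lesssim_n \omega_p(c_n, f) \lesssim_n \int_{(0,1)} t^{-s}\omega_p(t,f)\, \mu(\mathrm dt)$-type control, hence $\|\mathds{L}f\|_{\dot W^{1,p}} \lesssim \sb{f}_{\tilde B^{s,p}_q}$ via Definition~\ref{equivalent seminorm in terms on moduli of continuity} and Proposition~\ref{constant screening on rn equiv seminorm} (using the lower bound $\omega_p(t,f) \ge$ comparable for $t$ near $1$, or just monotonicity and an averaging trick as in Proposition~\ref{inclusion relations of truncated interpolation spaces}(5)). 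For the high-frequency part, $\mathds{H}f(x) = \int_Q \Delta_{-y}f(x)\,\mathrm dy$ (with the sign convention $\Delta_h f = \tau_{-h}f - f$, so $\mathds{H}f = -\int_Q \Delta_y f\,\mathrm dy$), which already shows $\|\mathds{H}f\|_{L^p} \le \omega_p(c_n,f)$, giving the $L^p$-piece of the $B^{s,p}_q$-norm. For the homogeneous Besov seminorm of $\mathds{H}f$, I would compute $\Delta_h \mathds{H}f = \mathds{H}\Delta_h f = \int_Q(\Delta_h f - \tau_{-y}\Delta_h\Delta_y f - \dots)$-type identities — more simply, $\|\Delta_h \mathds{H}f\|_{L^p} \le \|\Delta_h f\|_{L^p} + \|\mathds{H}\,(\text{stuff})\|$; the cleanest route is $\Delta_h\mathds H f = \Delta_h f - \Delta_h \mathds L f$ and then $\|\Delta_h \mathds L f\|_{L^p} \le |h|\,\|\mathds L f\|_{\dot W^{1,p}}$, so that $\|\Delta_h \mathds H f\|_{L^p} \le \min\{2\omega_p(|h|,f)+\dots, \|\Delta_h f\|_{L^p} + |h|\sb{f}_{\tilde B}\}$; integrating $|h|^{-s}(\cdot)$ against $|h|^{-n}\,\mathrm dh$ over $|h| < 1$ controls the $\dot B^{s,p}_q$-seminorm of $\mathds H f$ by $\sb{f}_{\tilde B^{s,p}_q}$ (the short-range part) plus a convergent tail, and one recalls $\|g\|_{B^{s,p}_q} \asymp \|g\|_{L^p} + (\int_{|h|<1} (|h|^{-s}\|\Delta_h g\|_{L^p})^q |h|^{-n}\,\mathrm dh)^{1/q}$ by the standard truncation of the homogeneous Besov seminorm. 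This gives the right inequality in \eqref{really silly}.

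For the left inequality, I would go through the sum structure: given any decomposition $f = g + w$ with $g \in B^{s,p}_q$, $w \in \dot W^{1,p}$, I have $\omega_p(t,f) \le \omega_p(t,g) + \omega_p(t,w) \le \omega_p(t,g) + t\,\sb{w}_{\dot W^{1,p}}$, and then $\sb{f}_{\tilde B^{s,p}_q} \asymp {}^\circ\sb{f}^{(1)}_{\tilde B^{s,p}_q} = (\int_{(0,1)}(t^{-s}\omega_p(t,f))^q t^{-1}\,\mathrm dt)^{1/q} \lesssim (\int_{(0,1)}(t^{-s}\omega_p(t,g))^q t^{-1}\,\mathrm dt)^{1/q} + \sb{w}_{\dot W^{1,p}}(\int_{(0,1)} t^{q(1-s)-1}\,\mathrm dt)^{1/q} \lesssim \|g\|_{B^{s,p}_q} + \sb{w}_{\dot W^{1,p}}$, where the first term is bounded because the $\dot B^{s,p}_q$-seminorm of $g$ (a restricted integral) is $\lesssim \|g\|_{B^{s,p}_q}$. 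Taking the infimum over decompositions gives $\sb{f}_{\tilde B^{s,p}_q} \lesssim \|f\|_{\Sigma(B^{s,p}_q,\dot W^{1,p})} \le \|\mathds H f\|_{B^{s,p}_q} + \sb{\mathds L f}_{\dot W^{1,p}}$, using that $(\mathds H f, \mathds L f)$ is itself an admissible decomposition (here one needs $\mathds H f \in B^{s,p}_q$ and $\mathds L f \in \dot W^{1,p}$ — but if the middle quantity is infinite there is nothing to prove, and if finite then these memberships hold). I expect the main technical obstacle to be bookkeeping the passage between $\omega_p$ on $L^1_{\mathrm{loc}}$ and the honest seminorms — in particular verifying that $\mathds H f$ genuinely lies in $B^{s,p}_q$ (not merely $\dot B^{s,p}_q$) with the stated bound, which requires controlling $\|\mathds H f\|_{L^p}$ by $\omega_p(c_n,f)$ and then estimating $\omega_p(c_n, f)$ by $\sb{f}_{\tilde B^{s,p}_q}$ using the monotone-plus-averaging argument from the proof of Proposition~\ref{constant screening on rn equiv seminorm}; a minor subtlety is that a priori $f \in L^1_{\mathrm{loc}}$ only, so one should check all the convolution manipulations make sense (they do, by the same mollification/Fatou device as in Lemma~\ref{K functionals and modulus of continuity}), and that the constants depend only on $n,s,p,q$.
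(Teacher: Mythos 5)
Your proposal is correct and tracks the paper's proof closely. The structure is identical: cite Corollary~\ref{sum characterization of screened sobolev spaces} together with $B^{s,p}_q \hookrightarrow \dot B^{s,p}_q$ to reduce everything to the right-hand inequality, bound $[\mathds{L}f]_{\dot W^{1,p}}$ via the averaging/FTC identity of Lemma~\ref{regularity promotion via averaging} and the chain $\omega_p(1,f) \le 2\mathscr{K}(1,f) \lesssim [f]^{(1)}_{s,q}$, bound $\|\mathds{H}f\|_{L^p}$ by $\omega_p(c_n,f)$, and then estimate the $\dot B^{s,p}_q$-seminorm of $\mathds{H}f$ by splitting $|h|<1$ from $|h|\ge 1$. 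The one place you diverge in detail is the short-range bound for $\|\Delta_h\mathds{H}f\|_{L^p}$: you use $\Delta_h\mathds{H}f = \Delta_h f - \Delta_h\mathds{L}f$ and $\|\Delta_h\mathds{L}f\|_{L^p}\le |h|[\mathds{L}f]_{\dot W^{1,p}}$, producing an extra (harmless) $|h|[\mathds{L}f]_{\dot W^{1,p}}$ term, whereas the paper simply applies Minkowski to the double-difference formula $\Delta_h\mathds{H}f(x)=\int_Q(\Delta_h f(x)-\Delta_h f(x+y))\,\m{d}y$ to get $\|\Delta_h\mathds{H}f\|_{L^p}\le 2\|\Delta_h f\|_{L^p}$ directly — slightly cleaner, same output. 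Note also that your parenthetical ``$\dot B^{s,p}_q \hookrightarrow B^{s,p}_q$ trivially'' has the arrow reversed; you correct yourself immediately, but the embedding you actually need and use is $B^{s,p}_q\hookrightarrow \dot B^{s,p}_q$.
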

\begin{proof}
By the sum characterization of Corollary~\ref{sum characterization of screened sobolev spaces} and the embedding $B^{s,p}_q\p{\R^n;\mathbb{K}} \emb \dot{B}^{s,p}_q\p{\R^n;\mathbb{K}}$, it is sufficient to prove the second inequality in~\eqref{really silly}.  Suppose that $f\in\tilde{B}^{s,p}_{q}\p{\R^n;\mathbb{K}}$. By Lemma~\ref{regularity promotion via averaging}, we have that $\mathds{L}f\in W^{1,1}_\loc\p{\R^n;\mathbb{K}}$ and for $j\in\cb{1,\dots,n}$ and a.e. $x\in\R^n$ it holds that
\begin{equation}
    \pd_j\mathds{L}f\p{x}=\int_{\Pi_jQ\p{x}}\Delta_{e_j}f\p{y_j',x_j-1/2}\;\m{d}y_i',
\end{equation}
where $Q\p{x}=\prod_{k=1}^n\p{-1/2+x_k,1/2+x_k}$ and $\Pi_j=\p{I-e_j\otimes e_j}$.  Then when $1\le p<\infty$ an application of Minkowski's integral inequality, Proposition~\ref{constant screening on rn equiv seminorm}, and   Proposition~\ref{truncated interpolation spaces are intermediate} show that
\begin{equation}
    \norm{\pd_j\mathds{L}f}_{L^p}\le\bp{\int_{\R^n}\abs{\Delta_{e_j}f\p{x}}^p\;\m{d}x}^{1/p}\le\omega_p\p{1,f}\le2\mathscr{K}\p{1,f}\le2q^{-1/q}\p{1-s}^{-1/q}\sb{f}_{s,q}^{\p{1}}.
\end{equation}
When $p=\infty$ it is similarly clear that $\norm{\pd_j\mathds{L}f}_{L^\infty}\le 2\sb{f}^{\p{1}}_{s,\infty}$.
Note that $\mathscr{K}$ is the $K$-functional associated to the sum $\Sigma\big(L^p\p{\R^n;\mathbb{K}},\dot{W}^{1,p}\p{\R^n;\mathbb{K}}\big)$ and that $\sb{\cdot}^{\p{1}}_{s,q}$ is the seminorm on the truncated interpolation space $\big(L^p\p{\R^n;\mathbb{K}},\dot{W}^{1,p}\p{\R^n;\mathbb{K}}\big)_{s,q}^{\p{1}}$. Theorem~\ref{screened sobolev spaces are truncated interpolation spaces} and Corollary~\ref{sum characterization of screened sobolev spaces} ensure us that $\sb{\cdot}_{s,q}^{\p{1}}\lesssim\sb{\cdot}_{\tilde{B}^{s,p}_{q}}$. Hence, $\sb{\mathds{L}f}_{\dot{W}^{1,p}}\le c\sb{f}_{\tilde{B}^{s,p}_{q}}$ for some $c\in\R^+$ depending only on $s$, $p$, and $n$.

With another application of Minkowski's integral inequality, we find that \begin{equation}\label{bungalow}
    \norm{\mathds{H}f}_{L^p}\le\omega_p\p{1,f}\lesssim\sb{f}_{\tilde{B}^{s,p}_{q}}.
\end{equation}
Thus, to show that $\mathds{H}f\in B^{s,p}_q\p{\R^n;\mathbb{K}}$, with a good estimate, it remains to bound $\sb{\mathds{H}f}_{\dot{B}^{s,p}_q}^{\thicksim}$. Note that this seminorm is defined in Definition~\ref{difference quotient norm on besov 2}. First we note that for all $h\in\R^n$ \eqref{bungalow} implies $\norm{\Delta_h\mathds{H}f}_{L^p}\le 2\norm{\mathds{H}f}_{L^p}\lesssim\sb{f}_{\tilde{B}^{s,p}_q}$. Hence, 
\begin{multline}\label{happy}
    \Delta_h\mathds{H}f\p{x}=\int_{Q}\p{f\p{x+h}-f\p{x+h+y}-f\p{x}+f\p{x+y}}\;\m{d}y\\\imp\norm{\Delta_h\mathds{H}f}_{L^p}\lesssim\min\big\{\sb{f}_{\tilde{B}^{s,p}_q},\norm{\Delta_hf}_{L^p}\big\}.
\end{multline}
Now, if $1\le q<\infty$ we use~\eqref{happy} to estimate
\begin{multline}
    \sb{\mathds{H}f}_{\dot{B}^{s,p}_q}^{\thicksim}\le\bp{\int_{\R^n}\p{\abs{h}^{-s}\norm{\Delta_h\mathds{H}f}_{L^p}}^q\abs{h}^{-n}\;\m{d}h}^{1/q}\\\lesssim\sb{f}_{\tilde{B}^{s,p}_q}\bp{\int_{\R^n\setminus B\p{0,1}}\abs{h}^{-n-sq}\;\m{d}h}^{1/q}+\bp{\int_{B\p{0,1}}\p{\abs{h}^{-s}\norm{\Delta_hf}_{L^p}}^q\abs{h}^{-n}\;\m{d}h}^{1/q}\lesssim\sb{f}_{\tilde{B}^{s,p}_q}.
\end{multline}
The same estimates work when $q=\infty$ as well.

Now that~\eqref{really silly} is established, the embedding $\tilde{B}^{s,p}_q\p{\R^n;\mathbb{K}}\emb\Sigma(B^{s,p}_q\p{\R^n;\mathbb{K}};\dot{W}^{1,p}\p{\R^n;\mathbb{K}})$ is clear. The opposite embedding is a consequence of Corollary~\ref{sum characterization of screened sobolev spaces} item (1) and the embedding $B^{s,p}_q\p{\R^n;\mathbb{K}} \emb \dot{B}^{s,p}_q\p{\R^n;\mathbb{K}}$.
\end{proof}

As a corollary, we show that the density of compactly supported continuous functions fails in the cases not covered by Corollary~\ref{density of com}.

\begin{coro}\label{lack of density}
Let $s\in\p{0,1}$ and $1\le q\le\infty$. Then
\begin{equation}
    \tilde{B}^{s,1}_{q}\p{\R;\mathbb{K}}\setminus\Bar{C^0_c\p{\R;\mathbb{K}}\cap\tilde{B}^{s,1}_{q}\p{\R;\mathbb{K}}}^{\tilde{B}^{s,1}_{q}}\neq\es.
\end{equation}
\end{coro}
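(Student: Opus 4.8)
## Plan of Proof

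The goal is to exhibit an explicit function in $\tilde{B}^{s,1}_q(\R;\mathbb{K})$ that cannot be approximated by compactly supported continuous functions. The natural candidate, guided by the sum characterization in Corollary~\ref{sum characterization of screened sobolev spaces} and the fundamental decomposition in Theorem~\ref{fundamental decomposition of screened sobolev spaces}, is a function whose ``low part'' $\mathds{L}f$ behaves like a nontrivial element of $\dot{W}^{1,1}(\R)$ that is \emph{not} in the closure of $C^\infty_c(\R)$ — recall Lemma~\ref{density of ccinfty in homog}, which tells us precisely that $C^\infty_c(\R)$ fails to be dense in $\dot{W}^{1,1}(\R)$ (the one exceptional case $p=1$, $n=1$). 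Concretely, I would take $f$ to be a fixed Lipschitz function with $f' \in L^1(\R)$ but $\int_\R f' \neq 0$, for instance (a smoothed version of) $f(x) = \arctan(x)$, or even simpler, a compactly-supported-derivative primitive: let $g \in C^\infty_c(\R)$ with $\int_\R g = 1$ and set $f(x) = \int_{-\infty}^x g$. Then $f \in \dot{W}^{1,1}(\R) \subseteq \tilde{B}^{s,1}_q(\R;\mathbb{K})$, since $\dot W^{1,1} \emb \tilde B^{s,1}_q$ by Theorem~\ref{screened sobolev spaces are truncated interpolation spaces} (or directly: $\omega_1(t,f) \le t \|f'\|_{L^1}$ gives finiteness of the seminorm for $s<1$).

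The key step is to produce an obstruction to approximation. The cleanest route is to construct a bounded linear functional (or a seminorm-continuous quantity) on $\tilde B^{s,1}_q(\R;\mathbb{K})$ that vanishes on $C^0_c(\R)\cap \tilde B^{s,1}_q$ but not on $f$. I would use the decomposition $\mathrm{Id} = \mathds{H} + \mathds{L}$ from Definition~\ref{average and singular parts} together with Theorem~\ref{fundamental decomposition of screened sobolev spaces}: for any $u \in \tilde B^{s,1}_q(\R;\mathbb{K})$ we have $\mathds{L}u \in \dot W^{1,1}(\R)$ with $\|\mathds{L}u\|_{\dot W^{1,1}} \lesssim \sb{u}_{\tilde B^{s,1}_q}$, so the map $u \mapsto \int_\R (\mathds{L}u)'$ is well-defined (the derivative is $L^1$), linear, and — crucially — bounded: $\bigl|\int_\R (\mathds{L}u)'\bigr| \le \|(\mathds{L}u)'\|_{L^1} = \sb{\mathds{L}u}_{\dot W^{1,1}} \lesssim \sb{u}_{\tilde B^{s,1}_q}$. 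Hence $u \mapsto \int_\R (\mathds L u)'$ is continuous on $\tilde B^{s,1}_q(\R;\mathbb{K})$. On the other hand, for $u \in C^0_c(\R)$ one computes $(\mathds L u)(x) = \int_{Q} u(x+y)\,\mathrm dy$ where $Q=(-1/2,1/2)$, so $\mathds L u$ is again compactly supported and continuous, in fact $\mathds L u \in W^{1,1}$ with compact support, whence $\int_\R (\mathds L u)' = 0$ by the fundamental theorem of calculus (or by $\mathds L u$ vanishing at $\pm\infty$). Since the functional is continuous, it also vanishes on the closure $\overline{C^0_c(\R)\cap \tilde B^{s,1}_q}^{\tilde B^{s,1}_q}$. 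But for our chosen $f$, $\mathds L f$ is a $W^{1,1}_{\loc}$ function with $(\mathds L f)(x) \to 0$ as $x\to -\infty$ and $(\mathds L f)(x) \to 1$ as $x \to +\infty$ (since $f(x) \to 0$, resp.\ $1$, and the averaging operator preserves these limits), so $\int_\R (\mathds L f)' = 1 \neq 0$. Therefore $f$ lies outside that closure, proving the claim.

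One technical point to verify carefully: that $\mathds L f$ genuinely has the stated limits at $\pm\infty$, i.e.\ that $\int_Q f(x+y)\,\mathrm dy \to \lim_{\pm\infty} f$. This is immediate from dominated convergence once $f$ is bounded with limits at $\pm\infty$ — which holds for our primitive-of-a-bump choice. One should also confirm $f \in L^1_{\loc}$ (trivial, $f$ continuous) and that $\sb{f}_{\tilde B^{s,1}_q} < \infty$; the latter follows from $\sb{f}_{\tilde B^{s,1}_q} \asymp {^\circ}\sb{f}_{\tilde B^{s,1}_q}^{(1)}$ (Proposition~\ref{constant screening on rn equiv seminorm}) and the bound $\omega_1(t,f) \le t\|f'\|_{L^1}$, giving ${^\circ}\sb{f}^{(1)}_{\tilde B^{s,1}_q} \le \|f'\|_{L^1}\bigl(\int_0^1 t^{(1-s)q-1}\,\mathrm dt\bigr)^{1/q} < \infty$ for $q<\infty$ (and the obvious modification for $q=\infty$).

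The main obstacle is really just identifying the right invariant: once one recognizes that the failure of density must be ``inherited'' from the failure of density of $C^\infty_c$ in $\dot W^{1,1}(\R)$ via the $\mathds L$ part of the decomposition, the functional $u \mapsto \int_\R (\mathds L u)'$ essentially writes itself, and all remaining steps are routine applications of Theorem~\ref{fundamental decomposition of screened sobolev spaces}, Proposition~\ref{constant screening on rn equiv seminorm}, and elementary calculus. I would present the proof by (i) fixing $f$, (ii) checking $f \in \tilde B^{s,1}_q(\R;\mathbb{K})$, (iii) defining the functional and proving its continuity via Theorem~\ref{fundamental decomposition of screened sobolev spaces}, (iv) showing it annihilates $C^0_c(\R) \cap \tilde B^{s,1}_q$, and (v) evaluating it on $f$ to get $1$.
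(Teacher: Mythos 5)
Your proof is correct and is essentially the paper's own argument. The paper picks $u(x)=\int_{(-\infty,x)}\chi$ with $\int_\R\chi=1$ and argues by contradiction: a hypothetical approximating sequence $\{u_m\}\subset C^0_c$ would give $\sb{\mathds{L}u_m-\mathds{L}u}_{\dot W^{1,1}}\to 0$ by Theorem~\ref{fundamental decomposition of screened sobolev spaces}, yet $\int_\R(\mathds{L}u_m)'=0$ while $\int_\R(\mathds{L}u)'=1$. You phrase the same mechanism positively, as a continuous linear functional $u\mapsto\int_\R(\mathds{L}u)'$ separating $f$ from the closure; the underlying ingredients — the continuity of $\mathds L$ from $\tilde B^{s,1}_q$ into $\dot W^{1,1}$, the FTC on compactly supported functions, and the choice of a primitive of a mass-one bump — are identical.
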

\begin{proof}
Take $\chi\in L^1\p{\R;\mathbb{K}}$ with $\int_{\R}\chi=1$ and let $u:\R\to\mathbb{K}$ be defined via $u\p{x}=\int_{\p{-\infty,x}}\chi\p{t}\;\m{d}t$.  Notice that $u\in\dot{W}^{1,1}\p{\R;\mathbb{K}}$ and hence $u\in\tilde{B}^{s,1}_{q}\p{\R;\mathbb{K}}$ by Corollary~\ref{sum characterization of screened sobolev spaces}. Suppose, for the sake of contradiction, that there exists a sequence $\cb{u_m}_{m\in\N}\subset C^0_c\p{\R;\mathbb{K}}\cap\tilde{B}^{s,1}_{q}\p{\R;\mathbb{K}}$ with the property that $\lim_{m\to\infty}\sb{u_m-u}_{\tilde{B}^{s,1}_{q}}=0$. Theorem~\ref{fundamental decomposition of screened sobolev spaces} then implies that $\lim_{m\to\infty}\sb{\mathds{L}u_m-\mathds{L}u}_{\dot{W}^{1,1}}=0$. The compact support of each $u_m$ would then tell us that $\mathds{L}u_m\in C^1_c\p{\R;\mathbb{K}}$.  Hence, the fundamental theorem of calculus implies that
\begin{equation}
    1=\int_{\R}\p{\mathds{L}u}'=\lim_{m\to\infty}\int_{\R}\p{\mathds{L}u_m}'=0,
\end{equation}
a contradiction.  This shows that $u$ cannot belong to the closure of $C^0_c\p{\R;\mathbb{K}}\cap\tilde{B}^{s,1}_{q}\p{\R;\mathbb{K}}$.
\end{proof}
\subsection{Frequency space characterizations}

Our goal in this subsection is to synthesize the sum characterization of the screened Besov spaces and the frequency characterizations of the Riesz potential and Besov-Lipschitz spaces. We find that the `low mode' part of the function behaves no worse than a general $\dot{W}^{1,p}$ function while the `high mode' part behaves like a general $B^{s,p}_q$ function. To achieve this we will generalize yet again and characterize the frequency behavior of truncated interpolation spaces between certain pairs of Riesz potential space pairs. We then read off the specifics for the screened Besov spaces.

\begin{defn}\label{fourier seminorm for screened spaces general p}
Recall that for $j\in\Z$ the operators $\{\uppi_j\}_{j \in \Z}$ are given in Definition \ref{dyadic localization}.  Let  $1<p<\infty$, $1\le q\le\infty$, and $r,s\in\R$.  We define $\sb{\cdot}_{\tilde{B}^{r,s}_{p,q}},\sb{\cdot}_{\tilde{H}^{r,s}_{p,q}}:\mathscr{S}^\ast\p{\R^n;\mathbb{K}}\to\sb{0,\infty}$ via
\begin{equation}
    \sb{f}_{\tilde{B}^{r,s}_{p,q}} = \bnorm{\bp{\textstyle{\sum_{j\in\Z\setminus\N}}\p{2^{sj}\abs{\uppi_j f}}^2}^{1/2}}_{L^p}+\snorm{\cb{2^{rj}\norm{\uppi_jf}_{L^p}}_{j\in\N}}_{\ell^q\p{\N;\R}}
\end{equation}
and
\begin{equation}
    \sb{f}_{\tilde{H}^{r,s}_{p,q}} =\snorm{\cb{2^{rj}\norm{\uppi_jf}_{L^p}}_{j\in\Z\setminus\N}}_{\ell^q\p{\Z\setminus\N;\R}}+\bnorm{\bp{\textstyle{\sum_{j\in\N}}\p{2^{sj}\abs{\uppi_j f}}^2}^{1/2}}_{L^p}.
\end{equation}
We define $\tilde{B}^{r,s}_{p,q}\p{\R^n;\mathbb{K}}$ and $\tilde{H}^{r,s}_{p,q}\p{\R^n;\mathbb{K}}$ to be the subspaces of $\mathscr{S}^\ast\p{\R^n;\mathbb{K}}$ on which $\sb{\cdot}_{\tilde{B}^{r,s}_{p,q}}$ and $\sb{\cdot}_{\tilde{H}^{r,s}_{p,q}}$ are finite, respectively.  We refer to these scales as the generalized screened Besov spaces and the generalized screened Riesz-potential spaces.
\end{defn}

The following theorem characterizes these spaces as interpolation spaces.

\begin{thm}[Truncated interpolation of Riesz potential spaces]\label{truncated interpolation of Riesz potential spaces}
Let $1<p<\infty$, $1\le q\le\infty$, $\al\in\p{0,1}$, $\sigma \in \R^+$, and $r,s\in\R$ with $r<s$. Then we have the equality of spaces with equivalence of seminorms:
\begin{equation}\label{wow so many indicies}
    \p{\dot{H}^{r,p}\p{\R^n;\mathbb{K}},\dot{H}^{s,p}\p{\R^n;\mathbb{K}}}^{\p{\sigma}}_{\al,q}=\tilde{B}^{t,s}_{p,q}\p{\R^n;\mathbb{K}},\text{ where }t=\p{1-\al}r+\al s.
\end{equation}
If, on the other hand, we suppose that $s<r$, then we have the equality of spaces with equivalence of seminorms:
\begin{equation}\label{wow so many indicies AGAIN}
    \p{\dot{H}^{r,p}\p{\R^n;\mathbb{K}};\dot{H}^{s,p}\p{\R^n;\mathbb{K}}}_{\al,q}^{\p{\sigma}}=\tilde{H}^{t,s}_{p,q}\p{\R^n;\mathbb{K}},\text{ where }t=\p{1-\al}r+\al s.
\end{equation}
\end{thm}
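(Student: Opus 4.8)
The plan is to combine the abstract sum characterization of the truncated method (Theorem~\ref{label sum characterization of the truncated method}) with the Littlewood--Paley characterization of the non-truncated interpolant of two Riesz potential spaces (Theorem~\ref{fourier characterization of besov spaces part 1}), exactly as the stated theorem decomposes the screened behavior into a ``high-frequency'' Besov-type part and a ``low-frequency'' Triebel--Lizorkin-type part. I will treat the two displays \eqref{wow so many indicies} and \eqref{wow so many indicies AGAIN} in parallel, since the only difference is whether $r<s$ or $s<r$, which swaps the roles of the two factors and hence swaps which dyadic regime ($j\ge 0$ versus $j<0$) inherits the $\ell^q$-Besov summation versus the $L^p(\ell^2)$ square-function. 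Throughout, I record that $\dot H^{r,p}$ and $\dot H^{s,p}$ are weakly compatible via $\mathscr S^\ast(\R^n;\mathbb K)$, and that $\dot H^{t,p}$ interpolates: by Theorem~\ref{fourier characterization of besov spaces part 1}, $(\dot H^{r,p},\dot H^{s,p})_{\al,q} = {}_\wedge\dot B^{t,p}_q$ with $t=(1-\al)r+\al s$.

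First I would handle \eqref{wow so many indicies}, where $r<s$. By Theorem~\ref{label sum characterization of the truncated method},
\begin{equation}
\p{\dot H^{r,p},\dot H^{s,p}}^{\p{\sigma}}_{\al,q} = \Sigma\big({}_\wedge\dot B^{t,p}_q,\ \dot H^{s,p}\big),
\end{equation}
with equivalence of seminorms, for any $\sigma\in\R^+$. So the entire content of \eqref{wow so many indicies} reduces to identifying $\Sigma\big({}_\wedge\dot B^{t,p}_q, \dot H^{s,p}\big)$ with $\tilde B^{t,s}_{p,q}$. For the ``$\subseteq$'' direction: given $f=g+h$ with $g\in{}_\wedge\dot B^{t,p}_q$ and $h\in\dot H^{s,p}$, I would split the sum $\uppi_j f=\uppi_j g+\uppi_j h$ and estimate the high modes ($j\ge 0$) of $g$ via the Besov seminorm $\norm{\{2^{tj}\norm{\uppi_j g}_{L^p}\}_{j\ge 0}}_{\ell^q}$ (directly from the definition of ${}_\wedge\dot B^{t,p}_q$) and the high modes of $h$ by summing the square function: since $t<s$, for $j\ge 0$ one has $2^{tj}\le 2^{sj}$, so $\norm{\{2^{tj}\norm{\uppi_j h}_{L^p}\}_{j\ge0}}_{\ell^q}\lesssim \norm{\{2^{sj}\norm{\uppi_j h}_{L^p}\}_{j\ge0}}_{\ell^q}$, and by Theorem~\ref{littlewood paley characterization of Riesz potential spaces} plus the embedding of $\ell^2$ into $\ell^q$ (for $q\ge 2$) or H\"older (for $q<2$, using the geometric decay $2^{(t-s)j}$) this is controlled by $\sb{h}_{\dot H^{s,p}}$. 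For the low modes ($j<0$), $g$ contributes via $2^{sj}\le 2^{tj}$ (since $j<0$, $s>t$) so $\bnorm{(\sum_{j<0}(2^{sj}|\uppi_j g|)^2)^{1/2}}_{L^p}\lesssim \bnorm{(\sum_{j<0}(2^{tj}|\uppi_j g|)^2)^{1/2}}_{L^p}\lesssim \sb{g}_{{}_\wedge\dot B^{t,p}_q}$ after converting $\ell^q$ to $\ell^2$ using the geometric gap $2^{(t-s)j}\to 0$ as $j\to-\infty$; and $h$ contributes directly via Theorem~\ref{littlewood paley characterization of Riesz potential spaces}. Taking the infimum over decompositions gives $\sb{f}_{\tilde B^{t,s}_{p,q}}\lesssim \sb{f}^{\p{\sigma}}_{\al,q}$.

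For the reverse inclusion, given $f\in\tilde B^{t,s}_{p,q}$ I would build an explicit decomposition $f=g+h$ with $g$ the ``high-frequency tail'' and $h$ the ``low-frequency head'' using the dyadic localization: set $h\in\mathscr S^\ast$ to be the limit of $\sum_{j\le 0}\uppi_j f$ (which exists in $\mathscr S^\ast$ by Lemma~\ref{convergence}, item (2), applied with an appropriate shift), and $g$ the complementary sum $\sum_{j>0}\uppi_j f$. Then the high modes of $g$ are exactly governed by $\norm{\{2^{tj}\norm{\uppi_j f}_{L^p}\}_{j\ge 0}}_{\ell^q}<\infty$, so that $g\in{}_\wedge\dot B^{t,p}_q$ (with only finitely many low-mode terms of $g$, which are themselves in $L^p$, hence harmless; I would need Theorem~\ref{frequency space characterization of homogeneous sobolev spaces}-type bounds to control those, or more simply note they lie in $\Delta$ and absorb them). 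Meanwhile, $h$ has $\uppi_j h=\uppi_j f$ for $j\le 0$ (up to edge corrections from almost-idempotence, Lemma~\ref{almost idempotent}), and $\bnorm{(\sum_{j<0}(2^{sj}|\uppi_j f|)^2)^{1/2}}_{L^p}<\infty$ gives $h\in\dot H^{s,p}$ by Theorem~\ref{littlewood paley characterization of Riesz potential spaces}. This yields $\sb{f}^{\p{\sigma}}_{\al,q}\lesssim \sb{f}_{\Sigma}\lesssim \sb{g}_{{}_\wedge\dot B^{t,p}_q}+\sb{h}_{\dot H^{s,p}}\lesssim\sb{f}_{\tilde B^{t,s}_{p,q}}$. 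The case $s<r$ of \eqref{wow so many indicies AGAIN} is handled identically after applying Theorem~\ref{label sum characterization of the truncated method} to get $\Sigma\big((\dot H^{r,p},\dot H^{s,p})_{\al,q},\dot H^{s,p}\big)={}_\wedge\dot B^{t,p}_q + \dot H^{s,p}$ again, but now $t$ satisfies $s<t<r$, so the inequality $2^{tj}\gtrless 2^{sj}$ flips on $j\ge 0$ versus $j<0$, which is precisely why the square function now sits at \emph{high} frequencies ($j\ge 0$) and the $\ell^q$-Besov sum at low frequencies ($j<0$) --- matching the definition of $\tilde H^{t,s}_{p,q}$.

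\textbf{Main obstacle.} The routine bookkeeping of edge terms in the dyadic splitting (the $\uppi_j$ almost-idempotence corrections at $j=0,\pm1$, and the finitely many ``wrong-regime'' modes of $g$ and $h$) is tedious but standard --- the genuine subtlety, as in the proofs of Theorems~\ref{littlewood paley characterization of Riesz potential spaces} and~\ref{fourier characterization of besov spaces part 1}, is justifying that the formal decomposition $f = (\sum_{j>0}\uppi_j f) + (\sum_{j\le 0}\uppi_j f)$ converges in the right topologies and reconstructs $f$ up to the annihilator (constants/polynomials), so that it actually witnesses membership in the sum $\Sigma$. This requires combining the convergence statements of Lemma~\ref{convergence} with the completeness of $\Sigma$ (Proposition~\ref{completeness of sum and intersection}) and showing the discrete decomposition lies in the appropriate decomposition set, arguing exactly as in the second half of the proof of Theorem~\ref{fourier characterization of besov spaces part 1} (the $\mathbf I_m, \mathbf{II}_m\to 0$ computations). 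I expect reproducing that convergence argument carefully --- now with one factor a Besov space and one a Riesz potential space, and with the truncation in the interpolation parameter --- to be the most delicate part; everything else follows by the monotonicity $2^{(t-s)j}$ in the dyadic index together with the two cited frequency characterizations.
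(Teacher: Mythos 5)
Your proposal follows essentially the same route as the paper's proof: reduce via Theorem~\ref{label sum characterization of the truncated method} and Theorem~\ref{fourier characterization of besov spaces part 1} to identifying $\Sigma({}_\wedge\dot B^{t,p}_q,\dot H^{s,p})$ with $\tilde B^{t,s}_{p,q}$, then in one direction estimate each summand on both frequency regimes (converting between $L^p(\ell^2)$ and $\ell^q(L^p)$ via the geometric gap $2^{(t-s)j}$, splitting into cases according to the size of $p$), and in the other direction exhibit the explicit witness $f = (\sum_{j\ge 0}\uppi_j f) + (f - \sum_{j\ge 0}\uppi_j f)$ and track the almost-orthogonality edge terms. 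Your discussion of the case $s<r$ and of the convergence/annihilator issues also matches the paper's remark that the second formula follows by the symmetric argument, so the approaches agree in substance and not merely in outline.
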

\begin{proof}
We will prove only \eqref{wow so many indicies}, as \eqref{wow so many indicies AGAIN} follows from similar arguments. 

Let  $\sb{\cdot}_{\Sigma} : \Sigma\big({_\wedge}\dot{B}^{t,p}_q\p{\R^n;\mathbb{K}},\dot{H}^{s,p}\p{\R^n;\mathbb{K}}\big) \to [0,\infty]$ via
\begin{equation}\label{sum sum sum}
\sb{f}_{\Sigma} =\inf\cb{\sb{f_0}_{\dot{B}^{t,p}_q}+\sb{f_1}_{\dot{H}^{s,p}}\;:\;f=f_0+f_1,\;\p{f_0,f_1}\in{_\wedge}\dot{B}^{t,p}_q\p{\R^n;\mathbb{K}}\times\dot{H}^{s,p}\p{\R^n;\mathbb{K}}}.
\end{equation}
The sum characterization of the truncated interpolation spaces, Theorem~\ref{label sum characterization of the truncated method}, and the interpolation theorem of Riesz potential spaces, Theorem~\ref{fourier characterization of besov spaces part 1}, ensure the equality of seminormed spaces with equivalence of seminorms:
\begin{equation}\label{kitty}
    \p{\dot{H}^{r,p}\p{\R^n;\mathbb{K}};\dot{H}^{s,p}\p{\R^n;\mathbb{K}}}^{\p{\sig}}_{\al,q}=\Sigma\p{{_\wedge}\dot{B}^{t,p}_q\p{\R^n;\mathbb{K}},\dot{H}^{s,p}\p{\R^n;\mathbb{K}}}.
\end{equation}
Therefore, $\sb{\cdot}_{\Sigma}$ is an equivalent seminorm on the truncated interpolation space on the left side of~\eqref{kitty}.

Now let $f\in\big(\dot{H}^{r,p}(\R^n;\mathbb{K}),\dot{H}^{s,p}(\R^n;\mathbb{K})\big)^{\p{\sig}}_{\al,q}$ and decompose $f=g+h$ with $g\in{_\wedge}\dot{B}^{t,p}_q(\R^n;\mathbb{K})$ and $h\in\dot{H}^{s,p}(\R^n;\mathbb{K})$. We estimate each factor in the space $\tilde{B}^{t,s}_{p,q}(\R^n;\mathbb{K})$, beginning with $g$:
\begin{multline}\label{expression for g}
    \sb{g}_{\tilde{B}^{t,s}_{p,q}}=\bnorm{\bp{\textstyle{\sum_{j\in\Z\setminus\N}}\p{2^{sj}\abs{\uppi_jg}}^2}^{1/2}}_{L^p}+\snorm{\cb{2^{tj}\norm{\uppi_jg}_{L^p}}_{j\in\N}}_{\ell^q\p{\N}}\\\le\bnorm{\bp{\textstyle{\sum_{j\in\Z\setminus\N}}\p{2^{sj}\abs{\uppi_jg}}^2}^{1/2}}_{L^p}+\sb{g}_{{_\wedge}\dot{B}^{t,p}_{q}}.
\end{multline}
To handle the remaining term controlling the low modes, we can break into cases on the size of $p$. First, suppose that $1<p\le 2$. Then the mapping $\R^+\cup\cb{0}\ni\eta\to\eta^{p/2}\in\R^+\cup\cb{0}$ is subadditive, and hence $t<s$ implies that
\begin{multline}
    \bnorm{\bp{\textstyle{\sum_{j\in\Z\setminus\N}}\p{2^{sj}\abs{\uppi_jg}}^2}^{1/2}}_{L^p}=\bp{\displaystyle{\int_{\R^n}}\p{\textstyle{\sum_{j\in\Z\setminus\N}}\p{2^{js}\abs{\uppi_j g}}^2}^{p/2}}^{1/p}\le\bp{\textstyle{\sum_{j\in\Z\setminus\N}}\p{2^{js}\norm{\uppi_jg}_{L^p}}^p}^{1/p}\\\le\begin{cases}\p{\sum_{j\in\Z\setminus\N}\p{2^{tj}\norm{\uppi_jg}_{L^p}}^q}^{1/q}&q\le p\\
    \snorm{\cb{2^{j\p{s-t}}}_{j\in\Z\setminus\N}}_{\ell^u\p{\Z\setminus\N}}\snorm{\cb{2^{jt}\norm{\uppi_jg}_{L^p}}_{j\in\Z\setminus\N}}_{\ell^q\p{\Z\setminus\N}}&p<q,\;1/p=1/q+1/u\end{cases}\lesssim\sb{g}_{{_\wedge}\dot{B}^{t,p}_q}.\end{multline}
    On the other hand, in the case that $2<p<\infty$, we can apply Minkowski's integral inequality to switch the sum and integral:
    \begin{multline}
        \bnorm{\bp{\textstyle{\sum_{j\in\Z\setminus\N}}\p{2^{sj}\abs{\uppi_jg}}^2}^{1/2}}_{L^p}\le\p{\textstyle{\sum_{j\in\Z\setminus\N}}\p{2^{\p{s-t}j}\norm{2^{tj}\uppi_jg}_{L^p}}^2}^{1/2}\\\le\begin{cases}\p{\sum_{j\in\Z\setminus\N}\p{2^{tj}\norm{\uppi_jg}_{L^p}}^q}^{1/q}&q\le 2\\
        \snorm{\cb{2^{j\p{s-t}}}_{j\in\Z\setminus\N}}_{\ell^u\p{\Z\setminus\N}}\snorm{\cb{2^{jt}\norm{\uppi_jg}_{L^p}}_{j\in\Z\setminus\N}}_{\ell^q\p{\Z\setminus\N}}&2<q,\;1/2=1/q+1/u\end{cases}\lesssim\sb{g}_{{_\wedge}\dot{B}^{t,p}_q}.
    \end{multline}
    Next, let us show the estimates of $h$.
    \begin{equation}
        \sb{h}_{\tilde{B}^{t,s}_{p,q}}=\bnorm{\bp{\textstyle{\sum_{j\in\Z\setminus\N}}\p{2^{sj}\abs{\uppi_j h}}^2}^{1/2}}_{L^p}+\snorm{\cb{2^{tj}\norm{\uppi_jh}_{L^p}}_{j\in\N}}_{\ell^q\p{\N}}\le\sb{h}_{\dot{H}^{s,p}}^{\thicksim}+\snorm{\cb{2^{tj}\norm{\uppi_jh}_{L^p}}_{j\in\N}}_{\ell^q\p{\N}}.
    \end{equation}
    Again, we break into cases based on the size of $p$ to control the high mode term. Let $w\in\R$ satisfy $t<w<s$. If $1<p<2$, then
    \begin{multline}
        \snorm{\cb{2^{tj}\norm{\uppi_jh}_{L^p}}_{j\in\N}}_{\ell^q\p{\N}}\le\textstyle{\sum_{j\in\N}}2^{tj}\norm{\uppi_j h}_{L^p}\le\bp{\textstyle{\sum_{j\in\N}}2^{\f{p}{p-1}\p{t-w}j}}^{(p-1)/p}\bp{\displaystyle{\int_{\R^n}}\textstyle{\sum_{j\in\N}}\abs{2^{wj}\uppi_jh}^p}^{1/p}\\=\bp{\textstyle{\sum_{j\in\N}}2^{\f{p}{p-1}\p{t-w}j}}^{(p-1)/p}\bp{\displaystyle{\int_{\R^n}}\textstyle{\sum_{j\in\N}}2^{p\p{w-s}j}\abs{2^{js}\uppi_jh}^p}^{1/p}\\\le\bp{\textstyle{\sum_{j\in\N}}2^{\f{p}{p-1}\p{t-w}j}}^{(p-1)/p}\bp{\textstyle{\sum_{j\in\N}}\p{2^{p\p{w-s}j}}^{\f{2}{2-p}}}^{(2-p)/2p}\bp{\displaystyle{\int_{\R^n}}\p{\textstyle{\sum_{j\in\N}}\p{2^{js}\abs{\uppi_jh}}^2}^{p/2}}^{1/p}\lesssim\sb{h}_{\dot{H}^{s,p}}^{\thicksim}.
    \end{multline}
    In the case that $2\le p<\infty$ we bound
    \begin{multline}
        \snorm{\cb{2^{tj}\norm{\uppi_jh}_{L^p}}_{j\in\N}}_{\ell^q\p{\N}}\le\textstyle{\sum_{j\in\N}}2^{tj}\norm{\uppi_j h}_{L^p}\le\bp{\textstyle{\sum_{j\in\N}}2^{\f{p}{p-1}\p{t-s}j}}^{(p-1)/p}\bp{\displaystyle{\int_{\R^n}}\textstyle{\sum_{j\in\N}}\abs{2^{sj}\uppi_jh}^p}^{1/p}\\\le\bp{\textstyle{\sum_{j\in\N}}2^{\f{p}{p-1}\p{t-s}j}}^{(p-1)/p}\bp{\displaystyle{\int_{\R^n}}\p{\textstyle{\sum_{j\in\N}}\p{2^{sj}\abs{\uppi_jh}}^2}^{p/2}}^{1/p}\lesssim\sb{h}_{\dot{H}^{s,p}}^{\thicksim}.
    \end{multline}
    Thus we have shown that there exists a constant $C\in\R^+$ depending on $\al$, $r$, $s$, $n$, $q$ and $p$ such that 
    \begin{equation}
        \sb{f}_{\tilde{B}^{t,s}_{p,q}}\le\sb{g}_{\tilde{B}^{t,s}_{p,q}}+\sb{h}_{\tilde{B}^{t,s}_{p,q}}\le C\big(\sb{g}_{{_\wedge}\dot{B}^{t,p}_q}+\sb{h}_{\dot{H}^{s,p}}\big).
    \end{equation}
    Upon taking the infimum over all decompositions of $f$, we arrive at the embedding
    \begin{equation}
        \p{\dot{H}^{r,p}\p{\R^n;\mathbb{K}},\dot{H}^{s,p}\p{\R^n;\mathbb{K}}}^{\p{\sig}}_{\al,q}\emb\tilde{B}^{t,s}_{p,q}\p{\R^n;\mathbb{K}}.
    \end{equation}
    
    On the other hand, let $f\in\tilde{B}^{t,s}_{p,q}(\R^n;\mathbb{K})$. Set $h=\sum_{j\in\N}\uppi_jf$ and $g=f-h$. We will prove that $h\in\dot{B}^{t,p}_{q}(\R^n;\mathbb{K})$ and $g\in\dot{H}^{s,p}(\R^n;\mathbb{K})$. Note first that the series defining $h$ is a well-defined tempered distribution, thanks to Lemma~\ref{convergence}. We now compute the action of the family $\cb{\uppi_j}_{j\in\Z}$ on $h$ using almost orthogonality, see Lemma~\ref{almost idempotent}. For $j\in\N^+$ it holds $\uppi_jh=\uppi_jf$, $\uppi_0h={\uppi_0}^2f+\uppi_0\uppi_1f$, $\uppi_{-1}h=\uppi_{-1}\uppi_0f$, and finally if $\Z\ni j<-1$ then $\uppi_jh=0$. This allows us to then estimate 
    \begin{equation}\label{this one is indeed an equation}
        \sb{h}_{{_\wedge}{B}^{t,p}_{q}}=\snorm{\cb{2^{tj}\norm{\uppi_jh}_{L^p}}_{j\in\Z}}_{\ell^q\p{\Z}}\le2^{-t}\norm{\uppi_{-1}\uppi_0f}_{L^p}+\norm{\p{\uppi_0+\uppi_1}\uppi_0f}_{L^p}+\snorm{\cb{2^{tj}\norm{\uppi_jf}_{L^p}}_{j\in\N^+}}_{\ell^q\p{\N^+}}.
    \end{equation}
    We apply Theorem~\ref{mihlin multiplier theorem} and Lemma~\ref{scaling invariance of fourier multipliers} to the first two terms on the right hand side  to find a constant $\bar{c}$, depending only on $n$, $p$, and $\psi$, such that for $\ell\in\cb{-1,0,1}$,  $\norm{\uppi_\ell\uppi_0f}_{L^p}\le\bar{c}\norm{\uppi_0f}_{L^p}$. Plugging this into~\eqref{this one is indeed an equation} yields the bound
    \begin{equation}\label{number 1}
        \sb{h}_{{_\wedge}\dot{B}^{t,p}_{q}}\lesssim\norm{\uppi_0f}_{L^p}+\snorm{\cb{2^{tj}\norm{\uppi_jf}_{L^p}}_{j\in\N^+}}_{\ell^q\p{\N^+}}\le2\sb{f}_{\tilde{B}^{t,s}_{p,q}}.
    \end{equation}
    
    We now handle the estimates of $g$. Again we use Lemma~\ref{convergence} to see that for $j\in\N^+$ we have $\uppi_jg=0$, $\uppi_0g=\uppi_{-1}\uppi_0f$, $\uppi_{-1}g={\uppi_{-1}}^2f+\uppi_{-2}\uppi_{-1}f$, while for $j\in\Z\setminus\p{\N\cup\cb{-1}}$ we have $\uppi_jg=\uppi_jf$. Thus with $\bar{c}$ as before, we find that
    \begin{multline}\label{number two}
        \sb{g}_{\dot{H}^{s,p}}=\bnorm{\bp{\textstyle{\sum_{j\in\Z}}\p{2^{js}\abs{\uppi_jg}}^2}^{1/2}}_{L^p}\le\norm{\uppi_{0}\uppi_{-1}f}_{L^p}+2^{-s}\norm{\p{\uppi_{-2}+\uppi_{-1}}\uppi_{-1}f}_{L^p}\\+\bnorm{\bp{\textstyle{\sum_{j=-\infty}^{-1}}\p{2^{sj}\abs{\uppi_jf}}^2}^{1/2}}_{L^p}\le\p{1+\p{1+2^{-s+1}}\Bar{c}}\sb{f}_{\tilde{B}^{t,s}_{p,q}}
    \end{multline}
    Together, estimates~\eqref{number 1} and~\eqref{number two} prove the other embedding: $\tilde{B}^{t,s}_{p,q}\p{\R^n;\mathbb{K}}\emb\big(\dot{H}^{r,p}\p{\R^n;\mathbb{K}},\dot{H}^{s,p}\big)^{\p{\sig}}_{\al,q}$.
\end{proof}

The following result should be contrasted with Theorem~\ref{fundamental decomposition of screened sobolev spaces}

\begin{coro}[Fundamental decomposition of generalized screened Besov and Riesz potential spaces]\label{2nd sum characterization} 
Let $r,s\in\R$, $1<p<\infty$, and $1\le q\le\infty$. 
Consider the high and low pass filters $\mathds{P}^+,\mathds{P}^-:\mathscr{S}^\ast\p{\R^n;\mathbb{K}}\to\mathscr{S}^\ast\p{\R^n;\mathbb{K}}$ defined by $\mathds{P}^+f=\textstyle{\sum_{j\in\N}}\uppi_jf$ and $\mathds{P}^-f =f - \mathds{P}^+ f = (I-\mathds{P}^+)f =\big(I-\textstyle{\sum_{j\in\N}}\uppi_j\big)f$.
These are well defined thanks to Lemmas~\ref{convergence} and~\ref{mult lemma}.  The following  hold:
 \begin{enumerate}
     \item If $r<s$, then for all $f\in\tilde{B}^{r,s}_{p,q}\p{\R^n;\mathbb{K}}$ we have $\mathds{P}^+f\in{_\wedge}\dot{B}^{r,p}_q\p{\R^n;\mathbb{K}}$, $\mathds{P}^-f\in\dot{H}^{s,p}\p{\R^n;\mathbb{K}}$, and
     \begin{equation}\label{danger math ahead}
         \sb{f}_{\tilde{B}^{r,s}_{p,q}}\asymp\sb{\mathds{P}^+f}_{{_\wedge}\dot{B}^{r,p}_q}+\sb{\mathds{P}^-f}_{\dot{H}^{s,p}}.
     \end{equation}
     \item If $s<r$, then for all $f\in\tilde{H}^{r,s}_{p,q}\p{\R^n;\mathbb{K}}$ we have $\mathds{P}^+f\in\dot{H}^{s,p}\p{\R^n;\mathbb{K}}$ and $\mathds{P}^-f\in{_\wedge}\dot{B}^{r,p}_q\p{\R^n;\mathbb{K}}$, and
     \begin{equation}
         \sb{f}_{\tilde{H}^{r,s}_{p,q}}\asymp\sb{\mathds{P}^+f}_{\dot{H}^{s,p}}+\sb{\mathds{P}^-f}_{{_\wedge}\dot{B}^{r,p}_q}
     \end{equation}
 \end{enumerate}
\end{coro}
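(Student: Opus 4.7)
The plan is to leverage Theorem~\ref{truncated interpolation of Riesz potential spaces} in tandem with the abstract sum characterization of the truncated $K$-method (Theorem~\ref{label sum characterization of the truncated method}) and the interpolation identification of the homogeneous Besov-Lipschitz scale (Theorem~\ref{fourier characterization of besov spaces part 1}). Combining these three results immediately yields, for $r<s$, the equality of seminormed spaces $\tilde{B}^{r,s}_{p,q}\p{\R^n;\mathbb{K}}=\Sigma\p{{_\wedge}\dot{B}^{r,p}_q\p{\R^n;\mathbb{K}},\dot{H}^{s,p}\p{\R^n;\mathbb{K}}}$ with equivalence of seminorms, and an analogous identification in the regime $s<r$. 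Item~(1) (resp.\ item~(2)) of the corollary then asserts that this abstract sum decomposition is realized concretely and almost optimally by the pair $\p{\mathds{P}^+f,\mathds{P}^-f}$ (resp.\ $\p{\mathds{P}^-f,\mathds{P}^+f}$).

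For item~(1), the central task reduces to verifying the inclusions $\mathds{P}^+f\in{_\wedge}\dot{B}^{r,p}_q\p{\R^n;\mathbb{K}}$ and $\mathds{P}^-f\in\dot{H}^{s,p}\p{\R^n;\mathbb{K}}$ with quantitative control of the seminorms by $\sb{f}_{\tilde{B}^{r,s}_{p,q}}$. This is precisely the computation executed in the final paragraphs of the proof of Theorem~\ref{truncated interpolation of Riesz potential spaces}, where the author sets $h=\sum_{j\in\N}\uppi_jf=\mathds{P}^+f$ and $g=f-h=\mathds{P}^-f$ and establishes estimates~\eqref{number 1} and~\eqref{number two}. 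I would therefore simply invoke that argument to obtain $\sb{\mathds{P}^+f}_{{_\wedge}\dot{B}^{r,p}_q}+\sb{\mathds{P}^-f}_{\dot{H}^{s,p}}\lesssim\sb{f}_{\tilde{B}^{r,s}_{p,q}}$. The reverse bound $\sb{f}_{\tilde{B}^{r,s}_{p,q}}\lesssim\sb{\mathds{P}^+f}_{{_\wedge}\dot{B}^{r,p}_q}+\sb{\mathds{P}^-f}_{\dot{H}^{s,p}}$ is then automatic from the sum identification noted above, since $f=\mathds{P}^+f+\mathds{P}^-f$ is one admissible decomposition and the $\Sigma$-seminorm is the infimum over all such decompositions.

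Item~(2) follows by exactly the same template, this time applying identity~\eqref{wow so many indicies AGAIN} of Theorem~\ref{truncated interpolation of Riesz potential spaces} to deduce $\tilde{H}^{r,s}_{p,q}\p{\R^n;\mathbb{K}}=\Sigma\p{\dot{H}^{s,p}\p{\R^n;\mathbb{K}},{_\wedge}\dot{B}^{r,p}_q\p{\R^n;\mathbb{K}}}$, and then verifying that $\mathds{P}^+f\in\dot{H}^{s,p}$ and $\mathds{P}^-f\in{_\wedge}\dot{B}^{r,p}_q$ with bounds controlled by $\sb{f}_{\tilde{H}^{r,s}_{p,q}}$. The verification uses the same ingredients as before: almost orthogonality and almost idempotence (Lemma~\ref{almost idempotent}) give $\uppi_j\mathds{P}^{\pm}f=\uppi_jf$ outside a bounded band of indices near $j=0$, while the handful of boundary contributions are controlled using Theorem~\ref{mihlin multiplier theorem} and Lemma~\ref{scaling invariance of fourier multipliers} to produce uniform $L^p$-bounds for the operators $\uppi_\ell\uppi_0$ and $\uppi_\ell\uppi_{-1}$ appearing there. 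The only mildly delicate step—and the one I would be most attentive to—is the correct bookkeeping of these three or four boundary indices where $\mathds{P}^\pm$ disagrees with the cleaner truncations $\sum_{j\ge1}\uppi_j$ and $\sum_{j\le-2}\uppi_j$; but this is exactly the bookkeeping already performed in the cited portion of the proof of Theorem~\ref{truncated interpolation of Riesz potential spaces}, so no new obstacle emerges.
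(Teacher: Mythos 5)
Your proposal is correct and follows essentially the same route as the paper: the paper likewise obtains the $\lesssim$ direction from the sum characterization $\tilde{B}^{r,s}_{p,q}=\Sigma\p{{_\wedge}\dot{B}^{r,p}_q,\dot{H}^{s,p}}$ furnished by Theorem~\ref{truncated interpolation of Riesz potential spaces}, and the $\gtrsim$ direction by citing the second half of that theorem's proof, where the concrete decomposition $f=\mathds{P}^+f+\mathds{P}^-f$ is estimated via estimates~\eqref{number 1} and~\eqref{number two}. The treatment of item~(2) by the analogous bookkeeping is also exactly how the paper disposes of it.
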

\begin{proof}
Again we only prove the first item, as the second item follows from similar arguments. A consequence of Theorem~\ref{truncated interpolation of Riesz potential spaces} is the sum characterization: $\Sigma\big({_\wedge}\dot{B}^{r,p}_q\p{\R^n;\mathbb{K}},\dot{H}^{s,p}\p{\R^n;\mathbb{K}}\big)=\tilde{B}^{r,s}_{p,q}\p{\R^n;\mathbb{K}}$. Therefore the `$\lesssim$' inequality in~\eqref{danger math ahead} is handled. As for the `$\gtrsim$' inequality, we see that this is covered in the latter half of the proof of Theorem~\ref{truncated interpolation of Riesz potential spaces}.  There we showed that for $f\in\tilde{B}^{r,s}_{p,q}\p{\R^n;\mathds{K}}$ we can decompose  $f=\mathds{P}^+f+\mathds{P}^-f$, and the seminorms of the factors in ${_\wedge}\dot{B}^{r,p}_q\p{\R^n;\mathbb{K}}$ and $\dot{H}^{s,p}\p{\R^n;\mathbb{K}}$, respectively, can be bounded above by a universal constant times $\sb{f}_{\tilde{B}^{r,s}_{p,q}}$.

\end{proof}

Next we obtain another characterization of the screened Besov spaces.

\begin{coro}[Frequency space characterization of screened Besov spaces]\label{general fourier char of screened spaces}
Let $s\in\p{0,1}$, $1<p<\infty$, and $1\le q\le\infty$. The following hold:
\begin{enumerate}
    \item $\tilde{B}^{s,p}_{q}\p{\R^n;\mathbb{K}}\emb\tilde{B}^{s,1}_{p,q}\p{\R^n;\mathbb{K}}$, where the latter space is from Definition~\ref{screened besov spaces defn}.
    \item If $f\in\tilde{B}^{s,1}_{p,q}\p{\R^n;\mathbb{K}}$, then $f$ is identified with a locally integrable function and there exists a polynomial $Q$ whose coefficients, aside from the constant term, are uniquely determined, with the property that $f-Q\in\tilde{B}^{s,p}_{q}\p{\R^n;\mathbb{K}}$.  Moreover, there exists a constant $c\in\R^+$ depending only on $s$, $p$, $q$, and $n$ such that: $\sb{f-Q}_{\tilde{B}^{s,p}_{q}}\le c\sb{f}_{\tilde{B}^{s,1}_{p,q}}$.
\end{enumerate}
\end{coro}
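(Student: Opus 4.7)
The plan is to identify both sides of the claimed embedding as truncated interpolation spaces and pass between them via the known embeddings relating the Lebesgue/Sobolev scale to the Riesz-potential scale, together with their one-sided inverses involving polynomial subtraction. By Theorem~\ref{screened sobolev spaces are truncated interpolation spaces} we have $\tilde{B}^{s,p}_q(\R^n;\mathbb{K})=\bigl(L^p(\R^n;\mathbb{K}),\dot{W}^{1,p}(\R^n;\mathbb{K})\bigr)^{(1)}_{s,q}$, while applying Theorem~\ref{truncated interpolation of Riesz potential spaces} with Riesz-potential indices $0<1$, interpolation parameter $\al=s$, and $\sigma=1$ yields $\tilde{B}^{s,1}_{p,q}(\R^n;\mathbb{K})=\bigl(\dot{H}^{0,p}(\R^n;\mathbb{K}),\dot{H}^{1,p}(\R^n;\mathbb{K})\bigr)^{(1)}_{s,q}$. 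So item (1) reduces to interpolating the inclusions $L^p\hookrightarrow\dot{H}^{0,p}$ and $\dot{W}^{1,p}\hookrightarrow\dot{H}^{1,p}$, while item (2) reduces to interpolating their polynomial-adjusted inverses.

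For item (1), I would apply Theorem~\ref{truncated and bounded linear mappings} to the identity map $T:\Sigma(L^p,\dot{W}^{1,p})\to\Sigma(\dot{H}^{0,p},\dot{H}^{1,p})$. The continuity of $T$ from $\dot{W}^{1,p}$ to $\dot{H}^{1,p}$ is item~(1) of Theorem~\ref{frequency space characterization of homogeneous sobolev spaces}, and the continuity of $T$ from $L^p$ to $\dot{H}^{0,p}$ follows from Theorem~\ref{littlewood-paley characterization of Lp} combined with the convergence of the Littlewood-Paley partial sums in $L^p$ for $1<p<\infty$ (the same dominated convergence argument appearing in the proof of Theorem~\ref{littlewood paley characterization of Riesz potential spaces}). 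This delivers the embedding $\tilde{B}^{s,p}_q\hookrightarrow\tilde{B}^{s,1}_{p,q}$.

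For item (2), given $f\in\tilde{B}^{s,1}_{p,q}(\R^n;\mathbb{K})$ I would use Corollary~\ref{2nd sum characterization}(1) (whose hypothesis $s<1$ is exactly ours) to decompose $f=\mathds{P}^+f+\mathds{P}^-f$ with $\mathds{P}^+f\in{_\wedge}\dot{B}^{s,p}_q(\R^n;\mathbb{K})$, $\mathds{P}^-f\in\dot{H}^{1,p}(\R^n;\mathbb{K})$, and the sum of their seminorms bounded by a constant times $[f]_{\tilde{B}^{s,1}_{p,q}}$. Corollary~\ref{besov fourier char}(2) then furnishes a polynomial $Q_1$ such that $\mathds{P}^+f-Q_1\in\dot{B}^{s,p}_q(\R^n;\mathbb{K})$, and Theorem~\ref{frequency space characterization of homogeneous sobolev spaces}(2) furnishes a polynomial $Q_2$ such that $\mathds{P}^-f-Q_2\in\dot{W}^{1,p}(\R^n;\mathbb{K})$, both with the appropriate seminorm control. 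Setting $Q=Q_1+Q_2$, I would then invoke Corollary~\ref{sum characterization of screened sobolev spaces}(1) to identify $f-Q\in\dot{B}^{s,p}_q+\dot{W}^{1,p}$ with an element of $\tilde{B}^{s,p}_q(\R^n;\mathbb{K})$, yielding local integrability of $f$ and the desired seminorm estimate. For the uniqueness of the non-constant part of $Q$, if $f-Q$ and $f-Q'$ both belong to $\tilde{B}^{s,p}_q$ then the polynomial $P:=Q'-Q$ lies in $\tilde{B}^{s,p}_q$; but for any non-constant polynomial $P$ and any $h\in\R^n\setminus\{0\}$, the function $\Delta_hP$ is a non-zero polynomial on $\R^n$, which forces $\norm{\Delta_hP}_{L^p(\R^n)}=\infty$ for $p<\infty$. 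Hence the modulus of continuity of $P$ is infinite on any ball around the origin, so $[P]_{\tilde{B}^{s,p}_q}=\infty$ unless $P$ is constant.

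The main obstacle, modest as it is, lies in the careful bookkeeping around polynomial ambiguity: the Riesz-potential-based frequency characterizations quotient implicitly by polynomials, while the $L^p$/$\dot{W}^{1,p}$-based characterization has only constants as its annihilator (Corollary~\ref{semi banach and kernels of screened spaces}), so one must correctly attribute which polynomial terms appear and which do not when combining the two summands of the $\mathds{P}^++\mathds{P}^-$ decomposition. All remaining components are direct consequences of results already in place.
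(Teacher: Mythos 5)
Your proposal is correct and takes essentially the same route as the paper: item (1) by combining the two truncated-interpolation identities with the embeddings $L^p\hookrightarrow\dot{H}^{0,p}$ and $\dot{W}^{1,p}\hookrightarrow\dot{H}^{1,p}$, and item (2) via the $\mathds{P}^{\pm}$ decomposition of Corollary~\ref{2nd sum characterization} together with Corollary~\ref{besov fourier char} and Theorem~\ref{frequency space characterization of homogeneous sobolev spaces}, plus the polynomial-uniqueness observation. The only cosmetic differences are that the paper notes no polynomial correction is needed for $\mathds{P}^{+}f$ (its Fourier transform is supported away from the origin), whereas you harmlessly allow a $Q_1$, and that you spell out the elementary fact, asserted without proof in the paper, that a non-constant polynomial has infinite $\tilde{B}^{s,p}_{q}$-seminorm when $p<\infty$.
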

\begin{proof}
Corollary~\ref{sum characterization of screened sobolev spaces} and Theorem~\ref{truncated interpolation of Riesz potential spaces} gave us the identities:
\begin{equation}
    \big(L^p\p{\R^n;\mathbb{K}};\dot{W}^{1,p}\p{\R^n;\mathbb{K}}\big)^{\p{1}}_{s,q}=\tilde{B}^{s,p}_q\p{\R^n;\mathbb{K}}\text{ and }\big(\dot{H}^{0,p}\p{\R^n;\mathbb{K}};\dot{H}^{1,p}\p{\R^n;\mathbb{K}}\big)^{\p{1}}_{s,q}=\tilde{B}^{s,1}_{p,q}\p{\R^n;\mathbb{K}}.
\end{equation}
Theorem \ref{littlewood-paley characterization of Lp} shows that $L^p(\R^n;\mathbb{K}) \emb\dot{H}^{0,p}(\R^n;\mathbb{K})$, and Theorem~\ref{frequency space characterization of homogeneous sobolev spaces} shows $\dot{W}^{1,p}(\R^n;\mathbb{K})\emb\dot{H}^{1,p}(\R^n;\mathbb{K})$. These combine to prove the first item.

On the other hand, if $f\in\tilde{B}^{s,1}_{p,q}\p{\R^n;\mathbb{K}}$, then Corollary~\ref{2nd sum characterization} tells us that $\mathds{P}^+f\in{_\wedge}\dot{B}^{s,p}_q\p{\R^n;\mathbb{K}}$ and $\mathds{P}^-f\in\dot{H}^{1,p}\p{\R^n;\mathbb{K}}$. The former has Fourier transform supported away from the origin and hence (by Corollary~\ref{besov fourier char}) $\mathds{P}^+f\in\dot{B}^{s,p}_q\p{\R^n;\mathbb{K}}$. The second conclusion of Theorem~\ref{frequency space characterization of homogeneous sobolev spaces} gives us a polynomial $Q$ such that $\mathds{P}^-f-Q\in\dot{W}^{1,p}\p{\R^n;\mathbb{K}}$, and hence $f-Q\in\tilde{B}^{s,p}_q\p{\R^n;\mathbb{K}}$. Moreover, from Corollary~\ref{besov fourier char}, Theorem~\ref{frequency space characterization of homogeneous sobolev spaces}, and Corollary~\ref{2nd sum characterization} we obtain the universal bound
\begin{equation}
    \sb{f-Q}_{\tilde{B}^{s,p}_q}\le\sb{\mathds{P}^-f-Q}_{\dot{W}^{1,p}}+\sb{\mathds{P}^+f}_{\dot{B}^{s,p}_q}\lesssim\sb{\mathds{P}^-f}_{\dot{H}^{1,p}}+\sb{\mathds{P}^+f}_{{_\wedge}\dot{B}^{s,p}_q}\lesssim\sb{f}_{\tilde{B}^{s,1}_{p,q}}.
\end{equation}
Finally, if $P$, $Q$ are both polynomials such that $f-Q,f-P\in\tilde{B}^{s,p}_q\p{\R^n;\mathbb{K}}$, then $P-Q\in\tilde{B}^{s,p}_q\p{\R^n;\mathbb{K}}$, which then implies that $P-Q$ is a constant.
\end{proof}
\subsection{Embeddings}\label{sec:embeddings}

In this subsection we shed some light on the nature of the Sobolev embeddings for the screened Besov spaces. Recall the notation for the homogeneous H\"older spaces, $\dot{C}^{0,\alpha}$, defined in Section~\ref{notation stuff}. We start in the subcritical case.

\begin{prop}[Subcritical embedding]\label{subcritical embedding}
Suppose that $n\in\N\setminus\cb{0,1}$, $1\le p<n$, $1\le u\le\infty$, and $s\in\p{0,1}$. Set $q=\f{np}{n-p}$ and $r=\f{np}{n-sp}$. Then there is a constant $c\in\R^+$ such that for all $f\in\tilde{B}^{s,p}_{u}\p{\R^n;\mathbb{K}}$ there exists $a\in\mathbb{K}$ such that $f-a\in\Sigma\p{L^q\p{\R^n;\mathbb{K}},L^{r,u}\p{\R^n;\mathbb{K}}}$,  with the estimate $\norm{f-a}_{\Sigma\p{L^q,L^{r,u}}}\le c\sb{f}_{\tilde{B}^{s,p}_{u}}$.
\end{prop}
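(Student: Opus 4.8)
The strategy is to invoke the sum characterization of screened Besov spaces from Corollary~\ref{sum characterization of screened sobolev spaces}, which gives
$\tilde{B}^{s,p}_{u}\p{\R^n;\mathbb{K}}=\Sigma\p{\dot{B}^{s,p}_u\p{\R^n;\mathbb{K}},\dot{W}^{1,p}\p{\R^n;\mathbb{K}}}$,
and then to apply the two classical subcritical Sobolev-type embeddings to the two summands separately. Precisely, given $f\in\tilde{B}^{s,p}_{u}\p{\R^n;\mathbb{K}}$, decompose $f=g+h$ with $g\in\dot{B}^{s,p}_u\p{\R^n;\mathbb{K}}$ and $h\in\dot{W}^{1,p}\p{\R^n;\mathbb{K}}$, chosen so that $\sb{g}_{\dot{B}^{s,p}_u}+\sb{h}_{\dot{W}^{1,p}}\le 2\sb{f}_{\tilde{B}^{s,p}_{u}}$ (using the equivalence of seminorms). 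For the homogeneous Sobolev summand, the Gagliardo--Nirenberg--Sobolev embedding (already invoked in the proof of Lemma~\ref{homog are tempered}) says each member of $\dot{W}^{1,p}\p{\R^n;\mathbb{K}}$ equals a constant plus an $L^q$ function, $q=\f{np}{n-p}$, with $\norm{h-b}_{L^q}\lesssim\sb{h}_{\dot{W}^{1,p}}$ for a suitable $b\in\mathbb{K}$. For the homogeneous Besov summand, one uses the homogeneous Besov embedding $\dot{B}^{s,p}_u\p{\R^n;\mathbb{K}}\emb\dot{B}^{0,r}_u\p{\R^n;\mathbb{K}}$ combined with $\dot{B}^{0,r}_u\p{\R^n;\mathbb{K}}\emb L^{r,u}\p{\R^n;\mathbb{K}}$ modulo constants, where $r=\f{np}{n-sp}$; concretely, $g$ equals a constant plus an $L^{r,u}$ function with $\norm{g-a_0}_{L^{r,u}}\lesssim\sb{g}_{\dot{B}^{s,p}_u}$. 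Combining, $f-(a_0+b)=(g-a_0)+(h-b)\in\Sigma\p{L^q\p{\R^n;\mathbb{K}},L^{r,u}\p{\R^n;\mathbb{K}}}$ with the desired estimate, after taking the infimum over all admissible decompositions of $f$.

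The place where I would be most careful is the Besov embedding $\dot{B}^{s,p}_u\emb L^{r,u}$ and the bookkeeping of the additive constants. For the embedding itself, the cleanest route is to use the interpolation characterization $\dot{B}^{s,p}_u=\p{L^p,\dot{W}^{1,p}}_{s,u}$ from Corollary~\ref{interpolation characterization of Besov spaces}, together with the subcritical Sobolev embedding $\dot{W}^{1,p}\emb\dot{L}^{p^*}$ (constants allowed) with $p^*=\f{np}{n-p}$, and the elementary inclusion $L^p\emb\dot{L}^p$, to get
$\dot{B}^{s,p}_u=\p{L^p,\dot{W}^{1,p}}_{s,u}\emb\p{\dot{L}^p,\dot{L}^{p^*}}_{s,u}=\dot{L}^{r,u}$,
the final equality being the seminormed Lorentz interpolation identity of Example~\ref{seminormed Lebesgue spaces} with $\f1r=\f{1-s}{p}+\f{s}{p^*}$, which indeed equals $\f{n-sp}{np}$. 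This realizes $g$ as a (unique) constant $a_0$ plus an $L^{r,u}$ function, with the seminorm of $g$ in $\dot{L}^{r,u}$ equal to the $L^{r,u}$ norm of $g-a_0$. For the constants: since $\dot{W}^{1,p}$ and $\dot{B}^{s,p}_u$ each have annihilator exactly the constant functions (Lemmas~\ref{completeness of homogeneous Sobolev spaces} and Corollary~\ref{interpolation characterization of Besov spaces}), the constants $a_0$ and $b$ are genuinely determined by $g$ and $h$, and one just sets $a=a_0+b$; a different decomposition of $f$ changes $g,h$ by opposite constants, so $a$ is well-defined up to the ambiguity already present in $\Sigma\p{L^q,L^{r,u}}$, which is harmless for the stated conclusion.

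Finally, one must double-check the edge constraints: the hypothesis $n\ge 2$ guarantees $\f{np}{n-p}>p\ge1$ so $L^q$ is a genuine Lebesgue space, and $0<s<1$ with $1\le p<n$ gives $p<r<q$, so $L^{r,u}$ is a well-defined Lorentz space in the range $1<r<\infty$ for every $1\le u\le\infty$. The main (and essentially only) obstacle is producing or citing the homogeneous Besov--Lorentz embedding cleanly; everything else is assembly from results already proved in the paper. I would therefore spend the bulk of the write-up justifying the chain
$\dot{B}^{s,p}_u\emb\p{\dot{L}^p,\dot{L}^{p^*}}_{s,u}=\dot{L}^{r,u}$
via Example~\ref{seminormed Lebesgue spaces} and monotonicity of the $K$-functional under the componentwise embeddings $L^p\emb\dot{L}^p$, $\dot{W}^{1,p}\emb\dot{L}^{p^*}$ (Theorem~\ref{truncated and bounded linear mappings} with the identity map), and then record the two-summand estimate and take the infimum.
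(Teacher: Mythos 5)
Your proof is correct, and it takes a genuinely different route from the paper's. The paper proves this proposition without ever decomposing $f$: it introduces the constant-correction map $\iota f = f - a(f)$ on $\Sigma\p{L^p,\dot{W}^{1,p}}$, where $a(f)$ is the unique constant furnished by Gagliardo--Nirenberg--Sobolev, checks that $\iota$ restricts to the identity $L^p\to L^p$ and to a continuous map $\dot{W}^{1,p}\to L^q$, and then applies the interpolation property of the truncated method (Theorem~\ref{truncated and bounded linear mappings}) once, sending $\tilde{B}^{s,p}_u=\p{L^p,\dot{W}^{1,p}}^{\p{1}}_{s,u}$ into $\p{L^p,L^q}^{\p{1}}_{s,u}$, which is then identified with $\Sigma\p{L^q,L^{r,u}}$ via Theorem~\ref{label sum characterization of the truncated method} and Example~\ref{lorentz}. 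You instead apply the sum characterization $\tilde{B}^{s,p}_u=\Sigma\p{\dot{B}^{s,p}_u,\dot{W}^{1,p}}$ at the outset, decompose $f=g+h$ with near-optimal seminorms, embed $h$ via GNS, and embed $g$ via the chain $\dot{B}^{s,p}_u=\p{L^p,\dot{W}^{1,p}}_{s,u}\emb\p{\dot{L}^p,\dot{L}^{p^*}}_{s,u}=\dot{L}^{r,u}$ obtained from the interpolation property (with $\sigma=\infty$) and Example~\ref{seminormed Lebesgue spaces}. Both proofs use the same ingredients; the paper's is more compact since it invokes the interpolation theorem only once and packages the constants into $\iota$, whereas yours makes the decomposition explicit and, as a byproduct, establishes the Besov-to-seminormed-Lorentz embedding $\dot{B}^{s,p}_u\emb\dot{L}^{r,u}$, a result of independent interest. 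One small imprecision: when you remark that a different decomposition of $f$ "changes $g,h$ by opposite constants," that is not quite right --- two admissible decompositions can differ by any element of $\Delta\p{\dot{B}^{s,p}_u,\dot{W}^{1,p}}$, not merely a constant --- but since the conclusion asks only for the existence of some $a$ with the stated estimate, this slip is harmless and does not affect correctness.
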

\begin{proof}
First, we claim that if $f\in\Sigma\big(L^p\p{\R^n;\mathbb{K}};\dot{W}^{1,p}\p{\R^n;\mathbb{K}}\big)$ there is a unique $a\p{f}\in\mathbb{K}$ such that $f-a\p{f}\in\Sigma\p{L^p\p{\R^n; \mathbb{K}},L^q\p{\R^n;\mathbb{K}}}$. Uniqueness is clear since if $f-a,f-b\in\Sigma\p{L^p\p{\R^n;\mathbb{K}},L^q\p{\R^n;\mathbb{K}}}$, for $a,b\in\mathbb{K}$, then $a-b\in\Sigma\p{L^p\p{\R^n;\mathbb{K}},L^q\p{\R^n;\mathbb{K}}}$, which can only happen if $a=b$. Existence is a consequence of the Gagliardo-Nirenberg-Sobolev inequality (see, for instance, Theorem 12.9 in~\cite{MR3726909}): there is a constant $c\in\R^+$ such that for all $w\in\dot{W}^{1,p}\p{\R^n;\mathbb{K}}$ there exists $a\p{w}\in\mathbb{K}$ such that $w-a\p{w}\in L^q\p{\R^n;\mathbb{K}}$ and $\norm{w-a\p{w}}_{L^q}\le c\sb{w}_{\dot{W}^{1,p}}$. Thus, if $u\in\Sigma\big(L^p\p{\R^n;\mathbb{K}},\dot{W}^{1,p}\p{\R^n;\mathbb{K}}\big)$, then we can take $a\p{u}=a\p{w}$ for any decomposition $u=v+w$, with $v\in L^p\p{\R^n;\mathbb{K}}$ and $w\in\dot{W}^{1,p}\p{\R^n;\mathbb{K}}$. 

Next we define $\iota:\Sigma\big(L^p\p{\R^n;\mathbb{K}},\dot{W}^{1,p}\p{\R^n;\mathbb{K}}\big)\to\Sigma\big(L^p\p{\R^n;\mathbb{K}},L^q\p{\R^n;\mathbb{K}}\big)$ via $ \iota f=f-a\p{f}$.  The dependence of $a\p{f}$ on $f$ is linear, so $\iota$ is linear.  It is also the case that $\iota$ is continuous. Indeed, for any $f\in\Sigma\big(L^p\p{\R^n;\mathbb{K}};\dot{W}^{1,p}\p{\R^n;\mathbb{K}}\big)$ and any decomposition $f=v+w$ for $v\in L^p\p{\R^n;\mathbb{K}}$ and $\dot{W}^{1,p}\p{\R^n;\mathbb{K}}$, we may estimate
\begin{equation}
    \norm{\iota f}_{\Sigma\p{L^p,L^q}}\le\norm{v}_{L^p}+\norm{w-a\p{w}}_{L^q}
     \le\p{1+c}\sb{f}_{\Sigma\p{L^p,\dot{W}^{1,p}}}.
\end{equation}
Similar arguments show that $\iota$ continuously maps $L^p\p{\R^n;\mathbb{K}}$ to itself (and, in fact, equals the identity mapping) and continuously maps $\dot{W}^{1,p}\p{\R^n;\mathbb{K}}$ to $L^q\p{\R^n;\mathbb{K}}$. 

Now we use the fact that the screened spaces are interpolation spaces (see Theorem~\ref{truncated and bounded linear mappings}). This implies, by the abstract sum characterization in Theorem~\ref{label sum characterization of the truncated method}, that
\begin{equation}
    \iota:\tilde{B}^{s,p}_{u}\p{\R^n;\mathbb{K}}\to\p{L^p\p{\R^n;\mathbb{K}},L^q\p{\R^n;\mathbb{K}}}_{s,u}^{\p{1}}=\Sigma\p{L^q\p{\R^n;\mathbb{K}},L^{r,u}\p{\R^n;\mathbb{K}}}
\end{equation}
is a continuous linear mapping. Here we have used the fact that $\tilde{B}^{s,p}_{u}\p{\R^n\mathbb{K}}=\big(L^p\p{\R^n;\mathbb{K}},\dot{W}^{1,p}\p{\R^n;\mathbb{K}}\big)^{\p{1}}_{s,u}$ by Corollary~\ref{sum characterization of screened sobolev spaces} and Theorem~\ref{screened sobolev spaces are truncated interpolation spaces}, and that $\p{L^p\p{\R^n;\mathbb{K}},L^q\p{\R^n;\mathbb{K}}}_{s,u}^{\p{1}}=\Sigma\p{L^q\p{\R^n;\mathbb{K}},L^{r,u}\p{\R^n;\mathbb{K}}}$ by Theorem~\ref{label sum characterization of the truncated method} and Example~\ref{lorentz}.
\end{proof}

Next we consider a first mixed case.

\begin{prop}[Mixed subcritical/critical embedding]\label{mixed subcritical/ciritcal embedding}
Let $1\le q\le\infty$ and $s\in\p{0,1}$. Then there exists $c\in\R^+$ such that for all $f\in\tilde{B}^{s,n}_{q}\p{\R^n;\mathbb{K}}$ we have the bound $\sb{f}_{X}\le c\sb{f}_{\tilde{B}^{s,n}_q}$, where $X=\Sigma\big(L^{r,q}\p{\R^n;\mathbb{K}},\m{BMO}\p{\R^n;\mathbb{K}}\big)$ and $r=n/(1-s)$.
\end{prop}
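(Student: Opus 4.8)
The strategy is to mimic the proof of the subcritical embedding (Proposition~\ref{subcritical embedding}), replacing the Gagliardo-Nirenberg-Sobolev inequality by the critical Sobolev embedding $\dot{W}^{1,n}\p{\R^n;\mathbb{K}}\emb\m{BMO}\p{\R^n;\mathbb{K}}$ (see, e.g., Theorem 12.31 in~\cite{MR3726909}), and then invoke the interpolation identity for $\m{BMO}$ established in Example~\ref{BMO}. The key structural facts to assemble are: Corollary~\ref{sum characterization of screened sobolev spaces} and Theorem~\ref{screened sobolev spaces are truncated interpolation spaces} give $\tilde{B}^{s,n}_q\p{\R^n;\mathbb{K}}=\big(L^n\p{\R^n;\mathbb{K}},\dot{W}^{1,n}\p{\R^n;\mathbb{K}}\big)^{\p{1}}_{s,q}$; and formula~\eqref{monkey2} in Example~\ref{BMO}, applied with $p=n$ and $r=n/(1-s)$, gives $\big(L^n\p{\R^n;\mathbb{K}},\m{BMO}\p{\R^n;\mathbb{K}}\big)^{\p{1}}_{s,q}=\Sigma\big(L^{r,q}\p{\R^n;\mathbb{K}},\m{BMO}\p{\R^n;\mathbb{K}}\big)=X$.

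\textbf{Steps.} First, I would construct the map that sends a screened-space element to its $\m{BMO}$-representative modulo constants. Given $f\in\Sigma\big(L^n\p{\R^n;\mathbb{K}},\dot{W}^{1,n}\p{\R^n;\mathbb{K}}\big)$, one shows there is a unique $a\p{f}\in\mathbb{K}$ such that $f-a\p{f}\in\Sigma\big(L^n\p{\R^n;\mathbb{K}},\m{BMO}\p{\R^n;\mathbb{K}}\big)$: for the $\dot{W}^{1,n}$ part, $\dot{W}^{1,n}\emb\m{BMO}$ needs no constant shift since $\m{BMO}$ is already a seminormed space containing $\dot{W}^{1,n}$ (in contrast to the subcritical case where one subtracts a constant to land in $L^q$), while for the $L^n$ part $a\p{f}=0$; uniqueness follows because the only constant in $\Sigma\big(L^n,\m{BMO}\big)$ that also lies in the quotient sense is $0$ — more precisely, since constants are exactly the annihilator of $\m{BMO}$ and $L^n$ contains no nonzero constants, the decomposition of any representative is determined up to that annihilator. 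Second, define $\iota f = f - a\p{f}$ and verify it is linear and that it continuously maps $L^n\p{\R^n;\mathbb{K}}$ into itself (it is the identity there) and $\dot{W}^{1,n}\p{\R^n;\mathbb{K}}$ into $\m{BMO}\p{\R^n;\mathbb{K}}$ with the Sobolev-embedding constant. Third, invoke Theorem~\ref{truncated and bounded linear mappings}: $\iota$ being bounded $L^n\to L^n$ and $\dot{W}^{1,n}\to\m{BMO}$ forces $\iota$ to be bounded from $\big(L^n,\dot{W}^{1,n}\big)^{\p{1}}_{s,q}$ into $\big(L^n,\m{BMO}\big)^{\p{\sigma}}_{s,q}$ for an appropriate $\sigma$ (the constant ratio in the truncation parameter), and then Proposition~\ref{inclusion relations of truncated interpolation spaces}(2) lets one replace $\big(L^n,\m{BMO}\big)^{\p{\sigma}}_{s,q}$ by $\big(L^n,\m{BMO}\big)^{\p{1}}_{s,q}$ up to equivalence of seminorms. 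Fourth, identify $\big(L^n,\m{BMO}\big)^{\p{1}}_{s,q}$ with $X$ via~\eqref{monkey2}. Chaining these gives $\sb{f}_X=\sb{\iota f}_X\lesssim\sb{\iota f}_{\big(L^n,\m{BMO}\big)^{\p{1}}_{s,q}}\lesssim\sb{f}_{\big(L^n,\dot{W}^{1,n}\big)^{\p{1}}_{s,q}}\asymp\sb{f}_{\tilde{B}^{s,n}_q}$, which is the claim.

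\textbf{Main obstacle.} The delicate point is the well-definedness of $a\p{f}$ and of $\iota$ in the presence of nontrivial annihilators. Because $\m{BMO}$ has the constants as its annihilator, one must be careful that $\iota$ is genuinely well-defined as a map into the seminormed space $\Sigma\big(L^n,\m{BMO}\big)$ — i.e. that the value of $\iota f$ is independent of the chosen decomposition $f=v+w$ with $v\in L^n$, $w\in\dot{W}^{1,n}$, at least up to the annihilator — and that it passes to the truncated interpolation space. In the subcritical proposition this is settled by uniqueness of the constant landing one in $L^q$; here the analogous statement is softer, since shifting $w$ by a constant does not change its $\m{BMO}$-class, so in fact one may simply take $a\p{f}=0$ and set $\iota f = f$, viewing $\Sigma\big(L^n,\dot{W}^{1,n}\big)\emb\Sigma\big(L^n,\m{BMO}\big)$ directly via $\dot{W}^{1,n}\emb\m{BMO}$; the only thing to check is that this inclusion is continuous, which is exactly the critical Sobolev embedding. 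I expect the bookkeeping around which truncation parameter $\sigma$ appears (and invoking Proposition~\ref{inclusion relations of truncated interpolation spaces}(2) to normalize it to $1$) to be the other place requiring care, but it is routine given the abstract machinery already in place.
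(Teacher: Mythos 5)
Your proposal is correct and, after the self-correction in your ``Main obstacle'' paragraph where you observe that one can take $a(f)=0$ and $\iota=\mathrm{id}$, it coincides with the paper's argument: apply Theorem~\ref{truncated and bounded linear mappings} to the inclusion $\Sigma(L^n,\dot{W}^{1,n})\emb\Sigma(L^n,\m{BMO})$ furnished by the critical embedding $\dot{W}^{1,n}\emb\m{BMO}$, then identify the target truncated interpolation space with $X$ via Example~\ref{BMO}. The initial detour constructing $a(f)$ in parallel with the subcritical case is harmless but unnecessary precisely because $\m{BMO}$ is already seminormed with the constants as annihilator, and your handling of the truncation parameter via Proposition~\ref{inclusion relations of truncated interpolation spaces}(2) is routine and valid.
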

\begin{proof}
By Theorem 12.31 in~\cite{MR3726909} we have the continuous embedding $\dot{W}^{1,n}\p{\R^n;\mathbb{K}}\emb\m{BMO}\p{\R^n;\mathbb{K}}$. Example~\ref{BMO} shows that $    \p{L^p\p{\R^n;\mathbb{K}},\m{BMO}\p{\R^n;\mathbb{K}}}_{s,q}^{\p{1}}= \Sigma\big(L^{r,q}\p{\R^n;\mathbb{K}},\m{BMO}\p{\R^n;\mathbb{K}}\big)$.
The result now follows from Theorem~\ref{truncated and bounded linear mappings} applied to the inclusion mapping from $\Sigma\big(L^p\p{\R^n;\mathbb{K}};\dot{W}^{1,n}\p{\R^n;\mathbb{K}}\big)$ to $\Sigma\big(L^p\p{\R^n;\mathbb{K}},\m{BMO}\p{\R^n;\mathbb{K}}\big)$.
\end{proof}

The next mixed case follows.

\begin{prop}[Mixed super/subcritical embedding]\label{super/sub embedding}
Let $s\in\p{0,1}$ and $n<p$, but $sp<n$, and $1\le q\le\infty$.  Set $r=\f{np}{n-sp}$ and $\al=1-\f{n}{p}$. Then we have the continuous embedding
\begin{equation}
    \tilde{B}^{s,p}_{q}\p{\R^n;\mathbb{K}}\emb\Sigma\big(L^{r,q}\p{\R^n;\mathbb{K}},\dot{C}^{0,\al}\p{\R^n;\mathbb{K}}\big).
\end{equation}
\end{prop}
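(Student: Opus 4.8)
The strategy mirrors exactly the proofs of Propositions~\ref{subcritical embedding} and~\ref{mixed subcritical/ciritcal embedding}: we realize the desired embedding as an application of the interpolation property (Theorem~\ref{truncated and bounded linear mappings}) together with the sum characterization of the truncated method (Theorem~\ref{label sum characterization of the truncated method}), using the interpolation description of the screened Besov space from Corollary~\ref{sum characterization of screened sobolev spaces} and Theorem~\ref{screened sobolev spaces are truncated interpolation spaces}. The one extra ingredient in the super/subcritical regime is Morrey's embedding $\dot{W}^{1,p}\p{\R^n;\mathbb{K}}\emb\dot{C}^{0,\al}\p{\R^n;\mathbb{K}}$ for $n<p$ with $\al=1-n/p$ (see Theorem 12.48 and Remark 12.49 in~\cite{MR3726909}, as already invoked in Lemma~\ref{homog are tempered}).

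First I would construct the auxiliary linear map. As in the proof of Proposition~\ref{subcritical embedding}, for $f\in\Sigma\big(L^p\p{\R^n;\mathbb{K}},\dot{W}^{1,p}\p{\R^n;\mathbb{K}}\big)$ one checks that there is a unique constant $a\p{f}\in\mathbb{K}$ such that $f-a\p{f}\in\Sigma\p{L^p\p{\R^n;\mathbb{K}},\dot{C}^{0,\al}\p{\R^n;\mathbb{K}}}$; uniqueness holds because the only constant function belonging to this sum is $0$, and existence follows from Morrey's embedding applied to the $\dot{W}^{1,p}$ summand of any decomposition $f=v+w$ (one takes $a\p{f}$ to be the additive constant Morrey's theorem associates to $w$, this being independent of the decomposition). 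Define $\iota f=f-a\p{f}$; the dependence $f\mapsto a\p{f}$ is linear, hence $\iota$ is linear. Then I would verify that $\iota$ restricts to a continuous map $L^p\p{\R^n;\mathbb{K}}\to L^p\p{\R^n;\mathbb{K}}$ (it is the identity there, since $a\p{f}=0$ for $f\in L^p$) and to a continuous map $\dot{W}^{1,p}\p{\R^n;\mathbb{K}}\to\dot{C}^{0,\al}\p{\R^n;\mathbb{K}}$ (this is precisely Morrey's estimate $\sb{w-a\p{w}}_{\dot{C}^{0,\al}}=\sb{w}_{\dot{C}^{0,\al}}\le c\sb{w}_{\dot{W}^{1,p}}$). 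Consequently $\iota$ maps $\Sigma\big(L^p\p{\R^n;\mathbb{K}},\dot{W}^{1,p}\p{\R^n;\mathbb{K}}\big)$ continuously into $\Sigma\p{L^p\p{\R^n;\mathbb{K}},\dot{C}^{0,\al}\p{\R^n;\mathbb{K}}}$.

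Next I would feed this into the interpolation machinery. By Theorem~\ref{truncated and bounded linear mappings}, $\iota$ maps $\big(L^p\p{\R^n;\mathbb{K}},\dot{W}^{1,p}\p{\R^n;\mathbb{K}}\big)^{\p{1}}_{s,q}$ continuously into $\big(L^p\p{\R^n;\mathbb{K}},\dot{C}^{0,\al}\p{\R^n;\mathbb{K}}\big)^{\p{1}}_{s,q}$. The left space is $\tilde{B}^{s,p}_{q}\p{\R^n;\mathbb{K}}$ by Corollary~\ref{sum characterization of screened sobolev spaces} and Theorem~\ref{screened sobolev spaces are truncated interpolation spaces}. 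For the right space, the sum characterization of Theorem~\ref{label sum characterization of the truncated method} gives $\big(L^p\p{\R^n;\mathbb{K}},\dot{C}^{0,\al}\p{\R^n;\mathbb{K}}\big)^{\p{1}}_{s,q}=\Sigma\big(\p{L^p\p{\R^n;\mathbb{K}},\dot{C}^{0,\al}\p{\R^n;\mathbb{K}}}_{s,q},\dot{C}^{0,\al}\p{\R^n;\mathbb{K}}\big)$, and it remains to identify the non-truncated interpolation space $\p{L^p\p{\R^n;\mathbb{K}},\dot{C}^{0,\al}\p{\R^n;\mathbb{K}}}_{s,q}$ with the Lorentz space $L^{r,q}\p{\R^n;\mathbb{K}}$, where $1/r=(1-s)/p$ — note $(1-s)/p=(n-sp)/(np)$ gives exactly $r=np/(n-sp)$. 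This last identification is the classical result that interpolating $L^p$ against a homogeneous H\"older space by the real method produces the expected Lorentz space; it can be obtained, for instance, from the $K$-functional computation for the couple $\big(L^p,\dot{C}^{0,\al}\big)$ — equivalently by reiteration through $\dot{W}^{1,p}$ using Corollary~\ref{interpolation characterization of Besov spaces} and the Sobolev/Besov embedding chain, or by citing the standard interpolation tables (Theorem 1.18.6 in~\cite{MR503903} together with the Morrey-type identification). Since $\dot{C}^{0,\al}\p{\R^n;\mathbb{K}}$ has annihilator equal to the constants, the sum $\Sigma\p{L^{r,q}\p{\R^n;\mathbb{K}},\dot{C}^{0,\al}\p{\R^n;\mathbb{K}}}$ already absorbs the constant $a\p{f}$, so composing with $\iota$ yields precisely the asserted embedding $\tilde{B}^{s,p}_{q}\p{\R^n;\mathbb{K}}\emb\Sigma\big(L^{r,q}\p{\R^n;\mathbb{K}},\dot{C}^{0,\al}\p{\R^n;\mathbb{K}}\big)$ with the displayed norm control.

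\textbf{Main obstacle.} The only genuinely non-formal point is the identification $\p{L^p\p{\R^n;\mathbb{K}},\dot{C}^{0,\al}\p{\R^n;\mathbb{K}}}_{s,q}=L^{r,q}\p{\R^n;\mathbb{K}}$: unlike the $L^p$--$L^q$ couple used in Proposition~\ref{subcritical embedding} or the $L^p$--$\m{BMO}$ couple of Proposition~\ref{mixed subcritical/ciritcal embedding}, this particular pair is not one for which the excerpt has already recorded the $K$-functional. I would handle it by a reiteration argument entirely internal to the paper: pick $v$ with $n<v<\infty$ so that $\dot{W}^{1,v}\p{\R^n;\mathbb{K}}\emb\dot{C}^{0,1-n/v}\p{\R^n;\mathbb{K}}$, combine with the reverse-direction Besov embeddings from Corollary~\ref{interpolation characterization of Besov spaces}, and use Theorem~\ref{reiteration theorem} to reduce to the already-known $L^p$--$L^v$ interpolation identity (Example~\ref{lorentz}); the H\"older factor then only contributes to the \emph{truncated} sum via Theorem~\ref{label sum characterization of the truncated method}, and the constants introduced by the H\"older embedding are harmless because the annihilator of every space in sight is exactly the constant functions. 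Everything else is bookkeeping with linear maps and the abstract theorems of Section~\ref{interpolation of seminormed spaces}.
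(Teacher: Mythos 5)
Your plan has a genuine gap. The paper's proof is a two-line affair: Theorem~\ref{fundamental decomposition of screened sobolev spaces} gives $\tilde{B}^{s,p}_{q}\p{\R^n;\mathbb{K}}=\Sigma\big(\dot{W}^{1,p}\p{\R^n;\mathbb{K}},B^{s,p}_q\p{\R^n;\mathbb{K}}\big)$ \emph{with the inhomogeneous Besov space} as the second summand, after which one applies Morrey to the first factor and the classical subcritical Besov--Lorentz embedding $B^{s,p}_q\p{\R^n;\mathbb{K}}\emb L^{r,q}\p{\R^n;\mathbb{K}}$ (Theorem 17.49 of~\cite{MR3726909}) to the second. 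No interpolation of the couple $\p{L^p,\dot{C}^{0,\al}}$ is ever performed. You instead mimic Proposition~\ref{subcritical embedding} by pushing the inclusion through Theorem~\ref{truncated and bounded linear mappings}, which forces you to identify $\p{L^p\p{\R^n;\mathbb{K}},\dot{C}^{0,\al}\p{\R^n;\mathbb{K}}}_{s,q}$ with the Lorentz space $L^{r,q}\p{\R^n;\mathbb{K}}$ (or at least to embed it there). That identification is not established anywhere in the paper and is not a ``classical interpolation table'' fact in the way the $(L^p,L^\infty)$ and $(L^p,\m{BMO})$ identities are; the $K$-functional for $\p{L^p,\dot{C}^{0,\al}}$ is governed by a fractional sharp maximal function, not by the decreasing rearrangement, and so the Lorentz-space claim requires a separate argument.

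The repair you propose — reiteration through $\dot{W}^{1,v}$ for some $n<v<\infty$ — does not close the gap. Theorem~\ref{reiteration theorem} requires both endpoints to be interpolation spaces of the \emph{same} underlying couple, or to coincide with the factors of that couple. But $\dot{W}^{1,v}\p{\R^n;\mathbb{K}}$ is only continuously \emph{embedded} in $\dot{C}^{0,1-n/v}\p{\R^n;\mathbb{K}}$; the two spaces are not equal, and $\dot{W}^{1,v}$ is not a member of the scale $\p{L^p,\dot{C}^{0,\al}}_{\vartheta,u}$. An embedding of factors only gives you a one-sided inclusion of interpolation spaces, not the identification reiteration needs, so the ``reduce to the $L^p$--$L^v$ identity'' step fails. (A separate, minor remark: the constant-subtraction operator $\iota$ is unnecessary here — $\dot{C}^{0,\al}$ already annihilates constants, so the plain inclusion $\Sigma(L^p,\dot{W}^{1,p})\emb\Sigma(L^p,\dot{C}^{0,\al})$ is already bounded without modifying $f$. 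But that over-engineering is harmless.) The essential point is that Theorem~\ref{fundamental decomposition of screened sobolev spaces} hands you the inhomogeneous Besov summand $B^{s,p}_q$ for free, and the subcritical embedding of \emph{that} space into $L^{r,q}$ is the already-available ingredient your route tries to reproduce the hard way.
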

\begin{proof}
Theorem~\ref{fundamental decomposition of screened sobolev spaces} tells us that we have the equality of spaces with equivalence of seminorms:
\begin{equation}
    \tilde{B}^{s,p}_{q}\p{\R^n;\mathbb{K}}=\Sigma\big(\dot{W}^{1,p}\p{\R^n;\mathbb{K}},B^{s,p}_q\p{\R^n;\mathbb{K}}\big).
\end{equation}The Morrey embedding yields $\dot{W}^{1,p}\p{\R^n;\mathbb{K}}\emb\dot{C}^{0,\al}\p{\R^n;\mathbb{K}}$ and the subcritical embedding of the Besov space yields $B^{s,p}_q\p{\R^n;\mathbb{K}}\emb L^{r,q}\p{\R^n;\mathbb{K}}$ (see Lemma 12.47 and Theorem 17.49 in~\cite{MR3726909}).
\end{proof}

We now handle the final mixed case.

\begin{prop}[Mixed critical, supercritical embedding]\label{critical/supercritical}
Let $s\in\p{0,1}$, $n<p$, with $sp=n$, and $1\le q\le\infty$. Set $p\le r<\infty$ and $\al=1-\f{n}{p}$. Then we have the continuous embedding
\begin{equation}
    \tilde{B}^{s,p}_{q}\p{\R^n;\mathbb{K}}\emb\Sigma\big(L^r\p{\R^n;\mathbb{K}},\dot{C}^{0,\al}\p{\R^n;\mathbb{K}}\big).
\end{equation}
\end{prop}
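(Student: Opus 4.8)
The plan is to mirror the structure of the previous embedding propositions by combining the fundamental decomposition of Theorem~\ref{fundamental decomposition of screened sobolev spaces} with the classical Sobolev-type embeddings for the two summand spaces. By that theorem we have the equality of seminormed spaces with equivalence of seminorms
\begin{equation}
    \tilde{B}^{s,p}_{q}\p{\R^n;\mathbb{K}}=\Sigma\big(\dot{W}^{1,p}\p{\R^n;\mathbb{K}},B^{s,p}_q\p{\R^n;\mathbb{K}}\big),
\end{equation}
so it suffices to embed each factor into the respective summand of $\Sigma\big(L^r\p{\R^n;\mathbb{K}},\dot{C}^{0,\al}\p{\R^n;\mathbb{K}}\big)$. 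Since $n<p$, the Morrey embedding gives $\dot{W}^{1,p}\p{\R^n;\mathbb{K}}\emb\dot{C}^{0,\al}\p{\R^n;\mathbb{K}}$ for $\al=1-\f{n}{p}$, exactly as used in Proposition~\ref{super/sub embedding} (Theorem~12.48 and Remark~12.49 in~\cite{MR3726909}). The new ingredient is the critical-case embedding for the inhomogeneous Besov space: when $sp=n$ one has $B^{s,p}_q\p{\R^n;\mathbb{K}}\emb L^r\p{\R^n;\mathbb{K}}$ for every $p\le r<\infty$, which is the Besov analogue of the $W^{1,n}$ critical embedding into all $L^r$, $r<\infty$. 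I would invoke this from the literature (e.g. Theorem~17.49 or the relevant critical-exponent statement in~\cite{MR3726909}, or alternatively deduce it from $B^{s,p}_q\emb B^{s',p'}_{q}$ Sobolev-type embeddings with $s'=n/r$, followed by $B^{n/r,r}_q\emb L^r$). Chaining the two embeddings of the summands then yields a continuous linear map
\begin{equation}
    \Sigma\big(\dot{W}^{1,p}\p{\R^n;\mathbb{K}},B^{s,p}_q\p{\R^n;\mathbb{K}}\big)\emb\Sigma\big(\dot{C}^{0,\al}\p{\R^n;\mathbb{K}},L^r\p{\R^n;\mathbb{K}}\big)=\Sigma\big(L^r\p{\R^n;\mathbb{K}},\dot{C}^{0,\al}\p{\R^n;\mathbb{K}}\big),
\end{equation}
where the last equality is just commutativity of the sum of seminormed spaces; composing with the seminorm equivalence from Theorem~\ref{fundamental decomposition of screened sobolev spaces} gives the claimed bound with some $c\in\R^+$ depending only on $s,p,q,r,n$.

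Concretely, the steps in order are: (1) write $f=\mathds{L}f+\mathds{H}f$ and invoke Theorem~\ref{fundamental decomposition of screened sobolev spaces} to get $\mathds{L}f\in\dot{W}^{1,p}$ with $\sb{\mathds{L}f}_{\dot{W}^{1,p}}\lesssim\sb{f}_{\tilde{B}^{s,p}_q}$ and $\mathds{H}f\in B^{s,p}_q$ with $\norm{\mathds{H}f}_{B^{s,p}_q}\lesssim\sb{f}_{\tilde{B}^{s,p}_q}$; (2) apply Morrey to $\mathds{L}f$, obtaining $\sb{\mathds{L}f}_{\dot{C}^{0,\al}}\lesssim\sb{\mathds{L}f}_{\dot{W}^{1,p}}$; (3) apply the critical Besov embedding to $\mathds{H}f$, obtaining $\norm{\mathds{H}f}_{L^r}\lesssim\norm{\mathds{H}f}_{B^{s,p}_q}$; (4) add the pieces: $\sb{f}_{\Sigma(L^r,\dot{C}^{0,\al})}\le\norm{\mathds{H}f}_{L^r}+\sb{\mathds{L}f}_{\dot{C}^{0,\al}}\lesssim\sb{f}_{\tilde{B}^{s,p}_q}$, which establishes the continuous embedding. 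Note that no constant needs to be subtracted here, in contrast to Proposition~\ref{subcritical embedding}, because $\dot{W}^{1,p}$ for $p>n$ already embeds into a genuine function space rather than only modulo constants; this is why the statement is phrased without an additive constant $a$.

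The main obstacle, such as it is, is pinning down the precise reference (and statement, including the role of the fine parameter $q$) for the critical-exponent Besov embedding $B^{n/p\cdot p,p}_q\emb L^r$ for all finite $r\ge p$. One must be slightly careful that this holds for every $1\le q\le\infty$ — it does, since $B^{s,p}_q\emb B^{s,p}_\infty$ and the embedding $B^{n/r,r}_\infty\emb L^r$ (equivalently, the subcritical Sobolev embedding $B^{s,p}_q\emb B^{s-n/p+n/r,r}_q\emb L^r$ valid for $r<\infty$, using $s-n/p+n/r=n/r>0$) does the job uniformly in $q$. An alternative, entirely self-contained route avoiding Besov embedding lore is to first downgrade $B^{s,p}_q\emb B^{s,p}_\infty$, then use $B^{s,p}_\infty\emb \dot{B}^{s,p}_\infty$ together with $\dot{W}^{1,p}\emb\dot{C}^{0,\al}$ and the fact that $\mathds{H}f$ is bounded (being in $L^p\cap\dot{B}^{s,p}_q$) to interpolate a bound in $L^r$ by hand; however, citing~\cite{MR3726909} is cleaner and matches the style of the surrounding propositions. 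Everything else — the additivity of $\Sigma$-seminorms and its commutativity — is immediate from Definition~\ref{admissable semi normed spaces}.
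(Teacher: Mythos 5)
Your proof is correct and takes essentially the same route as the paper's: invoke the sum decomposition $\tilde{B}^{s,p}_q = \Sigma(\dot{W}^{1,p}, B^{s,p}_q)$ from Theorem~\ref{fundamental decomposition of screened sobolev spaces}, apply Morrey to the $\dot{W}^{1,p}$ factor, and apply the critical Besov embedding $B^{s,p}_q \emb L^r$ (for $sp = n$, $p \le r < \infty$) to the $B^{s,p}_q$ factor. The only small discrepancy is the reference: the paper cites Theorem~17.55 in~\cite{MR3726909} for the critical Besov embedding, not Theorem~17.49 (the latter is the subcritical case used in Proposition~\ref{super/sub embedding}); your hedge about identifying the precise critical-exponent statement was warranted, and your sketched fallback derivation is a valid substitute.
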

\begin{proof}
Again Morrey's embedding implies that $\dot{W}^{1,p}\p{\R^n;\mathbb{K}}\emb\dot{C}^{0,\al}\p{\R^n;\mathbb{K}}$. The critical embedding of Besov spaces (see Theorem 17.55 in~\cite{MR3726909}) implies that $B^{s,p}_q\p{\R^n;\mathbb{K}}\emb L^r\p{\R^n;\mathbb{K}}$.
\end{proof}

Finally, we consider the supercritical case.

\begin{prop}[Supercritical embedding]\label{supercritical embedding}
Let $s\in\p{0,1}$, $n<p$ with $n<sp$, and $1\le q\le\infty$. Set $\al=1-\f{n}{p}$ and $\be=s-\f{n}{p}$. Then  we have the continuous embedding
\begin{equation}
    \tilde{B}^{s,p}_{q}\p{\R^n;\mathbb{K}}\emb\Sigma\big(\dot{C}^{0,\al}\p{\R^n;\mathbb{K}},C^{0,\be}\p{\R^n;\mathbb{K}}\big).
\end{equation}
\end{prop}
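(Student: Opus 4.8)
The plan is to mimic the pattern of the preceding propositions (Propositions~\ref{mixed subcritical/ciritcal embedding}, \ref{super/sub embedding}, \ref{critical/supercritical}): start from the fundamental decomposition of the screened Besov space into a homogeneous Sobolev summand and an inhomogeneous Besov summand, then embed each summand into the appropriate H\"older-type space. By Theorem~\ref{fundamental decomposition of screened sobolev spaces} we have the equality of seminormed spaces $\tilde{B}^{s,p}_q\p{\R^n;\mathbb{K}}=\Sigma\big(\dot{W}^{1,p}\p{\R^n;\mathbb{K}},B^{s,p}_q\p{\R^n;\mathbb{K}}\big)$, so it suffices to produce continuous embeddings $\dot{W}^{1,p}\p{\R^n;\mathbb{K}}\emb\dot{C}^{0,\al}\p{\R^n;\mathbb{K}}$ and $B^{s,p}_q\p{\R^n;\mathbb{K}}\emb C^{0,\be}\p{\R^n;\mathbb{K}}$, and then observe that $\Sigma$ of the images is contained in $\Sigma\big(\dot{C}^{0,\al}\p{\R^n;\mathbb{K}},C^{0,\be}\p{\R^n;\mathbb{K}}\big)$ with control of the seminorm. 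The first embedding is exactly Morrey's embedding, already cited in the excerpt (Theorem 12.48 and Remark 12.49 in~\cite{MR3726909}), valid because $n<p$ and giving exponent $\al=1-n/p$.

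\textbf{Key steps.} First I would invoke Theorem~\ref{fundamental decomposition of screened sobolev spaces} to fix, for a given $f\in\tilde{B}^{s,p}_q\p{\R^n;\mathbb{K}}$, the canonical decomposition $f=\mathds{L}f+\mathds{H}f$ with $\mathds{L}f\in\dot{W}^{1,p}\p{\R^n;\mathbb{K}}$ and $\mathds{H}f\in B^{s,p}_q\p{\R^n;\mathbb{K}}$, together with the bound $\sb{\mathds{H}f}_{B^{s,p}_q}+\sb{\mathds{L}f}_{\dot{W}^{1,p}}\lesssim\sb{f}_{\tilde{B}^{s,p}_q}$. Second, apply Morrey to get $\mathds{L}f\in\dot{C}^{0,\al}\p{\R^n;\mathbb{K}}$ with $\sb{\mathds{L}f}_{\dot{C}^{0,\al}}\lesssim\sb{\mathds{L}f}_{\dot{W}^{1,p}}$. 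Third, apply the supercritical Sobolev embedding for inhomogeneous Besov spaces: since $s-n/p=\be>0$ and $n<p$, one has $B^{s,p}_q\p{\R^n;\mathbb{K}}\emb C^{0,\be}\p{\R^n;\mathbb{K}}$ (this is the standard Besov--H\"older embedding in the supercritical regime; the relevant statement is the inhomogeneous analogue of the Besov embeddings cited for Propositions~\ref{super/sub embedding} and~\ref{critical/supercritical}, e.g. Theorem 17.49 or the $B^{s,p}_q\emb C^{0,\be}$ case discussed around Theorems 17.49--17.55 in~\cite{MR3726909}), giving $\norm{\mathds{H}f}_{C^{0,\be}}\lesssim\sb{\mathds{H}f}_{B^{s,p}_q}$. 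Fourth, combine: $f=\mathds{L}f+\mathds{H}f$ exhibits $f$ as an element of $\Sigma\big(\dot{C}^{0,\al}\p{\R^n;\mathbb{K}},C^{0,\be}\p{\R^n;\mathbb{K}}\big)$ with $\sb{f}_{\Sigma\p{\dot{C}^{0,\al},C^{0,\be}}}\le\sb{\mathds{L}f}_{\dot{C}^{0,\al}}+\norm{\mathds{H}f}_{C^{0,\be}}\lesssim\sb{f}_{\tilde{B}^{s,p}_q}$, which is the asserted continuous embedding.

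\textbf{Main obstacle.} The only genuinely delicate point is pinning down the precise inhomogeneous Besov embedding $B^{s,p}_q\p{\R^n;\mathbb{K}}\emb C^{0,\be}\p{\R^n;\mathbb{K}}$ with $\be=s-n/p$ in the supercritical range, and confirming that the target is the inhomogeneous (not homogeneous) H\"older space $C^{0,\be}$ — which is exactly what keeps this consistent with the screened space being genuinely larger than a homogeneous space. One should double check the admissible range of $q$ (all $1\le q\le\infty$ is fine here because $\be$ is not an integer and we are strictly supercritical, so no $\ell^q$-summation subtlety enters), and that $B^{s,p}_q$ for $s\in(0,1)$ as defined in Definition~\ref{defintion of besov spaces} (inhomogeneous, via the finite-difference seminorm plus $L^p$) indeed coincides with the scale for which the cited embedding theorem applies. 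Once these bookkeeping issues are settled, the argument is a one-line assembly from the fundamental decomposition plus two off-the-shelf embeddings, exactly parallel to the four preceding propositions.
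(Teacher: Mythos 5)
Your proposal is correct and follows exactly the same route as the paper: Theorem~\ref{fundamental decomposition of screened sobolev spaces} to get $\tilde{B}^{s,p}_q=\Sigma\big(\dot{W}^{1,p},B^{s,p}_q\big)$, then Morrey's embedding for the $\dot{W}^{1,p}$ summand and the supercritical Besov--H\"older embedding (the paper cites Theorem 17.52 in \cite{MR3726909}) for the $B^{s,p}_q$ summand. Your remark that the range of $q$ poses no difficulty is also right, since $\beta=s-n/p\in(0,1)$ is noninteger and strictly positive.
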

\begin{proof}
The hypothesis on  $s$ and $p$ ensure that $\dot{W}^{1,p}\p{\R^n;\mathbb{K}}\emb\dot{C}^{0,\al}\p{\R^n;\mathbb{K}}$ and $B^{s,p}_q\p{\R^n;\mathbb{K}}\emb C^{0,\be}\p{\R^n;\mathbb{K}}$, thanks to Morrey's embedding and Theorem 17.52 in~\cite{MR3726909}.
\end{proof}

\subsection{Behavior on spaces of codimension 1}

We now prove a a result characterizing how restriction to codimension $1$ subspaces behaves in screened Besov spaces.

\begin{thm}[Restriction]\label{restriction theorem}
Let $n\ge 2$, $1<p<\infty$, $1\le q\le\infty$, and $s\in\p{0,1}$.  Suppose that $p^{-1}<s$ and set
\begin{equation}\label{label label}
    X=\Sigma\big(B^{s-1/p,p}_q\p{\R^{n-1};\mathbb{K}},\dot{B}^{{1-1/p},p}_p\p{\R^{n-1};\mathbb{K}}\big).
\end{equation}
Define the restriction map $R:\mathscr{S}\p{\R^n;\mathbb{K}} \to \mathscr{S}\p{\R^{n-1};\mathbb{K}}$ via $Rf(y) = f(y,0)$ for $y \in \R^{n-1}$.  Then there exists a continuous linear map $\mathcal{R} : \tilde{B}^{s,p}_{q}\p{\R^n;\mathbb{K}} \to X$  with the property that $\mathcal{R}=R$ on $\mathscr{S}\p{\R^n;\mathbb{K}}$.  Moreover, there exists a continuous linear lifting map $\mathcal{E}:X\to\tilde{B}^{s,p}_{q}\p{\R^n;\mathbb{K}}$ such that $\mathcal{R}\mathcal{E}=I$ on $X$.
\end{thm}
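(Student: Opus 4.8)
The plan is to deduce the restriction and extension statements from the fundamental decomposition of Theorem~\ref{fundamental decomposition of screened sobolev spaces}, namely that $\tilde{B}^{s,p}_q(\R^n;\mathbb{K})=\Sigma\big(B^{s,p}_q(\R^n;\mathbb{K}),\dot{W}^{1,p}(\R^n;\mathbb{K})\big)$ with equivalent seminorms, together with the classical trace theory for inhomogeneous Besov spaces and for the homogeneous Sobolev space $\dot{W}^{1,p}$ on the half-space (or on all of $\R^n$ restricting to a hyperplane). The point is that the restriction operator $R$ acts well on each summand separately: on $B^{s,p}_q(\R^n;\mathbb{K})$ the standard trace theorem (valid since $s>1/p$, see e.g. the trace results in~\cite{MR3726909}) gives a bounded surjection $B^{s,p}_q(\R^n;\mathbb{K})\to B^{s-1/p,p}_q(\R^{n-1};\mathbb{K})$ with a bounded linear right inverse, while on $\dot{W}^{1,p}(\R^n;\mathbb{K})$ the trace lands in $\dot{B}^{1-1/p,p}_p(\R^{n-1};\mathbb{K})$, again with a bounded linear right inverse. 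Summing these two statements over the decomposition yields exactly the target space $X$ in~\eqref{label label}.

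The steps, in order. First, recall/record the two classical trace-and-lifting statements: (i) there exist bounded linear maps $R_B:B^{s,p}_q(\R^n;\mathbb{K})\to B^{s-1/p,p}_q(\R^{n-1};\mathbb{K})$ and $E_B:B^{s-1/p,p}_q(\R^{n-1};\mathbb{K})\to B^{s,p}_q(\R^n;\mathbb{K})$ with $R_BE_B=I$ and $R_B=R$ on Schwartz functions; (ii) there exist bounded linear maps $R_W:\dot{W}^{1,p}(\R^n;\mathbb{K})\to\dot{B}^{1-1/p,p}_p(\R^{n-1};\mathbb{K})$ and $E_W:\dot{B}^{1-1/p,p}_p(\R^{n-1};\mathbb{K})\to\dot{W}^{1,p}(\R^n;\mathbb{K})$ with $R_WE_W=I$ (up to the annihilator of constants) and $R_W=R$ on Schwartz functions. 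Second, note that $R_B$ and $R_W$ agree on $\mathscr{S}(\R^n;\mathbb{K})$, both being $R$; since $B^{s,p}_q\cap\dot{W}^{1,p}=\Delta(B^{s,p}_q,\dot{W}^{1,p})$ and $\mathscr{S}$ is dense there (or directly since the two operators are continuous into $\Sigma(B^{s-1/p,p}_q,\dot{B}^{1-1/p,p}_p)$ and agree on a dense subset), they glue to a well-defined operator on the intersection; hence by the universal property of the sum seminorm they induce a bounded linear map $\mathcal{R}:\Sigma(B^{s,p}_q,\dot{W}^{1,p})\to X$, defined by $\mathcal{R}(f_0+f_1)=R_Bf_0+R_Wf_1$, which is independent of the decomposition precisely because $R_B$ and $R_W$ coincide on overlaps. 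Transporting through the equivalence $\tilde{B}^{s,p}_q(\R^n;\mathbb{K})=\Sigma(B^{s,p}_q,\dot{W}^{1,p})$ of Theorem~\ref{fundamental decomposition of screened sobolev spaces} gives $\mathcal{R}:\tilde{B}^{s,p}_q(\R^n;\mathbb{K})\to X$, and $\mathcal{R}=R$ on $\mathscr{S}(\R^n;\mathbb{K})$ since both $R_B$ and $R_W$ restrict to $R$. Third, for the lifting, define $\mathcal{E}:X\to\tilde{B}^{s,p}_q(\R^n;\mathbb{K})$ on a decomposition $g=g_0+g_1\in B^{s-1/p,p}_q(\R^{n-1};\mathbb{K})+\dot{B}^{1-1/p,p}_p(\R^{n-1};\mathbb{K})$ by $\mathcal{E}(g)=E_Bg_0+E_Wg_1$; this lands in $\Sigma(B^{s,p}_q,\dot{W}^{1,p})=\tilde{B}^{s,p}_q(\R^n;\mathbb{K})$, is bounded by construction, and satisfies $\mathcal{R}\mathcal{E}(g)=R_BE_Bg_0+R_WE_Wg_1=g_0+g_1=g$; one must check $\mathcal{E}$ is well-defined independent of the decomposition of $g$, which again follows from the compatibility $R_BE_B=R_WE_W=R$ of the two classical liftings on the overlap $\Delta(B^{s-1/p,p}_q,\dot{B}^{1-1/p,p}_p)$ — more precisely, one fixes once and for all a bounded linear splitting $X\to B^{s-1/p,p}_q\times\dot{B}^{1-1/p,p}_p$ of the sum map (available since $X$ is a sum of semi-Banach spaces) and composes with $E_B\times E_W$ followed by addition, which sidesteps the well-definedness issue entirely.

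The main obstacle I anticipate is pinning down the homogeneous endpoint statement (ii): the classical trace theorems in references such as~\cite{MR3726909} are usually phrased for inhomogeneous Sobolev spaces, and one needs the homogeneous version $R_W:\dot{W}^{1,p}(\R^n)\to\dot{B}^{1-1/p,p}_p(\R^{n-1})$ with a homogeneous lifting, being careful about the polynomial/constant annihilators on both sides (the trace of a constant is a constant, and the relevant homogeneous Besov space has constants in its annihilator, so this is consistent, but it must be checked). A clean way around this is to realize $\dot{W}^{1,p}(\R^n;\mathbb{K})\simeq\dot{H}^{1,p}(\R^n;\mathbb{K})$ via Theorem~\ref{frequency space characterization of homogeneous sobolev spaces}, then use the interpolation characterization $\dot{B}^{1-1/p,p}_p(\R^{n-1};\mathbb{K})=(L^p,\dot{W}^{1,p})_{1-1/p,p}$ from Corollary~\ref{interpolation characterization of Besov spaces} on $\R^{n-1}$ and the analogous interpolation description of the trace, reducing (ii) to an $L^p$-trace statement plus interpolation of bounded operators (Theorem~\ref{truncated and bounded linear mappings}); alternatively, split $\dot{W}^{1,p}=\mathds{P}^-\dot{W}^{1,p}+\mathds{P}^+\dot{W}^{1,p}$ using the Littlewood–Paley projections, handle the high-frequency part by the inhomogeneous trace theorem and the (smooth, low-frequency) part directly. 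Once (ii) is in place, the gluing and infimum-over-decompositions arguments are routine, using only Definition~\ref{admissable semi normed spaces} and the completeness results of Section~\ref{interpolation of seminormed spaces}.
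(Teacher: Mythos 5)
Your construction of $\mathcal{R}$ follows the paper's route exactly: both use the sum decomposition $\tilde{B}^{s,p}_q(\R^n;\mathbb{K})=\Sigma(B^{s,p}_q(\R^n;\mathbb{K}),\dot{W}^{1,p}(\R^n;\mathbb{K}))$, trace each summand via the classical theorems, and verify compatibility on the overlap $W^{1,p}(\R^n;\mathbb{K})$ by density of Schwartz functions. That part is fine.

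The gap is in your construction of $\mathcal{E}$. You first assert that well-definedness of $\mathcal{E}(g_0+g_1)=E_Bg_0+E_Wg_1$ ``follows from the compatibility $R_BE_B=R_WE_W=R$,'' but that identity says nothing useful here: $R_BE_B$ and $R_WE_W$ are both the identity on their respective \emph{domains}, and well-definedness of $\mathcal{E}$ would require instead that $E_B$ and $E_W$ agree (modulo the annihilator) on the overlap $\Delta(B^{s-1/p,p}_q,\dot{B}^{1-1/p,p}_p)$ --- and there is no reason for two independently chosen lifting operators to do so. You then propose to ``sidestep'' this by fixing a bounded linear splitting $X\to B^{s-1/p,p}_q\times\dot{B}^{1-1/p,p}_p$ of the sum map, claiming this is ``available since $X$ is a sum of semi-Banach spaces.'' That claim is false in general: the existence of a bounded linear section of the sum map is a complemented-subspace / lifting-property condition, not something one gets for free from completeness. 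This is precisely the point the paper has to address, and it does so by constructing an explicit splitting: the high and low pass filters $\mathds{P}^{\pm}$ of Corollary~\ref{2nd sum characterization} map $X$ continuously and linearly into $B^{s-1/p,p}_q(\R^{n-1};\mathbb{K})$ and $\dot{B}^{1-1/p,p}_p(\R^{n-1};\mathbb{K})$ respectively, and the paper defines $\mathcal{E}f=\mathcal{E}^-\mathds{P}^-f+\mathcal{E}^+\mathds{P}^+f$, which is manifestly single-valued. You mention $\mathds{P}^{\pm}$ only as an alternate route to the homogeneous trace statement for $\dot{W}^{1,p}$, not where it is actually needed, namely to make the lift well-defined and bounded. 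To repair the proposal, replace the ``bounded linear splitting is available'' claim with the explicit frequency-space splitting via $\mathds{P}^{\pm}$.
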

\begin{proof}
The trace theory in Section 2.7.2 of~\cite{MR3024598} and Chapter 18 of~\cite{MR3726909} provides continuous linear maps $\mathcal{R}^+ : B^{s,p}_q\p{\R^n;\mathbb{K}} \to B^{s-1/p,p}_p\p{\R^{n-1};\mathbb{K}}$ and $\mathcal{R}^-: \dot{W}^{1,p}\p{\R^n;\mathbb{K}}\to \dot{B}^{1-1/p,p}_p\p{\R^{n-1};\mathbb{K}}$ such that $\mathcal{R}^+u=Ru = \mathcal{R}^{-} u$ for all $u \in \mathscr{S}\p{\R^n;\mathbb{K}}$.  Moreover, the restriction of $\mathcal{R}^-$ to $W^{1,p}\p{\R^n;\mathbb{K}}$ maps continuously into $B^{1-1/p,p}_p\p{\R^{n-1};\mathbb{K}}$.

We claim that $\mathcal{R}^+=\mathcal{R}^-$ on $\Delta\big(B^{s,p}_q\p{\R^n;\mathbb{K}}, \dot{W}^{1,p}\p{\R^n;\mathbb{K}}\big) = W^{1,p}\p{\R^n;\mathbb{K}}$. Let $u\in W^{1,p}\p{\R^n}$ and take $\cb{u_m}_{m\in\N}\subset\mathscr{S}\p{\R^n;\mathbb{K}}$ such that $u_m\to u$ in  $W^{1,p}\p{\R^n;\mathbb{K}}$ as $m\to\infty$. Then $\mathcal{R}^+u_m=Ru_m=\mathcal{R}^-u_m$ converges to both $\mathcal{R}^+u$ in $B^{s-1/p,p}_q\p{\R^{n-1};\mathbb{K}}$ and $\mathcal{R}^-u$ in $B^{1-1/p,p}_p\p{\R^n;\mathbb{K}}$ as $m\to\infty$. These are strongly compatible Hausdorff vector spaces, so $\mathcal{R}^+u=\mathcal{R}^-u$, and the claim is proved.

Corollary~\ref{sum characterization of screened sobolev spaces} showed that $\tilde{B}^{s,p}_{q}\p{\R^n;\mathbb{K}}=\Sigma\big(\dot{W}^{1,p}\p{\R^n;\mathbb{K}},B^{s,p}_q\p{\R^n;\mathbb{K}}\big)$, so if $u\in\tilde{B}^{s,p}_{q}\p{\R^n;\mathbb{K}}$ we may decompose it as $u=v+w$ for $v \in \dot{W}^{1,p}\p{\R^n;\mathbb{K}}$ and $w\in B^{s,p}_q\p{\R^n;\mathbb{K}}$ as above.  Given two such decompositions, $u=v+w=\tilde{v}+\tilde{w}$, we have that $v-\tilde{v} = \tilde{w}-w \in\Delta\big(\dot{W}^{1,p}\p{\R^n;\mathbb{K}},B^{s,p}_q\p{\R^n;\mathbb{K}}\big)$, and so the above claim shows that $\mathcal{R}^-\p{v-\tilde{v}}=\mathcal{R}^+\p{\tilde{w}-w}$, and hence $\mathcal{R}^-\tilde{v}+\mathcal{R}^+\tilde{w}=\mathcal{R}^-v+\mathcal{R}^+w$.   This allows us to define the linear map $\mathcal{R}:\tilde{B}^{s,p}_{q}\p{\R^n;\mathbb{K}}\to X$ via $\mathcal{R}u = \mathcal{R}^- v + \mathcal{R}^+ w$ where $u= v+w$ is a decomposition as above.  This map takes values in $X$ and is continuous due to the mapping properties of $\mathcal{R}^\pm$.  Clearly, $\mathcal{R} = R$ on the Schwartz class.

We now construct $\mathcal{E}$. The results in Chapter 18 of~\cite{MR3726909} and Section 2.7.2 in~\cite{MR3024598} provide continuous linear maps $\mathcal{E}^-:\dot{B}^{1-1/p,p}_p\p{\R^{n-1};\mathbb{K}}\to\dot{W}^{1,p}\p{\R^n;\mathbb{K}}$ and $\mathcal{E}^+:B^{s-1/p,p}_q\p{\R^{n-1};\mathbb{K}}\to B^{s,p}_q\p{\R^n;\mathbb{K}}$
with the property that for all $v \in \dot{B}^{1-1/p,p}_p\p{\R^{n-1};\mathbb{K}}$ and $w \in B^{s-1/p,p}_q\p{\R^{n-1};\mathbb{K}}$ we have $\mathcal{R}^-\mathcal{E}^-v=v$ and $\mathcal{R}^+\mathcal{E}^+w=w$.  We paste $\mathcal{L}^\pm$ together with the use of the high and low pass filters $\mathds{P}^{\pm}$ from Corollary~\ref{2nd sum characterization}.  Indeed, we define  $\mathcal{E}:X\to\tilde{B}^{s,p}_{q}\p{\R^n;\mathbb{K}}$ via $\mathcal{E}f=\mathcal{L}^{-}\mathds{P}^-f+\mathcal{L}^{+}\mathds{P}^+f$.

Arguing as in the Corollary~\ref{2nd sum characterization}, we have that $\mathds{P}^+:X\to B^{s-1/p,p}_q\p{\R^n;\mathbb{K}}$ and $\mathds{P}^-:X\to\dot{B}^{1-1/p,p}_p\p{\R^n;\mathbb{K}}$ are continuous linear maps.   Hence, $\mathcal{E}$ is as well.  Moreover, for any $w\in X$ we can compute
\begin{equation}
    \mathcal{R}\mathcal{L}w=\mathcal{R}^-\mathcal{L}^-\mathds{P}^-w+\mathcal{R}^+\mathcal{L}^+\mathds{P}^+w=\p{\mathds{P}^-+\mathds{P}^+}w=w,
\end{equation}
and so $\mathcal{E}$ is the desired right inverse for $\mathcal{R}$.
\end{proof}
\appendix
\section{Vector topologies}\label{seminorm topology}

In this appendix we recall notions of topology in vector spaces with a particular interest in seminormed spaces. We will state several elementary facts and not attempt to provide proofs. The interested reader is referred to Taylor's book~\cite{MR564653} or to Section 2.4 of \cite{leoni2019traces}.

\begin{defn}[Topological vector spaces and annihilators]\label{defn of topological vector space}
Suppose that $X$ is a vector space over $\mathbb{K}\in\cb{\R,\C}$ equipped with a topology $\tau$.\begin{enumerate}
    \item We say that $\p{X,\tau}$ is a topological vector space if the vector operations, $+:X\times X\to X$ and $\cdot:\mathbb{K}\times X\to X$, are continuous.
    \item We define the annihilator of $\tau$ to be the set $\mathfrak{A}\p{X}=\Bar{\cb{0}}^{\tau}$, i.e. the $\tau$-closure of $0$.
\end{enumerate}
\end{defn}

Note that some authors enforce that topological vector spaces are a priori Hausdorff. We do not build this into our definition since our interests include vector spaces topologized by seminorms. The annihilator of a topological vector space measures how far away the space is from being Hausdorff. The following proposition quantifies this precisely.

\begin{prop}\label{kernel measures failure of a space to be hausdorff} 
Suppose that $\p{X,\tau}$ is a topological vector space in the sense of Definition~\ref{defn of topological vector space}. Then $\mathfrak{A}\p{X}$ is a closed vector subspace of $X$ that  measures the failure of $\p{X,\tau}$ to be a Hausdorff space in the following sense.  If $\p{Y,\tilde{\tau}}$ is a topological space, $f:Y\to X$, $y\in Y$,  $x_0,x_1\in X$, and  $f\p{z}\to x_0$ and  $f\p{z}\to x_1$  as $z\to y$, then $x_0-x_1\in\mathfrak{A}\p{X}$.
\end{prop}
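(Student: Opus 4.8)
The statement is Proposition~\ref{kernel measures failure of a space to be hausdorff}, which has two parts: first, that $\mathfrak{A}\p{X}=\Bar{\cb{0}}^\tau$ is a closed vector subspace of $X$; second, the uniqueness-of-limits-modulo-annihilator property. The plan is to prove the subspace claim directly from the continuity of the vector operations, and then to deduce the limit statement by a standard net/filter argument using translation invariance of the topology.

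\textbf{Step 1: $\mathfrak{A}\p{X}$ is a closed subspace.} Closedness is immediate since $\mathfrak{A}\p{X}$ is by definition a closure. For the subspace property, I would argue as follows. Since translation $x\mapsto x+a$ is a homeomorphism of $X$ for each fixed $a\in X$ (it is continuous with continuous inverse $x\mapsto x-a$, both being restrictions of the continuous addition map), we have $\overline{\cb{a}}^\tau = a + \overline{\cb{0}}^\tau = a+\mathfrak{A}\p{X}$ for every $a$. In particular, if $a\in\mathfrak{A}\p{X}$, then $0\in\overline{\cb{0}}^\tau$ forces (applying the homeomorphism $x \mapsto x+a$) $a \in \overline{\cb{a}}^\tau$, but more usefully: for $a,b\in\mathfrak{A}\p{X}$ I want $a+b\in\mathfrak{A}\p{X}$. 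One clean way: the map $\phi:X\times X\to X$, $\phi(x,y)=x+y$, is continuous, and $(0,0)\in\overline{\cb{(a,b)}}$ in the product topology whenever $a,b\in\overline{\cb{0}}$ — wait, that is the wrong direction. Instead, observe $(a,b)\in \overline{\cb{0}}\times\overline{\cb{0}} \subseteq \overline{\cb{(0,0)}}$ (closure of a product contains product of closures; actually $\overline{A\times B}=\overline{A}\times\overline{B}$), so by continuity of $\phi$, $\phi(a,b)=a+b\in\overline{\phi(\cb{(0,0)})}=\overline{\cb{0}}=\mathfrak{A}\p{X}$. Similarly, scalar multiplication $\mathbb{K}\times X\to X$ is continuous and $(\lambda,a)\in\overline{\cb{\lambda}}\times\overline{\cb{0}} = \overline{\cb{(\lambda,0)}}$, so $\lambda a\in\overline{\cb{\lambda\cdot 0}}=\overline{\cb{0}}=\mathfrak{A}\p{X}$. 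This shows $\mathfrak{A}\p{X}$ is closed under addition and scalar multiplication, hence a (closed) vector subspace.

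\textbf{Step 2: the limit property.} Suppose $f:Y\to X$ with $f(z)\to x_0$ and $f(z)\to x_1$ as $z\to y$. I want $x_0-x_1\in\mathfrak{A}\p{X}$, i.e. $x_0-x_1\in\overline{\cb{0}}^\tau$. Equivalently, every open neighborhood $U$ of $0$ must meet $\cb{x_0-x_1}$ — no, rather: $x_0 - x_1 \in \overline{\{0\}}$ means every neighborhood of $x_0-x_1$ contains $0$. The cleanest route: consider the map $g:Y\to X$, $g(z)=f(z)-x_1$. By continuity of subtraction, $g(z)\to x_0-x_1$ and $g(z)\to 0$ as $z\to y$. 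Now suppose for contradiction that $x_0-x_1\notin\mathfrak{A}\p{X}$. Then $x_0 - x_1 \notin \overline{\{0\}}$, so there is an open set $V$ with $x_0-x_1\in V$ and $0\notin V$; equivalently $0$ has a neighborhood, namely $X\setminus\overline{\{x_0-x_1\}}$... hmm, let me instead use that in a topological vector space one can separate using that $\{0\}$ has a neighborhood base. Actually the simplest: since $g(z)\to 0$, for the open neighborhood $X \setminus \overline{\{x_0 - x_1\}}$ of $0$ (which is a neighborhood of $0$ precisely when $0 \notin \overline{\{x_0-x_1\}}$, i.e. by the symmetric formula $\overline{\{x_0-x_1\}} = (x_0-x_1)+\mathfrak{A}(X)$, this holds iff $x_1 - x_0 \notin \mathfrak{A}(X)$, which since $\mathfrak{A}(X)$ is a subspace is equivalent to $x_0 - x_1 \notin \mathfrak{A}(X)$), there is a neighborhood $W$ of $y$ with $g(W)\subseteq X\setminus\overline{\{x_0-x_1\}}$; but $g(z)\to x_0-x_1$ simultaneously forces $g$ to eventually enter every neighborhood of $x_0-x_1$, and the two requirements are incompatible on $W\cap W'$. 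This contradiction gives $x_0-x_1\in\mathfrak{A}\p{X}$.

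\textbf{Main obstacle.} The only delicate point is being careful with the direction of inclusions in the closure computations and with the precise meaning of "$f(z)\to x$ as $z\to y$" when $Y$ is merely a topological space (the limit is along the neighborhood filter of $y$, and one should note $y$ need not be in the domain of $f$ in the sense of the excerpt's phrasing, but the filter argument is insensitive to this). Everything else is a routine unwinding of the topological-vector-space axioms via the homeomorphism property of translations; I would keep the write-up brief and cite~\cite{MR564653} or Section 2.4 of~\cite{leoni2019traces} for the standard facts, exactly as the appendix's preamble suggests.
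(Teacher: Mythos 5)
The paper's appendix explicitly declines to prove this proposition, citing~\cite{MR564653} and Section 2.4 of~\cite{leoni2019traces}, so there is no paper proof to compare against; I'll assess your argument on its own terms.

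Your Step 1 is correct: the identity $\overline{A}\times\overline{B}=\overline{A\times B}$, continuity of the vector operations, and the Hausdorff property of the scalar field (so that $\overline{\{\lambda\}}=\{\lambda\}$) give exactly the closure-under-operations you need, and your translation argument $\overline{\{a\}}=a+\overline{\{0\}}$ is also right.  Step 2, however, has a genuine gap. You correctly reduce to the claim that $g(z)\to 0$ and $g(z)\to a:=x_0-x_1$ force $a\in\overline{\{0\}}$, and you correctly observe that if $a\notin\overline{\{0\}}$ then $U:=X\setminus\overline{\{a\}}$ is an open neighborhood of $0$.  But the intended contradiction "$g$ eventually in $U$, yet $g$ eventually in every neighborhood of $a$" is not a contradiction at all: for it to be one you would need some neighborhood of $a$ disjoint from $U$, i.e. contained in the closed set $\overline{\{a\}}$, and $\overline{\{a\}}$ is essentially never a neighborhood of $a$ (in a seminormed space $\overline{\{a\}}=a+\mathfrak{A}(X)$, which contains no ball unless $\mathfrak{A}(X)=X$).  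All the double-limit hypothesis gives you for free is that every neighborhood of $0$ meets every neighborhood of $a$, and translation-invariance alone cannot upgrade that to $a\in\overline{\{0\}}$; you must invoke continuity of addition at $(0,0)$.

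The standard fix: translation-invariance gives the characterization $\overline{\{0\}}=\bigcap\{V : V \text{ a neighborhood of } 0\}$.  Fix such a $V$.  Continuity of $+$ at $(0,0)$ produces neighborhoods $V_1,V_2$ of $0$ with $V_1+V_2\subseteq V$, and replacing $V_2$ by $V_2\cap(-V_2)$ makes it symmetric.  Since $g(z)\to 0$ there is a neighborhood $W_1$ of $y$ with $g(W_1)\subseteq V_1$; since $g(z)\to a$ there is a neighborhood $W_2$ of $y$ with $g(W_2)\subseteq a+V_2$.  Picking $z\in W_1\cap W_2$ (a nonempty neighborhood of $y$) yields $g(z)\in V_1$ and $a-g(z)\in -V_2=V_2$, hence $a=g(z)+(a-g(z))\in V_1+V_2\subseteq V$.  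Since $V$ was arbitrary, $a\in\overline{\{0\}}$, which is the claim.
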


A topological vector space may be equipped with various notions of size. On way of doing this is through a seminorm.

\begin{defn}[Seminormed spaces]\label{defn of seminorm}
Suppose that $\mathbb{K}\in\cb{\R,\C}$ and $X$ is a vector space over $\mathbb{K}$. We say that a function $\sb{\cdot}:X\to\R$ is a seminorm if the following properties hold: \emph{nonnegativity}:  $\sb{x}\ge0$ for all $x\in X$; \emph{subadditivity}:  $\forall\;x,y\in X$, $\sb{x+y}\le\sb{x}+\sb{y}$; \emph{homogeneity}:  $\forall\;\al\in\mathbb{K}$ and $\forall\;x\in X$,  $\sb{\al x}=\abs{\al}\sb{x}$. We say that the pair $\p{X,\sb{\cdot}}$ is a seminormed space. This space is endowed with the topology $\tau=\big\{\sb{\cdot}^{-1}\p{U}\;:\;U\subseteq\R\;\text{is open}\big\}$. Note that this is the smallest topology in which $\sb{\cdot}$ is a continuous mapping.
\end{defn}
Seminormed spaces are topological vector spaces and we have the following realization of their annihilators.
\begin{prop}\label{seminormed spaces are TVS} 
If $\p{X,\sb{\cdot}}$ is a seminormed space, then the topology $\tau$ from Definition~\ref{defn of seminorm} makes $\p{X,\tau}$ a topological vector space in the sense of Definition~\ref{defn of topological vector space}. Moreover, $\mathfrak{A}\p{X}=\cb{x\in X\;:\;\sb{x}=0}$ is a closed vector subspace.
\end{prop}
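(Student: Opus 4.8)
The plan is to first isolate the one inequality that drives everything: for $x,y\in X$ subadditivity and homogeneity give $\sb{x}=\sb{(x-y)+y}\le\sb{x-y}+\sb{y}$ together with the symmetric bound, hence $\abs{\sb{x}-\sb{y}}\le\sb{x-y}$. In particular $\sb{\cdot}$ is continuous, and — the one point that genuinely needs care — a neighborhood base at a point $x_0$ for $\tau$ is given by the translated seminorm balls $x_0+B_\ep$, $\ep\in\R^+$, where $B_\ep=\cb{x\in X:\sb{x}<\ep}$; equivalently $\tau$ is the topology of the pseudometric $d(x,y)=\sb{x-y}$. Once this description is in hand, the rest is routine.

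For continuity of $+:X\times X\to X$ at $(x_0,y_0)$, I would fix a basic neighborhood $(x_0+y_0)+B_\ep$ of the sum and note that if $x\in x_0+B_{\ep/2}$ and $y\in y_0+B_{\ep/2}$, then $\sb{(x+y)-(x_0+y_0)}\le\sb{x-x_0}+\sb{y-y_0}<\ep$, so $(x_0+B_{\ep/2})+(y_0+B_{\ep/2})\subseteq(x_0+y_0)+B_\ep$, which is exactly continuity at $(x_0,y_0)$. For continuity of scalar multiplication at $(\al_0,x_0)\in\mathbb{K}\times X$, I would fix a neighborhood $\al_0x_0+B_\ep$ and, for $\abs{\al-\al_0}<\del$ and $x\in x_0+B_\del$, use subadditivity and homogeneity to estimate
\begin{equation}
  \sb{\al x-\al_0x_0}\le\abs{\al}\,\sb{x-x_0}+\abs{\al-\al_0}\,\sb{x_0}\le(\abs{\al_0}+\del)\del+\del\,\sb{x_0},
\end{equation}
which is $<\ep$ once $\del$ is small in terms of $\ep$, $\al_0$, and $\sb{x_0}$; this yields the required product neighborhood. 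Together these show $(X,\tau)$ is a topological vector space.

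For the annihilator I would compute $\Bar{\cb{0}}^\tau$ directly from the neighborhood base: $x\in\Bar{\cb{0}}$ iff $0\in x+B_\ep$ for every $\ep\in\R^+$, iff $\sb{x}=\sb{-x}<\ep$ for every $\ep$, iff $\sb{x}=0$; hence $\mathfrak{A}(X)=\cb{x\in X:\sb{x}=0}$. That this set is a vector subspace is immediate from $\sb{\al x+\be y}\le\abs{\al}\sb{x}+\abs{\be}\sb{y}$, and that it is closed follows either because it is a closure or, more concretely, because it equals $\sb{\cdot}^{-1}(\cb{0})$ with $\sb{\cdot}$ continuous and $\cb{0}$ closed in $\R$.

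The main — really the only — obstacle I anticipate is bookkeeping rather than mathematics: pinning down at the outset that $\tau$ is generated by the translates of the balls $B_\ep$ (equivalently by the pseudometric $d(x,y)=\sb{x-y}$), since this is precisely what makes addition and scalar multiplication continuous, and remembering throughout that $\tau$ need not be Hausdorff, so that $\Bar{\cb{0}}$ can be strictly larger than $\cb{0}$.
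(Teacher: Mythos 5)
The paper does not prove this proposition: the appendix opens by saying it will state facts without proof, deferring to Taylor's book and to \cite{leoni2019traces}, so there is nothing in the paper for your argument to be checked against. Taken on its own terms, your proof of continuity of the vector operations and your computation of the annihilator are the standard ones and the estimates are all correct.

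However, the step you flagged as ``the one point that genuinely needs care'' --- that the translated balls $x_0 + B_\ep$, $B_\ep = \cb{x : \sb{x} < \ep}$, form a neighborhood base for $\tau$, equivalently that $\tau$ is the pseudometric topology of $d(x,y) = \sb{x-y}$ --- is not a consequence of Definition~\ref{defn of seminorm}; it is a silent replacement of it. The paper literally sets $\tau = \cb{\sb{\cdot}^{-1}\p{U} : U \subseteq \R \text{ open}}$ and glosses it as ``the smallest topology in which $\sb{\cdot}$ is continuous,'' i.e.\ the initial topology of the single map $\sb{\cdot}: X \to \R$. In that topology every open set containing $x_0$ also contains every $x$ with $\sb{x} = \sb{x_0}$, so $x_0 + B_\ep$ is in general not $\tau$-open and $\tau$ is strictly coarser than the pseudometric topology; worse, addition fails to be continuous for it. For instance with $X = \R$, $\sb{x} = \abs{x}$, and $V = \cb{x : 1 < \abs{x} < 2}$, every basic $\tau \times \tau$-neighborhood of $(1,\tfrac12)$ also contains $(-1,\tfrac12)$, whose sum $-\tfrac12$ lies outside $V$, so $+^{-1}(V)$ is not open. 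With the literal $\tau$ the proposition is therefore false. The topology you actually argue with --- base $\cb{x_0 + B_\ep}$ --- is the one the proposition needs and is clearly what the paper intends, but as written Definition~\ref{defn of seminorm} contains a misprint, and what you present as bookkeeping is in fact a correction of the definition; it is worth flagging this explicitly rather than absorbing it silently.
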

Note that seminormed spaces are, in particular, semimetric spaces. Hence, we can quotient out be the annihilator of the seminorm and the resulting structure is, at the least, a metric space; but actually, this quotient space results in a normed vector space.

\begin{prop}[Quotient by annihilator]\label{quotient space of seminormed space}
Let $\p{X,\sb{\cdot}}$ be a seminormed space. We make the following definitions:
\begin{enumerate}
    \item For $x,y \in X$ we say that $x\sim y$  if $x-y\in\mathfrak{A}\p{X}$. This obviously defines an equivalence relation on $X$. Let $X/\mathfrak{A}\p{X}$ denote the resulting set of equivalence classes.
    \item We define the function $\abs{\sb{\cdot}}:X/\mathfrak{A}\p{X}\to\R$ via $\abs{\sb{Y}}=\sb{y}, \text{ where }y\in Y\in X/\mathfrak{A}\p{X}$.
\end{enumerate}
Then, it holds that $\abs{\sb{\cdot}}$ is well-defined and equipping $X/\mathfrak{A}\p{X}$ with $\abs{\sb{\cdot}}$ results in a normed vector space.
\end{prop}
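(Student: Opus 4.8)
\textbf{Proof proposal for Proposition~\ref{quotient space of seminormed space}.}

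The plan is to verify in order the two assertions: that $\abs{\sb{\cdot}}$ is well-defined on the quotient $X/\mathfrak{A}\p{X}$, and that it is a norm making $X/\mathfrak{A}\p{X}$ into a normed vector space. This is entirely routine, and the argument follows the standard template for quotienting a seminorm by its kernel; the only thing to be careful about is to use the seminorm axioms from Definition~\ref{defn of seminorm} together with the identification $\mathfrak{A}\p{X}=\cb{x\in X\;:\;\sb{x}=0}$ from Proposition~\ref{seminormed spaces are TVS}.

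First I would check well-definedness. Suppose $x,y$ lie in the same equivalence class $Y\in X/\mathfrak{A}\p{X}$, so $x-y\in\mathfrak{A}\p{X}$, i.e. $\sb{x-y}=0$. Subadditivity gives $\sb{x}\le\sb{y}+\sb{x-y}=\sb{y}$ and symmetrically $\sb{y}\le\sb{x}$, hence $\sb{x}=\sb{y}$; therefore $\abs{\sb{Y}}=\sb{y}$ does not depend on the chosen representative $y\in Y$. I would also note that the vector space operations on $X/\mathfrak{A}\p{X}$ are well-defined precisely because $\mathfrak{A}\p{X}$ is a vector subspace (Proposition~\ref{seminormed spaces are TVS}), so $X/\mathfrak{A}\p{X}$ is a genuine quotient vector space over $\mathbb{K}$.

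Next I would verify that $\abs{\sb{\cdot}}$ satisfies the norm axioms. Nonnegativity and homogeneity are inherited directly: for $Y\in X/\mathfrak{A}\p{X}$ with representative $y$ and $\al\in\mathbb{K}$, the class $\al Y$ has representative $\al y$, and $\abs{\sb{\al Y}}=\sb{\al y}=\abs{\al}\sb{y}=\abs{\al}\abs{\sb{Y}}$; likewise $\abs{\sb{Y}}=\sb{y}\ge 0$. For subadditivity, if $Y,Z$ have representatives $y,z$ then $Y+Z$ has representative $y+z$, and $\abs{\sb{Y+Z}}=\sb{y+z}\le\sb{y}+\sb{z}=\abs{\sb{Y}}+\abs{\sb{Z}}$. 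Finally, positive definiteness: if $\abs{\sb{Y}}=0$ then for any representative $y\in Y$ we have $\sb{y}=0$, so $y\in\mathfrak{A}\p{X}$, which means $y\sim 0$ and hence $Y$ is the zero class in $X/\mathfrak{A}\p{X}$. Thus $\abs{\sb{\cdot}}$ is a norm, and $\big(X/\mathfrak{A}\p{X},\abs{\sb{\cdot}}\big)$ is a normed vector space.

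There is no real obstacle here; the statement is a bookkeeping lemma whose content is that killing the kernel of a seminorm is exactly the move that upgrades it to a norm. The only point requiring a (trivial) argument rather than a direct appeal to an axiom is well-definedness, and that is handled by the two-sided subadditivity estimate above.
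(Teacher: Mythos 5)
Your proof is correct and complete. Note that the paper does not actually supply a proof of this proposition: Appendix~\ref{seminorm topology} explicitly says it will state several elementary facts without proofs, deferring to Taylor's book and to \cite{leoni2019traces}. Your argument — well-definedness via the two-sided subadditivity estimate $\sb{x}\le\sb{y}+\sb{x-y}$ when $\sb{x-y}=0$, then checking the norm axioms and using $\mathfrak{A}(X)=\{x:\sb{x}=0\}$ for positive definiteness — is exactly the standard argument those references would give, so there is nothing to flag.
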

This leads us to a natural notion of completeness in seminormed spaces.
\begin{defn}[Semi-Banach spaces]\label{semi-banach spaces} We say that a seminormed space $\p{X,\sb{\cdot}}$ is semi-Banach or complete if the normed quotient space $\p{X/\mathfrak{A}\p{X},\abs{\sb{\cdot}}}$ is complete or Banach as a normed vector space.
\end{defn}
The following characterization of completeness in seminormed spaces spaces is often useful.
\begin{lem}\label{characterizations of completeness in seminormed spaces}
Suppose that $\p{X,\sb{\cdot}}$ is a seminormed space. Then, $\p{X,\sb{\cdot}}$ is semi-Banach if and only if $\forall\;\cb{x_k}_{k=0}^\infty\subset X$ Cauchy, there exists $x\in X$ such that $\lim_{k\to\infty}\sb{x-x_k}=0$ if and only if $\forall\;\cb{x_k}_{k=0}^\infty\subset X$ such that $\sum_{k=0}^\infty\sb{x_k}<\infty$ there exists $x\in X$ such that $\big[x-\sum_{k=0}^Mx_k\big]\to0$ as $M\to\infty$.
\end{lem}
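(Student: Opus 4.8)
The statement to prove is Lemma~\ref{characterizations of completeness in seminormed spaces}: for a seminormed space $\p{X,\sb{\cdot}}$, the following are equivalent: (i) $\p{X,\sb{\cdot}}$ is semi-Banach (i.e. $\p{X/\mathfrak{A}\p{X},\abs{\sb{\cdot}}}$ is Banach), (ii) every Cauchy sequence in $X$ has a limit in $X$, (iii) every series $\sum_k x_k$ with $\sum_k \sb{x_k}<\infty$ has partial sums converging in $X$ to some $x\in X$.

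\medskip

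The plan is to prove (i)$\imp$(ii)$\imp$(iii)$\imp$(i) in a cycle. For (i)$\imp$(ii): given a Cauchy sequence $\cb{x_k}$ in $X$, its image $\cb{[x_k]}$ under the quotient map $q:X\to X/\mathfrak{A}\p{X}$ is Cauchy in the normed quotient, since $\abs{\sb{[x_k]-[x_m]}}=\sb{x_k-x_m}$. By hypothesis there is $[x]\in X/\mathfrak{A}\p{X}$ with $\abs{\sb{[x_k]-[x]}}\to 0$; picking any representative $x$, we get $\sb{x_k-x}\to 0$. For (ii)$\imp$(iii): suppose $\sum_k\sb{x_k}<\infty$. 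Then the partial sums $S_M=\sum_{k=0}^M x_k$ form a Cauchy sequence, because for $M>N$ we have $\sb{S_M-S_N}\le\sum_{k=N+1}^M\sb{x_k}$, which is the tail of a convergent series and hence tends to $0$; by (ii) they converge to some $x\in X$, which is exactly the assertion $\ssb{x-\sum_{k=0}^M x_k}\to 0$.

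\medskip

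The only step requiring a little care is (iii)$\imp$(i): we must show $\p{X/\mathfrak{A}\p{X},\abs{\sb{\cdot}}}$ is Banach. It suffices to show that every absolutely convergent series in $X/\mathfrak{A}\p{X}$ converges (this is the standard characterization of completeness for normed spaces). So let $\cb{Y_k}\subset X/\mathfrak{A}\p{X}$ with $\sum_k\abs{\sb{Y_k}}<\infty$. Choose representatives $x_k\in Y_k$; then $\sb{x_k}=\abs{\sb{Y_k}}$, so $\sum_k\sb{x_k}<\infty$, and by (iii) there is $x\in X$ with $\ssb{x-\sum_{k=0}^M x_k}\to 0$. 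Applying the quotient map $q$, which is continuous (indeed $\abs{\sb{q(u)-q(v)}}=\sb{u-v}$), gives $\abs{\ssb{[x]-\sum_{k=0}^M Y_k}}\to 0$, so the series $\sum_k Y_k$ converges in the quotient. Hence $X/\mathfrak{A}\p{X}$ is complete, i.e. Banach, which is precisely (i).

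\medskip

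There is no real obstacle here; the proof is essentially bookkeeping once one recalls the standard fact that a normed space is complete iff every absolutely convergent series converges, and that the quotient map is an isometry onto its image in the sense $\abs{\sb{q(u)-q(v)}}=\sb{u-v}$ (which is immediate from Proposition~\ref{quotient space of seminormed space}). The mild subtlety worth a sentence is the need to \emph{choose representatives} when passing from the quotient back to $X$ in the step (iii)$\imp$(i), and to note that the choice of representative does not affect the seminorm. I would write the proof as the three implications above, each a couple of lines.
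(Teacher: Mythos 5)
Your cycle of implications is correct and complete. The paper itself offers no proof of this lemma (it is stated in the appendix with proofs omitted and the reader referred to Taylor's book and to Section 2.4 of Leoni--Tice), so there is no internal argument to compare against, but your argument is exactly the standard bookkeeping one: (i)$\Rightarrow$(ii) by pushing a Cauchy sequence through the quotient map and lifting a representative of the limit, (ii)$\Rightarrow$(iii) by noting absolute summability forces the partial sums to be Cauchy, and (iii)$\Rightarrow$(i) by invoking the absolutely-convergent-series criterion for completeness of normed spaces and choosing representatives. The one small wording quibble is your phrase ``the quotient map is an isometry onto its image''; since the quotient map is surjective and not injective, it is better described as a seminorm-preserving surjection, i.e. $\abs{\sb{q(u)-q(v)}}=\sb{u-v}$ as you in fact wrote. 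The substance is right.
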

We now turn our attention to linear mappings between seminormed spaces.
\begin{prop}[Properties of linear mappings]\label{properties of linear mappings}
Let $\p{X_0,\sb{\cdot}_0}$ and  $\p{X_1,\sb{\cdot}_1}$ be seminormed spaces, and $T:X_0\to X_1$ be a linear mapping. Then $T$ is continuous if and only if there is a constant $c\in\R^+$ such that for all $x\in X_0$ we have the bound $\sb{Tx}_1\le c\sb{x}_0$.  If either condition holds, then $T\mathfrak{A}\p{X_0}\subseteq\mathfrak{A}\p{X_1}$.
\end{prop}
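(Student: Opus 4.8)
The plan is to establish the biconditional by proving each implication separately, working directly from the fact that the topology on a seminormed space $(X_i,\sb{\cdot}_i)$ is, by Definition, the initial topology induced by $\sb{\cdot}_i$, so that every open subset of $X_i$ has the form $\sb{\cdot}_i^{-1}(U)$ for some open $U\subseteq\R$. The annihilator inclusion will then drop out of the bound together with the description $\mathfrak{A}(X_i)=\{x:\sb{x}_i=0\}$ from Proposition~\ref{seminormed spaces are TVS}.

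For the direction ``bound $\Rightarrow$ continuous'', suppose $\sb{Tx}_1\le c\sb{x}_0$ for all $x$, with $c\in\R^+$. A generic open subset of $X_1$ is $\sb{\cdot}_1^{-1}(U)$, and its preimage under $T$ is $\{x\in X_0:\sb{Tx}_1\in U\}$; I would show this set is a neighbourhood of each of its points, hence open. Given $x_0$ with $\sb{Tx_0}_1\in U$, choose $\delta>0$ with $(\sb{Tx_0}_1-\delta,\sb{Tx_0}_1+\delta)\subseteq U$. The translate $x_0+\{x:\sb{x}_0<\delta/c\}$ is open, since translations are homeomorphisms on the topological vector space $X_0$ (Proposition~\ref{seminormed spaces are TVS}), and it lies inside the preimage: for $x$ in it, linearity of $T$, the reverse triangle inequality for $\sb{\cdot}_1$, and the hypothesized bound give $\abs{\sb{Tx}_1-\sb{Tx_0}_1}\le\sb{T(x-x_0)}_1\le c\sb{x-x_0}_0<\delta$, so $\sb{Tx}_1\in U$. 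Since a set that is a neighbourhood of each of its points is open, $T$ is continuous.

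For the converse, suppose $T$ is continuous. Then $\{x\in X_0:\sb{Tx}_1<1\}=T^{-1}\big(\sb{\cdot}_1^{-1}((-1,1))\big)$ is an open subset of $X_0$ containing $0$, hence equals $\sb{\cdot}_0^{-1}(U)$ for an open $U\subseteq\R$ with $0=\sb{0}_0\in U$; picking $\eta>0$ with $(-\eta,\eta)\subseteq U$ yields the implication $\sb{x}_0<\eta\Rightarrow\sb{Tx}_1<1$. A homogeneity argument then finishes the proof: if $\sb{x}_0>0$, apply this to $tx$ with $t=\eta/(2\sb{x}_0)$ to obtain $\sb{Tx}_1\le(2/\eta)\sb{x}_0$; if $\sb{x}_0=0$, apply it to $tx$ for every $t>0$ to force $\sb{Tx}_1=0$. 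In both cases $\sb{Tx}_1\le c\sb{x}_0$ with $c=2/\eta\in\R^+$.

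For the last assertion, assume the (equivalent) bound holds and let $x\in\mathfrak{A}(X_0)$, i.e.\ $\sb{x}_0=0$ by Proposition~\ref{seminormed spaces are TVS}; then $0\le\sb{Tx}_1\le c\sb{x}_0=0$, so $Tx\in\mathfrak{A}(X_1)$ (alternatively, invoke $T\big(\overline{\{0\}}\big)\subseteq\overline{T\{0\}}=\overline{\{0\}}$ from continuity). I do not anticipate a genuine obstacle; the only point needing care is the bookkeeping forced by the ``initial topology'' definition of the seminorm topology, namely verifying that the preimages above are genuinely open and that balls centred away from $0$ are open, both of which are resolved by citing Proposition~\ref{seminormed spaces are TVS}.
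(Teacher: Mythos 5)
The paper states this result without proof (it is one of the elementary facts collected in the appendix, with the reader referred to Taylor's book and to Leoni--Tice), so there is no in-text argument for you to have matched or diverged from. Your mathematics is correct at the level of the estimates: the bound--continuity equivalence, the homogeneity step producing $c = 2/\eta$, the separate treatment of the case $\sb{x}_0 = 0$, and the annihilator inclusion are all handled properly.

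The one issue worth fixing is a tension in the forward direction that you inherit from Definition~\ref{defn of seminorm}. Read literally, that definition declares $\tau = \{\sb{\cdot}^{-1}(U) : U \subseteq \R \text{ open}\}$, the initial topology induced by the seminorm; under that topology translations are \emph{not} continuous (the translate of $\sb{\cdot}^{-1}((-1,1))$ by a vector of positive seminorm is not the preimage of any open subset of $\R$), so the space fails to be a topological vector space and Proposition~\ref{seminormed spaces are TVS} would be false. Your argument simultaneously uses the literal reading (``a generic open subset of $X_1$ is $\sb{\cdot}_1^{-1}(U)$'') and the TVS structure (``translations are homeomorphisms''), and these two cannot coexist: under the intended topology, the one generated by translated balls $a + \sb{\cdot}^{-1}((-r,r))$, which does make the space a TVS, not every open subset of $X_1$ has the special form $\sb{\cdot}_1^{-1}(U)$, so your computation only proves openness of preimages of a sub-family and does not by itself yield continuity. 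The standard repair: since $T$ is linear and both spaces are topological vector spaces, $T$ is continuous if and only if it is continuous at $0$, and for this it suffices that $T^{-1}(\sb{\cdot}_1^{-1}((-r,r)))$ contains an origin-centered ball --- which your bound gives directly with radius $r/c$. Similarly, in the converse direction ``hence equals $\sb{\cdot}_0^{-1}(U)$'' should read ``hence contains $\{x : \sb{x}_0 < \eta\}$ for some $\eta > 0$''; after these two adjustments everything you wrote goes through.
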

A particularly common linear mapping between seminormed spaces is an embedding.
\begin{defn}[Continuous embeddings and equivalent seminormed spaces]\label{continuous embeddings and equivalence of seminormed spaces} 
Let $\p{X_0,\sb{\cdot}_0}$ and  $\p{X_1,\sb{\cdot}_1}$ be seminormed spaces related via the inclusion $X_0\subseteq X_1$. We say that $X_0$ is continuously embedded into $X_1$ if the inclusion mapping $\iota:X_0\to X_1$ is continuous; we write $X_0\emb X_1$ in this case. If, in addition, we assume that $X_1\subseteq X_0$ and the opposite inclusion $X_1\to X_0$ is continuous, we say that $X_0$ and $X_1$ are equivalent as seminormed spaces.
\end{defn}

Note that Proposition~\ref{properties of linear mappings} implies that if $X_0\emb X_1$, then the annihilator of $X_1$ must be at least as large as the annihilator of $X_0$; in particular, a non-Hausdorff space cannot be continuously embedded into a Hausdorff space.

Lastly, we note that seminormed spaces are equivalent if and only if their seminormed are uniformly comparable.
\begin{prop}[Equivalent seminormed spaces have the same topology]\label{characterization of equivalent seminormed spaces}
Let $\p{X_0,\sb{\cdot}_0}$ and  $\p{X_1,\sb{\cdot}_1}$ be seminormed spaces with $X_0\subseteq X_1\subseteq X_0$. Then $\p{X_0,\sb{\cdot}_0}$ and $\p{X_1,\sb{\cdot}_1}$ are equivalent as seminormed spaces if and only if
    the seminorms $\sb{\cdot}_0$ and $\sb{\cdot}_1$ are equivalent in the sense that there is $c\in\R^+$ such that 
    \begin{equation}
        c^{-1}\sb{x}_0\le\sb{x}_1\le c\sb{x}_0 \text{ for all } x\in X_0=X_1.
    \end{equation}
\end{prop}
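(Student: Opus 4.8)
The statement is a direct consequence of the quantitative characterization of continuity of linear maps between seminormed spaces, namely Proposition~\ref{properties of linear mappings}, applied to the two inclusion maps. First I would unpack the definition of equivalence from Definition~\ref{continuous embeddings and equivalence of seminormed spaces}: since $X_0 \subseteq X_1 \subseteq X_0$ forces $X_0 = X_1$ as sets, saying that $\p{X_0,\sb{\cdot}_0}$ and $\p{X_1,\sb{\cdot}_1}$ are equivalent means precisely that the identity map viewed as $\iota_0 : X_0 \emb X_1$ and the identity map viewed as $\iota_1 : X_1 \emb X_0$ are both continuous.

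Next, for the forward direction, I would apply Proposition~\ref{properties of linear mappings} to each of $\iota_0$ and $\iota_1$. Continuity of $\iota_0 : X_0 \to X_1$ yields a constant $c_1 \in \R^+$ with $\sb{x}_1 \le c_1 \sb{x}_0$ for all $x$, and continuity of $\iota_1 : X_1 \to X_0$ yields a constant $c_0 \in \R^+$ with $\sb{x}_0 \le c_0 \sb{x}_1$ for all $x$. Setting $c = \max\cb{c_0,c_1} \in \R^+$ gives $c^{-1}\sb{x}_0 \le \sb{x}_1 \le c\sb{x}_0$ for every $x \in X_0 = X_1$, as desired. For the converse, if such a $c$ exists, then the inequality $\sb{x}_1 \le c\sb{x}_0$ together with the ``if'' direction of Proposition~\ref{properties of linear mappings} shows $\iota_0$ is continuous, and the inequality $\sb{x}_0 \le c\sb{x}_1$ shows $\iota_1$ is continuous; hence the spaces are equivalent.

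There is no real obstacle here: the argument is a two-line invocation of an already-recorded fact, and the only mild point of care is observing that the hypothesis $X_0 \subseteq X_1 \subseteq X_0$ identifies the two underlying sets so that the ``inclusion'' maps in both directions are genuinely defined on all of the common space. I would also remark, to justify the parenthetical in the title, that equivalence of the two topologies then follows immediately, since by Definition~\ref{defn of seminorm} the topology on a seminormed space is the initial topology induced by the seminorm, and uniformly comparable seminorms induce the same collection of open balls and hence the same topology.
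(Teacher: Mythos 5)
Your proof is correct. Note that the paper itself does not supply a proof of this proposition — Appendix~\ref{seminorm topology} explicitly states that the elementary facts it lists are recorded without proof, with references to external sources — so there is no in-paper argument to compare against; your two-line reduction to Proposition~\ref{properties of linear mappings} applied to the two inclusion maps is exactly the expected argument, and the constant bookkeeping with $c = \max\cb{c_0,c_1}$ and the converse direction are both handled correctly.
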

\section{Miscellaneous facts from analysis}\label{Miscellaneous facts from analysis}
This appendix serves to collect some analysis results used in this paper.
\subsection{Integral inequalities and identities}
\begin{lem}[Averaging on cubes]\label{regularity promotion via averaging} Let $\es\neq Q\subset\R^n$ be an open cube. If $f:\R^n\to\mathbb{K}$ is a locally integrable function, we define $f_Q:\R^n\to\mathbb{K}$ via $f_Q\p{x}=\int_{Q}f\p{x+y}\;\m{d}y$. Then  $f_Q\in W^{1,1}_\loc\p{\R^n;\mathbb{K}}$, with the distributional gradient identified with the (a.e. defined) function $\grad f_Q\p{x}=\int_{\pd Q}f\p{x+y}\nu\p{y}\;\m{d}\mathcal{H}^{n-1}\p{y}$,
where $\nu:\pd Q\to\R^n$ is the outward unit normal.
\end{lem}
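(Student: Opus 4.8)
\textbf{Proof plan for Lemma~\ref{regularity promotion via averaging}.}

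The plan is to establish the formula first in the case of smooth functions and then pass to the general case by a mollification/density argument. First I would reduce to the case $n \ge 1$ with $Q = Q(0,\ell) = (-\ell/2,\ell/2)^n$, since a general open cube is a translate of such a cube and translation commutes with the distributional derivative and with the boundary integral. For a function $g \in C^1(\R^n;\mathbb{K})$ (not necessarily with any decay, but the averaging integrals are all over bounded sets so everything is finite locally), I would differentiate $g_Q(x) = \int_Q g(x+y)\,\m{d}y$ under the integral sign—justified by dominated convergence on the compact set $Q$ and continuity of $\grad g$—to get $\grad g_Q(x) = \int_Q \grad g(x+y)\,\m{d}y$. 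Then the divergence theorem applied coordinate-by-coordinate on the cube $Q$ (using that $\grad$ of the shifted function in $y$ equals $\grad$ in $x$) converts this volume integral of a gradient into the surface integral $\int_{\pd Q} g(x+y)\nu(y)\,\m{d}\mathcal{H}^{n-1}(y)$. This gives the identity pointwise for $C^1$ functions, and in particular shows $g_Q \in C^1 \subset W^{1,1}_{\loc}$.

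Next I would treat a general locally integrable $f$. Fix an arbitrary bounded open set $V \subset \R^n$; it suffices to show $f_Q \in W^{1,1}(V;\mathbb{K})$ with the stated gradient. Let $f_\ep = f * \varphi_\ep$ be a standard mollification, so $f_\ep \in C^\infty$ and $f_\ep \to f$ in $L^1(W;\mathbb{K})$ as $\ep \to 0^+$ for every bounded open $W$; in particular take $W$ to be a bounded open set containing $\overline{V} + \overline{Q}$ (the Minkowski sum). By the smooth case, $(f_\ep)_Q \in C^1$ with $\grad (f_\ep)_Q(x) = \int_{\pd Q} f_\ep(x+y)\nu(y)\,\m{d}\mathcal{H}^{n-1}(y)$. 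Now I would check three convergences as $\ep \to 0^+$: (i) $(f_\ep)_Q \to f_Q$ in $L^1(V;\mathbb{K})$, which follows from $\norm{(f_\ep)_Q - f_Q}_{L^1(V)} \le \Le^n(Q)\,\norm{f_\ep - f}_{L^1(W)}$ via Minkowski's integral inequality and a change of variables; (ii) the candidate gradient $G_\ep(x) := \int_{\pd Q} f_\ep(x+y)\nu(y)\,\m{d}\mathcal{H}^{n-1}(y)$ converges in $L^1(V;\mathbb{K}^n)$ to $G(x) := \int_{\pd Q} f(x+y)\nu(y)\,\m{d}\mathcal{H}^{n-1}(y)$, again by Minkowski's integral inequality, writing $\pd Q$ as a finite union of flat faces and estimating $\norm{G_\ep - G}_{L^1(V)} \le \mathcal{H}^{n-1}(\pd Q)\sup_{z \in \overline{Q}}\norm{(f_\ep - f)(\cdot + z)}_{L^1(V)}$—but here one must be slightly careful since restriction to a hyperplane is involved; and (iii) that $G$ is in fact locally integrable, which is really the content of (ii) combined with an absolute-continuity/Fubini argument. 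Passing to the limit in the distributional identity $\int_V (f_\ep)_Q\, \pd_k \phi = -\int_V G_\ep^k \phi$ for $\phi \in C_c^\infty(V)$ then yields $\int_V f_Q\, \pd_k \phi = -\int_V G^k \phi$, i.e. $\grad f_Q = G$ in the distributional sense, and since $G \in L^1_{\loc}$ this shows $f_Q \in W^{1,1}_{\loc}(\R^n;\mathbb{K})$ with the asserted formula.

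The main obstacle is item (ii)–(iii): making rigorous sense of the surface integral $\int_{\pd Q} f(x+y)\nu(y)\,\m{d}\mathcal{H}^{n-1}(y)$ for merely $L^1_{\loc}$ data $f$ and proving it is locally integrable in $x$. The resolution is that after translating so a face of $\pd Q$ lies in a coordinate hyperplane $\{y_k = c\}$, the integral over that face is $\pm\int_{\tilde Q} f(x_k' + y_k', x_k + c)\,\m{d}y_k'$ where $\tilde Q \subset \R^{n-1}$ is the projected face; by Fubini's theorem, for $\Le^1$-a.e. value of the shift in the $x_k$-direction this is an honest $L^1_{\loc}(\R^{n-1})$ function of $x_k'$, and integrating the $L^1$ bound in $x_k$ over a bounded interval shows the whole face contribution is in $L^1_{\loc}(\R^n;\mathbb{K})$. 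The same Fubini argument simultaneously gives the convergence in (ii), since $f_\ep \to f$ in $L^1_{\loc}$ implies convergence of these hyperplane slices after integrating back in the transverse variable. Everything else is bookkeeping with Minkowski's integral inequality and the definition of the weak derivative.
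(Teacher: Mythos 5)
Your proof is correct, and it takes a genuinely different route from the paper's one-line argument. The paper proves the identity directly: after the change of variables $z=x+y$, the $k$-th coordinate integral becomes $\int_{a_k+x_k}^{b_k+x_k}$, so Lebesgue's form of the fundamental theorem of calculus gives the partial derivative in $x_k$ as a boundary difference for a.e.\ $x_k$, and Fubini's theorem is then used to make sense of the iterated integral over $\tilde Q_k$, to identify $\grad f_Q$ with the stated $L^1_\loc$ function, and to verify the weak-derivative identity. (This is the same calculation carried out explicitly in the proof of Lemma~\ref{K functionals and modulus of continuity}, around~\eqref{pufferfish}.) You instead prove the identity for $C^1$ data by differentiation under the integral and the divergence theorem, and then pass to general $L^1_\loc$ data by mollification, checking $L^1_\loc$-convergence of $(f_\ep)_Q$ and of the candidate gradient $G_\ep$. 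The mollification route trades the one delicate step in the paper's proof (differentiating $\int_{\tilde Q_k}\int_{a_k+x_k}^{b_k+x_k}$ under the $y'$-integral when $f$ is merely locally integrable) for the convergence checks (i)--(iii), which are straightforward by Minkowski's integral inequality and a change of variables; the key observation, which you correctly make, is that after unfolding one face by Fubini, $\|G_\ep - G\|_{L^1(V)}$ is controlled by a constant times $\|f_\ep - f\|_{L^1(W)}$ for a suitably enlarged $W$, so there is no actual issue of trace or hyperplane restriction to worry about. Your approach is longer but arguably safer, since it never asks what the pointwise restriction of $f$ to a hyperplane means; the paper's approach is shorter but leans on the reader to see that the Fubini--FTC interchange is valid for $L^1_\loc$ functions.
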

\begin{proof}
This follows from Fubini's theorem, differentiation under the integral, and the fundamental theorem of calculus.
\end{proof}

\begin{lem}[Hardy's inequalities]\label{truncated hardy inequality}
Suppose that $s\in\R^+$, $\sig\in(0,\infty]$, and $1\le p<\infty$.  Then for all measurable functions $\Omega:\p{0,\sig}\to\sb{0,\infty}$ we have the estimates
\begin{equation}\label{truncated hardy inequality 0}
    \bp{\int_{\p{0,\sig}}\bp{t^{-s}\int_{\p{0,t}}\Omega\p{\rho}\rho^{-1}\;\m{d}\rho}^pt^{-1}\;\m{d}t}^{1/p}\le s^{-1}\bp{\int_{\p{0,\sig}}\p{t^{-s}\Omega\p{t}}^pt^{-1}\;\m{d}t}^{1/p}
\end{equation}
and 
\begin{equation}\label{truncated hardy inequality 1}
    \bp{\int_{\R^+}\bp{t^s\int_{\p{t,\infty}}\Omega\p{\rho}\rho^{-1}\;\m{d}\rho}^pt^{-1}\;\m{d}t}^{1/p}\le s^{-1}\bp{\int_{\R^+}\p{t^s\Omega\p{t}}^pt^{-1}\;\m{d}t}^{1/p}
\end{equation}
\end{lem}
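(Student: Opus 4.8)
\textbf{Proof proposal for Lemma~\ref{truncated hardy inequality} (Hardy's inequalities).}

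The plan is to reduce both estimates to the classical one-dimensional Hardy inequality on $L^p(\R^+,\mu)$ by an elementary manipulation, exploiting that $\mu$ is the multiplicative Haar measure and that the kernels involved are homogeneous. First I would prove \eqref{truncated hardy inequality 0}. Writing $F(t) = t^{-s}\int_{(0,t)} \Omega(\rho)\rho^{-1}\,\m{d}\rho$, I would substitute $\rho = t u$ with $u \in (0,1)$, so that $\rho^{-1}\,\m{d}\rho = u^{-1}\,\m{d}u$ and
\begin{equation}
    F(t) = \int_{(0,1)} \bp{t^{-s}\,\Omega(tu)} u^{-1}\,\m{d}u = \int_{(0,1)} u^s\, \bp{(tu)^{-s}\Omega(tu)}\, u^{-1}\,\m{d}u.
\end{equation}
Now I would apply Minkowski's integral inequality in $L^p((0,\sig),\mu)$, pulling the $L^p$ norm inside the $\m{d}u$ integral; for fixed $u \in (0,1)$ the change of variables $t \mapsto t/u$ shows that $\norm{(t\mapsto (tu)^{-s}\Omega(tu))}_{L^p((0,\sig),\mu)} \le \norm{(t\mapsto t^{-s}\Omega(t))}_{L^p((0,\sig),\mu)}$, using that the domain $(0,\sig u) \subseteq (0,\sig)$ and that $\Omega \ge 0$ (extending $\Omega$ by $0$ outside $(0,\sig)$ if necessary). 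This yields the bound with constant $\int_{(0,1)} u^s\, u^{-1}\,\m{d}u = \int_0^1 u^{s-1}\,\m{d}u = s^{-1}$, which is exactly \eqref{truncated hardy inequality 0}.

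For \eqref{truncated hardy inequality 1} I would argue in the same way, now on the full half-line $\R^+$, where $\sig = \infty$. Setting $G(t) = t^s \int_{(t,\infty)} \Omega(\rho)\rho^{-1}\,\m{d}\rho$ and substituting $\rho = t u$ with $u \in (1,\infty)$ gives
\begin{equation}
    G(t) = \int_{(1,\infty)} \bp{t^s\,\Omega(tu)}u^{-1}\,\m{d}u = \int_{(1,\infty)} u^{-s}\,\bp{(tu)^s \Omega(tu)}\,u^{-1}\,\m{d}u,
\end{equation}
and Minkowski's inequality in $L^p(\R^+,\mu)$ together with the scaling invariance $t \mapsto t/u$ of that norm (here the domain is all of $\R^+$, so there is genuine invariance) produces the constant $\int_{(1,\infty)} u^{-s}\,u^{-1}\,\m{d}u = \int_1^\infty u^{-s-1}\,\m{d}u = s^{-1}$. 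This gives \eqref{truncated hardy inequality 1}.

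I do not anticipate a serious obstacle here; this is a standard computation. The only point requiring a little care is the truncated case in \eqref{truncated hardy inequality 0}: after the substitution the inner variable ranges over $(0,\sig u) \subsetneq (0,\sig)$, so one must either extend $\Omega$ by zero or simply note that enlarging the domain of integration of a nonnegative integrand only increases it, which is why the estimate survives the truncation. One should also observe that when $p = 1$ Minkowski's integral inequality is just Tonelli's theorem, so the argument is uniform in $1 \le p < \infty$, and that both sides may be infinite, in which case there is nothing to prove.
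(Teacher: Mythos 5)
Your proof is correct. The paper handles this by citing the classical ($\sigma=\infty$) Hardy inequalities (Theorem C.41 of Leoni) and then disposing of the truncated case in one line by applying the classical inequality to $\Omega\chi_{(0,\sigma)}$ — which is precisely the ``extend $\Omega$ by zero'' observation you make. What you do differently is prove the classical inequality inline rather than citing it: the substitution $\rho=tu$ followed by Minkowski's integral inequality in $L^p(\mu)$ and the translation invariance of the multiplicative Haar measure is the textbook proof of the sharp-constant one-dimensional Hardy inequality, and it correctly yields $s^{-1}$. Your treatment of the truncation via the monotonicity $(0,\sigma u)\subseteq(0,\sigma)$ is equivalent to the paper's extension-by-zero reduction. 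Your approach buys self-containedness at the cost of a few extra lines; the paper's buys brevity by deferring to a standard reference. Both are fine, and the observation that for $p=1$ Minkowski reduces to Tonelli is a nice touch that keeps the argument uniform.
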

\begin{proof}
When $\sigma = \infty$, these are the classical Hardy inequalities: see, for instance, Theorem C.41 in~\cite{MR3726909}.  To prove \eqref{truncated hardy inequality 0} when $\sigma < \infty$, we apply \eqref{truncated hardy inequality 0} on $\R^+$ to the function $\Omega\chi_{\p{0,\sig}}$.
\end{proof}
\subsection{Harmonic Analysis}\label{harmonic analysis}
Here are some important notions from the theory of real and complex valued tempered distributions.

\begin{lem}[Real valued distributions]\label{real valued distributions}
For tempered distributions $f\in\mathscr{S}^\ast\p{\R^n;\C}$ we define the complex conjugate distribution, $\Bar{f}\in \mathscr{S}^\ast\p{\R^n;\mathbb{C}}$ via $\br{\Bar{f},\varphi}=\Bar{\br{f,\Bar{\varphi}}}$, $\varphi\in\mathscr{S}\p{\R^n;\C}$. We say that $f$ is $\R$-valued if $f=\Bar{f}$ and write $f\in\mathscr{S}^\ast\p{\R^n;\R}$. The following hold.
\begin{enumerate}
    \item For $f\in\mathscr{S}^\ast\p{\R^n;\C}$ we have that $f\in\mathscr{S}^\ast\p{\R^n;\R}$ if and only if $\Bar{\hat{f}}=\del_{-1}\hat{f}$.
    \item For $f\in L^1_\loc\p{\R^n;\C}\cap\mathscr{S}^\ast\p{\R^n;\C}$ we have that $f\in\mathscr{S}^\ast\p{\R^n;\R}$ if and only if $f\p{x}\in\R$ for $\Le^n$-a.e. $x\in\R^n$.
\end{enumerate}
\end{lem}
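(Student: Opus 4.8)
The plan is to deduce both equivalences from the way the conjugation operation $f\mapsto\Bar f$ on $\mathscr{S}^\ast(\R^n;\C)$ interacts with the Fourier transform and with the reflection $\del_{-1}$. For part (1), I would first record the pointwise identity $\Bar{\hat\varphi}=\mathscr{F}(\del_{-1}\Bar\varphi)$ for $\varphi\in\mathscr{S}(\R^n;\C)$, which is immediate from the integral formula for $\mathscr{F}$ and the change of variables $x\mapsto -x$. Dualizing this, and using the defining relation $\br{\Bar f,\varphi}=\Bar{\br{f,\Bar\varphi}}$ together with the fact that $\del_{-1}$ is its own adjoint (its Jacobian has modulus $1$), one computes, for all $\varphi\in\mathscr{S}(\R^n;\C)$,
\[
\br{\mathscr{F}\Bar f,\varphi}=\Bar{\br{f,\Bar{\hat\varphi}}}=\Bar{\br{\hat f,\del_{-1}\Bar\varphi}}=\Bar{\br{\del_{-1}\hat f,\Bar\varphi}}=\br{\del_{-1}\Bar{\hat f},\varphi},
\]
so that $\mathscr{F}\Bar f=\del_{-1}\Bar{\hat f}$ for every $f\in\mathscr{S}^\ast(\R^n;\C)$.

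Since $\mathscr{F}$ is a bijection of $\mathscr{S}^\ast(\R^n;\C)$ onto itself and $\del_{-1}$ is an involution, the chain $f=\Bar f\lra \hat f=\mathscr{F}\Bar f\lra \hat f=\del_{-1}\Bar{\hat f}\lra \del_{-1}\hat f=\Bar{\hat f}$ yields part (1). For part (2), I would observe that when $f\in L^1_\loc(\R^n;\C)$ is a tempered distribution, acting by $\varphi\mapsto\int_{\R^n}f\varphi$, the computation $\br{\Bar f,\varphi}=\Bar{\br{f,\Bar\varphi}}=\Bar{\int_{\R^n}f\Bar\varphi}=\int_{\R^n}\Bar f\,\varphi$ shows that the conjugate distribution $\Bar f$ is represented by the (again locally integrable) pointwise conjugate function $x\mapsto\Bar{f(x)}$. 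Hence $f=\Bar f$ in $\mathscr{S}^\ast(\R^n;\C)$ if and only if $\int_{\R^n}(f-\Bar f)\varphi=0$ for every $\varphi\in\mathscr{S}(\R^n;\C)$, equivalently for every $\varphi\in C^\infty_c(\R^n;\C)$, and by the du Bois-Reymond lemma for locally integrable functions this holds precisely when $f(x)=\Bar{f(x)}$ for $\Le^n$-a.e. $x$, i.e. when $f(x)\in\R$ for $\Le^n$-a.e. $x\in\R^n$.

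Neither step presents a genuine obstacle; the only points that require attention are fixing a single convention for $\mathscr{F}$ so that the reflection appearing in $\Bar{\hat\varphi}=\mathscr{F}(\del_{-1}\Bar\varphi)$ is tracked correctly through the duality, and, in part (2), invoking the standard fact that an element of $L^1_\loc(\R^n;\C)$ is determined $\Le^n$-a.e. by its pairing against $C^\infty_c(\R^n;\C)$.
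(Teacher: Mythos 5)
Your proof is correct and follows essentially the same approach as the paper's: for part (1), unwind the duality pairing using the definitions of $\mathscr{F}$ on $\mathscr{S}^\ast$, of $\bar f$, and the elementary identity $\Bar{\hat\varphi}=\mathscr{F}(\del_{-1}\Bar\varphi)$ for test functions, which in both cases boils down to showing $\widehat{\Bar f}=\del_{-1}\Bar{\hat f}$; and for part (2), pass to the pointwise representative and invoke that a locally integrable function is determined a.e.\ by its pairing against test functions. Your version is presented slightly more cleanly by recording the operator identity $\mathscr{F}\Bar f=\del_{-1}\Bar{\hat f}$ explicitly before deducing the equivalence, but the underlying computation is the same as the paper's.
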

\begin{proof}
If $f\in\mathscr{S}^\ast\p{\R^n;\R}$ and $\varphi\in\mathscr{S}\p{\R^n;\C}$, then we equate
\begin{equation}
\br{f,\varphi}=\br{\Bar{f},\varphi}\lra\br{\del_{-1}\hat{f},\hat{\varphi}}=\Bar{\br{\hat{f},\del_{-1}\hat{\Bar{\varphi}}}}=\Bar{\br{\hat{f},\Bar{\hat{\varphi}}}}=\br{\Bar{\hat{f}},\hat{\varphi}}.
\end{equation}
This gives the first item. If $f$ is in $L^1_\loc\p{\R^n;\C}$ as in the second item, then
\begin{equation}
    \br{f-\Bar{f},\varphi}=\int_{\R^n}\p{f-\Bar{f}}\varphi=0,\;\forall\;\varphi\in\mathscr{S}^\ast\p{\R^n;\C}\lra f\p{x}\in\R\text{ for a.e }x\in\R^n.
\end{equation}
\end{proof}

\begin{defn}[Space of Fourier Multipliers]\label{space of fourier multipliers}
Given $1\le p<\infty$, the space of $L^p\p{\R^n;\mathbb{K}}$-Fourier multipliers, denoted $\mathscr{M}_p\p{\R^n;\mathbb{K}}$, is the set of all $m\in L^\infty\p{\R^n;\C}$ such that 
\begin{equation}
    \norm{m}_{\mathscr{M}_p}=\sup\cb{\norm{\mathscr{F}^{-1}\p{m\mathscr{F}f}}_{L^p}\;:\;f\in\mathscr{S}\p{\R^n;\mathbb{K}},\;\norm{f}_{L^p}=1} < \infty
\end{equation}
and for all $f\in\mathscr{S}\p{\R^n;\mathbb{K}}$ we have that $\mathscr{F}^{-1}\p{m\mathscr{F}}f$ is $\mathbb{K}$-valued.
\end{defn}

We will need a few facts about Fourier multipliers.

\begin{lem}[Invariance under scaling]\label{scaling invariance of fourier multipliers}
Let $1\le p<\infty$. If $m\in\mathscr{M}_p\p{\R^n;\mathbb{K}}$, then for all $\xi\in\R\setminus\cb{0}$ it holds that $\del_{\xi}m\in\mathscr{M}_p\p{\R^n;\mathbb{K}}$ with the equality $\norm{\del_\xi m}_{\mathscr{M}_p}=\norm{m}_{\mathscr{M}_p}$; note that $\del_\xi m$ is the isotropic $\xi$-dilate of $m$, defined via $\del_\xi m\p{\zeta}=m\p{\zeta\xi^{-1}}$.
\end{lem}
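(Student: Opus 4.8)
\textbf{Proof plan for Lemma~\ref{scaling invariance of fourier multipliers}.}

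The statement to prove is that scaling the multiplier symbol leaves the $\mathscr{M}_p$ norm invariant: if $m\in\mathscr{M}_p\p{\R^n;\mathbb{K}}$ and $\xi\in\R\setminus\cb{0}$, then $\del_\xi m\in\mathscr{M}_p\p{\R^n;\mathbb{K}}$ with $\norm{\del_\xi m}_{\mathscr{M}_p}=\norm{m}_{\mathscr{M}_p}$. The plan is to exploit the commutation relation between the Fourier multiplier operator associated to a dilated symbol and the spatial dilation operator, together with the fact that spatial dilations act as isometries on $L^p$ up to an explicit Jacobian constant that cancels.

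First I would set up notation: for $\lambda\in\R\setminus\cb{0}$ let $D_\lambda g\p{x}=g\p{\lambda x}$ denote the spatial dilation, so that $\norm{D_\lambda g}_{L^p}=\abs{\lambda}^{-n/p}\norm{g}_{L^p}$ by a change of variables. The key computation is the Fourier-side identity $\mathscr{F}\p{D_\lambda g}=\abs{\lambda}^{-n}D_{\lambda^{-1}}\mathscr{F}g$, which is the standard scaling law for the Fourier transform. From this one deduces, for the multiplier operator $T_m f=\mathscr{F}^{-1}\p{m\mathscr{F}f}$, the conjugation relation $T_{\del_\xi m}=D_{\xi}^{-1}\,T_m\,D_{\xi}$ acting on $\mathscr{S}\p{\R^n;\mathbb{K}}$; here I use that $\del_\xi m\p{\zeta}=m\p{\zeta\xi^{-1}}=D_{\xi^{-1}}m\p{\zeta}$, and chase the definitions through $\mathscr{F}^{-1}$. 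Concretely, for $f\in\mathscr{S}\p{\R^n;\mathbb{K}}$ one computes $\mathscr{F}\p{D_\xi f}\p{\zeta}=\abs{\xi}^{-n}\p{\mathscr{F}f}\p{\zeta\xi^{-1}}$, multiplies by $m$, observes $m\p{\zeta}\abs{\xi}^{-n}\p{\mathscr{F}f}\p{\zeta\xi^{-1}}=\abs{\xi}^{-n}\p{\del_\xi m}\p{\xi\zeta}\cdot\p{\mathscr{F}f}\p{\zeta\xi^{-1}}$ — after regrouping this should display $T_m D_\xi f$ as $D_\xi$ applied to $T_{\del_\xi m}f$.

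Second, I would convert this operator identity into the norm equality. For $f\in\mathscr{S}\p{\R^n;\mathbb{K}}$ with $\norm{f}_{L^p}=1$, set $g=D_\xi f$, so $\norm{g}_{L^p}=\abs{\xi}^{-n/p}$. Then $\norm{T_{\del_\xi m}f}_{L^p}=\norm{D_\xi^{-1}T_m D_\xi f}_{L^p}=\abs{\xi}^{n/p}\norm{T_m g}_{L^p}\le\abs{\xi}^{n/p}\norm{m}_{\mathscr{M}_p}\norm{g}_{L^p}=\norm{m}_{\mathscr{M}_p}$, so $\norm{\del_\xi m}_{\mathscr{M}_p}\le\norm{m}_{\mathscr{M}_p}$. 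Applying the same bound with the roles of $m$ and $\del_\xi m$ swapped (noting $\del_{\xi^{-1}}\del_\xi m=m$) gives the reverse inequality, hence equality. One also checks that $T_{\del_\xi m}f$ is $\mathbb{K}$-valued: since $T_m$ preserves $\mathbb{K}$-valuedness by hypothesis and the dilation operators $D_\xi$ clearly preserve $\mathbb{K}$-valuedness of Schwartz functions, the conjugation formula shows $T_{\del_\xi m}f=D_\xi^{-1}T_m D_\xi f$ is $\mathbb{K}$-valued.

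The only mild obstacle is bookkeeping the powers of $\abs{\xi}$ in the Fourier scaling law and making sure the Jacobian factors cancel correctly in the operator identity; everything else is a routine change of variables. It is worth being slightly careful that the argument $\del_\xi m$ of $m$ in the definition (isotropic dilation $\del_\xi m\p{\zeta}=m\p{\zeta\xi^{-1}}$) matches the direction of the spatial dilation being conjugated by, but this is just a matter of consistently tracking which variable gets scaled by $\xi$ versus $\xi^{-1}$.
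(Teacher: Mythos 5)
Your argument is the standard one — conjugation of the multiplier operator by spatial dilations, using that dilations act on $L^p$ by a Jacobian factor which cancels against the one coming from the Fourier scaling law — and the paper itself does not give an argument here at all, simply citing Proposition 2.5.14 of Grafakos. So you are filling in what the paper omits, and the overall structure (conjugation identity, norm cancellation, swap $m\leftrightarrow\del_\xi m$ for the reverse inequality, then observe that dilations and $T_m$ both preserve $\mathbb{K}$-valuedness) is correct and complete.

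There is one sign slip in the displayed conjugation identity, which is exactly the kind of bookkeeping you yourself flag as the delicate point. Tracking the convention $\mathscr{F}(D_\lambda g)(\zeta)=\abs{\lambda}^{-n}(\mathscr{F}g)(\zeta/\lambda)$ through the definitions yields $D_{1/\xi}\,T_m\,D_\xi = T_{D_\xi m}=T_{\del_{1/\xi}m}$, i.e. $T_{\del_\xi m}=D_\xi\,T_m\,D_\xi^{-1}$, not $D_\xi^{-1}\,T_m\,D_\xi$ as written. This does not damage the conclusion: in the norm computation the Jacobian factors $\abs{\xi}^{\pm n/p}$ cancel regardless of which of $D_\xi$ or $D_\xi^{-1}$ appears on the outside, so both versions give $\norm{\del_\xi m}_{\mathscr{M}_p}\le\norm{m}_{\mathscr{M}_p}$, and the symmetric bound then gives equality. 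It would be cleaner, though, to state the correct conjugation relation rather than relying on the noted cancellation to paper over the reversed order. Everything else — including the observation that $\mathbb{K}$-valuedness is preserved because $D_\xi$ and $T_m$ each preserve it, which is needed precisely because the paper's $\mathscr{M}_p\p{\R^n;\mathbb{K}}$ builds in this constraint — is correct.
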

\begin{proof}
See Proposition 2.5.14 in~\cite{MR3243734}.
\end{proof}

Next we state sufficient conditions for an essentially bounded function to be a Fourier multiplier.

\begin{thm}[H\"ormander-Mihlin Multiplier Theorem]\label{mihlin multiplier theorem}
Suppose that $m\in L^\infty\p{\R^n;\C}$ is a function whose distributional derivatives on $\R^n\setminus\cb{0}$ of orders less than $\lfloor{n/2}\rfloor+1$ can be identified with locally integrable functions on $\R^n\setminus\cb{0}$. Define the number $A\in\sb{0,\infty}$ via
\begin{equation}
    A=\max\cb{\m{esssup}\cb{\abs{\xi}^{\abs{\al}}\abs{\pd^\al m\p{\xi}}\;:\;\xi\in\R^n\setminus\cb{0}}\;:\;\al\in\N^n,\;\abs{\al}\le\lfloor{n/2}\rfloor+1}.
\end{equation}
It holds that for all $1<p<\infty$ there exists a constant $C_{n,p}\in\R^+$, independent of $m$, such that $\norm{m}_{\mathscr{M}_p}\le C_{n,p}A$.
\end{thm}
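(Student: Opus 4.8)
The plan is to prove the operator--norm estimate $\sup\{\norm{\mathscr{F}^{-1}(m\hat f)}_{L^p}:f\in\mathscr{S}(\R^n;\mathbb{K}),\ \norm{f}_{L^p}=1\}\le C_{n,p}A$ by the classical Calder\'on--Zygmund route; the $\mathbb{K}$-valuedness of $T_mf:=\mathscr{F}^{-1}(m\hat f)$ is automatic once $m$ is conjugate-symmetric, by Lemma~\ref{real valued distributions}, so it does not enter the estimate. The $L^2$ bound is free: Plancherel gives $\norm{T_mf}_{L^2}=\norm{m\hat f}_{L^2}\le A\norm{f}_{L^2}$. It then suffices to realize $T_m$ as convolution against a kernel $K$ that obeys H\"ormander's regularity condition $\int_{|x|>2|y|}|K(x-y)-K(x)|\,\m{d}x\lesssim_n A$; granting this, the Calder\'on--Zygmund theorem (see, e.g., Chapter~5 of~\cite{MR3243734} or Chapter~I of~\cite{MR1232192}) yields weak type $(1,1)$ with constant $\lesssim_n A$, Marcinkiewicz interpolation against the $L^2$ bound gives strong type $(p,p)$ for $1<p\le 2$ with constant $\lesssim_{n,p}A$, and duality (the reflected conjugate multiplier satisfies the same hypothesis with the same $A$) promotes this to all $1<p<\infty$.

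To produce $K$ I would decompose $m$ dyadically using the partition of unity $\psi$ from Lemma~\ref{dyadic partition of unity}: set $m_j=(\del_{2^j}\psi)m$ and $K_j=\check{m_j}$ for $j\in\Z$, so $m=\sum_j m_j$ away from the origin and $K=\sum_j K_j$ as a distribution on $\R^n\setminus\cb{0}$. Since $|\xi|\sim 2^j$ on $\supp\del_{2^j}\psi$, the hypothesis $|\pd^\al m(\xi)|\lesssim A|\xi|^{-|\al|}$ and the Leibniz rule give $\norm{\pd^\al m_j}_{L^\infty}\lesssim_n A\,2^{-j|\al|}$ for $|\al|\le N:=\lfloor n/2\rfloor+1$, on a support of measure $\lesssim 2^{jn}$; equivalently, after rescaling $\xi\mapsto 2^j\xi$, each $m_j$ is $A$ times a fixed $H^N$-bounded bump supported in an annulus. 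From this one reads off three estimates, uniform in $j$: $\norm{K_j}_{L^1}\lesssim_n A$; $\norm{\grad K_j}_{L^1}=\norm{\mathscr{F}^{-1}(2\pi\ii\xi\, m_j)}_{L^1}\lesssim_n 2^j A$; and, using a weighted Plancherel identity, for any $s$ with $n/2<s\le N$,
\[
\norm{\,|x|^s K_j}_{L^2}\approx_{n,s}\norm{\,|\grad|^s m_j}_{L^2}\lesssim_{n,s}A\,2^{j(n/2-s)}.
\]

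Next I would verify H\"ormander's condition scale by scale. Fix $y\neq 0$ and set $I_j=\int_{|x|>2|y|}|K_j(x-y)-K_j(x)|\,\m{d}x$. When $2^j|y|\le 1$, the mean value inequality gives $I_j\le |y|\,\norm{\grad K_j}_{L^1}\lesssim_n A\,2^j|y|$. When $2^j|y|>1$, use $|x-y|>2|y|\Rightarrow|x|>|y|$ to get $I_j\le 2\int_{|x|>|y|}|K_j(x)|\,\m{d}x$, and then Cauchy--Schwarz with the weight $|x|^{-2s}$ (integrable on $\{|x|>|y|\}$ because $2s>n$) together with the $\norm{\,|x|^s K_j}_{L^2}$ bound yields $I_j\lesssim_{n,s}\big(|y|^{\,n-2s}\big)^{1/2}A\,2^{j(n/2-s)}=A\,(2^j|y|)^{-\ep}$ with $\ep:=s-n/2>0$. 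Summing, $\sum_{j\in\Z}I_j\lesssim_n A\sum_{j\in\Z}\min\big(2^j|y|,\,(2^j|y|)^{-\ep}\big)\lesssim_{n,\ep}A$, which is exactly H\"ormander's condition with constant $\lesssim_n A$, completing the reduction.

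The substantive difficulty — and the reason the result is not soft — is that the hypothesis provides only $N=\lfloor n/2\rfloor+1$ derivatives, i.e.\ the Sobolev-critical amount, so the naive route of bounding $|\grad K_j|$ pointwise and integrating fails for $n\ge 2$ (the tail $\int_{|x|>2^{-j}}(2^j|x|)^{-N}\,\m{d}x$ diverges since $N\le n$). The remedy is precisely the $L^2$ estimate $\norm{\,|x|^s K_j}_{L^2}\lesssim A\,2^{j(n/2-s)}$ for $n/2<s\le N$, which converts $L^2$ control of $s$ fractional derivatives of $m_j$ into an $L^1$ tail bound for $K_j$; executing this cleanly — the weighted Plancherel identity relating $\norm{\,|x|^s K_j}_{L^2}$ to $\norm{\,|\grad|^s m_j}_{L^2}$, and the scaling $\norm{\,|\grad|^s m_j}_{L^2}\lesssim A\,2^{j(n/2-s)}$ from the unit-scale bump bounds — is the technical heart. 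Everything else (the $L^2$ bound, summing the $\min(\cdot,\cdot)$, interpolation and duality) is routine, and one may alternatively simply invoke the statement as recorded in~\cite{MR3243734}.
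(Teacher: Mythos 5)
Your proof is correct and follows the standard Calder\'on--Zygmund route --- dyadic decomposition $m=\sum_j(\del_{2^j}\psi)m$, a weighted Plancherel bound $\norm{\abs{x}^sK_j}_{L^2}\lesssim A\,2^{j(n/2-s)}$ to control kernel tails, a gradient $L^1$ bound for small scales, H\"ormander's condition, CZ weak $(1,1)$, Marcinkiewicz interpolation, and duality --- and this is precisely the argument behind Theorem 6.2.7 of \cite{MR3243734}, which the paper cites rather than reproduces. So your proposal is essentially the same approach; the only difference is that you supply the details the paper delegates to the reference.
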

\begin{proof}
See Theorem 6.2.7 in~\cite{MR3243734}.
\end{proof}
We can also use Lemma~\ref{real valued distributions} to characterize which multipliers preserve the property being $\R$-valued.

\begin{lem}\label{mult lemma}
Let $1<p<\infty$.  Then $m\in\mathscr{M}_p\p{\R^n;\R}$ if and only if $m\in\mathscr{M}_p\p{\R^n;\C}$ and $\del_{-1}m=\Bar{m}$.  Moreover, for each $\varphi\in\mathscr{S}\p{\R^n;\C}$ we have that $\big(\varphi\hat{f}\big)^{\vee}\in\mathscr{S}^\ast\p{\R^n;\R}$ for all $f\in\mathscr{S}^\ast\p{\R^n;\R}$ if and only if $        \del_{-1}\varphi=\Bar{\varphi}$.
\end{lem}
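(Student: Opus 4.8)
\textbf{Proof plan for Lemma~\ref{mult lemma}.}

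The plan is to treat the two biconditionals separately, using Lemma~\ref{real valued distributions} as the main engine in both cases. For the statement about $\varphi\in\mathscr{S}\p{\R^n;\C}$, note that for $f\in\mathscr{S}^\ast\p{\R^n;\R}$ the tempered distribution $g=\big(\varphi\hat{f}\big)^{\vee}$ has Fourier transform $\hat{g}=\varphi\hat{f}$. By item $(1)$ of Lemma~\ref{real valued distributions}, $g$ is $\R$-valued if and only if $\bar{\hat{g}}=\del_{-1}\hat{g}$, i.e. $\bar{\varphi}\,\bar{\hat{f}}=\del_{-1}\varphi\,\del_{-1}\hat{f}$. Since $f$ is $\R$-valued we have $\bar{\hat{f}}=\del_{-1}\hat{f}$, so this becomes $\bar{\varphi}\,\del_{-1}\hat{f}=\del_{-1}\varphi\,\del_{-1}\hat{f}$. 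The ``if'' direction is then immediate. For the ``only if'' direction, the claim must hold for every $\R$-valued $f$, so I would choose $f$ so that $\del_{-1}\hat{f}$ is, say, nonvanishing on a prescribed open set (for instance $\hat{f}$ a bump function symmetrized via $\hat f(\xi)+\overline{\hat f(-\xi)}$ to make $f$ real, noting $\del_{-1}\hat f$ is then nonvanishing where the bump is), and conclude $\bar{\varphi}=\del_{-1}\varphi$ on that set; ranging over such $f$ gives the identity everywhere. One must be slightly careful that $\del_{-1}\hat f$ can be made nonzero at an arbitrary point, which is why the symmetrization trick is used.

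For the multiplier statement, I would first observe that $\mathscr{M}_p\p{\R^n;\R}\subseteq\mathscr{M}_p\p{\R^n;\C}$ is immediate from the definition (the $\R$-valued condition is an extra requirement on top of being a $\C$-multiplier). So the content is: given $m\in\mathscr{M}_p\p{\R^n;\C}$, the operator $f\mapsto\big(m\hat f\big)^{\vee}$ maps $\mathscr{S}\p{\R^n;\R}$ into $\mathscr{S}^\ast\p{\R^n;\R}$ if and only if $\del_{-1}m=\bar m$. But this is exactly the content of the previous paragraph applied to $\varphi=m$ — the only gap is that the Schwartz multiplier criterion of Definition~\ref{space of fourier multipliers} asks for $\mathbb{K}$-valued output on $\mathscr{S}\p{\R^n;\mathbb{K}}$, whereas the previous paragraph considered $f\in\mathscr{S}^\ast\p{\R^n;\R}$ and $\varphi\in\mathscr{S}$. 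I would reconcile this by noting that (i) the ``only if'' direction only needs the conclusion on the smaller class $\mathscr{S}\p{\R^n;\R}$, and the symmetrized bump functions used above already lie in $\mathscr{S}\p{\R^n;\R}$; (ii) for the ``if'' direction, once $\del_{-1}m=\bar m$ is known, the computation $\bar{m}\hat f=\del_{-1}m\,\del_{-1}\hat f=\del_{-1}(m\hat f)$ for real $f$ shows via item $(1)$ of Lemma~\ref{real valued distributions} that $\big(m\hat f\big)^{\vee}$ is $\R$-valued for all $f\in\mathscr{S}\p{\R^n;\R}$, which is precisely the requirement that $m\in\mathscr{M}_p\p{\R^n;\R}$.

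I expect the only real subtlety to be the surjectivity-type argument in the ``only if'' directions: making sure that one can produce a real Schwartz function $f$ with $\del_{-1}\hat f$ nonzero at (or near) any given point, so that dividing out is legitimate and the pointwise identity $\del_{-1}m=\bar m$ can be extracted on all of $\R^n$ (up to a null set, which suffices since $m\in L^\infty$). Everything else is bookkeeping with the definitions of $\del_{-1}$, the Fourier transform, and the conjugate distribution, together with direct appeals to Lemma~\ref{real valued distributions}.
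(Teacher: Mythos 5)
Your argument is correct and follows the same route as the paper, whose entire proof of this lemma is the single sentence ``This is clear given Lemma~\ref{real valued distributions}.'' You have simply spelled out the bookkeeping (conjugates and reflections of products, the symmetrized-bump argument to extract the pointwise/a.e.\ identity) that the paper leaves implicit.
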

\begin{proof}
This is clear given Lemma~\ref{real valued distributions}.
\end{proof}

\section*{Acknowledgments}
The authors would like to express their gratitude to the anonymous referee for their thorough reading of the first draft and for the many helpful suggestions.

\bibliography{screened_sobolev_1.bib}

\begin{thebibliography}{CFCKU09}

\bibitem[AF03]{adams-fournier2003book}
Robert~A. Adams and John J.~F. Fournier.
\newblock {\em {Sobolev Spaces}}, volume 140 of {\em {Pure and Applied
  Mathematics}}.
\newblock Elsevier/Academic Press, Amsterdam, second edition, 2003.

\bibitem[ALM19]{astashkin_etal_2019}
Sergey~V. Astashkin, Konstantin~V. Lykov, and Mario Milman.
\newblock Limiting interpolation spaces via extrapolation.
\newblock {\em J. Approx. Theory}, 240:16--70, 2019.

\bibitem[BBM01]{bourgain-brezis-mironescu2001}
Jean Bourgain, H.~Brezis, and Petru Mironescu.
\newblock Another look at {S}obolev spaces.
\newblock In {\em Optimal control and partial differential equations}, pages
  439--455. IOS, Amsterdam, 2001.

\bibitem[BBM02]{bourgain-brezis-mironescu2002}
J.~Bourgain, H.~Brezis, and P.~Mironescu.
\newblock Limiting embedding theorems for {$W^{s,p}$} when {$s\uparrow1$} and
  applications.
\newblock {\em J. Anal. Math.}, 87:77--101, 2002.
\newblock Dedicated to the memory of Thomas H. Wolff.

\bibitem[BCD11]{MR2768550}
Hajer Bahouri, Jean-Yves Chemin, and Rapha\"{e}l Danchin.
\newblock {\em Fourier analysis and nonlinear partial differential equations},
  volume 343 of {\em Grundlehren der Mathematischen Wissenschaften [Fundamental
  Principles of Mathematical Sciences]}.
\newblock Springer, Heidelberg, 2011.

\bibitem[BF79]{MR554783}
V.~Benci and D.~Fortunato.
\newblock Weighted {S}obolev spaces and the nonlinear {D}irichlet problem in
  unbounded domains.
\newblock {\em Ann. Mat. Pura Appl. (4)}, 121:319--336, 1979.

\bibitem[BIN78]{besov-ilin-nikolskii1979book}
Oleg~V. Besov, Valentin~P. {Il'in}, and Sergey~M. {Nikol'ski\u\i}.
\newblock {\em {Integral Representations of Functions and Imbedding Theorems.
  {V}ol. {I}}}.
\newblock V. H. Winston \&\ Sons, Washington, D.C.; Halsted Press [John Wiley
  \&\ Sons], New York-Toronto, Ont.-London, 1978.
\newblock Translated from the Russian, Scripta Series in Mathematics, Edited by
  Mitchell H. Taibleson.

\bibitem[BIN79]{besov-ilin-nikolskii1978book}
Oleg~V. Besov, Valentin~P. {Il'in}, and Sergey~M. {Nikol'ski\u\i}.
\newblock {\em {Integral Representations of Functions and Imbedding Theorems.
  {V}ol. {II}}}.
\newblock V. H. Winston \&\ Sons, Washington, D.C.; Halsted Press [John Wiley
  \&\ Sons], New York-Toronto, Ont.-London, 1979.
\newblock Scripta Series in Mathematics, Edited by Mitchell H. Taibleson.

\bibitem[BL76]{MR0482275}
J\"{o}ran Bergh and J\"{o}rgen L\"{o}fstr\"{o}m.
\newblock {\em Interpolation spaces. {A}n introduction}.
\newblock Springer-Verlag, Berlin-New York, 1976.
\newblock Grundlehren der Mathematischen Wissenschaften, No. 223.

\bibitem[BN18]{brezis-nguyen2018}
H.~Brezis and Hoai-Minh Nguyen.
\newblock Non-local functionals related to the total variation and connections
  with image processing.
\newblock {\em Ann. PDE}, 4(1):Art. 9, 77, 2018.

\bibitem[BS72]{MR312241}
Melvyn~S. Berger and Martin Schechter.
\newblock Embedding theorems and quasi-linear elliptic boundary value problems
  for unbounded domains.
\newblock {\em Trans. Amer. Math. Soc.}, 172:261--278, 1972.

\bibitem[Bur98]{burenkov1998book}
Victor~I. Burenkov.
\newblock {\em {Sobolev Spaces on Domains}}, volume 137 of {\em {Teubner-Texte
  zur Mathematik [Teubner Texts in Mathematics]}}.
\newblock B. G. Teubner Verlagsgesellschaft mbH, Stuttgart, 1998.

\bibitem[CFCKU09]{cobos_etal_2009}
Fernando Cobos, Luz~M. Fern\'{a}ndez-Cabrera, Thomas K\"{u}hn, and Tino
  Ullrich.
\newblock On an extreme class of real interpolation spaces.
\newblock {\em J. Funct. Anal.}, 256(7):2321--2366, 2009.

\bibitem[DGLZ12]{MR3023366}
Qiang Du, Max Gunzburger, R.~B. Lehoucq, and Kun Zhou.
\newblock Analysis and approximation of nonlocal diffusion problems with volume
  constraints.
\newblock {\em SIAM Rev.}, 54(4):667--696, 2012.

\bibitem[DNPV12]{dinezza-palatucci-valdinoci2012}
Eleonora Di~Nezza, Giampiero Palatucci, and Enrico Valdinoci.
\newblock Hitchhiker's guide to the fractional {S}obolev spaces.
\newblock {\em Bull. Sci. Math.}, 136(5):521--573, 2012.

\bibitem[EE73]{MR380094}
D.~E. Edmunds and W.~D. Evans.
\newblock Elliptic and degenerate-elliptic operators in unbounded domains.
\newblock {\em Ann. Scuola Norm. Sup. Pisa Cl. Sci. (3)}, 27:591--640 (1974),
  1973.

\bibitem[FKV15]{MR3318251}
Matthieu Felsinger, Moritz Kassmann, and Paul Voigt.
\newblock The {D}irichlet problem for nonlocal operators.
\newblock {\em Math. Z.}, 279(3-4):779--809, 2015.

\bibitem[FN20]{2020arXiv200110473F}
Patrick~T. {Flynn} and Huy~Q. {Nguyen}.
\newblock {The vanishing surface tension limit of the Muskat problem}.
\newblock {\em arXiv e-prints}, page arXiv:2001.10473, January 2020.

\bibitem[GM86]{gomez_milman_1986}
M.~E. Gomez and M.~Milman.
\newblock Extrapolation spaces and almost-everywhere convergence of singular
  integrals.
\newblock {\em J. London Math. Soc. (2)}, 34(2):305--316, 1986.

\bibitem[Gra14a]{MR3243734}
Loukas Grafakos.
\newblock {\em Classical {F}ourier analysis}, volume 249 of {\em Graduate Texts
  in Mathematics}.
\newblock Springer, New York, third edition, 2014.

\bibitem[Gra14b]{MR3243741}
Loukas Grafakos.
\newblock {\em Modern {F}ourier analysis}, volume 250 of {\em Graduate Texts in
  Mathematics}.
\newblock Springer, New York, third edition, 2014.

\bibitem[Gri11]{grisvard2011book}
Pierre Grisvard.
\newblock {\em {Elliptic Problems in Nonsmooth Domains}}, volume~69 of {\em
  {Classics in Applied Mathematics}}.
\newblock Society for Industrial and Applied Mathematics (SIAM), Philadelphia,
  PA, 2011.
\newblock Reprint of the 1985 original.

\bibitem[Gus70]{gustavsson}
Jan Gustavsson.
\newblock Interpolation of semi-norms.
\newblock Technical report, Lund University, 1970.

\bibitem[HaKa95]{MR1315521}
Piotr Haj\l~asz and Agnieszka Ka\l~amajska.
\newblock Polynomial asymptotics and approximation of {S}obolev functions.
\newblock {\em Studia Math.}, 113(1):55--64, 1995.

\bibitem[Han77]{MR448052}
R.~Hanks.
\newblock Interpolation by the real method between {BMO}, {$L^{\alpha
  }(0<\alpha <\infty )$} and {$H^{\alpha }(0<\alpha <\infty )$}.
\newblock {\em Indiana Univ. Math. J.}, 26(4):679--689, 1977.

\bibitem[JM91]{MR1046185}
Bj\"{o}rn Jawerth and Mario Milman.
\newblock Extrapolation theory with applications.
\newblock {\em Mem. Amer. Math. Soc.}, 89(440):iv+82, 1991.

\bibitem[Kuf85]{MR802206}
Alois Kufner.
\newblock {\em Weighted {S}obolev spaces}.
\newblock A Wiley-Interscience Publication. John Wiley \& Sons, Inc., New York,
  1985.
\newblock Translated from the Czech.

\bibitem[Leo17]{MR3726909}
Giovanni Leoni.
\newblock {\em A first course in {S}obolev spaces}, volume 181 of {\em Graduate
  Studies in Mathematics}.
\newblock American Mathematical Society, Providence, RI, second edition, 2017.

\bibitem[LT19]{leoni2019traces}
Giovanni Leoni and Ian Tice.
\newblock Traces for homogeneous sobolev spaces in infinite strip-like domains.
\newblock {\em Journal of Functional Analysis}, 277(7):2288--2380, 2019.

\bibitem[Lun18]{MR3753604}
Alessandra Lunardi.
\newblock {\em Interpolation theory}, volume~16 of {\em Appunti. Scuola Normale
  Superiore di Pisa (Nuova Serie) [Lecture Notes. Scuola Normale Superiore di
  Pisa (New Series)]}.
\newblock Edizioni della Normale, Pisa, 2018.
\newblock Third edition [of MR2523200].

\bibitem[Maz11]{mazya2011book}
Vladimir Maz'ya.
\newblock {\em {Sobolev Spaces with Applications to Elliptic Partial
  Differential Equations}}, volume 342 of {\em {Grundlehren der Mathematischen
  Wissenschaften [Fundamental Principles of Mathematical Sciences]}}.
\newblock Springer, Heidelberg, augmented edition, 2011.

\bibitem[MMS10]{MR2753293}
V.~Maz'ya, M.~Mitrea, and T.~Shaposhnikova.
\newblock The {D}irichlet problem in {L}ipschitz domains for higher order
  elliptic systems with rough coefficients.
\newblock {\em J. Anal. Math.}, 110:167--239, 2010.

\bibitem[Ne{\v{c}}12]{necas}
Jind\v{r}ich Ne{\v{c}}as.
\newblock {\em {Direct Methods in the Theory of Elliptic Equations}}.
\newblock {Springer Monographs in Mathematics}. Springer, Heidelberg, 2012.
\newblock Translated from the 1967 French original.

\bibitem[{Ngu}19]{2019arXiv190711552N}
Huy~Q. {Nguyen}.
\newblock {On well-posedness of the Muskat problem with surface tension}.
\newblock {\em arXiv e-prints}, page arXiv:1907.11552, July 2019.

\bibitem[NP20]{nguyen_pausader}
Huy~Q. Nguyen and Benoît Pausader.
\newblock A paradifferential approach for well-posedness of the muskat problem.
\newblock {\em Archive for Rational Mechanics and Analysis}, Feb 2020.

\bibitem[Pee68]{MR0243340}
J.~Peetre.
\newblock {\em A theory of interpolation of normed spaces}.
\newblock Notas de Matem\'{a}tica, No. 39. Instituto de Matem\'{a}tica Pura e
  Aplicada, Conselho Nacional de Pesquisas, Rio de Janeiro, 1968.

\bibitem[Pee76]{MR0461123}
Jaak Peetre.
\newblock {\em New thoughts on {B}esov spaces}.
\newblock Mathematics Department, Duke University, Durham, N.C., 1976.
\newblock Duke University Mathematics Series, No. 1.

\bibitem[Pon04]{ponce2004}
Augusto~C. Ponce.
\newblock A new approach to {S}obolev spaces and connections to
  {$\Gamma$}-convergence.
\newblock {\em Calc. Var. Partial Differential Equations}, 19(3):229--255,
  2004.

\bibitem[PS17]{ponce-spector2017}
Augusto~C. Ponce and Daniel Spector.
\newblock On formulae decoupling the total variation of {BV} functions.
\newblock {\em Nonlinear Anal.}, 154:241--257, 2017.

\bibitem[Ste93]{MR1232192}
Elias~M. Stein.
\newblock {\em Harmonic analysis: real-variable methods, orthogonality, and
  oscillatory integrals}, volume~43 of {\em Princeton Mathematical Series}.
\newblock Princeton University Press, Princeton, NJ, 1993.
\newblock With the assistance of Timothy S. Murphy, Monographs in Harmonic
  Analysis, III.

\bibitem[Str16]{MR3481175}
Robert~S. Strichartz.
\newblock ``{G}raph paper'' trace characterizations of functions of finite
  energy.
\newblock {\em J. Anal. Math.}, 128:239--260, 2016.

\bibitem[Tha02]{MR1923391}
Gudrun Thater.
\newblock Neumann problem in domains with outlets of bounded diameter.
\newblock {\em Acta Appl. Math.}, 73(3):251--274, 2002.

\bibitem[TL80]{MR564653}
Angus~Ellis Taylor and David~C. Lay.
\newblock {\em Introduction to functional analysis}.
\newblock John Wiley \& Sons, New York-Chichester-Brisbane, second edition,
  1980.

\bibitem[Tri78]{MR503903}
Hans Triebel.
\newblock {\em Interpolation theory, function spaces, differential operators},
  volume~18 of {\em North-Holland Mathematical Library}.
\newblock North-Holland Publishing Co., Amsterdam-New York, 1978.

\bibitem[Tri10]{MR3024598}
Hans Triebel.
\newblock {\em Theory of function spaces}.
\newblock Modern Birkh\"{a}user Classics. Birkh\"{a}user/Springer Basel AG,
  Basel, 2010.
\newblock Reprint of 1983 edition [MR0730762], Also published in 1983 by
  Birkh\"{a}user Verlag [MR0781540].

\bibitem[Yos95]{MR1336382}
Kosaku Yosida.
\newblock {\em Functional analysis}.
\newblock Classics in Mathematics. Springer-Verlag, Berlin, 1995.
\newblock Reprint of the sixth (1980) edition.

\bibitem[ZD10]{MR2733097}
Kun Zhou and Qiang Du.
\newblock Mathematical and numerical analysis of linear peridynamic models with
  nonlocal boundary conditions.
\newblock {\em SIAM J. Numer. Anal.}, 48(5):1759--1780, 2010.

\end{thebibliography}
\bibliographystyle{alpha}
\end{document}